
\documentclass[final,1p,times,numbers,sort]{elsarticle}

\usepackage{yhmath} 

\usepackage{amsfonts}
\usepackage{amsxtra}

\usepackage{dsfont}
\usepackage{mathrsfs}
\usepackage{amsthm}
\usepackage[T1]{fontenc}
\usepackage[utf8]{inputenc}
\usepackage[french,english]{babel}
\usepackage{amsmath}
\usepackage{bm}
\usepackage[colorlinks=true,linkcolor=blue]{hyperref}
\usepackage{color}
\usepackage{relsize}
\usepackage{stmaryrd}
\usepackage{listings}
\usepackage[shortlabels]{enumitem}
\usepackage{url}

\urlstyle{same}
\numberwithin{equation}{section}


\makeatletter
\def\ps@pprintTitle{%
  \let\@oddhead\@empty
  \let\@evenhead\@empty
  \def\@oddfoot{}
  \let\@evenfoot\@oddfoot}
\makeatother

\newtheorem{Thm}{Theorem}[section]
\newtheorem{Cor}[Thm]{Corollary}
\newtheorem{Prop}[Thm]{Proposition}
\newtheorem{Lem}[Thm]{Lemma}
\theoremstyle{definition}
\newtheorem{Rem}{Remark}      
\newtheorem{Defn}[Thm]{Definition}
\newtheorem{Ex}[Thm]{Example}
\newtheorem{Fact}[Thm]{Fact}
\newtheorem*{Prob*}{Problem}

\newtheorem*{dsq}{Stein's question}


\newcommand{\Z}{\mathbb{Z}}
\newcommand{\D}{\mathbb{D}}
\newcommand{\kr}{\mathds{kr}}
\newcommand{\N}{\mathbb{N}}
\newcommand{\T}{\mathbb{T}}

\newcommand{\R}{\mathbb{R}}
\newcommand{\mC}{\mathcal{C}}
\newcommand{\E}{\mathop{\mathbb{E}}}
\DeclareMathOperator{\Id}{Id}

\DeclareMathOperator{\Sym}{Sym}
\DeclareMathOperator{\supp}{supp}
\DeclareMathOperator{\Ima}{Im}
\DeclareMathOperator{\Rea}{Re}

\newcommand{\se}[1]{\underset{_{\substack{#1}}}{\mathlarger{\mathlarger{\E}}}}

\begin{document}

\begin{frontmatter}

\title{Dimension-free estimates for discrete maximal functions and lattice points in high-dimensional spheres and balls with small radii\tnoteref{t1}}

\tnotetext[t1]{Both authors were supported by the National Science Centre, Poland, grant Sonata Bis 2022/46/E/ST1/00036. Jakub Niksiński was also supported by the NSF CAREER grant DMS-2236493.}

\author[inst1]{Jakub Niksiński}
\address[inst1]{Institute of Mathematics, University of Wrocław, Plac Grunwaldzki 2, 50-384 Wrocław, Poland}

\author[inst2,inst1]{Błażej Wróbel}
\address[inst2]{Institute of Mathematics of the Polish Academy of Sciences, Śniadeckich 8, 00-656 Warsaw, Poland\\
Email addresses: trolek1130@gmail.com (JN), blazej.wrobel@math.uni.wroc.pl (BW)}

\begin{keyword}
discrete maximal function, dimension-free estimates, lattice points in spheres and balls, Krawtchouk polynomials
\MSC[2020] 42B25, 42B15, 05A10, 11P21
\end{keyword}


\selectlanguage{english}

\begin{abstract}
We prove that the discrete Hardy-Littlewood maximal function associated with Euclidean spheres with small radii has dimension-free estimates on $\ell^p(\mathbb{Z}^d)$ for $p\in[2,\infty).$ This implies an analogous result for the Euclidean balls, thus making progress on a question of E.M. Stein from the mid 1990s. Our work provides the first dimension-free estimates for full discrete maximal functions related to spheres and balls without relying on comparisons with their continuous counterparts. An important part of our argument is a uniform (dimension-free) count of lattice points in high-dimensional spheres and balls with small radii. We also established a dimension-free estimate for a multi-parameter maximal function of a combinatorial nature, which is a new phenomenon and may be useful for studying similar problems in the future.
\vspace{1.0em}

\end{abstract}

\end{frontmatter}



\section{Introduction}

\subsection{Statement of the results}

Let $G$ be a subset of $\R^d$. For every $t\ge 0$ we let $tG=\{tx\colon x\in\R^d\}$ be the dilation of $G.$ Let $\mathbb{J}\subset [0,\infty)$ be the set of those $t\ge 0$ such that $tG\cap \Z^d$ is non-empty. For $t\in \mathbb{J}$ we consider the discrete Hardy-Littlewood averaging operator
\begin{align*}
	\mathcal M_t^Gf(x):=\frac{1}{|tG\cap \Z^d|}\sum_{y\in tG\cap\Z^d}f(x-y),
	 \qquad f\in \ell^1(\Z^d),\quad 
	x\in\Z^d.
\end{align*}
The symbol $|tG\cap \Z^d|$ above stands for the number of elements of the set $tG\cap \Z^d.$ For $p\in (1,\infty]$ and $\mathbb{I}\subseteq \mathbb{J}$ we also let
\begin{equation}
\label{eq:mCp}
{\mC}(p,\mathbb I, G)=\sup_{\|f\|_{\ell^p(\Z^d)}\le1}\big\|\sup_{t\in\mathbb I}|\mathcal M_t^Gf|\big\|_{\ell^p(\Z^d)}.
\end{equation}
Note that trivially, ${\mC}(\infty,\mathbb{J}, G)\le 1.$

When  $G=B=B^2(d)=\{x\in\R^d\colon \sum_{j=1}^d x_j^2 \le 1\}$ is the Euclidean ball of radius one then it is well known  that in all dimensions $d$ we have ${\mC}(p,\R_+, B^2(d))<\infty$ for $1<p\le\infty,$ where $\R_+=(0,\infty).$  However, classical arguments rely on interpolating  the $\ell^{\infty}(\Z^d)$ bound with a weak-type $(1,1)$ estimate, leading to bounds for ${\mC}(p,\R_+, B^2(d))$ that grow exponentially in $d.$  In the mid 1990's E. M. Stein asked  about independent of the dimension estimates for ${\mC}(2,\R_+, B^2(d)).$ Below and throughout the paper we abbreviate
\[
\mathcal M_t:=\mathcal M_t^{B^2(d)}.
\]
\begin{dsq}[\cite{SteinPC}]
	\label{con:dsq}
Does there exist a universal constant $C>0$ such that
	\begin{equation}
		\label{eq:36'}
            \tag{SQ}
    \Big\| \sup_{t \ge 0} |\mathcal{M}_tf|\Big\|_{\ell^2(\Z^d)} \leq C \big\| f \big\|_{\ell^2(\Z^d)}
	\end{equation}
holds uniformly, independent of the dimension $d\in \N$?
\end{dsq}

Our paper makes progress on this question in the small scales regime $t<d^{1/2-\varepsilon}.$ 
\begin{Thm} \label{thm:ball}
For any $\varepsilon>0$ there exists $C_\varepsilon>0$  depending only on $\varepsilon$ and such that 
    \[
    \Big\| \sup_{0\le t \leq d^{\frac{1}{2}- \varepsilon}} |\mathcal{M}_tf| \Big\|_{\ell^p(\Z^d)} \leq C_{\varepsilon} \big\| f \big\|_{\ell^p(\Z^d)},\qquad f\in \ell^p(\Z^d),
    \]
   holds uniformly in the dimension $d \in \N$ and $p\in[2,\infty]$. 
\end{Thm}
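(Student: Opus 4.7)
The plan is to reduce Theorem~\ref{thm:ball} to the dimension-free maximal inequality for the discrete sphere averages announced in the abstract, which is the technical core of the paper. The reduction itself rests on expressing each ball average as a convex combination of sphere averages.

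For each $n\in\mathbb{N}$ with $r_d(n):=|\{y\in\Z^d\colon |y|^2=n\}|>0$, let
\[
\mathcal S_n f(x) := \frac{1}{r_d(n)} \sum_{|y|^2=n} f(x-y)
\]
denote the sphere averaging operator. Partitioning the lattice points in $tB\cap\Z^d$ by their squared length gives the exact identity
\[
\mathcal M_t f(x) = \frac{1}{|tB\cap\Z^d|}\sum_{n\le t^2} r_d(n)\,\mathcal S_n f(x),
\]
which exhibits $\mathcal M_t f(x)$ as a convex combination of the numbers $\mathcal S_n f(x)$. Consequently, pointwise in $x$,
\[
\sup_{0\le t\le T}|\mathcal M_t f(x)| \ \le\ \sup_{\substack{0\le n\le T^2\\ r_d(n)>0}}|\mathcal S_n f(x)|.
\]
Choosing $T=d^{1/2-\varepsilon}$, so that $T^2=d^{1-2\varepsilon}$, and invoking the sphere maximal bound
\[
\Big\|\sup_{0\le n\le d^{1-2\varepsilon}}|\mathcal S_n f|\Big\|_{\ell^p(\Z^d)}\le C_\varepsilon\|f\|_{\ell^p(\Z^d)},\qquad p\in[2,\infty),
\]
then immediately yields Theorem~\ref{thm:ball} for $p\in[2,\infty)$; the endpoint $p=\infty$ is trivial because each $\mathcal M_t$ is an average.

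The convex-combination step is essentially a one-line observation, so all of the genuine difficulty is concentrated in the sphere inequality, which I expect to be the subject of the body of the paper. Guided by the keywords, I would anticipate that it proceeds by diagonalising $\mathcal S_n$ through Krawtchouk polynomials — the natural joint eigenfunctions of the sphere averages on $\Z^d$ — then recasting the supremum over $n$ as a suitable multi-parameter combinatorial maximal function, and finally controlling the resulting spectral multipliers uniformly in $d$. For the last step one needs a sharp dimension-free count of $r_d(n)$ in the range $n\lesssim d^{1-2\varepsilon}$, and this — namely the absence of anomalous arithmetic fluctuations of $r_d(n)$ at these scales — is what I expect to be the principal obstacle, and also what should determine the threshold $t<d^{1/2-\varepsilon}$ appearing in the statement.
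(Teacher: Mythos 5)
Your reduction is exactly the paper's: the authors also observe that $\sup_{0\le t\le d^{1/2-\varepsilon}}|\mathcal M_t f|=\sup_{n\le d^{1-2\varepsilon}}|\mathcal M_{\sqrt n}f|\le \sup_{n\le d^{1-2\varepsilon}}|\mathcal S_{\sqrt n}f|$ because ball averages are convex combinations of sphere averages, and then invoke Theorem \ref{thm:sphere}. The proof of the sphere inequality itself proceeds somewhat differently from your sketch (via a lattice point count reducing $\mathcal S_{\sqrt n}$ to the multi-parameter operators $\mathcal D_{\overline n}$, with Krawtchouk polynomials entering only through the multipliers $\beta_{\overline n}$), but for the statement at hand your argument is correct and identical to the paper's.
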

  In fact, our method yields a stronger result when balls are replaced by spheres. Namely, let $S^{d-1}$ denote the $(d-1)$-dimensional unit sphere in $\R^d,$ i.e. $S^{d-1}=\{x\in\R^d\colon \sum_{j=1}^d x_j^2 = 1\}.$ For $d\ge 4$ and $t\in \sqrt{\N_0}$ we abbreviate $\mathcal M_t^{S^{d-1}}$ as
 \[ 
 \mathcal S_t :=\mathcal M_t^{S^{d-1}},
 \]
 where $\sqrt{\N_0}$ denotes the set of square roots of non-negative integers $\N_0$. Note that for $d\ge4$ the set $tS^{d-1}\cap\Z^d$ is non-empty precisely for $t\in \sqrt{\N_0}.$  We establish the following result.
\begin{Thm} \label{thm:sphere}
    For any $\varepsilon>0$ there exists $C_\varepsilon>0$ depending only on $\varepsilon$ and such that 
     \[
    \Big\| \sup_{n\in \N_0\colon n\leq d^{1- \varepsilon}} |\mathcal{S}_{\sqrt{n}}f| \Big\|_{\ell^p(\Z^d)} \leq C_{\varepsilon} \big\| f \big\|_{\ell^p(\Z^d)}, \qquad f\in \ell^p(\Z^d),
    \]
    holds uniformly in dimensions $d\ge 4$ and $p\in[2,\infty].$
\end{Thm}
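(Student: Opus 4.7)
The plan is to first reduce to the case $p=2$ by interpolation with the trivial $\ell^\infty$ bound $\|\sup_n |\mathcal{S}_{\sqrt{n}} f|\|_{\ell^\infty(\Z^d)} \le \|f\|_{\ell^\infty(\Z^d)}$, which follows from each $\mathcal{S}_{\sqrt{n}}$ being an average. On $\ell^2(\Z^d)$, $\mathcal{S}_{\sqrt{n}}$ is a Fourier multiplier with symbol
\[
m_n(\xi)=\frac{1}{r_d(n)}\sum_{y\in\Z^d:\,|y|^2=n}e^{-2\pi i\langle y,\xi\rangle},
\]
where $r_d(n)=|\sqrt{n}\,S^{d-1}\cap\Z^d|$. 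The whole difficulty is extracting a $d$-free estimate for the supremum of these multipliers over the discrete range $n\le d^{1-\varepsilon}$.

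The structural observation I would exploit is that for $n\le d^{1-\varepsilon}$ the overwhelming majority of lattice points on $\sqrt{n}\,S^{d-1}$ have all coordinates in $\{-1,0,1\}$: there are exactly $\binom{d}{n}2^n$ such "Hamming-type" points (an $n$-subset and a sign assignment). Any other point in $\sqrt{n}\,S^{d-1}\cap\Z^d$ has at least one coordinate of absolute value $\ge 2$, which forces its support size to drop by at least three, and a direct count then shows such points form only a $d^{-c(\varepsilon)}$-fraction of $r_d(n)$ — this is where I would invoke the uniform lattice point count advertised in the abstract. Sorting the remaining points by their "type" $\tau$ (the multiset of absolute values of non-zero coordinates) I can therefore decompose
\[
\mathcal{S}_{\sqrt{n}} \;=\; \alpha_n\,\mathcal{H}_n \;+\; \sum_{\tau\ne (1,\dots,1)}\beta_{n,\tau}\,\mathcal{A}_{n,\tau},
\]
where $\mathcal{H}_n$ averages over Hamming-type points, each $\mathcal{A}_{n,\tau}$ averages over points of the fixed type $\tau$, and $\alpha_n=1-O(d^{-c(\varepsilon)})$, $\sum_\tau|\beta_{n,\tau}|=O(d^{-c(\varepsilon)})$ uniformly in $n\le d^{1-\varepsilon}$.

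The main term $\sup_n|\mathcal{H}_n f|$ is then handled by the dimension-free estimate for the multi-parameter combinatorial maximal function that the abstract promises to establish. Indeed, $\mathcal{H}_n$ can be written as an average over the number of positive coordinates and the number of negative coordinates (a two-parameter combinatorial object on $\{-1,0,1\}^d$), and taking $\sup_n$ fits the multi-parameter combinatorial framework. Each error operator $\mathcal{A}_{n,\tau}$ is also a combinatorial averaging operator — one first chooses the disjoint supports for each distinct absolute value appearing in $\tau$, then assigns signs — so it too is controlled by the multi-parameter combinatorial maximal function, on fewer effective parameters. Using the crude pointwise estimate
\[
\sup_n\Big|\sum_\tau \beta_{n,\tau}\,\mathcal{A}_{n,\tau}f\Big|\le \sum_\tau\Big(\sup_n|\beta_{n,\tau}|\Big)\sup_n|\mathcal{A}_{n,\tau}f|
\]
and summing over $\tau$, the gain $d^{-c(\varepsilon)}$ absorbs the count of types.

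The genuinely hard step — and what I expect to be the main obstacle — is the dimension-free estimate for the multi-parameter combinatorial maximal function itself. Via Fourier analysis on $\T^d$ this reduces to uniform-in-$d$ control of multipliers built from tensor products of Krawtchouk polynomials, taken over a range of parameters indexing the different "support sizes". Classical dimension-free arguments in the Bourgain–Stein tradition use monotonicity in a single radius parameter and are not directly applicable; a new ingredient is needed to handle the simultaneous maximum over several combinatorial parameters without dimensional loss. Once this combinatorial dimension-free bound is in hand, the passage to spheres via the decomposition above is quantitative and routine, and the ball case (Theorem~\ref{thm:ball}) follows from the sphere case by summing in the radial variable.
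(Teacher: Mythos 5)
There is a genuine gap in the reduction step. Your decomposition rests on the claim that for all $n\le d^{1-\varepsilon}$ the points of $\sqrt{n}\,S^{d-1}\cap\Z^d$ with a coordinate of absolute value $\ge 2$ form only a $d^{-c(\varepsilon)}$-fraction of the sphere. This is false once $n$ exceeds roughly $d^{3/4}$: your own "support drops by three" count gives a relative contribution of order $n^4/d^3$ from points with a single coordinate equal to $\pm2$, which tends to infinity for $n\gg d^{3/4}$, and the precise asymptotic $|\sqrt{n}S\cap\Z^d|\approx 2^n\binom{d}{n}\exp\big(\tfrac{n\alpha^3}{8}+O(n\alpha^4)\big)$ (Corollary \ref{cor:1.4}, with $\alpha=n/d$) shows that in the range $d^{3/4}\ll n\le d^{1-\varepsilon}$ the $\{-1,0,1\}$-valued points are an \emph{exponentially small} fraction of the sphere. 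So $\alpha_n=1-O(d^{-c})$ fails and the main term $\mathcal H_n$ does not capture the operator. The correct dominant set consists of points whose coordinates are bounded by a threshold $K=K(\varepsilon)$ except for a bounded amount $a$ of the quadratic mass (Theorem \ref{thm:3.1}, Corollary \ref{cor:3.2}); this is exactly why the paper's combinatorial operator $\mathcal D_{n_1,\dots,n_K}$ carries $K$ parameters, with $K\to\infty$ as $\varepsilon\to0$, rather than being a Hamming-type average on $\{-1,0,1\}^d$.

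A second, quantitative problem is your treatment of the error terms: the types $\tau$ range over multisets of absolute values that can be as large as $\sqrt n$, so the number of types grows with $d$, and bounding each $\sup_n|\mathcal A_{n,\tau}f|$ by an $O(1)$ maximal inequality and multiplying by $\sup_n|\beta_{n,\tau}|=O(d^{-c})$ does not obviously close, quite apart from the fact that the coefficients are not small in the upper range of $n$. The paper avoids any maximal bound for the errors: it proves uniform-in-$\xi$ multiplier estimates $|r_{\sqrt n}(\xi)|\lesssim d^{-1}+2^{-n/2}$ and $|\tilde s_{\sqrt n}(\xi)-\phi_{\sqrt n,\epsilon}(\xi)|\lesssim n^{-1}$ (after splitting frequency space according to whether $\|\xi\|\le\|\xi+1/2\|$), and these are square-summable over $n\le d$, so Plancherel and a square function finish the error terms (Proposition \ref{lem:4.3}). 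The main part is then a genuine convex combination of the operators $\mathcal D_{\overline i}$, pointwise dominated by the multi-parameter maximal function of Theorem \ref{thm:1.5}. Your overall architecture (interpolation to $p=2$, reduction to a dimension-free multi-parameter combinatorial maximal inequality proved via Krawtchouk polynomials) does match the paper's, but as written the reduction collapses precisely in the regime $d^{3/4}\ll n\le d^{1-\varepsilon}$ that the theorem is designed to reach.
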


 It is easy to see that  \[\sup_{0\le t\leq d^{\frac{1}{2}- \varepsilon}} |\mathcal{M}_tf(x)|= \sup_{n\in \N_0\colon n\leq d^{1- 2\varepsilon}} |\mathcal{M}_{\sqrt{n}}f(x)|\le \sup_{n\in \N_0\colon n\leq d^{1- 2\varepsilon}} |\mathcal{S}_{\sqrt{n}}f(x)|,\]
 where the inequality above follows from the fact that averages over discrete balls are themselves averages of avereges over discrete spheres. This observation shows that Theorem \ref{thm:sphere} implies Theorem \ref{thm:ball}. However, in the small scales regime $t=O(d^{1/2-\varepsilon})$ these two results are in fact closely related. This will become evident at a later stage in the paper.

As a key tool in proving Theorem \ref{thm:sphere}, we establish two types of results that we believe are interesting in their own right; therefore, we explicitly highlight them here.

The first type of result concerns counting lattice points in high-dimensional spheres and balls with small radii. A representative example of this kind is given below. In what follows we write $\N$ to denote the set of positive integers and for $n,d\in \N$ we let
\[
\alpha=\frac{n}{d}.
\]
For $K\in \N$ the symbol $O_K$ stands for a big $O$ notation in which the implicit constant depends only on $K.$ 
We will write $C,C',C_1,C_2,...,$ to denote positive universal constants and we abbreviate $B=B^2(d),$ $S=S^{d-1}.$

\begin{Thm}[Part of Theorem \ref{thm:3.4}]
\label{thm:lat:extract} There exist real numbers $b_k,$ $k=1,2,\ldots,$ and
a universal constant $c\in (0,1)$ such that for all $K\in \N$ and all $1\le n\le cd$ we have
\begin{equation}
\label{eq: compagen BS}
\begin{split}
|\sqrt{n}S\cap\Z^d|&\le |\sqrt{n}B\cap\Z^d| \\
&\le C_1 2^n e^n \alpha^{-n}  \frac{1}{\sqrt{n}
}\exp \Big( \sum_{k=1}^{K} b_k n\alpha^k + O_K(n \alpha^{K+1}) \Big) \le C_2|\sqrt{n}S\cap\Z^d|.
\end{split}
\end{equation}
The numbers $b_k$ are coefficients of a power series with
$b_1=-\frac{1}{2},$ $ b_2=-\frac{1}{6},$ $ b_3=\frac{1}{24}$.
\end{Thm}
\noindent The above formula may be thought of as a quantitative uniform (dimension-free) asypmtotic formula in the classical Waring's problem for the squares within the regime $1 \leq n \leq c d.$ Such a uniform formula was proved before in the range $n>Cd^3$ in  \cite[Theorem 3.1]{MiSzWr}. Moreover, \cite[Theorem 1]{Rou} implies that under mild arithmetic assumptions on $n,d$ we have a dimension-free asymptotic formula in Waring's problem in the broader range $n>Cd^{2}$. The estimate from \cite[Theorem 3.1]{MiSzWr} combined with \cite[Lemma 5.3.]{BMSW4} shows that 
\[C|\sqrt{n}B\cap\Z^d|\le (n/d)|\sqrt{n}S\cap\Z^d|\le C'|\sqrt{n}B\cap\Z^d|,\qquad \textrm{for }n>Cd^3.\]
The above estimate contrasts with the behavior for $n\le cd$ in which case according to \eqref{eq: compagen BS} the quantities $|\sqrt{n}B\cap\Z^d|$ and $|\sqrt{n}S\cap\Z^d|$ are comparable up to universal constants.

Rewriting Theorem \ref{thm:lat:extract} in terms of binomial coefficients and taking $K=3$ leads to the following corollary.
\begin{Cor} \label{cor:1.4}
There exists a universal constant $c \in (0,1)$ such that for  $n,d \in \N$ satisfying  $1 \leq n \leq c d$ we have 
\begin{equation*}
|\sqrt{n}S\cap\Z^d|\le |\sqrt{n}B\cap\Z^d| \le C_3  2^n \binom{d}{n} \exp \Big( \frac{n \alpha^3}{8} + O(n \alpha^{4}) \Big)\le C_4 |\sqrt{n}S\cap\Z^d|.
\end{equation*}
In particular, if $n=O(d^{3/4}),$ then
\begin{equation}
\label{eq:snBZ}
 |\sqrt{n}S\cap \{-1,0,1\}^d|\le |\sqrt{n}S\cap\Z^d| \le |\sqrt{n}B\cap\Z^d| \le  C_5 2^n \binom{d}{n}=C_5|\sqrt{n}S\cap \{-1,0,1\}^d|.
\end{equation}
\end{Cor}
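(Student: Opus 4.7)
The plan is to derive the corollary directly from Theorem \ref{thm:lat:extract} by computing $\log\binom{d}{n}$ via Stirling's formula and matching the power-series expansions in $\alpha = n/d$. Since the hypothesis $n\le cd$ guarantees that both $n$ and $d-n$ are comparable to $d$, Stirling yields
\[
\log\binom{d}{n} = d\log d - n\log n - (d-n)\log(d-n) + \tfrac{1}{2}\log\tfrac{d}{2\pi n(d-n)} + O(1/n).
\]
A short manipulation rewrites the polynomial part as $-n\log\alpha -(d-n)\log(1-\alpha)$ and the logarithmic correction as $-\tfrac{1}{2}\log(2\pi n) - \tfrac{1}{2}\log(1-\alpha)$.

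Next, expanding $-\log(1-\alpha)=\sum_{k\ge 1}\alpha^k/k$ and using $d\alpha=n$, I would compute the clean identity
\[
-(d-n)\log(1-\alpha) = n - \sum_{k\ge 1}\frac{n\alpha^k}{k(k+1)}.
\]
Combining with the previous display and absorbing the bounded terms $\tfrac{1}{2}\log(2\pi)+\tfrac{1}{2}\log(1-\alpha) + O(1/n)$ into the constant of $\approx$, one obtains $e^n\alpha^{-n}/\sqrt{n} \approx \binom{d}{n}\exp(\sum_{k\ge 1}\frac{n\alpha^k}{k(k+1)})$. Inserting this into the first asymptotic of Theorem \ref{thm:lat:extract} replaces the prefactor by $2^n\binom{d}{n}$ and modifies each coefficient $b_k$ into $b_k + \tfrac{1}{k(k+1)}$. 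With the values $b_1=-1/2,\ b_2=-1/6,\ b_3=1/24$ furnished by Theorem \ref{thm:lat:extract}, the coefficients of $n\alpha$ and $n\alpha^2$ vanish exactly, while the coefficient of $n\alpha^3$ becomes $\tfrac{1}{24}+\tfrac{1}{12}=\tfrac{1}{8}$. Taking $K=3$ and controlling the tail $\sum_{k>3}\frac{n\alpha^k}{k(k+1)} = O(n\alpha^4)$ (valid since $\alpha\le c<1$) then yields $|\sqrt n B\cap \Z^d|\approx 2^n\binom{d}{n}\exp(n\alpha^3/8+O(n\alpha^4))$. The sphere asymptotic follows by the same computation applied to the second $\approx$ in Theorem \ref{thm:lat:extract}.

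For the final assertion, I would observe that $n=O(d^{3/4})$ forces $n\alpha^3 = n^4/d^3 = O(1)$ and $n\alpha^4 = n^5/d^4 = O(d^{-1/4})$, so the exponential factor is bounded above and below by universal constants and may be absorbed into $\approx$. The identity $2^n\binom{d}{n}=|\sqrt{n}S\cap\{-1,0,1\}^d|$ is immediate: a vector $x\in\{-1,0,1\}^d$ with $|x|^2=n$ is determined by an $n$-element support, chosen in $\binom{d}{n}$ ways, together with a sign pattern in $\{-1,+1\}^n$. No substantive obstacle is anticipated here beyond routine bookkeeping of the series coefficients; the only care needed is to verify that the Stirling remainder and the $\log(1-\alpha)$ correction remain uniformly bounded throughout the range $0<\alpha\le c<1$, which is transparent.
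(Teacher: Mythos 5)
Your proposal is correct and follows essentially the same route as the paper: combine the expansion of Theorem \ref{thm:lat:extract} (with $K=3$) with Stirling's formula for $\binom{d}{n}$ and check that the coefficients of $n\alpha$ and $n\alpha^2$ cancel while $\tfrac{1}{24}+\tfrac{1}{12}=\tfrac18$ survives. The only cosmetic difference is that the paper expands $\prod_{k=1}^{n-1}(1-k/d)$ term by term, whereas you use the closed-form identity $-(d-n)\log(1-\alpha)=n-\sum_{k\ge1}\frac{n\alpha^k}{k(k+1)}$, which yields the same coefficients.
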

\noindent Corollary \ref{cor:1.4} provides an explicit and uniform comparison between the number of lattice points in  $\sqrt{n}S$ and $\sqrt{n}S\cap \{-1,0,1\}^d.$ Previous results of this kind were less uniform; for instance \cite[Proposition 2.8.2]{NSDthesis} implies a variant of \eqref{eq:snBZ} with implicit constants depending on $n$ and $d$. The uniformity of these constants in $n$ and $d$ is crucial for our applications in the proof of Theorem \ref{thm:sphere}.  The authors are clueless, whether any results similar to Theorem \ref{thm:lat:extract} or Corollary \ref{cor:1.4} have been previously established in the literature. However, the case $n=cd$ for fixed $c>0$ as $d \to \infty$ was studied in \cite{HRou} and \cite{MO}.

It is worth to remark that Theorem \ref{thm:lat:extract} and Corollary \ref{cor:1.4} may be thought of as a concentration of measure results, stating that lattice points in the ball $\sqrt{n}B$ are concentrated on the sphere $\sqrt{n}S$ or even on the set $\sqrt{n}S\cap \{-1,0,1\}^d$ (when $n=O(n^{3/4})).$ This is a much stronger analog of the concentration of measure phenomenon for the ball in $\R^d$, where most of its mass is concentrated near the sphere.

\par
The second type of results concerns a multi-parameter maximal operator of a combinatorial nature, which may be useful for studying similar problems involving sets other than balls and spheres in the future.
Fix $K \in \N$ and take $\overline{n}=(n_1,n_2,...,n_K) \in (\N_0)^K.$ Let $D_{\overline{n}}$ denote the number of lattice points in $\{-K,\ldots,K\}^d$ such that exactly $n_1$ coordinates are equal to $\pm 1,$ $n_2$ coordinates are equal to $\pm 2,$ and so on; formally
\begin{equation}
\label{eq:def Dn}
D_{\overline{n}}= \bigcap_{j=1}^K\Big\{ x \in \{-K,...,K\}^d:  \#\{i \in [d]: |x_i|=j\}=n_j \Big\}.
\end{equation}
Consider the operator
\[
\mathcal{D}_{\overline{n}}f(x)= \frac{1}{|D_{\overline{n}}|} \sum_{y \in D_{\overline{n}}} f(x-y).
\]
Notice that $\mathcal{D}_{\overline{n}}$ is a multi-parameter averaging operator depending on the parameters $d$ and $K$. We prove that the corresponding multi-parameter maximal function has a dimension-free bound on $\ell^p(\Z^d),$ $p\in[2,\infty].$ Below the supremum is taken with respect to non-negative integers $n_1,\ldots,n_K.$
\begin{Thm} \label{thm:1.5} 
For all $K \in \N$ there is a constant $C_K>0$ depending only on $K$ and such that for all dimension $d\in\N$ and all exponents $p\in[2,\infty]$ we have
\[
 \Big\| \sup_{ n_1,n_2,...,n_K \leq \frac{d}{2K}} |\mathcal{D}_{n_1,...,n_K}f| \Big\|_{\ell^p(\Z^d)} \leq C_K \big\| f \big\|_{\ell^p(\Z^d)}, \qquad f\in\ell^p(\Z^d).
\]
\end{Thm}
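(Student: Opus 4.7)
The plan is to establish the $\ell^2$ bound using Plancherel and a combinatorial analysis of the Fourier multipliers; the $\ell^\infty$ bound is trivial since each $\mathcal{D}_{\overline{n}}$ is convolution with a probability measure, and Riesz--Thorin interpolation then yields the full range $p\in[2,\infty]$.

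First I would compute the Fourier multiplier of $\mathcal{D}_{\overline{n}}$ on $\T^d$. Exploiting the $\Sym([d])\times\{\pm 1\}^d$-invariance of $D_{\overline{n}}$, a direct calculation gives
\[
m_{\overline{n}}(\xi)\;=\;\frac{1}{\binom{d}{n_0,n_1,\ldots,n_K}}\,[t_1^{n_1}\cdots t_K^{n_K}]\prod_{i=1}^{d}\Bigl(1+\sum_{j=1}^{K}t_j\cos(2\pi j\xi_i)\Bigr),
\]
where $n_0=d-\sum_j n_j$ and $[\cdot]$ denotes coefficient extraction. Thus $m_{\overline{n}}$ is a normalized multivariate elementary symmetric polynomial in $\bigl(\cos(2\pi j\xi_i)\bigr)_{i,j}$; specializing $\xi_i\in\{0,1/2\}$ yields a Krawtchouk polynomial evaluation, and this is precisely the structure the argument will exploit.

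I would then induct on $K$. The base case $K=1$ is a dimension-free maximal estimate for averages $\mathcal{D}_n$ over the Hamming slice $\{y\in\{-1,0,1\}^d:\|y\|_0=n\}$, with multiplier $\binom{d}{n}^{-1}e_n(\cos 2\pi\xi_1,\ldots,\cos 2\pi\xi_d)$. To control $\sup_{n\le d/2}|\mathcal{D}_nf|$ on $\ell^2$, I would combine numerical sparsification in $n$ (reducing the supremum to a bounded number of canonical regimes) with Plancherel applied to a square function, using Krawtchouk orthogonality on the discrete cube to keep the constants dimension-free.

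For the inductive step, the required disjointness of $A_1,\ldots,A_K$ in $D_{\overline{n}}$ yields the decomposition
\[
\mathcal{D}_{\overline{n}}\;=\;\frac{1}{\binom{d}{n_K}}\sum_{|A|=n_K}\Bigl(\prod_{i\in A}A^K_i\Bigr)\circ\mathcal{D}^{A^c}_{(n_1,\ldots,n_{K-1})},
\]
where $A^K_if(x)=\tfrac12\bigl(f(x-Ke_i)+f(x+Ke_i)\bigr)$ and $\mathcal{D}^{A^c}_{(n_1,\ldots,n_{K-1})}$ is the analogue of $\mathcal{D}_{(n_1,\ldots,n_{K-1})}$ acting only on the coordinates in $A^c$. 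Since $n_j\le d/(2K)$ forces $n_j\le(d-n_K)/(2(K-1))$, a Fubini argument combined with the inductive hypothesis in dimension $d-n_K$ gives the bound $\|\sup_{(n_1,\ldots,n_{K-1})}\mathcal{D}^{A^c}_{\cdot}f\|_{\ell^p(\Z^d)}\le C_{K-1}\|f\|_{\ell^p(\Z^d)}$ uniformly in the subset $A$. Commuting the interior supremum past the positive operators and averaging over $A$, the problem is reduced to a single-parameter supremum in $n_K$ of the ``layer-$K$'' operators $\mathcal{B}^{(K)}_{n_K}=\binom{d}{n_K}^{-1}\sum_{|A|=n_K}\prod_{i\in A}A^K_i$, i.e.\ the averaging over $\{y\in\{-K,0,K\}^d:\|y\|_0=n_K\}$; this single-parameter maximal function is a rescaling of the $K=1$ base case and is bounded by the same Krawtchouk-based sparsification argument.

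The principal obstacle is controlling the multi-parameter supremum without losing dimension freeness: the parameter space has size $\Theta(d^K)$, so a naive union bound would produce $\log d$-type losses that are fatal. It is the coordinate-wise multiplicative structure of $m_{\overline{n}}$ together with Krawtchouk orthogonality on the cube that enable the correct decoupling of the $K$ parameters, allowing each inductive step to contribute only a constant depending on $K$.
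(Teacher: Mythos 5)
Your multiplier computation and the reduction to $p=2$ by interpolation match the paper, but the core of your argument --- the induction on $K$ via the decomposition $\mathcal{D}_{\overline{n}}=\binom{d}{n_K}^{-1}\sum_{|A|=n_K}\bigl(\prod_{i\in A}A^K_i\bigr)\circ\mathcal{D}^{A^c}_{(n_1,\ldots,n_{K-1})}$ --- breaks at the step ``commuting the interior supremum past the positive operators and averaging over $A$.'' After using positivity you arrive at $\sup_{n_K}\binom{d}{n_K}^{-1}\sum_{|A|=n_K}\bigl(\prod_{i\in A}A^K_i\bigr)g_A$ with $g_A:=\sup_{(n_1,\ldots,n_{K-1})}|\mathcal{D}^{A^c}_{\cdot}f|$. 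Each single $g_A$ has $\ell^2$ norm $\lesssim C_{K-1}\|f\|_2$ by your Fubini argument, but the functions $g_A$ genuinely depend on $A$ (and the admissible family of $A$'s changes with $n_K$), so this expression is \emph{not} $\sup_{n_K}\mathcal{B}^{(K)}_{n_K}g$ for any fixed function $g$; dominating $g_A$ by $\sup_A g_A$, or summing norms over the $\binom{d}{n_K}$ subsets, destroys dimension-freeness, and no vector-valued maximal inequality for varying inputs is available. The paper circumvents exactly this obstruction by never tying the inactive parameters to a choice of coordinates: it approximates $\beta_{\overline{n}}$ by the tensor product $\prod_j\widetilde{p_{n_j}}(j\xi)$ of semigroup multipliers (Lemma \ref{lem:2.12}), so that the suprema over the inactive $n_j$ are absorbed by the maximal theorem for the symmetric diffusion semigroups $P_{j,t,0}$ of Theorem \ref{thm:2.11}, applied to the \emph{single} function $\sup_{U}|\mathcal{D}_{\overline{n}(U)}f|$, with a square-summable error.

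A second gap concerns the full (non-dyadic) supremum, already for $K=1$: ``numerical sparsification in $n$ to a bounded number of canonical regimes'' is not an argument. The supremum ranges over roughly $d/2$ values of $n$, the multipliers $\beta_n$ are not orthogonal across $n$, and a naive square function costs a factor $\sqrt d$. Passing from the dyadic maximal function to the full one is precisely where the paper needs the difference estimates for $\Delta^U_{\overline{n}}(\beta)$ obtained from the Krawtchouk difference formula (Lemma \ref{lem:2.17}, Corollary \ref{cor:2.18}) combined with the multi-parameter Rademacher--Menshov inequality of Lemma \ref{lem:2.15}; nothing in your outline supplies an analogue of this mechanism, and the same issue resurfaces in your single-parameter supremum over $n_K$ in the inductive step.
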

\noindent Later we will see that the lattice point count mentioned in the previous paragraph allows us to reduce Theorem \ref{thm:sphere} to Theorem \ref{thm:1.5}.

It is worth highlighting that Theorem \ref{thm:1.5} gives a multi-parameter and dimension-free estimate. Such an estimate is rare for multi-parameter maximal functions as their estimates are usually dimension dependent.

\subsection{State of the art}

Dimension-free estimates for Hardy-Littlewood maximal operators were first studied in the continuous
context and thus we briefly describe the current state of the art. For every $t>0$ and $x\in\R^d$ we define the continuous Hardy--Littlewood averaging operator over balls by
\begin{align*}
M_t^{B^2(d)}f(x):=\frac{1}{|t B^2(d)|}\int_{tB^2(d)}f(x-y)dy, \qquad f\in L^1_{\rm loc}(\R^d).
\end{align*}
We denote by
$C(p,\R_+, B^2(d))$ the smallest constant in
the  maximal inequality
\begin{align}
\label{eq:34}
\big\|\sup_{t\in \R_+}|M_t^{B^2(d)}f|\big\|_{L^p(\R^d)}\le C(p,\R_+, B^2(d))\|f\|_{L^p(\R^d)},
\qquad f\in L^p(\R^d).
\end{align}
As in the discrete setting, standard arguments show that $C(p,\R_+, B^2(d))<\infty$ for every $p\in(1, \infty].$
In 1982 Stein \cite{SteinMax} (see
also Stein and Str\"omberg \cite{StStr}) proved that there
exists a constant $C_p>0$ depending only on $p\in(1, \infty]$ such
that
\begin{align}
\label{eq:41}
\sup_{d\in\N}C(p,\R_+, B^2(d))\le C_p.
\end{align}
Subsequently, Bourgain \cite{B1,B2,B3}, Carbery \cite{Car1}, and M\"uller \cite{Mul1} significantly extended Stein's result by replacing the Euclidean ball with various symmetric convex bodies $G.$ A symmetric convex body $G$ is a convex compact non-empty subset of $\R^d,$ which is symmetric and has non-empty interior. These sets are relevant, because any norm on $\R^d$ is associated with such a set $G$ via the Minkowski functional. Let $C(p,\R_+,G)$ be the best constant in \eqref{eq:34} with the ball $B^2(d)$ replaced by a general symmetric convex body $G$. In \cite{B1} Bourgain proved that $C(2,\R_+,G)\le C,$ were $C$ is a universal constant independent of both the symmetric convex body $G$ and the  dimension $d.$ This result was extended by Borugain \cite{B2} and independently by Carbery \cite{Car1}, who showed that  $C(p,\R_+,G)\le C_p$ for $p>3/2,$ where $C_p$ depends only on $p.$ For general symmetric convex bodies, it remains an open question whether Bourgain and Carbery's results can be extended beyond the threshold $p>3/2$. The conjecture that $C(p,\R_+,G)\le C_p,$ for $p>1$ is one of the major unsolved problems in the field.

However, a dimension-free estimate for all $p>1$ can be obtained for specific classes of symmetric convex bodies. The case of $\ell^q$ balls 
\begin{equation} 
\label{eq: Bqballs}
B^q=B^q(d)=\{x\in\R^d\colon \sum_{j=1}^d |x_j|^q \le 1\},\qquad 1\le q < \infty,
\end{equation}
was treated by M\"uller \cite{Mul1} who showed that $C(p,\R_+,B^q(d))\le C_{p,q}$ for $p\in (1,\infty].$ The picture for these balls was complemented by Bourgain \cite{B3} who proved that also for the cubes $B^{\infty}(d)=[-1,1]^d$ we have $C(p,\R_+,B^{\infty}(d))\le C_{p},$ $1<p<\infty.$ Additionally, Aldaz \cite{Ald1} showed that the weak-type $(1,1)$ constant for the cubes grows to infinity with the dimension. More recently, the second author in collaboration with Bourgain, Mirek, and Stein \cite{BMSW1} proved analogous dimension-free estimates for variational seminorm counterparts of these maximal inequalities. For a broader perspective on dimension-free estimates for continuous maximal functions, we refer the reader to the survey \cite{BMSW4} and the references therein.

The study of dimension-free inequalities for discrete Hardy-Littlewood maximal functions was initiated by the second author in collaboration with Bourgain, Mirek, and Stein. We now mention existing results, highlighting those that are most relevant to our work. Recall that  $\mC(p,\R_+,G)$ is defined in \eqref{eq:mCp}. In the following, we denote by $\mathbb D$ the set of dyadic numbers $\mathbb D=\{2^n\colon n\in \N\}.$

\begin{enumerate}

\item In \cite{BMSW3} Bourgain, Mirek, Stein, and the second author, constructed a counterexample - a family of ellipsoids $E(d)$ -  for which there is no dimension-free estimate on any $\ell^p(\Z^d),$ $1<p<\infty.$ Specifically, they showed that
$\mC(p,\R_+,E(d))\ge C_p (\log d)^{1/p}.$ This implies that a general result as in the continuous setting is impossible. Instead, one must focus on specific symmetric convex bodies in the discrete setting. Arguably the most natural choices are the balls $B^2(d)$ and the cubes $B^{\infty}(d).$

\item In \cite{balls} the same authors proved that the discrete maximal function over balls and dyadic scales has dimension-free estimates on $\ell^p(\Z^d),$ $p\ge 2,$ namely, $\mC(p,\mathbb D,B^2(d))\le C,$ for $p\ge 2.$ Later in \cite{MiSzWr} Mirek, Szarek, and the second author, proved an analogous result with spheres replacing balls showing that $\mC(p,\mathbb D,S^{d-1})\le C,$ for $p\ge 2$ and $d\ge 5.$ However, neither of these papers address dimension-free estimates for the full maximal functions over balls or spheres.

\item In \cite{BMSW3} the authors obtained a comparison principle between the discrete and continuous settings, valid for all symmetric convex bodies. This comparison principle states that $\mC(p,[c(G)d,\infty),G)\le e^6 C(p,\R_+,G),$ $p\in(1,\infty],$ where $c(G)$ is a constant depending on the symmetric convex body. Notably for $B^q$ balls we have $c(B^q)=d^{1/q}/2.$ Moreover, for $q\ge 2$ this bound was improved to the estimate $\mC(p,[d,\infty),B^q)\lesssim C(p,\R_+,B^q),$ see \cite{BMSW4} (case $q=2$) and \cite{KMPW} (case $q\in(2,\infty]$). Thus, in view of Stein's continuous dimension-free estimate \eqref{eq:41} the remaining range of radii to consider in Stein's Question \eqref{eq:36'} is $t=O(d).$

\item It should be noted that the comparison principle from (3) relies on the nestedness of the dilations $t_1G\subseteq t_2 G ,$ $0<t_1<t_2,$ which does not hold for all sets $G.$ This is the case with Euclidean spheres $G=S^{d-1},$ where even the (non-dimension-free) $\ell^p$ boundedness of the maximal function cannot be deduced from a corresponding continuous result and is a challenging problem by itself. It turns out that the spherical maximal function $\sup_{n\in \N}|\mathcal S_{\sqrt{n}}f|$ is bounded in dimensions $d\ge 5$ in the range $p>d/(d-2).$ This has been established by Magyar, Stein, and Wainger \cite{MaStWa}, see also Magyar \cite{Ma} for a previous result of this kind and Ionescu \cite{Ion1} for an endpoint estimate. 

\item In \cite{KMPW} Kosz, Mirek, Plewa, and the second author proved that the norm of the continuous maximal function is always smaller than the norm of the corresponding discrete maximal function, namely, $C(p,\R_+,G)\le \mC(p,\R_+,G),$ $p\in(1,\infty],$ for all symmetric convex bodies $G$ and $p\in(1,\infty].$ This shows in particular that a discrete dimension-free estimate is harder to achieve than its continuous counterpart.

\item In \cite{Ni1} and \cite{Ni2} the first author proved dimension-free estimates for dyadic maximal functions associated with $B^q$ balls $q \ge 1$ in various small scales regimes.

\item In \cite{BMSW3} it was shown that a dimension-free estimate holds for the full maximal function over the discrete cubes in the range $p>3/2.$ Namely, we have $\mC(p,\R_+,B^{\infty}(d))\le C_p,$ for $p>3/2$ together with a dyadic variant $\mC(p,\mathbb D,B^{\infty}(d))\le C_p,$ which is valid for all $p>1.$ This stands in sharp contrast with the analogous question for the balls for which no such results are known.

\end{enumerate}

\subsection{Our motivations and methods}

When we learned about Stein's Question \eqref{eq:36'} it was natural to conjecture that it had an affirmative answer. However,  the counterexample constructed in \cite{BMSW3} cast doubt on this presumption. The ellipsoids considered in that work are given by
\[
E(d)=\{x\in \R^d\colon \sum_{j=1}^d \lambda_j^2 x_j^2 \le 1\},
\]
where $1\le \lambda_1<\ldots\lambda_d<\sqrt{2}.$ At first glance, these ellipsoid appear to be only a minor distortion of the ball $B^2(d).$ Moreover, the failure of the dimension-free estimate occurs on very small scales, specifically for $t\le \sqrt{2}.$ More precisely, \cite[proof of Theorem 2, p.\ 871]{BMSW3} shows that already
\[
    \Big\| \sup_{t \in(0, \sqrt{2})} |\mathcal{M}^{E(d)}_tf| \Big\|_{\ell^2(\Z^d)} \geq C(\log d)^{1/2} \big\| f \big\|_{\ell^2(\Z^d)},
    \]
for a non zero function $f.$ This observation is part our motivation for studying a version of Stein's question restricted to small scales.

On the other hand \cite{balls} and \cite{MiSzWr} gave a positive answer to a dyadic variant of Stein's Question \eqref{eq:36'} for both balls and spheres. Moreover, in a recent preprint \cite{MiSzWrGau} Mirek, Szarek, and the second author considered the full maximal function associated with normalized averages over discrete Gasussians defined as $G_t f(x)= \sum_{y\in \Z^d}g_t(y)f(x-y),$ where for $t>0,$ 
\[
g_t(n) := \frac{1}{\Theta_{1/t}(0)}\exp\bigg(- \frac{\pi|n|^2}{t}\bigg),\quad \Theta_{1/t}(0)=\sum_{j\in \Z^d}\exp\bigg(- \frac{\pi|j|^2}{t}\bigg),\quad n\in \Z^d.
\]
Such a maximal function is pointwise dominated by the discrete maximal function corresponding to the Euclidean balls, namely $\sup_{t>0}\mathcal |G_t f(x)|\le \sup_{t>0}|\mathcal M_t^{B^2} f(x)|.$ It was proved in \cite{MiSzWrGau} that a dimension-free estimate holds for $\|\sup_{t>0} |G_t f|\|_{\ell^p(\Z^d)}$ on all $\ell^p(\Z^d)$ spaces, $1<p<\infty.$ This result further supports an affirmative answer to Stein's question.

Taken together, the above findings suggest that Question \eqref{eq:36'} may in fact have a positive answer. Our main results,  Theorem \ref{thm:ball} and \ref{thm:sphere} are steps towards confirming this supposition.

We shall now outline our methods.  The maximal operators from Theorems \ref{thm:ball}, \ref{thm:sphere} and \ref{thm:1.5} are clearly contractions on $\ell^{\infty}(\Z^d).$ Thus, by interpolation it is enough to establish these theorems for $p=2$. It is therefore not surprising that one of the most important tools in our paper and also in \cite{balls}, \cite{BMSW3}, \cite{MiSzWr}, are Fourier methods. A key component of this approach is obtaining pointwise estimates for the multiplier symbol \[m_t^G(\xi)=\frac{1}{|tG\cap \Z^d|}\sum_{x\in tG\cap \Z^d}\exp\big(-2\pi i x\cdot \xi\big),\qquad \xi\in [-1/2,1/2)^d,\] corresponding to the operator $\mathcal M_t ^G f$. The general strategy is as follows: 
\begin{enumerate}[(a)]
    \item To estimate the dyadic maximal function $\sup_{t\in \mathbb D} |\mathcal M_t^G f|$ one needs
appropriate (dimension-free) estimates for $|m_t^G(\xi)-1|$ when $|\xi|$ is small and for $m_t^G(\xi)$ when $|\xi|$ is large. This strategy has been successfully applied in \cite{balls}, \cite{BMSW3}, \cite{MiSzWr} among others. 
\item In order to estimate the full maximal function one must control, in an appropriate dimension-free manner, the difference $m_{t_1}^G(\xi)-m_{t_2}^G(\xi),$ where $t_1\neq t_2.$ This task has only been successfully accomplished for the cubes in \cite[Section 3]{BMSW3}.
\end{enumerate}
Unfortunately, it is not clear how to apply strategy (b) for any maximal function corresponding to the discrete averages $\mathcal M_t^G$ other than the one for the cubes $G=B^{\infty}(d)$. The primary obstacle is the absence of an explicit and useful formula for $m_{t}^G$ when $G$ is not the cube.

One of the main ideas of our paper is the departure from trying to establish (b) for multipliers corresponding to discrete balls or spheres. Instead, we compare their corresponding maximal functions with the multi-parameter maximal function from Theorem \ref{thm:1.5}. Remarkably, a multi-parameter variant of strategy (b) does apply for the multipliers $\beta_{n_1,\ldots,n_K}$  defined by \eqref{eq:bon}, which correspond to the averages $\mathcal{D}_{n_1,\ldots,n_K}.$ 

Our main source of inspiration is \cite[Section 3]{balls} though interpreted in an appropriate way. In this section of \cite{balls} the authors established the bound
\[
 \Big\| \sup_{t \leq d^{\frac{1}{2}}, t \in \mathbb{D}} |\mathcal{M}_tf| \Big\|_{\ell^2(\Z^d)} \leq C  \big\| f \big\|_{\ell^2(\Z^d)},\qquad f\in \ell^2(\Z^d).
\]
The first step in proving this result is \cite[Lemma 3.2]{balls} (restated here as Lemma \ref{lem:4.1}), which states that if $n$ is small, then a large proportion of points in $\sqrt{n}B \cap \Z^d$ has many coordinates equal to $\pm 1$. Then from the above mentioned lemma one may deduce that
\begin{equation} \label{Mb_to_D}
\Big\| \sup_{t \leq d^{\frac{1}{2}}, t \in \mathbb{D}} |\mathcal{M}_tf| \Big\|_{\ell^2(\Z^d)} \lesssim \Big\| \sup_{t \leq d/2, t \in \mathbb{D}} |\mathcal{D}_tf| \Big\|_{\ell^2(\Z^d)}+ \|f \|_{\ell^2(\Z^d)}
\end{equation}
and the proof is completed by showing that 
\begin{equation} \label{D_bound}
\Big\| \sup_{t \leq d/2, t \in \mathbb{D}} |\mathcal{D}_tf| \Big\|_{\ell^2(\Z^d)} \lesssim \|f \|_{\ell^2(\Z^d)}.
\end{equation}
The symbol $\mathcal{D}_t$ above denotes the operator \eqref{eq:def Dn} in the one-parameter case with $K=1$ and $t=n_1.$ Inequalities \eqref{Mb_to_D} and \eqref{D_bound} are not explicitly established in \cite{balls}, but they essentially follow from the methods developed there. Our approach is to extend these three results: \cite[Lemma 3.2]{balls}, \eqref{Mb_to_D} and \eqref{D_bound}, to remove the restriction $t \in \mathbb{D}$ and obtain an estimate for the full maximal function. More precisely, our strategy consists of the following steps:

\begin{enumerate}
\item Our improvement of \cite[Lemma 3.2]{balls} is Theorem \ref{thm:3.1} (for balls) and Corollary \ref{cor:3.2} (for spheres). These results aim to capture  simultaneously two different phenomena. The first one is a concentration of lattice points in a narrow annulus near the boundary. The second one is the observation that vast majority of points has nearly all their coordinates bounded by $K,$ provided the radius is sufficiently small in terms of $K$ and $d$. Note that the assumptions in Theorem \ref{thm:3.1} and Corollary \ref{cor:3.2} are stronger then those in  \cite[Lemma 3.2]{balls} but so is the conclusion. 

\item Our main tool for proving Theorem \ref{thm:3.1} and deducing Corollary \ref{cor:3.2} is the count for lattice points in balls and spheres of small radii \\ $n<cd$ established in Theorem \ref{thm:lat:extract} (see also Theorem \ref{thm:3.4} for a detailed statement). The feasibility of obtaining such estimates for $n\approx d$ was noted by Mazo and Odlyzko in \cite[Section 3]{MO}. Our approach, based on the saddle-point method, follows an idea briefly mentioned in \cite{MO} but without explicit details.

\item Our improvement of \eqref{Mb_to_D} is Proposition \ref{lem:4.3}, where we reduce the maximal function corresponding to discrete spheres to the maximal function corresponding to $D_{\overline{n}}.$ Here we  use Plancherel's theorem and a square function argument to pass from the multiplier symbol $m_t^S$ to other multipliers. Ultimately, the process leads to two multiplier symbols $\varphi_{\sqrt{n},0}$ and $\varphi_{\sqrt{n},1},$ whose corresponding maximal function is dominated in $\ell^2(\Z^d)$ by $\sup_{n_1,n_2,...,n_K \leq \frac{d}{2K}}|D_{\overline{n}}f|.$ The reduction is accomplished by examining lattice points in $\sqrt{n}S$ with coordinates equal to $\pm 1, \pm 2,..., \pm K,$ instead of only those with coordinates $\pm 1$ as in \cite[Section 3]{balls}. The fact that such lattice points constitute a large proportion of $\sqrt{n}S$ follows from Corollary \ref{cor:3.2}. 

\item   The final component of our argument is a generalization of \eqref{D_bound}, which is established in Theorem \ref{thm:1.5}. This result relies on analyzing Fourier multipliers $\beta_{\overline{n}}$ corresponding to the multi-parameter averages $D_{{\overline{n}}}$  with $\overline{n}=(n_1,\ldots,n_K).$ Our approach combines Plancherel's theorem, a square function argument, a comparison with certain discrete semigroups in Theorem \ref{thm:2.11}, and a multi-parameter Rademacher-Menshov type inequality from Lemma \ref{lem:2.15}. These tools are applicable because, as we already mentioned,  a multi-parameter version of strategy b) can be employed for the multipliers $\beta_{\overline{n}}.$ Specifically, we obtain appropriate estimates for $\beta_{\overline{n}}-\beta_{\overline{n'}},$ see Corollary \ref{cor:2.18}. This is because the multipliers $\beta_{\overline{n}}$ have combinatorial nature and by using combinatorial arguments one may largely reduce their analysis to properties of Krawtchouk polynomials (see Section \ref{subsec:2.1} for their definition and key properties).

\end{enumerate}

 It should be noted that in the range $n=O(d^{3/4-\varepsilon})$ it suffices to deal with a one-parameter maximal function  in Theorem \ref{thm:1.5}, with $K=1$. In this case the difference $\beta_{n}-\beta_{n'}$ can be essentially expressed in terms of Krawtchouk polynomials. However, going beyond this threshold to $d^{3/4-\varepsilon}<n<d^{1-\varepsilon}$ creates significant new difficulties and is the reason for introducing the multi-parameter maximal function.  The treatment of this maximal function in Theorem \ref{thm:1.5} is feasible because many desired properties of the multipliers $\beta_{\overline{n}}$ can be deduced from a proper understanding of the one-parameter multipliers $\beta_n.$ This is the case in Lemmas \ref{lem:2.8} and \ref{lem:2.17}. We remark that Lemma \ref{lem:2.8} is an important step for bounding both the dyadic and the full maximal function, while the formula devised in Lemma \ref{lem:2.17} is crucial for controlling the full maximal function. Both these lemmas are instrumental in the proof of Corollary \ref{cor:2.18}.

We conclude this section with a few remarks.
\begin{Rem}
\label{Rem:1}
Theorem \ref{thm:ball}, together with \cite[Theorem 2]{BMSW4}, implies that the remaining range of radii relevant to Stein's Question is $t\in(d^{1/2-\varepsilon},Cd).$ However, using our methods we do not even see any hope of removing $\varepsilon$ from Theorems \ref{thm:ball} and \ref{thm:sphere}. This is because, as $\varepsilon$ goes to $0,$ the behavior of lattice points in a ball of radius $d^{1/2-\varepsilon}$ becomes increasingly complex, making a reduction to Theorem \ref{thm:1.5} seem no longer possible. 
  \end{Rem}
\begin{Rem}
\label{Rem:2}
By tracing the constants it can be seen that Theorems \ref{thm:ball} and \ref{thm:sphere} involve implicit constants of the order $(C\varepsilon^{-1/2})^{\varepsilon^{-1/2}}.$ This is because the constant in our proof of Theorem \ref{thm:1.5} is of order $(CK)^{K}.$
\end{Rem}
\begin{Rem}
\label{Rem:3}
Our methods should also extend to maximal functions associated with other $B^q$ balls \eqref{eq: Bqballs}, provided the supremum in their definition is restricted to small radii. We plan to study this topic in future work.
\end{Rem}

\subsection{Structure of the paper}

We now briefly describe the structure of our paper. The notation is explained in the next Section \ref{ssec:not}.

Section \ref{sec:2} begins with preliminary results regarding the multipliers $\beta_{\overline{n}}$ and definitions and properties of Krawtchouk polynomials. The goal of this section is to prove Theorem \ref{thm:1.5}. The proof is based on estimating the full maximal function (over small scales) by its dyadic variant plus a sum of short maximal functions over dyadic intervals. The estimate for the dyadic maximal function is presented in Section \ref{sec:22}. In Section \ref{sec:23} we prove difference estimates for the multipliers $\beta_{\overline{n}}$ which are crucial for bounding the short maximal functions. In Section \ref{sec:24} we collect all these tools and complete the proof of Theorem \ref{thm:1.5}.

Section \ref{sec:3} is devoted to counting lattice points in Euclidean balls and spheres with small radii. The main tools which are needed for the proofs of Theorems \ref{thm:ball} and \ref{thm:sphere} are given by Theorem \ref{thm:3.1} and Corollary \ref{cor:3.2}. These results are justified with the aid of the lattice point count provided in Theorem \ref{thm:3.4}. In Section \ref{sec:3} we also justify Corollary \ref{cor:1.4}. We remark that Sections \ref{sec:2} and \ref{sec:3} are not related and can be read independently of each other.

Finally, in Section \ref{sec:4} we combine the results of Sections \ref{sec:2} and \ref{sec:3} and prove Theorem \ref{thm:sphere}. The connection between Theorem \ref{thm:1.5} and Theorem \ref{thm:sphere} is established in Proposition \ref{lem:4.3}.

\subsection{Notation}
\label{ssec:not}
We finish the introduction with the description of our notation.

{\bf Sets and their size}
\begin{enumerate}
\item We denote by $\N= \{1,2,... \}$ the set of positive integers and by $\N_0=\N \cup \{ 0\}$ the set of non-negative integers. Letters $n$ and $n_1,\ldots,n_K$ will always denote elements of $\N_0,$ in particular under suprema in maximal functions.
\item By $\mathbb{D}=\{2^{n}: n \in \N\}$ we denote the set of dyadic integers.
\item For $N\in \N$ we abbreviate $[N]=\{1,...,N\}.$
\item For two sets $X,Y$, by $X^Y$ we will denote the set of functions from $Y$ to $X$.
\item For a finite set $A$ we denote by $\#A$ or $|A|$ the number of its elements. The notation $|A|$ is used especially when $A$ is a subset of the integer lattice $\Z^d$.
\item In the reminder of the paper we abbreviate $tB=tB^2(d) \cap \Z^d$ and $tS=tS^{d-1}\cap \Z^d.$ This will cause no confusion as from now on we will only work on $\Z^d$ or its subsets. In particular in what follows $|tB|$ and $|tS|$ denote the number of lattice points in ball of radius $t$ and sphere of radius $t,$ respectively.
\end{enumerate}

{\bf Expectations}
\begin{enumerate}\addtocounter{enumi}{+6}
\item Let $A$ be a finite set of $\#A$ elements and let $h\colon A\to \mathbb{C}.$ We abbreviate \[\se{x \in A}h(x):=\frac{1}{\#A}\sum_{x\in A} h(x).\]

\item In terms of sums, suprema and expected values (normalized sums) (see point (7) above), the convention is the following. In the first line under a sum we write over which objects we are summing, then after the symbol ";" but still in the first line we write with which parameters are our objects parametrized. In next lines we write conditions on our objects. For instance
\begin{equation*}
\sum_{\substack{I_k \subseteq J  ; k \in V , \\
    (\forall k \in V ) |I_k|=n_k, \\
    (\forall j,k \in V , j \neq k) I_j \cap I_k=\emptyset
    }} h((I_k)_{k\in V})
\end{equation*}
means that we're taking a  sum  of $h((I_k)_{k\in V})$ over subsets $I_k$ of the set $J$ and these subsets are parametrized by $k$ from the set $V$, moreover these subsets satisfy some condition written in the second and third line under the sum. This notation also applies to suprema and expected values. In the case when $V=[K]$ we often write
$I_1,I_2,\ldots, I_K \subseteq J$  instead of $I_k \subseteq J; k \in V.$ A typical example of this convention appears already in Definition \ref{def:2.1} with $h((I_k)_{k\in [K]})=\prod_{k=1}^K \prod_{i \in I_k} \cos(2k \pi \xi_i),$ and $\xi$ being a fixed element of $\T^d.$
\end{enumerate}

{\bf Fourier transforms and related}
\begin{enumerate}\addtocounter{enumi}{+8}
\item For vectors $x,y\in\R^d$ we let $x \cdot y=\sum_{j=1}^d x_j y_j$ be their inner product.
\item For $s\in \R$ we abbreviate $e(s)=e^{-2\pi i s}.$
\item We identify the $d$-dimensional torus $\T^d:=\R^d/\Z^d$
with the unit cube $Q:=[-1/2, 1/2)^d.$

\item For $f\in \ell^1(\Z^d)$ we let 
\begin{equation*}
 \hat{f}(\xi) := \sum_{n\in\Z^d} f(n) e(n\cdot \xi),\qquad \xi\in\T^d,
\end{equation*}
be its Fourier series. Similarly, for $g\in L^1(\T^d)$ we let 
\begin{equation*}
 \mathcal F^{-1} g(n) := \int_{\T^d} g(\xi) e(-n\cdot \xi)\,d\xi,\qquad n\in\Z^d,
\end{equation*}
be its Fourier coefficient. 
\item Parseval's theorem takes the form
\[
 \|\hat{f}\|_{L^2(\T^d)}=\|f\|_{\ell^2(\Z^d)},\qquad \|g\|_{L^2(\T^d)}=\|\mathcal F^{-1} g\|_{\ell^2(\Z^d)}
\]
while inversion formula reads
\[
 \mathcal F^{-1}\hat{f}=f, \qquad \widehat{\mathcal{F}^{-1}g}=g. 
\]

\end{enumerate}

{\bf Operators}
\begin{enumerate}\addtocounter{enumi}{+13}
\item For a family of bounded commuting operators $T_i,$ $i\in U,$ on $\ell^2(\Z^d),$ we denote by  $\prod_{i\in U} T_i$ their composition product. We use the standard convention that  $\prod_{i \in \emptyset} T_i$ represents the identity operator. This convention also applies for numbers $a_i\in \mathbb{C}$ in which case $\prod_{i \in \emptyset} a_i=1.$

\item Let $T$ be a bounded operator on $\ell^2(\Z^d).$ We say that $T$ is positivity preserving if for every non-negative $f\in \ell^2(\Z^d)$ we have  $Tf(x)\ge 0,$ $x\in \Z^d.$

\item Recall that the spherical averaging operator $\mathcal S_t$ is defined for $t>0$ by
\[
\mathcal S_t f(x)=\frac{1}{|tS\cap \Z^d|}\sum_{y\in tS\cap \Z^d} f(x-y),\qquad x\in \Z^d.
\]

\end{enumerate}

{\bf Combinatorial symbols}
\begin{enumerate}\addtocounter{enumi}{+16}
\item For $d \in \N$ the symbol $\Sym(d)$ will denote the set of all permutations of the set $\{1,...,d \}$.
\item For $K \in \N$ and $n_1,...,n_K, d \in \N$ such that $n_1+...+n_K=d$ the multinomial coefficient is given by 
\[
\binom{d}{n_1,n_2,...,n_K}= \frac{d!}{\prod_{i=1}^K n_i!}.
\]
Note that for $K=1$ the symbol $\binom{d}{n_1,d-n_1}$ is exactly the binomial coefficient. 
\item We will frequently use the fact that $\binom{d}{n_1,n_2,...,n_K}$ is the number of partitions of a $d$ element set into $K$ subsets $I_1,\ldots,I_K$ such that $\#I_j=n_j,$ $j=1,\ldots,K.$   
\end{enumerate}

{\bf Asymptotic notation}
\begin{enumerate} \addtocounter{enumi}{+19}
\item Throughout the paper the letter $d\in \N$ is reserved for the dimension and all inexplicit constants will be
independent of $d$.  
\item For two nonnegative quantities $X, Y$
we write $X \lesssim_{\delta} Y$ if there is an absolute constant
$C_{\delta}>0$ depending only on $\delta>0$ such that $X\le C_{\delta}Y$ .
We  write $X \approx_{\delta} Y$ when
$X \lesssim_{\delta} Y\lesssim_{\delta} X$. We will omit the subscript
$\delta$ if the implicit constants are universal.
\item The above convention is also used for Big $O$ notation. In particular $X=O_{\delta}(Y)$ means that $|X|\lesssim_{\delta} |Y|.$
\end{enumerate}

{\bf Other abbreviations}
\begin{enumerate} \addtocounter{enumi}{+22}
\item For $x \in \mathbb{R}^d$ we define
\[
\supp(x)=\{i \in [d]: x_i \neq 0\}.
\]
\item 
\label{p: nota}
For each $K\in \N,$ $U\subseteq [K]$ and $\overline{n} \in (\N_0)^K$ we denote by  $\overline{n}(U) \in \N_0^K$ the vector which has $j$-th coordinate equal to $n_j$ if $j  \in U$ and $0$ if $j \not\in U$. On the other hand if we have numbers $n_j \in \N_0$ defined only for $j \in U$, then we will use the same notation $\overline{n}(U)$ for the vector of length $K$, whose $j$-th coordinate is $n_j$ for $j \in U$ and $0$ otherwise.
\item For $\xi\in \R^d$ we let $\| \xi \|^2:=\sum_{j=1}^d \sin^2(\pi \xi_j).$ Furthermore we abbreviate \[\xi+1/2=(\xi_1+1/2,\ldots,\xi_d+1/2), \|\xi +1/2 \|:=\|(\xi_1+1/2,\ldots,\xi_d+1/2)\|.\] We will frequently use the fact that
$\|\xi +1/2 \|^2= \sum_{j=1}^d \cos^2(\pi \xi_j)$.
\item For $p\in [1,\infty]$ we often write $\ell^p:=\ell^p(\Z^d)$ and abbreviate $\|f\|_p:=\|f\|_{\ell^p}$ for the norm of $f\in \ell^p.$
\end{enumerate}

\section{Bounds for the multi-parameter maximal function}
\label{sec:2}
In this section we will prove Theorem \ref{thm:1.5}.
Recall that the operator $\mathcal{D}_{\overline{n}},$ $\overline{n}\in (\N_0)^K,$ is given by convolution with $\frac{1}{|D_{\overline{n}}|} \mathds{1}_{D_{\overline{n}}}$. Hence we can study it by looking at its multiplier symbol
\begin{equation}
  \label{eq:bon}
    \beta_{\overline{n}}(\xi)=\se{x \in D_{\overline{n}}} e(x \cdot \xi),\qquad \xi\in\T^d,
   \end{equation}
on the Fourier transform side.  Our entire analysis will revolve around gathering information about $\beta_{\overline{n}}$.

\subsection{Preliminary results regarding the multiplier $\beta_{\overline{n}}$.}
\label{subsec:2.1}

We shall need the following family of multipliers.
\begin{Defn}
    \label{def:2.1}
   
    For a finite set $J \subseteq \N$, $\overline{n}=(n_1,n_2,...,n_K) \in \N_0^K$ satisfying $n_1+...+n_K \le |J| $ we define
    \[
    \beta_{\overline{n}}^J(\xi)=\se{\substack{I_1,I_2,...,I_K \subseteq J, \\
    (\forall j \in [K]) |I_j|=n_j, \\
    (\forall i,j \in [K], i \neq j) I_i \cap I_j=\emptyset
    }} \prod_{k=1}^K \prod_{i \in I_k} \cos(2k \pi \xi_i).
    \]
\end{Defn}
 Exploiting symmetries of the set $D_{\overline{n}}$ it is easy to see that $ \beta_{\overline{n}}=\beta_{\overline{n}}^{[d]}.$
\begin{Lem}
    \label{rem:2.2}
    For any $K \in \N$, $\overline{n}=(n_1,n_2,...,n_K) \in \N_0^K$ satisfying $n_1+...+n_K \le d $ and any $\xi \in \T^d$
    we have
    \[
    \beta_{\overline{n}}(\xi)=\beta_{\overline{n}}^{[d]}(\xi)=\se{\substack{I_1,I_2,...,I_K \subseteq [d], \\
    (\forall j \in [K]) |I_j|=n_j, \\
    (\forall i,j \in [K], i \neq j) I_i \cap I_j=\emptyset
    }} \prod_{k=1}^K \prod_{i \in I_k} \cos(2k \pi \xi_i).
    \]
\end{Lem}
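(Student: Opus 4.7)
The lemma is a direct computation that unpacks the definition of $\beta_{\overline{n}}$ and the set $D_{\overline{n}}$, so I would prove it by parametrizing elements of $D_{\overline{n}}$ in terms of a partition of $[d]$ together with a sign pattern, evaluating the sign sum explicitly, and identifying the result as an average over ordered tuples of disjoint subsets.

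More concretely, my plan is as follows. First, observe that every $x\in D_{\overline{n}}$ is uniquely determined by the data $\bigl((I_1,\ldots,I_K),(\varepsilon_i)_{i\in I_1\cup\cdots\cup I_K}\bigr)$, where $I_k:=\{i\in[d]\colon |x_i|=k\}$ and $\varepsilon_i:=\operatorname{sgn}(x_i)\in\{-1,+1\}$; here the tuple $(I_1,\ldots,I_K)$ ranges over ordered $K$-tuples of pairwise disjoint subsets of $[d]$ with $|I_k|=n_k$, and the coordinates indexed by $[d]\setminus(I_1\cup\cdots\cup I_K)$ are automatically $0$. Counting parametrisations gives
\[
|D_{\overline{n}}|=\binom{d}{n_1,n_2,\ldots,n_K,d-n_1-\cdots-n_K}\cdot 2^{n_1+\cdots+n_K},
\]
since for each admissible partition there are exactly $2^{n_1+\cdots+n_K}$ sign choices.

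Next, I would substitute this parametrisation into the defining formula \eqref{eq:bon}:
\[
\sum_{x\in D_{\overline{n}}} e(x\cdot\xi)=\sum_{\substack{I_1,\ldots,I_K\subseteq[d]\\ |I_k|=n_k,\ I_i\cap I_j=\emptyset}}\ \prod_{k=1}^K\prod_{i\in I_k}\sum_{\varepsilon_i\in\{-1,+1\}} e(k\varepsilon_i\,\xi_i).
\]
The inner one-variable sum evaluates to $\sum_{\varepsilon=\pm1} e^{-2\pi i k\varepsilon \xi_i}=2\cos(2k\pi\xi_i)$, so the whole sum equals
\[
2^{n_1+\cdots+n_K}\sum_{\substack{I_1,\ldots,I_K\subseteq[d]\\ |I_k|=n_k,\ I_i\cap I_j=\emptyset}}\ \prod_{k=1}^K\prod_{i\in I_k}\cos(2k\pi\xi_i).
\]
Dividing by $|D_{\overline{n}}|$ cancels the factor $2^{n_1+\cdots+n_K}$ and leaves exactly the average over the $\binom{d}{n_1,\ldots,n_K,d-n_1-\cdots-n_K}$ admissible tuples $(I_1,\ldots,I_K)$, which by Definition \ref{def:2.1} (applied with $J=[d]$) is $\beta_{\overline{n}}^{[d]}(\xi)$.

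There is no real obstacle here: the only subtle point is bookkeeping the combinatorial factors and making sure that the normalisation in the expectation symbol $\se{}$ matches the total number of ordered tuples $(I_1,\ldots,I_K)$ of pairwise disjoint subsets with $|I_k|=n_k$. Once the parametrisation of $D_{\overline{n}}$ is written down cleanly, the rest is routine algebra.
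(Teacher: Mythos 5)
Your proposal is correct and is essentially the same argument as the paper's: both exploit the sign symmetry of $D_{\overline{n}}$ to turn the exponential sum into a product of cosines (the paper averages over all $\epsilon\in\{-1,1\}^d$ first, while you factor the sign sum coordinatewise after parametrizing $D_{\overline{n}}$ by the tuple $(I_1,\ldots,I_K)$ and the signs), and both then match the combinatorial factors $2^{n_1+\cdots+n_K}$ and $\binom{d}{n_1,\ldots,n_K,d-\sum n_i}$ against $|D_{\overline{n}}|$. The bookkeeping in your version is accurate, so nothing is missing.
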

\begin{proof}
 For any $\epsilon \in \{-1,1\}^d$ we have that
 \[
 x \in D_{\overline{n}} \Longleftrightarrow \epsilon \circ x \in D_{\overline{n}},
 \]
 where $\epsilon \circ x=(\epsilon_1 x_1,..., \epsilon_d x_d)$. This implies
 \[
 \beta_{\overline{n}}(\xi)= \se{\epsilon \circ x \in D_{\overline{n}}} e(( \epsilon \circ x) \cdot \xi)= \se{ x \in D_{\overline{n}}} e(( \epsilon \circ x) \cdot \xi).
 \]
 Hence 
 \begin{align*}
 \beta_{\overline{n}}(\xi)&= \se{ \epsilon \in \{-1,1 \}^d }\, \se{ x \in D_{\overline{n}}} e(( \epsilon \circ x) \cdot \xi)= \se{ x \in D_{\overline{n}}} \se{ \epsilon \in \{-1,1 \}^d } e(( \epsilon \circ x) \cdot \xi)
\\
 &=\se{ x \in D_{\overline{n}}} \prod_{i=1}^d \cos(2 \pi x_i\xi_i)= \frac{1}{|D_{\overline{n}}|} \sum_{\substack{I_1,I_2,...,I_K \subseteq [d], \\
    (\forall j \in [K]) |I_j|=n_j, \\
    (\forall i,j \in [K], i \neq j) I_i \cap I_j=\emptyset
    }} \prod_{k=1}^K \prod_{i \in I_k} \cos(2k \pi \xi_i)\\
    &\hspace{4cm}\cdot \#\{ x \in D_{\overline{n}}: (\forall k \in [K])\, i \in I_k \Leftrightarrow |x_i|=  k \}.
 \end{align*}
 Above we used the fact that cosine is an even function.
 Notice that for all $I_1,I_2,...,I_K \subseteq [d]$, such that $|I_j|=n_j$ and $I_i \cap I_j= \emptyset$ for $i \neq j$, we have that
 \[
 \#\{ x \in D_{\overline{n}}: (\forall k \in [K])\, i \in I_k \Leftrightarrow |x_i|=  k \}= 2^{\sum_{i=1}^K n_i}.
 \]
 Moreover it is easy to see that
 \[
 |D_{\overline{n}}|= 2^{\sum_{i=1}^K n_i} \cdot \binom{d}{n_1,n_2,...,n_K,d-\sum_{i=1}^Kn_i},
 \]
 combining all of this we get that
 \[
 \beta_{\overline{n}}(\xi)= \se{\substack{I_1,I_2,...,I_K \subseteq [d], \\
    (\forall j \in [K]) |I_j|=n_j, \\
    (\forall i,j \in [K], i \neq j) I_i \cap I_j=\emptyset
    }} \prod_{k=1}^K \prod_{i \in I_k} \cos(2k \pi \xi_i).
 \]
\end{proof}
Considering functions $\beta^J_{\overline{n}}$, which generalize $\beta_{\overline{n}}$, will come in handy in various arguments using induction.
It turns out that functions $\beta_{\overline{n}}^J$ have useful representation using Krawtchouk polynomials, which will be crucial ingredient in the proof of Theorem \ref{thm:1.5}.
\begin{Defn} \label{def:2.3}
 For every $n \in \N_0$ and integers $x,k \in [0,n]$ we define $k-th$ Krawtchouk polynomial
 \[
 \kr_{k}^{(n)}(x)= \frac{1}{\binom{n}{k}} \sum_{j=0}^k(-1)^j \binom{x}{j} \binom{n-x}{k-j}.
 \]
\end{Defn}
Here are all the facts about Krawtchouk polynomials that we will need.
\begin{Thm}\label{thm:2.4} For every $n \in \N _0$ and integers $x,k \in [0,n]$ we have
\begin{enumerate}
    \item{Symmetry:} $\kr _k^{(n)}(x)= \kr_x^{(n)}(k)$.
    \item{Reflection symmetry:} $\kr ^{(n)}_k(n-x)=(-1)^k \kr _k^{(n)}(x)  $.
    \item{Uniform bound:} There is $c \in (0,1)$ (independent of $k,n,x$) such if $0 \leq x,k \leq n/2$, then we have
    \[|\kr ^{(n)}_k(x)| \leq e^{-ckx/n}.\]
    \item{Formula for difference in $k$:} If $n \geq 2$, then
    \[
    \kr_{k+2}^{(n)}(x)-\kr_{k}^{(n)}(x)= \frac{-4x(n-x)}{n(n-1)} \kr_{k}^{(n-2)}(x-1),
    \]
    where the right-hand side is 0 if  $x=0 $ or $x=n$. 
\end{enumerate}
\end{Thm}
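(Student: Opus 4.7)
The plan is to extract all four items from the generating function
\[
F(z) := (1-z)^x(1+z)^{n-x} = \sum_{k=0}^n \binom{n}{k}\,\kr_k^{(n)}(x)\,z^k,
\]
which follows immediately from Definition \ref{def:2.3} by multiplying out the two binomials and collecting powers of $z$. Parts (1), (2), and (4) will then be purely algebraic consequences of this identity, while part (3) is of a different nature and will require an analytic/probabilistic argument.

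For (1) the starting point is the double-counting identity $\binom{n}{x}\binom{x}{j}\binom{n-x}{k-j} = \binom{n}{k}\binom{k}{j}\binom{n-k}{x-j}$, obtained by enumerating (in two ways) pairs of subsets $(A,B)\subseteq[n]$ with $|A|=x$, $|B|=k$, $|A\cap B|=j$. Multiplying by $(-1)^j$, summing over $j$, and dividing by $\binom{n}{x}\binom{n}{k}$ produces $\kr_k^{(n)}(x) = \kr_x^{(n)}(k)$. Property (2) follows by noting that $F(-z) = (1+z)^x(1-z)^{n-x}$ is the generating function $\sum_k \binom{n}{k}\kr_k^{(n)}(n-x)\,z^k$; since $[z^k]F(-z) = (-1)^k[z^k]F(z)$, we get $\kr_k^{(n)}(n-x) = (-1)^k\kr_k^{(n)}(x)$.

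For (4) logarithmic differentiation of $F$ gives $(1-z^2)F'(z) = F(z)[(n-2x)-nz]$; differentiating once more and substituting $F = (1-z^2)G$, where $G(z) := (1-z)^{x-1}(1+z)^{n-x-1}$ is the generating function of $\binom{n-2}{k}\kr_k^{(n-2)}(x-1)$, one can verify (after routine but somewhat tedious bookkeeping on the $z^0,z^1,z^2$ coefficients) the second-order ODE
\[
(1-z^2)F''(z) + 2(n-1)z\,F'(z) - n(n-1)F(z) = -4x(n-x)\,G(z).
\]
Taking the coefficient of $z^k$ on both sides and using the arithmetic identity $(k+1)(k+2)\binom{n}{k+2} = (n-k)(n-k-1)\binom{n}{k} = n(n-1)\binom{n-2}{k}$ yields (4) after dividing through by this common factor.

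The main obstacle will be (3). I plan to approach it through the integral representation
\[
\binom{n}{k}\kr_k^{(n)}(x) = \frac{2^n(-i)^x}{\pi}\int_{-\pi/2}^{\pi/2}\sin^x\phi\,\cos^{n-x}\phi\,e^{i(n-2k)\phi}\,d\phi,
\]
derived from Cauchy's formula for $[z^k]F(z)$ on $|z|=1$ with $z = e^{2i\phi}$, and to estimate it by a saddle-point analysis on a suitably deformed contour --- the idea being that the phase factor $e^{i(n-2k)\phi}$ provides the cancellation that the trivial bound $|\kr_k^{(n)}(x)|\le 1$ misses. Equivalently, one can use the probabilistic representation $\kr_k^{(n)}(x) = \E[(-1)^X]$ for $X \sim \mathrm{HyperGeom}(n,x,k)$ (which matches Definition \ref{def:2.3} directly) combined with Chernoff-type bounds on $\E[e^{i\pi X}]$; the relevant variance is $\mathrm{Var}(X) = kx(n-x)(n-k)/(n^2(n-1)) \gtrsim kx/n$ on the range $k,x \le n/2$. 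The technical hurdle will be promoting the heuristic $|\E[(-1)^X]| \lesssim e^{-c\,\mathrm{Var}(X)}$ into a fully uniform estimate valid for \emph{all} triples $(n,k,x)$, especially in the corner regime where one of $k,x$ is very small compared to $n$ and naive saddle-point breaks down. Since only the existence of \emph{some} universal $c \in (0,1)$ is required (not an optimal one), there is room to split into cases or to invoke classical uniform Krawtchouk estimates of Krasikov--Litsyn type.
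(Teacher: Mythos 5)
Your treatments of (1), (2) and (4) are correct, and in fact more self-contained than the paper's, which simply declares (1)--(2) straightforward and cites \cite[Corollary 3.1]{Lev} for (4). The double-counting identity behind (1) and the substitution $z\mapsto -z$ for (2) both check out. I also verified the computation you deferred in (4): with $F(z)=(1-z)^x(1+z)^{n-x}$ and $G=F/(1-z^2)$ one indeed gets $(1-z^2)F''+2(n-1)zF'-n(n-1)F=-4x(n-x)G$ (the $z$- and $z^2$-coefficients cancel and the constant term is $(n-2x)^2-n^2=-4x(n-x)$), and extracting $[z^k]$ gives $(k+1)(k+2)f_{k+2}-(n-k)(n-k-1)f_k=-4x(n-x)g_k$, where the two normalizers on the left and $n(n-1)\binom{n-2}{k}$ on the right all equal $n!/(k!(n-k-2)!)$; dividing yields the stated formula. (Two boundary remarks: for $x=0$ your $G$ is not a polynomial, but the prefactor $-4x(n-x)$ vanishes, matching the convention in the statement; and for $k>n-2$ the identity must be read with $\kr_{k+2}^{(n)}:=0$.)

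The genuine gap is part (3), which is the only item with real analytic content and the one on which Lemma \ref{lem:2.8} and everything downstream rests. What you give there is a heuristic --- $\kr_k^{(n)}(x)=\E[(-1)^X]$ for hypergeometric $X$, plus the guess $|\E[(-1)^X]|\lesssim e^{-c\,\mathrm{Var}(X)}$ --- together with an admission that the uniform version over all $(n,k,x)$ is the hard part, to be settled by unspecified case-splitting ``or'' by citing Krasikov--Litsyn. That uniformity \emph{is} the theorem; it is exactly what the cited \cite[Lemma 2.2]{HKS} provides, and the paper's remark that one may take $c=-2\log 0.93$ is read off from that proof. So as written, your (3) either silently reduces to a citation (in which case it coincides with what the paper does) or is not a proof. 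Two further cautions if you do want to make it self-contained: the theorem requires $|\kr_k^{(n)}(x)|\le e^{-ckx/n}$ with no multiplicative constant, and a constant $C>1$ in front cannot be absorbed by the trivial bound $|\kr_k^{(n)}(x)|\le 1$ in the regime $kx/n=O(1)$, so a ``$\lesssim$'' estimate is strictly weaker than what is needed; and a uniform saddle-point analysis of your oscillatory integral is delicate precisely because the saddle degenerates when $k/n$ or $x/n$ tends to $0$. The more tractable self-contained route is the combinatorial one: $\kr_k^{(n)}(x)=\E_{|S|=k}[(-1)^{|S\cap T|}]$ with $|T|=x$ equals $(1-2x/n)^k\le e^{-2kx/n}$ if the $k$ draws were independent, and the entire content of the lemma is controlling the without-replacement correlations uniformly.
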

\noindent Proofs of the first two points are straightforward, third point is \cite[Lemma 2.2]{HKS}, last point follows from symmetry combined with equalities (44) and (45) of \cite[Corollary 3.1]{Lev} (note that their definition of Krawtchouk polynomials is a bit different). We remark that in point (3) one may take $c=-2 \log 0.93 \approx 0.145$.  This is not explicitly stated in \cite[Lemma 2.2]{HKS} but can be deduced from its proof.
\par Now we can state the representation of $\beta_{\overline{n}}^J(\xi)$ using Krawtchouk polynomials, which in lesser generality was noticed in the proof of \cite[Proposition 3.3]{balls}. 
\begin{Lem} \label{rem:2.5}
For any $K \in \N$, finite set $J \subseteq \N$, $\overline{n}=(n_1,n_2,...,n_K) \in \N_0^K$ satisfying $n_1+...+n_K \le |J| $, $j \in [K]$ and any $\xi \in \T^d$ we have
\begin{align*}
&\beta^J_{\overline{n}}(\xi)= \se{\substack{I_1,...,I_{j-1},I_{j+1},...,I_K \subseteq J, \\
    (\forall k \in [K] \setminus \{j \}) |I_k|=n_k, \\
    (\forall i,k \in [K] \setminus \{j\}, i \neq k) I_i \cap I_k=\emptyset
    }} \Big( \prod_{k=1, k \neq j}^K \prod_{i \in I_k} \cos(2k \pi \xi_i) 
  \\
&\cdot \sum_{U \subseteq J \setminus ( \cup_{k \neq j} I_k )} \kr_{n_j}^{(|J|- \sum_{k \neq j} n_k)}(|U|) \cdot \prod_{i \in J \setminus ( \cup_{k \neq j}I_k) \setminus U} \cos^2(j \pi \xi_i) \cdot \prod_{i \in U} \sin^2(j \pi \xi_i) \Big).
\end{align*}
\end{Lem}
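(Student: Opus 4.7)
The plan is to fix the distinguished index $j \in [K]$ and separate the expectation defining $\beta_{\overline{n}}^J(\xi)$ into an outer sum over the tuple $(I_k)_{k \neq j}$ and an inner expectation over $I_j$. Writing the defining multinomial coefficient as
\[
\binom{|J|}{n_1,\ldots,n_K,|J|-\textstyle\sum_k n_k}
= \binom{|J|}{n_1,\ldots,n_{j-1},n_{j+1},\ldots,n_K,|J|-\textstyle\sum_{k\neq j} n_k}\cdot\binom{m}{n_j},
\]
where $m=|J|-\sum_{k\neq j}n_k$ and $A=J\setminus\bigcup_{k\neq j} I_k$, one sees that everything reduces to proving the single-parameter identity
\[
\frac{1}{\binom{m}{n_j}}\sum_{\substack{I\subseteq A\\ |I|=n_j}}\prod_{i\in I}\cos(2j\pi\xi_i)
=\sum_{U\subseteq A}\kr_{n_j}^{(m)}(|U|)\prod_{i\in A\setminus U}\cos^2(j\pi\xi_i)\prod_{i\in U}\sin^2(j\pi\xi_i).
\]
Once this identity is in hand, substituting it back and recognizing the outer sum as an expectation over $(I_k)_{k\neq j}$ will yield the claimed formula verbatim.

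To prove the displayed identity, the key trick is to use the double-angle formula $\cos(2j\pi\xi_i)=\cos^2(j\pi\xi_i)-\sin^2(j\pi\xi_i)$ and, crucially, to multiply the left-hand side by the harmless factor
\[
1=\prod_{i\in A\setminus I}\bigl(\cos^2(j\pi\xi_i)+\sin^2(j\pi\xi_i)\bigr).
\]
Setting $a_i=\cos^2(j\pi\xi_i)$ and $b_i=\sin^2(j\pi\xi_i)$, the left-hand side becomes
\[
\frac{1}{\binom{m}{n_j}}\sum_{\substack{I\subseteq A\\ |I|=n_j}}\prod_{i\in I}(a_i-b_i)\prod_{i\in A\setminus I}(a_i+b_i).
\]
Expanding both products and indexing the resulting monomials by the set $U\subseteq A$ on which $b$ appears collects every monomial $\prod_{i\in U}b_i\prod_{i\in A\setminus U}a_i$, with a coefficient that depends only on $|U\cap I|$.

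A short bookkeeping shows that the coefficient in front of $\prod_{i\in U}b_i\prod_{i\in A\setminus U}a_i$ equals
\[
\frac{1}{\binom{m}{n_j}}\sum_{\substack{I\subseteq A\\ |I|=n_j}}(-1)^{|U\cap I|}
=\frac{1}{\binom{m}{n_j}}\sum_{l=0}^{n_j}(-1)^l\binom{|U|}{l}\binom{m-|U|}{n_j-l},
\]
the second equality being obtained by summing over the possible values of $l=|U\cap I|$. By the very definition of Krawtchouk polynomials (Definition \ref{def:2.3}) this coefficient is exactly $\kr_{n_j}^{(m)}(|U|)$, which completes the proof.

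I do not foresee a serious obstacle: the argument is entirely algebraic. The only conceptual step is the ``multiply by $1$'' trick that artificially symmetrizes the products over $I$ and $A\setminus I$ and makes the Krawtchouk structure visible; everything else is book-keeping with binomial coefficients.
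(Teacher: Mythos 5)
Your proposal is correct and follows essentially the same route as the paper: the paper also splits off the expectation over $I_j$ (its equation for $\beta_{\overline{n}}^J$ before \eqref{eq:Lem2.5calc}), rewrites $\prod_{i\in I_j}\cos(2j\pi\xi_i)$ as $\prod_{i\in J\setminus\widetilde I}\bigl(\cos^2(j\pi\xi_i)+\epsilon(I_j)_i\sin^2(j\pi\xi_i)\bigr)$ — which is exactly your ``double angle plus multiply by $1$'' trick encoded by a sign vector — expands over $U$, and identifies the averaged coefficient $\frac{1}{\binom{m}{n_j}}\sum_{I}(-1)^{|U\cap I|}$ with $\kr_{n_j}^{(m)}(|U|)$ by the same binomial count.
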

\begin{proof}
Fix $j$, for simplicity of notation for fixed sets $I_1,...,I_{j-1},I_{j+1},...,I_K$ we denote
\[
\widetilde{I}= \cup_{k \neq j} I_k.
\]
Then we notice that
\begin{equation} \label{eq:2.1}
\beta_{\overline{n}}^J(\xi)=\se{I_1,...,I_{j-1},I_{j+1},...,I_K \subseteq J, \\
    (\forall k \in [K] \setminus \{j \}) |I_k|=n_k, \\
    (\forall i,k \in [K] \setminus \{j\}, i \neq k) I_i \cap I_k=\emptyset
    } \prod_{k=1, k \neq j}^K \prod_{i \in I_k} \cos(2k \pi \xi_i) \cdot
    \se{\substack{I_j \subseteq J \setminus \widetilde{I}, \\ |I_j|=n_j}} \prod_{i \in I_j} \cos(2j \pi \xi_i) .
\end{equation}
Fixing $I_1,...,I_{j-1},I_{j+1},...,I_K$ we will look only at last term of the equality above. Let $\epsilon(I_j) \in \{-1,1\}^J$ be such that $\epsilon(I_j)_i=\epsilon(I_j)(i)=-1$ exactly when $i \in I_j$, then we have
\begin{align*}
\prod_{i \in I_j} \cos(2j \pi \xi_i)&= \prod_{i \in J \setminus \widetilde{I}} \Big(  \frac{1+\cos(2 j \pi \xi_i)}{2}+ \epsilon(I_j)_i \frac{1- \cos(2 j \pi \xi_i)}{2} \Big)\\
&=\prod_{i \in J \setminus \widetilde{I}} \Big(  \cos^2(j \pi \xi_i)+ \epsilon(I_j)_i \sin^2(j \pi \xi_i) \Big)
\\
&= \sum_{U \subseteq J \setminus \widetilde{I}} w_U(\epsilon(I_j)) \prod_{i \in J \setminus \widetilde{I} \setminus U} \cos^2(j \pi \xi_i) \prod_{i \in U} \sin^2(j \pi \xi_i), 
\end{align*}
where we defined $w_U: \{-1,1 \}^{J} \to \{-1, 1\}$ by formula $w_U(\epsilon)= \prod_{i \in U} \epsilon_{i}$. Therefore we see that
\begin{equation}
\label{eq:Lem2.5calc}
\begin{split}
&\se{\substack{I_j \subseteq J \setminus \widetilde{I}, \\ |I_j|=n_j}} \prod_{i \in I_j} \cos(2j \pi \xi_i)=\frac{1}{\binom{|J|- \sum_{i \neq j}n_i}{n_j}} \sum_{\substack{I_j \subseteq J \setminus \widetilde{I}, \\ |I_j|=n_j}} \prod_{i \in I_j} \cos(2j \pi \xi_i)
\\
&= \frac{1}{\binom{|J|- \sum_{i \neq j}n_i}{n_j}} \sum_{\substack{I_j \subseteq J \setminus \widetilde{I}, \\ |I_j|=n_j}}  
\sum_{U \subseteq J \setminus \widetilde{I}} w_U(\epsilon(I_j)) \prod_{i \in J \setminus \widetilde{I} \setminus U} \cos^2(j \pi \xi_i) \prod_{i \in U} \sin^2(j \pi \xi_i)
\\
&=\sum_{U \subseteq J \setminus \widetilde{I}} \prod_{i \in J \setminus \widetilde{I} \setminus U} \cos^2(j \pi \xi_i) \prod_{i \in U} \sin^2(j \pi \xi_i) \frac{1}{\binom{|J|- \sum_{i \neq j}n_i}{n_j}} \sum_{\substack{I_j \subseteq J \setminus \widetilde{I}, \\ |I_j|=n_j}}
 w_U(\epsilon(I_j)).
 \end{split}
 \end{equation}
Denote $m_j=\sum_{i \neq j}n_i$ and observe that for any $U \subseteq J \setminus \widetilde{I}$ we have
\begin{align*}
& \frac{1}{\binom{|J|- m_j}{n_j}} \sum_{\substack{I_j \subseteq J \setminus \widetilde{I}, \\ |I_j|=n_j}} w_U(\epsilon(I_j))= \frac{1}{\binom{|J|-m_j}{n_j}}\sum_{\substack{I_j \subseteq J \setminus \widetilde{I}, \\ |I_j|=n_j}} (-1)^{|U \cap I_j|}
\\
&=\frac{1}{\binom{|J|- m_j}{n_j}}\sum_{m=0}^{n_j} (-1)^m \cdot \# \{I_j \subseteq J \setminus \widetilde{I} : |I_j|=n_j, |U \cap I_j|=m  \}
\\
&= \frac{1}{\binom{|J|- m_j}{n_j}}\sum_{m=0}^{n_j} (-1)^m \binom{|U|}{m} \binom{|J|-m_j-|U|}{n_j-m}= \kr_{n_j}^{(|J|- m_j)}(|U|).
\end{align*}
Thus, coming back to \eqref{eq:Lem2.5calc} we obtain
\begin{equation}
\label{eq:Lem2.5calc'}
\begin{split}
&\se{\substack{I_j \subseteq J \setminus \widetilde{I}, \\ |I_j|=n_j}} \prod_{i \in I_j} \cos(2j \pi \xi_i)\\
&= \sum_{U \subseteq J \setminus \widetilde{I}} \prod_{i \in J \setminus \widetilde{I} \setminus U} \cos^2(j \pi \xi_i) \prod_{i \in U} \sin^2(j \pi \xi_i) \kr_{n_j}^{(|J|- \sum_{k \neq j}n_k)}(|U|).
\end{split}
\end{equation}
Finally plugging the above into \eqref{eq:2.1} we get the conclusion of the lemma.
\end{proof}
Using Lemma \ref{rem:2.5} and bounds for Krawtchouk polynomials we will be able to deduce crucial Lemma \ref{lem:2.8} exactly in the same manner as in the proof of \cite[Proposition 3.3]{balls}, however before that we need to introduce an auxiliary lemma from \cite{balls}.
\begin{Lem}[{\cite[Lemma 2.6]{balls}}]
\label{lem:2.6} Assume that we have a sequence $(u_j: j \in [d])$ with $0 \leq u_j \leq \frac{1- \delta_0}{2}$ for some $\delta_0 \in (0,1)$. Suppose that $I \subseteq [d]$ satisfies $\delta_1d \leq |I| \leq d$ for some $ \delta_1 \in (0,1]$. Then for every $J=(d_0,d] \cap \Z$ with $0 \leq d_0 \leq d$ we have
\[
\se{\tau \in \Sym(d)} \exp\Big(- \sum_{j \in \tau(I) \cap J} u_j\Big) \leq 3 \exp\Big(-\frac{\delta_0 \delta_1}{20} \sum_{j \in J} u_j \Big).
\]
\end{Lem}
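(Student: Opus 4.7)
My plan is to reinterpret the average over permutations as an average over a uniformly random subset and then apply negative association. First I would observe that since the exponential depends on $\tau$ only through $\tau(I)$, and $\tau(I)$ is uniformly distributed on the $|I|$-element subsets of $[d]$ as $\tau$ ranges over $\Sym(d)$, one has
\[
\se{\tau \in \Sym(d)} \exp\Big(-\sum_{j \in \tau(I) \cap J} u_j\Big) = \se{A \subseteq [d], \, |A|=|I|} \exp\Big(-\sum_{j \in J} u_j \mathds{1}_{j \in A}\Big).
\]
Writing $Y_j := \mathds{1}_{j \in A}$, this reduces the problem to bounding $\se{A}\prod_{j \in J} e^{-u_j Y_j}$.

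The decisive step I would invoke is the Joag-Dev--Proschan theorem, which states that indicators arising from uniform sampling without replacement from a finite population are negatively associated. Since each map $y \mapsto e^{-u_j y}$ is non-increasing (as $u_j \geq 0$), negative association yields
\[
\se{A} \prod_{j \in J} e^{-u_j Y_j} \, \leq \, \prod_{j \in J} \se{A} e^{-u_j Y_j} \, = \, \prod_{j \in J} \Big(1 - \tfrac{|I|}{d}(1-e^{-u_j})\Big).
\]
Next I would apply $1-x \leq e^{-x}$ together with $|I|/d \geq \delta_1$, and the elementary bound $1-e^{-u} \geq u/2$ valid on $[0, \log 2] \supseteq [0, (1-\delta_0)/2]$, to conclude
\[
\se{A} \prod_{j \in J} e^{-u_j Y_j} \leq \exp\Big(-\delta_1 \sum_{j \in J}(1-e^{-u_j})\Big) \leq \exp\Big(-\tfrac{\delta_1}{2}\sum_{j \in J} u_j\Big),
\]
which is in fact stronger than the claimed bound (the factor $3$ and the factor $\delta_0/10$ become absorbable slack).

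The hard part, in my view, would be to justify the use of negative association cleanly; while well-known, it relies on a nontrivial combinatorial property of sampling without replacement. A self-contained alternative I would consider is to establish a Chernoff-type lower-tail bound for $X := \sum_{j \in J} u_j Y_j$ (via a Hoeffding--Serfling style inequality, which is where the range hypothesis $u_j \leq (1-\delta_0)/2$ enters through the Bernstein-type variance control), and then split
\[
\se{A}\, e^{-X} \leq e^{-\mu/2} + \Pr(X < \mu/2),
\]
with $\mu := \se{A} X = (|I|/d)\sum_{j \in J} u_j \geq \delta_1 \sum_{j \in J} u_j$. The second term should be bounded by a universal multiple of $\exp(-c\delta_0\delta_1\sum_{j \in J} u_j)$, producing the constant $3$ as an additive contribution and the exact form $\delta_0\delta_1/20$ of the exponent in the statement.
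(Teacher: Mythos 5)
Your argument is correct, and it takes a genuinely different route from the source: this paper does not prove Lemma \ref{lem:2.6} at all but imports it from \cite[Lemma 2.6]{balls}, where the proof is a direct combinatorial estimate that needs the sequence $(u_j)$ to be monotone (an assumption the present authors remove only by a rearrangement trick borrowed from \cite{MiSzWr}). Your first reduction — that $\tau(I)$ is uniform over $|I|$-element subsets, so the left-hand side equals $\se{A}\prod_{j\in J}e^{-u_jY_j}$ with $Y_j=\mathds{1}_{j\in A}$ — is exactly right, and the appeal to Joag-Dev--Proschan is legitimate: indicators of a uniform random $|I|$-subset are a permutation distribution, hence negatively associated, and the product inequality $\se{A}\prod_j f_j(Y_j)\le\prod_j\se{A}f_j(Y_j)$ holds for non-negative functions that are all non-increasing (apply the covariance inequality inductively, peeling off one factor at a time). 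The remaining computations check out: $\se{A}e^{-u_jY_j}=1-\tfrac{|I|}{d}(1-e^{-u_j})$, and $1-e^{-u}\ge u/2$ on $[0,\log 2]\supseteq[0,(1-\delta_0)/2]$, so you land at $\exp(-\tfrac{\delta_1}{2}\sum_{j\in J}u_j)$, which is strictly stronger than the stated bound (constant $1$ instead of $3$, exponent $\delta_1/2$ instead of $\delta_0\delta_1/20$) and, pleasantly, makes no use of monotonicity of $(u_j)$, so the rearrangement step the paper mentions becomes unnecessary. What negative association buys you is exactly the decoupling that the original proof has to work for by hand; the price is reliance on a nontrivial external theorem, and your fallback via a Hoeffding--Serfling lower-tail bound plus the split $\se{A}e^{-X}\le e^{-\mu/2}+\Pr(X<\mu/2)$ is indeed the route that naturally produces the additive constant $3$ and the degraded exponent of the stated version. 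The only point worth tightening in a written version is the justification that the NA product inequality applies to non-increasing (rather than non-decreasing) functions; this follows by replacing each $f_j$ by $-f_j$ in the covariance definition, but it should be said.
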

\noindent In \cite[Lemma 2.6]{balls} there is an extra assumption, that $u_j$ is decreasing, however this assumption can be removed by rearranging the sequence $(u_j: j \in [d])$, see \cite[Lemma 6.7]{MiSzWr}. We will use the following version of Lemma \ref{lem:2.6}.
\begin{Cor} \label{cor:2.7}
Fix a finite set $J \subseteq \N$. Assume that we have a sequence $(u_j :j \in J)$ with $0 \leq u_j \leq \frac{1-\delta_0}{2}$ for some $\delta_0 \in (0,1)$. Then for any nonnegative integer $k \leq |J|$ we have
\[
\se{\substack{I\subseteq J, \\ |I|=k}} \exp\Big( - \sum_{j \in I} u_j\Big) \leq 3 \exp\Big(- \frac{\delta_0k}{20|J|} \sum_{j \in J} u_j \Big)
\]
\end{Cor}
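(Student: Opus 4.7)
The plan is to derive Corollary \ref{cor:2.7} directly from Lemma \ref{lem:2.6} by re-expressing the uniform expectation over size-$k$ subsets of $J$ as an expectation over permutations of $J$ acting on a fixed reference subset. First, I would relabel $J$ with $[|J|]$ via an arbitrary bijection; this is harmless since the inequality only depends on the multiset of values $(u_j)_{j \in J}$ together with $|J|$. The case $k=0$ is trivial, the two sides being $1$ and $3$, so I may assume $k \geq 1$.

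Next, I would fix any reference set $I_0 \subseteq J$ of size $k$, say $I_0 = \{1, \dots, k\}$ under the chosen labeling. The elementary observation driving the reduction is that for $\tau$ uniformly distributed in $\Sym(|J|)$, the random set $\tau(I_0)$ is uniformly distributed over all size-$k$ subsets of $J$. This yields the identity
\[
\se{\substack{I \subseteq J, \\ |I| = k}} \exp\Big(-\sum_{j \in I} u_j\Big) = \se{\tau \in \Sym(|J|)} \exp\Big(-\sum_{j \in \tau(I_0)} u_j\Big).
\]

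To bound the right-hand side I would invoke Lemma \ref{lem:2.6} with $d := |J|$, with $I := I_0$ (so that one may take $\delta_1 = k/|J| \in (0,1]$), and with the set called $J$ in the statement of Lemma \ref{lem:2.6} taken to be all of $[|J|]$, i.e.\ $d_0 = 0$. The hypothesis $0 \leq u_j \leq (1-\delta_0)/2$ transfers verbatim. Lemma \ref{lem:2.6} then gives
\[
\se{\tau \in \Sym(|J|)} \exp\Big(-\sum_{j \in \tau(I_0)} u_j\Big) \leq 3 \exp\Big(-\frac{\delta_0 k}{20 |J|} \sum_{j \in J} u_j\Big),
\]
which, combined with the identity above, is exactly the claimed inequality.

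The whole argument is a bookkeeping reduction, so there is no substantive obstacle; the only thing to take care of is the clash of notation, namely that the letter $J$ plays two different roles (the index set of the corollary versus a subset inside Lemma \ref{lem:2.6}), which is resolved by choosing $d_0 = 0$ in the invocation of the lemma.
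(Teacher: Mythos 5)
Your proposal is correct and follows essentially the same route as the paper's proof: relabel $J$ as $[m]$, fix a reference set $I_0$ of size $k$, rewrite the expectation over size-$k$ subsets as an expectation over $\sigma(I_0)$ with $\sigma$ uniform in $\Sym(m)$, and apply Lemma \ref{lem:2.6} with $\delta_1=k/|J|$ and $d_0=0$. Your explicit treatment of the $k=0$ case is a minor tidying the paper leaves implicit.
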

\begin{proof}
Let $m=|J|$. Since there is a bijection between $J$ and $[m]$, without loss of generality  we can assume that $J=[m]$. Fix $I_0 \subseteq [m]$, $|I_0|=k$. For each $I \subseteq [m]$, $|I|=k$ there are exactly $k!(m-k)!$ permutations $\sigma \in \Sym(m)$ such that $\sigma(I_0)=I$. Using Lemma \ref{lem:2.6} with $\delta_1= \frac{k}{m}$ we obtain
\[
\se{\substack{I \subseteq [m], \\ |I|=k}} \exp\Big(- \sum_{j \in I} u_j \Big)= \se{\sigma \in \Sym(m)} \exp\Big(- \sum_{j \in \sigma(I_0)} u_j \Big) \leq 3
\exp\Big(- \frac{\delta_0k}{20m}\sum_{j \in [m]} u_j \Big)
\]
\end{proof}
\par Now we are ready to state a crucial bound for $\beta_{\overline{n}}^J(\xi)$, which will come in handy in the following subsections.
\begin{Lem} \label{lem:2.8}
For any $K \in \N$, finite set  $J \subseteq \N$, $K$-tuple  $\overline{n} \in \N_0^K$ satisfying \\ $ \sum_{i=1}^K n_i + \max_{1 \leq i \leq K} n_i \leq |J|$ and any $\xi \in \T^d$ we have
    \[
    |\beta^J_{\overline{n}}(\xi)| \leq 6 \prod_{j=1}^K \exp\Big(- \frac{cn_j}{80 K|J|} \min\Big( \sum_{i \in J} \sin^2(j \pi \xi_i), \sum_{i \in J} \cos^2(j \pi \xi_i)\Big) \Big),
    \]
    where $c$ is the constant from Theorem \ref{thm:2.4}.
\end{Lem}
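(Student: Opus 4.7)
The plan is to establish, for each fixed $j\in[K]$, the following single-index bound:
\[
|\beta^J_{\overline{n}}(\xi)|\leq 6\exp\Bigl(-\tfrac{cn_j}{80|J|}\min\bigl(\textstyle\sum_{i\in J}\sin^2(j\pi\xi_i),\sum_{i\in J}\cos^2(j\pi\xi_i)\bigr)\Bigr),
\]
and then to combine the $K$ such estimates by a geometric-mean argument: multiplying the bounds for $j=1,\ldots,K$ and taking the $K$-th root yields exactly the claimed inequality, with the factor $K$ in the denominator $80K$ arising naturally from the root.

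For the single-index bound, apply Lemma \ref{rem:2.5} with the chosen $j$ to write $\beta^J_{\overline{n}}(\xi)=\mathbb{E}_{I_k,k\neq j}\bigl[\prod_{k\neq j}\prod_{i\in I_k}\cos(2k\pi\xi_i)\cdot R_j(J')\bigr]$, where $J':=J\setminus\bigcup_{k\neq j}I_k$ and $R_j(J')$ is the inner Krawtchouk sum in the $j$-th coordinate. Since $|\prod\cos|\leq 1$, it suffices to bound $\mathbb{E}|R_j(J')|$. To estimate $|R_j(J')|$ itself, split the Krawtchouk sum into the ranges $|U|\leq|J'|/2$ and $|U|>|J'|/2$: the hypothesis $\sum n_i+\max n_i\leq|J|$ forces $|J'|\geq 2n_j$, so Theorem \ref{thm:2.4}(3) applies directly to the first range, and Theorem \ref{thm:2.4}(2) (reflection symmetry) reduces the second to the first via $U\mapsto J'\setminus U$. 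Summing each piece using the elementary identity
\[
\sum_{U\subseteq J'}e^{-\alpha|U|}\prod_{i\in J'\setminus U}\cos^2(j\pi\xi_i)\prod_{i\in U}\sin^2(j\pi\xi_i)=\prod_{i\in J'}\bigl(\cos^2(j\pi\xi_i)+e^{-\alpha}\sin^2(j\pi\xi_i)\bigr)\leq e^{-(1-e^{-\alpha})\sum_{i\in J'}\sin^2(j\pi\xi_i)}
\]
(together with its $\cos\leftrightarrow\sin$ version) yields $|R_j(J')|\leq e^{-\alpha_j\sum_{J'}\sin^2(j\pi\xi_i)}+e^{-\alpha_j\sum_{J'}\cos^2(j\pi\xi_i)}$, with $\alpha_j:=1-e^{-cn_j/|J'|}$.

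For the final averaging, observe that the integrand depends on $(I_k)_{k\neq j}$ only through their union $T:=\bigcup_{k\neq j}I_k$, which is uniformly distributed among size-$m$ subsets of $J$ with $m=\sum_{k\neq j}n_k$. Corollary \ref{cor:2.7} applied with $\delta_0=1/2$ (valid since $\alpha_j\leq cn_j/|J'|\leq c/2<1/4$) and $u_i=\alpha_j\sin^2(j\pi\xi_i)$ gives
\[
\mathbb{E}_T\exp\Bigl(-\alpha_j\sum_{i\in J\setminus T}\sin^2(j\pi\xi_i)\Bigr)\leq 3\exp\Bigl(-\tfrac{(|J|-m)\alpha_j}{40|J|}\sum_{i\in J}\sin^2(j\pi\xi_i)\Bigr),
\]
and the analogous bound holds with $\cos^2$ in place of $\sin^2$. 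The elementary inequality $1-e^{-x}\geq x/2$ on $[0,1]$, applied to $x=cn_j/(|J|-m)\leq c/2<1$, gives $\alpha_j\geq cn_j/(2(|J|-m))$, whence $\tfrac{(|J|-m)\alpha_j}{40|J|}\geq\tfrac{cn_j}{80|J|}$. Adding the sine and cosine contributions produces the single-index bound with constant $3+3=6$.

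The decisive point, and the main obstacle, is the exact cancellation of the factor $|J|-m$ appearing in the numerator of Corollary \ref{cor:2.7}'s rate and in the denominator of $1/\alpha_j$, which produces a single-index rate $cn_j/(80|J|)$ that is \emph{independent of $K$}. Only this $K$-independence allows the subsequent geometric mean to turn the $K$-th root into the desired factor $1/K$ in the final exponents; any residual $K$-dependence in the single-index bound would accumulate inappropriately. The hypothesis $\sum n_i+\max n_i\leq|J|$ plays a dual role, simultaneously guaranteeing $n_j\leq|J'|/2$ (so that Theorem \ref{thm:2.4}(3) applies) and $\alpha_j<1/4$ (so that Corollary \ref{cor:2.7} is available with $\delta_0=1/2$).
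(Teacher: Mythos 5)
Your proposal is correct and follows essentially the same route as the paper's proof: reduce to a single-index bound via Lemma \ref{rem:2.5} with $|\prod\cos|\le 1$, control the inner Krawtchouk sum by splitting at $|U|=|J'|/2$ and using Theorem \ref{thm:2.4}(2)--(3) together with the product identity, average over the union $\widetilde I$ via Corollary \ref{cor:2.7} with $\delta_0=1/2$, and finish with the geometric mean over $j\in[K]$. The only cosmetic difference is the order in which the elementary inequalities $1-x\le e^{-x}$ and $1-e^{-x}\ge x/2$ are deployed; the constants and the crucial cancellation of $|J|-m_j$ come out the same.
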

\begin{proof}
Fix $j \in [K]$ and denote $m_j=\sum_{k \neq j} n_k.$ From Lemma \ref{rem:2.5} we get that
\begin{align*}
&| \beta_{\overline{n}}^J(\xi)| \leq \se{\substack{I_1,...,I_{j-1},I_{j+1},...,I_K \subseteq J, \\
    (\forall k \in [K] \setminus \{j \}) |I_k|=n_k, \\
    (\forall i,k \in [K] \setminus \{j\}, i \neq k) I_i \cap I_k=\emptyset
    }} 1\\
& \hspace{0.5cm}\cdot   
\sum_{U \subseteq J \setminus ( \cup_{k \neq j} I_k )} |\kr_{n_j}^{(|J|- m_j)}(|U|)| \cdot \prod_{i \in J \setminus ( \cup_{k \neq j}I_k) \setminus U} \cos^2(j \pi \xi_i) \cdot \prod_{i \in U} \sin^2(j \pi \xi_i)
\\
&= \frac{\prod_{i \neq j} n_i! \cdot (|J|-m_j)!}{|J|!} \sum_{\substack{\widetilde{I} \subseteq J, \\ |\widetilde{I}|= m_j}} 
\sum_{U \subseteq J \setminus \widetilde{I}} |\kr_{n_j}^{(|J|- m_j)}(|U|)|\\
&\hspace{0.5cm}\cdot \prod_{i \in J \setminus \widetilde{I} \setminus U} \cos^2(j \pi \xi_i) \cdot \prod_{i \in U} \sin^2(j \pi \xi_i)  
\cdot \sum_{\substack{I_1,...,I_{j-1},I_{j+1},...,I_K \subseteq J, \\
    (\forall k \in [K] \setminus \{j \}) |I_k|=n_k, \\
    (\forall i,k \in [K] \setminus \{j\}, i \neq k) I_i \cap I_k=\emptyset, \\ \cup_{k \neq j} I_k=\widetilde{I}
    }} 1.
\end{align*}
However notice that for a fixed $\widetilde{I} \subseteq J$, $|\widetilde{I}|= m_j$ we have
\[\sum_{\substack{I_1,...,I_{j-1},I_{j+1},...,I_K \subseteq J, \\
    (\forall k \in [K] \setminus \{j \}) |I_k|=n_k, \\
    (\forall i,k \in [K] \setminus \{j\}, i \neq k) I_i \cap I_k=\emptyset, \\ \cup_{k \neq j} I_k= \widetilde{I}
    }} 1= \frac{(m_j)!}{\prod_{k \neq j}n_k!}
    \]
and thus we obtain
\begin{equation} \label{eq:2.2}
|\beta_{\overline{n}}^J(\xi)| \leq \se{\substack{\widetilde{I} \subseteq J, \\ |\widetilde{I}|=m_j}} \sum_{U \subseteq J \setminus \widetilde{I}} |\kr_{n_j}^{(|J|- m_j)}(|U|)| \cdot \prod_{i \in J \setminus \widetilde{I} \setminus U} \cos^2(j \pi \xi_i) \cdot \prod_{i \in U} \sin^2(j \pi \xi_i).  
\end{equation}
By assumptions on $n_j$ we have 
\[
n_j \leq \frac{|J|-m_j}{2}.
\]
Then for any $U \subseteq J \setminus \widetilde{I}$ such that $|U| \leq \frac{|J|-m_j}{2}$ by item (3) of Theorem \ref{thm:2.4} we get
\[
|\kr_{n_j}^{(|J|- m_j)}(|U|)| \leq \exp\Big( - \frac{cn_j|U|}{|J|-m_j} \Big),
\]
on the other hand if $|U|> \frac{|J|-m_j}{2}$, then by items $(2)$ and $(3)$ of Theorem \ref{thm:2.4} we obtain
\[
|\kr_{n_j}^{(|J|- m_j)}(|U|)|=|\kr_{n_j}^{(|J|- m_j)}(|J|-m_j-|U|)| \leq \exp\Big( - \frac{cn_j(|J|-m_j-|U|)}{|J|-m_j} \Big),
\]
so in both cases we have
\[
|\kr_{n_j}^{(|J|- m_j)}(|U|)| \leq \exp\Big( - \frac{cn_j|U|}{|J|-m_j} \Big)+ \exp\Big( - \frac{cn_j(|J|-m_j-|U|)}{|J|-m_j} \Big).
\]
From the above inequalities for fixed $\widetilde{I} \subseteq J, |\widetilde{I}|=m_j$ we obtain
\begin{equation*}
\sum_{U \subseteq J \setminus \widetilde{I}} |\kr_{n_j}^{(|J|- m_j)}(|U|)| \cdot \prod_{i \in J \setminus \widetilde{I} \setminus U} \cos^2(j \pi \xi_i) \cdot \prod_{i \in U} \sin^2(j \pi \xi_i) \\ \le S_1+S_2,
\end{equation*}
where
\begin{align*}
&S_1:= \sum_{U \subseteq J \setminus \widetilde{I}} \exp\Big( - \frac{cn_j|U|}{|J|-m_j} \Big)  \cdot \prod_{i \in J \setminus \widetilde{I} \setminus U} \cos^2(j \pi \xi_i) \cdot \prod_{i \in U} \sin^2(j \pi \xi_i)
\\
&S_2:=  \sum_{U \subseteq J \setminus \widetilde{I}} \exp\Big( - \frac{cn_j(|J|-m_j-|U|)}{|J|-m_j} \Big)  \cdot \prod_{i \in J \setminus \widetilde{I} \setminus U} \cos^2(j \pi \xi_i) \cdot \prod_{i \in U} \sin^2(j \pi \xi_i).
\end{align*}
For $S_1$ we have
\begin{align*}
&S_1= \sum_{U \subseteq J \setminus \widetilde{I}}   \prod_{i \in J \setminus \widetilde{I} \setminus U} \cos^2(j \pi \xi_i) \cdot \prod_{i \in U} e^{-\frac{cn_j}{|J|-m_j}}\sin^2(j \pi \xi_i)\\
&= \prod_{i \in J \setminus \widetilde{I}} \big(\cos^2(j \pi \xi_i)+e^{-\frac{cn_j}{|J|-m_j}}\sin^2(j \pi \xi_i) \big)=\prod_{i \in J \setminus \widetilde{I}}\big(1-(1-e^{-\frac{cn_j}{|J|-m_j}}) \sin^2(j \pi \xi_i) \big)\\
&\leq \exp\Big( -\frac{cn_j}{2(|J|-m_j)} \sum_{i \in J \setminus \widetilde{I}} \sin^2(j \pi \xi_i) \Big),
\end{align*}
where at the end we have used two elementary inequalities: $xe^{-x/2} \leq 1-e^{-x}$ and \\ $1-x \leq e^{-x}$. Similarly we obtain
\begin{align*}
&S_2=  \prod_{i \in J \setminus \widetilde{I}} \big(e^{-\frac{cn_j}{|J|-m_j}}\cos^2(j \pi \xi_i)+\sin^2(j \pi \xi_i) \big)
\\
&=\prod_{i \in J \setminus \widetilde{I}}\big(1-(1-e^{-\frac{cn_j}{|J|-m_j}}) \cos^2(j \pi \xi_i) \big) \leq \exp\Big( -\frac{cn_j}{2(|J|-m_j)} \sum_{i \in J \setminus \widetilde{I}} \cos^2(j \pi \xi_i) \Big).
\end{align*}
Plugging this into \eqref{eq:2.2} we get
\begin{align*}
|\beta_{\overline{n}}^J(\xi)| &\leq \se{\substack{\widetilde{I} \subseteq J, \\ |\widetilde{I}|=m_j}} \Big(\exp\Big( -\frac{cn_j}{2(|J|-m_j)} \sum_{i \in J \setminus \widetilde{I}} \sin^2(j \pi \xi_i) \Big)\\
&\hspace{5cm}+\exp\Big( -\frac{cn_j}{2(|J|-m_j)} \sum_{i \in J \setminus \widetilde{I}} \cos^2(j \pi \xi_i) \Big) \Big)
\\
&=\se{\substack{\widetilde{I} \subseteq J, \\ |\widetilde{I}|=|J|-m_j}} \Big(\exp\Big( -\frac{cn_j}{2(|J|-m_j)} \sum_{i \in \widetilde{I}} \sin^2(j \pi \xi_i) \Big)\\
&\hspace{4cm}+\exp\Big( -\frac{cn_j}{2(|J|-m_j)} \sum_{i \in  \widetilde{I}} \cos^2(j \pi \xi_i) \Big) \Big).
\end{align*}
Now using Corollary \ref{cor:2.7} twice for $J=J$, $k=|J|-m_j$ $\delta_0=\frac{1}{2}$, and sequences $(u_i: i \in J)=(  \frac{cn_j}{2(|J|-m_j)}\sin^2(j \pi \xi_i): i \in J)$ and $(u_i: i \in J)=(\frac{cn_j}{2(|J|-m_j)}\cos^2(j \pi \xi_i): i \in J),$ respectively, we obtain
\begin{align*}
|\beta_{\overline{n}}^J(\xi)| &\leq 3  \exp\Big( - \frac{cn_j}{80|J|} \sum_{i \in J} \sin^2(j \pi \xi_i) \Big)+ 3\exp\Big( - \frac{cn_j}{80|J|} \sum_{i \in J} \cos^2(j \pi \xi_i) \Big)
\\
&\leq 6 \exp\Big(- \frac{cn_j}{80|J|} \min\Big( \sum_{i \in J} \sin^2(j \pi \xi_i), \sum_{i \in J} \cos^2(j \pi \xi_i)\Big) \Big).
\end{align*}
Since $j \in [K]$ was chosen arbitrarily we can take the geometric mean of such quantities for all $j \in [K]$ and obtain
\[
|\beta_{\overline{n}}^J(\xi)| \leq 6 \prod_{j=1}^K \exp\Big(- \frac{cn_j}{80 K|J|} \min\Big( \sum_{i \in J} \sin^2(j \pi \xi_i), \sum_{i \in J} \cos^2(j \pi \xi_i)\Big) \Big).
\]
\end{proof}
\subsection{Estimates for the dyadic maximal operator}
\label{sec:22}
In this subsection we will obtain the following useful partial result towards Theorem \ref{thm:1.5}.
\begin{Prop} \label{prop:2.9}
For all $K \in \N$ there is a constant $C_K>0$ such that for all $\epsilon \in \{0,1 \}^K$, $d \in \N$ and $f \in \ell^2(\Z^d)$ we have
\[
 \Big\| \sup_{\substack{
 n_1+\epsilon_1,n_2+\epsilon_2,...,n_K+\epsilon_K \in \D, \\
 n_1,n_2,...,n_K \leq \frac{d}{2K}}} |\mathcal{D}_{n_1,...,n_K}f| \Big\|_{2} \leq C_K \big\| f \big\|_{2}.
\]
\end{Prop}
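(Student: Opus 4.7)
The plan is to work on the Fourier side via Parseval, dominate $|\beta_{\overline{n}}|$ by a product semigroup multiplier using Lemma \ref{lem:2.8}, and invoke Stein's maximal theorem for symmetric Markov semigroups. Writing $\Phi_{j,0}(\xi):=\sum_i \sin^2(j\pi\xi_i)$ and $\Phi_{j,1}(\xi):=\sum_i \cos^2(j\pi\xi_i)$, Lemma \ref{lem:2.8} gives
\[
|\beta_{\overline{n}}(\xi)| \le 6\prod_{j=1}^K \exp\!\Big(-\tfrac{cn_j}{80Kd}\min(\Phi_{j,0}(\xi),\Phi_{j,1}(\xi))\Big).
\]
First I would partition $\T^d = \bigsqcup_{\sigma\in\{0,1\}^K} A_\sigma$ according to which of $\Phi_{j,0}$ or $\Phi_{j,1}$ attains the minimum in each coordinate, and introduce on each $A_\sigma$ the model multiplier $h_{\overline{n},\sigma}(\xi):=\prod_{j=1}^K \exp(-\tfrac{cn_j}{80Kd}\Phi_{j,\sigma_j}(\xi))$, so that $|\beta_{\overline{n}}(\xi)|\le 6\, h_{\overline{n},\sigma}(\xi)$ on $A_\sigma$.

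The crucial structural observation is that $H_{\overline{n},\sigma}:=\mathcal F^{-1} h_{\overline{n},\sigma}\mathcal F$ is a composition of $K$ commuting symmetric positivity-preserving contractions on $\ell^2(\Z^d)$, each acting as a genuine one-parameter Markov semigroup in its own variable $n_j\ge 0$ (the generator being a sum over coordinates of operators with symbols $\sin^2(j\pi\xi_i)/d$ or $\cos^2(j\pi\xi_i)/d$, whose kernels are non-negative via standard modified Bessel identities). Iterated application of Stein's maximal theorem for symmetric diffusion semigroups in the $K$ commuting parameters then yields the dimension-free bound $\|\sup_{\overline{n}} |H_{\overline{n},\sigma} g|\|_{\ell^2} \le C^K\|g\|_{\ell^2}$, with \emph{no} logarithmic loss.

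To descend from $H_{\overline{n},\sigma}$ to $\beta_{\overline{n}}$ I would use the triangle inequality together with the crude bound $\sup_{\overline{n}} |\cdot|^2 \le \sum_{\overline{n}}|\cdot|^2$ applied to the difference, reducing matters, via Fubini, to the pointwise estimate
\[
\sup_{\xi \in A_\sigma}\ \sum_{\overline{n}\text{ dyadic},\ n_j\le d/(2K)}\bigl|\beta_{\overline{n}}(\xi)-h_{\overline{n},\sigma}(\xi)\bigr|^2 \lesssim_K 1.
\]
Once this is in hand, combining with the semigroup bound and summing over the $2^K$ pieces $A_\sigma$ completes the argument with constant of order $2^K C^K\lesssim C_K$.

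The hard part will be precisely this pointwise summation. The raw domination $|\beta_{\overline{n}}|\le 6 h_{\overline{n},\sigma}$ is \emph{not} enough, since summing $h_{\overline{n},\sigma}^2$ alone over dyadic $\overline{n}$ already produces a factor of order $(\log d)^K$ on regions where $\Phi_{j,\sigma_j}(\xi)/d$ is very small. To obtain dimension-free control one must exploit finer smoothness of $\beta_{\overline{n}}$ in $\overline{n}$, coming from the Krawtchouk representation of Lemma \ref{rem:2.5} together with the dyadic-in-degree difference identity of Theorem \ref{thm:2.4}(4). A natural route is to subdivide each $A_\sigma$ further by the dyadic scale of $\Phi_{j,\sigma_j}(\xi)/d$, pick on each subpiece a principal dyadic vector $\overline{n}^*(\xi)$ at which $|\beta_{\overline{n}^*}-h_{\overline{n}^*,\sigma}|$ is negligibly small (using the Krawtchouk bounds), and then rely on Lemma \ref{lem:2.8} to provide geometric decay in $\overline{n}/\overline{n}^*$ so that the remaining tails sum to an absolute constant.
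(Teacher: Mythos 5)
Your overall architecture (Fourier side, comparison with the product semigroup multiplier, Stein's maximal theorem for the commuting semigroups, a square-function argument for the error) is the same as the paper's, and for $K=1$ your plan is essentially the paper's proof. But for $K\ge 2$ there is a genuine gap at the step you yourself flag as "the hard part": the pointwise estimate
\[
\sup_{\xi\in A_\sigma}\sum_{\overline{n}\ \mathrm{dyadic}}\bigl|\beta_{\overline{n}}(\xi)-h_{\overline{n},\sigma}(\xi)\bigr|^2\lesssim_K 1
\]
is false, and no choice of "principal vector" $\overline{n}^*(\xi)$ rescues it. The single difference $\beta_{\overline{n}}-h_{\overline{n},\sigma}$ gains smallness of the form $\min(x_j,x_j^{-1})$ with $x_j=\tfrac{n_j}{d}\Phi_{j,\sigma_j}(\xi)$ only in \emph{one} parameter at a time; it does not factor as a product of such gains over $j\in[K]$. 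Concretely, take $K=2$ and $\xi$ with $\Phi_{2,\sigma_2}(\xi)/d$ so small that $n_2\Phi_{2,\sigma_2}(\xi)/d\le 1$ for every admissible $n_2\le d/(2K)$, while $n_1\Phi_{1,\sigma_1}(\xi)/d\approx 1$ for some dyadic $n_1$. Then for that $n_1$ the difference is essentially independent of $n_2$ and comparable to the one-parameter quantity $|\beta_{n_1,0}(\xi)-\epsilon_1\widetilde{p_{n_1}}(\xi)|$, which is of order $1$ (bounded below by a constant, not decaying); summing its square over the $\asymp\log d$ admissible dyadic values of $n_2$ produces a $(\log d)$ loss. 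Your proposed remedy fails for the same reason: Lemma \ref{lem:2.8} gives no decay in the $n_2$ direction on this region (both $\beta$ and $h$ stay of size $\approx 1$ there), so there is no "geometric decay in $\overline{n}/\overline{n}^*$" along the degenerate parameters.

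What is needed, and what the paper does, is to replace the single difference by the full multi-parameter inclusion--exclusion $\sum_{U\subseteq V}\prod_{j\in V\setminus U}(-\epsilon_j\widetilde{p_{n_j}}(j\xi))\,\beta_{\overline{n}(U)}(\xi)$. Lemma \ref{lem:2.12} shows that \emph{this} alternating sum is bounded by the genuine product $\prod_{j\in V}\min(x_j,x_j^{-1})$, which is square-summable over the full dyadic grid by Lemma \ref{lem:2.13}; the leftover terms indexed by proper subsets $U\subsetneq V$ are not small, but each has strictly fewer "active" $\beta$-parameters, so they are handled by composing the semigroup maximal inequality (Theorem \ref{thm:2.11}, in the $V\setminus U$ variables, using the pointwise domination $|P_{k,t,1}f|\le P_{k,t,0}|f|$) with an induction on $|V|$. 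This induction-plus-telescoping structure is the essential missing idea in your proposal. A secondary inaccuracy: the "cosine" semigroup $P_{k,t,1}$ does \emph{not} have a non-negative kernel (its expansion $e^{-t/2}\sum_r(-t)^rT^r/((2d)^rr!)$ alternates in sign; the relevant Bessel coefficients $I_m(-s)=(-1)^mI_m(s)$ are not all non-negative), so you cannot apply Stein's theorem to it directly; you must instead dominate it pointwise by $P_{k,t,0}$ as in \eqref{eq:Pkn1<Pkn0}.
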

\noindent Necessity of including $\epsilon \in \{0,1 \}^K$ in the theorem above will be clear in the next subsection. It is due to the fact that in the  Lemma \ref{lem:2.17} it is crucial to compare two multipliers, whose coordinates have the same parity. \par
Let us introduce multipliers, which in some sense (see Lemma \ref{lem:2.12})  will approximate $\beta_{\overline{n}}(\xi)$.
\begin{Defn} \label{def:2.10}
For $t \ge 0$ we define $p_t: \T^d \to \mathbb{C}$ and $\widetilde{p_t}: \T^d \to \mathbb{C}$ by formulas
\begin{align}
\label{eq: ptxi} 
p_t(\xi)&= \exp\Big( - \frac{t}{d} \sum_{i=1}^d \sin^2(\pi \xi_i) \Big)=\exp\Big(- \frac{t}{d}  \| \xi \|^2 \Big),\\
\label{eq: pttilxi}
\widetilde{p_t}(\xi)&=\exp\Big( - \frac{t}{d} \min\Big( \sum_{i=1}^d \sin^2(\pi \xi_i),\sum_{i=1}^d \cos^2(\pi \xi_i)\Big) \Big)=\exp\Big(- \frac{t}{d} \min\big( \| \xi \|^2, \| \xi+1/2 \|^2\big) \Big)
\end{align}
For any $k \in \N$ and $t\ge 0 $ we define multiplier operators $P_{k,t,0} : \ell^2(\Z^d) \to \ell^2(\Z^d)$ and $P_{k,t,1}: \ell^2(\Z^d) \to \ell^2(\Z^d)$ given on the Fourier transform side by the formulas
\[
(\widehat{P_{k,t,0}f})(\xi)= p_t(k \xi) \widehat{f}(\xi),\qquad (\widehat{P_{k,t,1}f})(\xi)= p_t(k \xi+ 1/2) \widehat{f}(\xi).
\]
\end{Defn}

It is known that $\{P_{1,t,0}\}_{t>0}$ is a symmetric diffusion semigroup in the sense of Stein \cite[Chapter III]{Ste1}. This was observed e.g.\ in \cite[Lemma 4.1]{BMSW3} and became an important ingredient in \cite{balls,BMSW3,MiSzWr,Ni1}. It turns out that for each $k\in \N$ also $\{P_{k,t,0}\}_{t>0}$ is a symmetric diffusion semigroup.  This fact together with other important properties of the objects defined above is described in the following theorem.

\begin{Thm} \label{thm:2.11}
Fix $k\in \N.$ Then $\{P_{k,t,0}\}_{t>0}$ is a symmetric diffusion semigroup in the sense of Stein \cite[Chapter III]{Ste1}. In particular it is positivity preserving and there is a universal constant $C>0$ such that for all $d,k \in \N$ and $f \in \ell^2(\Z^d)$ we have
\begin{equation}
\label{eq:maxPkn0}
 \Big\| \sup_{t>0} |P_{k,t,0} f| \Big\|_{2} \leq C \big\|f \big\|_{2}
\end{equation}
Additionally, for each  $t>0$ the operator $P_{k,t,0}$ dominates pointwise $P_{k,t,1}$ in the following sense
\begin{equation}
\label{eq:Pkn1<Pkn0}
|P_{k,t,1}f(x)|\le P_{k,t,0}[|f|](x),\qquad x\in \Z^d.
\end{equation} 
Consequently, with the same constant $C$ we also have
\begin{equation}
\label{eq:maxPkn1}
 \Big\| \sup_{t>0} |P_{k,t,1} f| \Big\|_{2} \leq C \big\|f \big\|_{2}.
\end{equation}
\end{Thm}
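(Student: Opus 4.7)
\emph{Plan of proof.} The argument splits into three essentially independent tasks: verifying the semigroup axioms for $\{P_{k,t,0}\}_{t>0},$ invoking Stein's maximal theorem, and comparing the convolution kernels of $P_{k,t,0}$ and $P_{k,t,1}.$ The crucial observation is that both symbols $p_t(k\xi)$ and $p_t(k\xi+1/2)$ tensor over the coordinates, so everything reduces to a one-variable computation with the Jacobi--Anger expansion.

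First I would note that $\sin^2(k\pi\xi)=(1-\cos(2k\pi\xi))/2,$ and thus Jacobi--Anger gives
\[
\exp\!\Big(-\tfrac{t}{d}\sin^2(k\pi\xi)\Big)=e^{-t/(2d)}\sum_{m\in\Z}I_{|m|}\!\big(\tfrac{t}{2d}\big)\,e(-mk\xi),
\]
where $I_j$ denotes the modified Bessel function of the first kind. Since the multiplier $e(-mk\xi)$ corresponds on the spatial side to a unit mass at $n=-mk,$ the one-dimensional convolution kernel of the factor operator is supported on $k\Z,$ equals $e^{-t/(2d)}I_{|n/k|}(t/(2d))$ at $n\in k\Z,$ and is nonnegative because $I_j\ge 0.$ Tensorising across coordinates then identifies $P_{k,t,0}$ with convolution against a nonnegative product kernel on $(k\Z)^d$ that sums to $1$ (evaluate the symbol at $\xi=0$), so $P_{k,t,0}$ is a Markovian, positivity-preserving contraction on every $\ell^p(\Z^d).$ The semigroup law $P_{k,s,0}P_{k,t,0}=P_{k,s+t,0}$ and the self-adjointness on $\ell^2$ are immediate from multiplicativity and realness of the symbol. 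These are exactly the axioms of a symmetric diffusion semigroup in the sense of Stein \cite[Chapter~III]{Ste1}, so his maximal theorem for such semigroups delivers \eqref{eq:maxPkn0} with a constant depending only on $p=2,$ hence universal.

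For \eqref{eq:Pkn1<Pkn0} I would repeat the above computation with $\cos^2(k\pi\xi)=(1+\cos(2k\pi\xi))/2$ and use $I_{|m|}(-z)=(-1)^m I_{|m|}(z)$ to obtain that the one-dimensional kernel of $P_{k,t,1}$ equals $(-1)^{n/k}e^{-t/(2d)}I_{|n/k|}(t/(2d))$ at $n\in k\Z.$ Its absolute value therefore coincides with the corresponding kernel of $P_{k,t,0},$ and the product structure lifts this to $|K_{k,t,1}(y)|\le K_{k,t,0}(y)$ for every $y\in\Z^d.$ Applying the triangle inequality inside the convolution yields \eqref{eq:Pkn1<Pkn0}, and \eqref{eq:maxPkn1} then follows by applying \eqref{eq:maxPkn0} to $|f|$ with the same universal constant. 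I do not anticipate any serious obstacle: once the tensor structure and one line of Jacobi--Anger are written down, the remainder of the proof is bookkeeping.
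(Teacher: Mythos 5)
Your proof is correct, and it reaches the same three conclusions as the paper but by a different computational route. The paper never writes down the kernel: it sets $T_if(x)=\tfrac12[f(x+ke_i)+f(x-ke_i)]$, $T=\sum_iT_i$, identifies $P_{k,t,0}=\exp(-\tfrac{t}{2d}(d\Id-T))=e^{-t/2}\sum_{r\ge0}\tfrac{t^rT^r}{(2d)^rr!}$, and reads off positivity, the contraction property, self-adjointness and $P_{k,t,0}\mathds{1}=\mathds{1}$ from this power series with nonnegative coefficients in the positivity-preserving operator $T$; the domination \eqref{eq:Pkn1<Pkn0} then comes from $P_{k,t,1}=e^{-t/2}\sum_r\tfrac{(-t)^rT^r}{(2d)^rr!}$ together with $|T^rf|\le T^r|f|$ applied termwise. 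Your Jacobi--Anger computation is exactly the explicit evaluation of that same exponential series: the one-dimensional kernel $e^{-t/(2d)}I_{|m|}(t/(2d))$ at $n=mk$ is what $e^{-t/(2d)}\exp(\tfrac{t}{2d}T_i)$ produces, and the sign pattern $(-1)^{m}$ you extract from $I_{|m|}(-z)=(-1)^mI_{|m|}(z)$ is the kernel-level expression of the paper's alternating series. So the two arguments are equivalent in substance; the paper's version is slightly leaner (no special functions, no Fourier inversion needed to identify the kernel), while yours has the minor advantage of exhibiting the kernel explicitly. Both correctly note that the constant in Stein's (or Cowling's) maximal theorem is universal over symmetric diffusion semigroups, which is what makes \eqref{eq:maxPkn0} independent of $d$ and $k$. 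No gaps.
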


\begin{proof}
Throughout the proof we fix $k\in \N.$ For each $i\in [d]$ we let $e_i$ be the $i$-th vector of the standard basis, define the operators 
    \[
    T_i f(x)=\frac12[f(x+ke_i)+f(x-ke_i)],\qquad T=\sum_{i=1}^d T_i,
    \]
    and let $\Delta_i=\frac12[\Id -T_i],$ $\Delta=\sum_{i=1}^d \Delta_i.$ Note that these operators are bounded on $\ell^p$ spaces and $\Delta=\frac12 [d\Id-T].$ Since $\widehat{(T_i f)}(\xi)=\cos(2\pi k\xi_i)\hat{f}(\xi)$ it is easy to see that
$\widehat{(\Delta_i f)}(\xi)=\sin^2(\pi k \xi_i)\hat{f}(\xi)$ so that $\widehat{(\Delta f)}(\xi)=\sum_{i=1}^d\sin^2(\pi k \xi_i)\hat{f}(\xi).$ Therefore, the operator $P_{k,t,0}$ may be rewritten as $P_{k,t,0}=\exp(-\frac{t}{d}\Delta).$ This expression implies that $P_{k,t,0}$ is positivity preserving. Indeed, taking $s=t/d>0$ we have
\begin{equation}
\label{eq:esDel}
P_{k,t,0}=\exp(-s\Delta)=\exp(-sd/2)\exp(sT/2)=\exp(-t/2)\sum_{r=0}^{\infty}\frac{t^rT^r}{(2d)^rr!}.
\end{equation}
Since each $T_i$ is positivity preserving so are $T,$ $T^r$ and $P_{k,t,0}.$

Expression \eqref{eq:esDel} also implies that $\{P_{k,t,0}\}_{t>0}$ is a symmetric diffusion semigroup in the sense of Stein \cite[Chapter III]{Ste1}. We have just proved that $P_{k,t,0}$ is positivity preserving. Obviously, it is  also self-adjoint and $P_{k,t,0}(\mathds{1})=\mathds{1}$ in view of \eqref{eq:esDel} and the observation that $T\mathds{1}=d \mathds{1}.$ Finally, the contraction property $\|P_{k,t,0} f\|_{p}\le \|f\|_p$ also follows from \eqref{eq:esDel} and the norm bound $\|T f\|_p\le d\|f\|_p.$ Thus, we have verified that 
$\{P_{k,t,0}\}_{t>0}$ is a symmetric diffusion semigroup. Hence, applying  \cite[Chapter
III, Section 3, p.73]{Ste1} or \cite[Theorem 7]{Cow1} we obtain \eqref{eq:maxPkn0}. We remark that the constant $C$ in \eqref{eq:maxPkn0} is universal among symmetric diffusion semigroups, in particular it is independent of both $k$ and $d.$ This is implicit in the Stein's proof \cite[Chapter
III, Section 3, p.73]{Ste1} and explicit in Cowling's \cite[Theorem 7]{Cow1}.

To complete the proof of the theorem it remains to demonstrate \eqref{eq:Pkn1<Pkn0}. Note that once this inequality is justified then \eqref{eq:maxPkn1} will follow from \eqref{eq:maxPkn0}. By definition we have \[
p_{k,t,1}(\xi)= \exp\Big( - \frac{t}{d} \sum_{i=1}^d \sin^2(\pi k\xi_i+\pi/2) \Big)= \exp\Big( - \frac{t}{d} \sum_{i=1}^d \cos^2(\pi k\xi_i) \Big).
\]
Denote $\tilde{\Delta}=\frac{1}{2}[d\Id+T].$ Then the multiplier symbol of $\tilde{\Delta}$ is $\sum_{i=1}^d \cos^2(\pi k\xi_i).$ Consequently, taking $s=t/d$ we have
\[
P_{k,t,1}=\exp(-s\tilde{\Delta})=\exp(-sd/2)\exp(-sT/2)=\exp(-t/2)\sum_{r=0}^{\infty}\frac{(-t)^r T^r}{(2d)^r r!}.
\]
Since $|T[f](x)|\le T[|f|](x)$ using \eqref{eq:esDel} we obtain
\[
|P_{k,t,1}f(x)|=\le \exp(-t/2)\sum_{r=0}^{\infty}\frac{t^r (T^r)[|f|](x)}{(2d)^r r!}=P_{k,t,0}(|f|)(x).
\]
This proves \eqref{eq:Pkn1<Pkn0} and completes the proof of the theorem.
\end{proof}

Now we are ready to explain the relation between $p_n(\xi)$ and $\beta_{\overline{n}}(\xi)$. In the lemma below we use the notation $\overline{n}(U),$ see (24) in Section \ref{ssec:not}. Recall that $\widetilde{p_{n_j}}$ is defined by \eqref{eq: pttilxi}.
\begin{Lem} \label{lem:2.12}
Let $K \in \N$. Take any $d \in \N$, $d \geq 2K$, $\xi \in \T^d$, $\overline{n}=(n_1,...,n_K) \in \N_0^K$, such that $n_j \leq \frac{d}{2K}$ for all $j \in [K]$. Fix $\xi\in \T^d$ and let
\[
A= A(\xi) =\{ j \in [K]: \|j \xi+ 1/2 \| \leq \| j \xi \| \}.
\]
Define $\epsilon \in \{-1,1\}^K$, by $\epsilon_j=(-1)^{n_j}$ if $j \in A$ and $\epsilon_j=1$ if $j \not\in A$. Then for any $V \subseteq [K]$ we have
\begin{align*}
&\Big|\sum_{U \subseteq V} \prod_{j \in V \setminus U} (-\epsilon_j \widetilde{p_{n_j}}(j \xi) ) \cdot \beta_{\overline{n}(U)}(\xi) \Big| \\
&\lesssim_K \prod_{j \in V} \min \Big( \frac{n_j}{d} \|j\xi +\frac{\mathds{1}_A(j)}{2} \|^2, \big( \frac{n_j}{d} \|j\xi +\frac{\mathds{1}_A(j)}{2} \|^2\big)^{-1} \Big),
\end{align*}
where the implied constant in the inequality above may depend on $K$, however it is uniform with respect to $d \in \N$ and $\xi \in \T^d$. Furthermore, if $n_j=0$ for some $j \in V$, then the left-hand side equals $0$.
\end{Lem}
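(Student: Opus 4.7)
The plan is to proceed by induction on $|V|$, with the trivial base case $V=\emptyset$ (where both sides equal $1$). The starting observation is that every multiplier $\beta_{\overline{n}(U)}(\xi)$, $U\subseteq V$, is a marginal of a single joint distribution: if $\vec I=(I_k)_{k\in V}$ is sampled uniformly from the set $\mathcal I_V$ of $|V|$-tuples of pairwise disjoint subsets of $[d]$ with $|I_k|=n_k$ (possible since $\sum_{k\in V}n_k\le d/2\le d$), then a direct marginalization check shows
\[
\beta_{\overline{n}(U)}(\xi)=\E_{\vec I\in\mathcal I_V}\prod_{k\in U}\prod_{i\in I_k}\cos(2k\pi\xi_i),\qquad U\subseteq V.
\]
Summing against the signed weights and factoring the product yields the clean identity
\[
\sum_{U\subseteq V}\prod_{j\in V\setminus U}(-\epsilon_j\widetilde{p_{n_j}}(j\xi))\cdot\beta_{\overline{n}(U)}(\xi)=\E_{\vec I\in\mathcal I_V}\prod_{j\in V}\Bigl[\prod_{i\in I_j}\cos(2j\pi\xi_i)-\epsilon_j\widetilde{p_{n_j}}(j\xi)\Bigr].
\]
From this representation the vanishing assertion is immediate: if $n_{j_0}=0$ then $I_{j_0}=\emptyset$, $\epsilon_{j_0}=1$, and $\widetilde{p_0}(j_0\xi)=1$, so the $j_0$-th factor of the product is $1-1=0$.

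For the inductive step, I would peel off one index $j_0\in V$ by conditioning on the other coordinates $\vec I_{V'}$, where $V'=V\setminus\{j_0\}$. The conditional expectation of the $j_0$-th factor equals $\beta^{J'}_{\overline{n}(\{j_0\})}(\xi)-\epsilon_{j_0}\widetilde{p_{n_{j_0}}}(j_0\xi)$ on the set $J'=[d]\setminus\bigcup_{k\in V'}I_k$, which has $|J'|\ge d/2$ by the hypothesis $n_k\le d/(2K)$. This quantity is precisely the $|V|=1$ instance of the lemma ambient on $J'$ rather than $[d]$, so the crux is proving a uniform single-variable bound $|\beta^{J'}_{\overline{n}(\{j_0\})}(\xi)-\epsilon_{j_0}\widetilde{p_{n_{j_0}}}(j_0\xi)|\lesssim_K\min(x_{j_0},x_{j_0}^{-1})$ that is independent of $J'$. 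In the regime $x_{j_0}\gtrsim 1$, Lemma \ref{lem:2.8} gives $|\beta^{J'}_{\overline{n}(\{j_0\})}(\xi)|\lesssim e^{-cx_{j_0}}\lesssim x_{j_0}^{-1}$, and similarly $\widetilde{p_{n_{j_0}}}(j_0\xi)\le e^{-x_{j_0}}\lesssim x_{j_0}^{-1}$. In the regime $x_{j_0}\ll 1$, I would Taylor-expand both the Krawtchouk representation of $\beta^{J'}_{\overline{n}(\{j_0\})}$ (Lemma \ref{rem:2.5}) and the exponential $\widetilde{p_{n_{j_0}}}(j_0\xi)$, checking that both equal $\epsilon_{j_0}$ to zeroth order and that their first-order discrepancy is $O_K(x_{j_0})$. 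The sign $\epsilon_{j_0}=(-1)^{n_{j_0}}$ for $j_0\in A$ is forced by the reflection identity $\kr_k^{(n)}(n-x)=(-1)^k\kr_k^{(n)}(x)$ of Theorem \ref{thm:2.4}(2): it absorbs the sign flip produced by the shift $\xi_i\mapsto\xi_i+\tfrac{1}{2j_0}$ that interchanges $\sin^2$ and $\cos^2$.

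The main obstacle is that, once the single-variable bound is in hand, the quantity $L_{J'}(\xi):=\beta^{J'}_{\overline{n}(\{j_0\})}(\xi)-\epsilon_{j_0}\widetilde{p_{n_{j_0}}}(j_0\xi)$ still depends on the random set $J'$, so one cannot pull it out of the outer expectation over $\vec I_{V'}$ while preserving the alternating-sum structure needed to recognize the residual sum as $S_{V'}$ and apply induction. I would overcome this by splitting $L_{J'}=L_{[d]}+(L_{J'}-L_{[d]})$: the main contribution $L_{[d]}\cdot S_{V'}$ is handled by the inductive hypothesis together with the single-variable estimate $|L_{[d]}|\lesssim_K\min(x_{j_0},x_{j_0}^{-1})$, while the error stemming from the variation $J'\mapsto L_{J'}$ is controlled by tracking the explicit dependence of the Krawtchouk sum on $|J'|$ and on which elements lie in $J'$, using both the Krawtchouk difference formula (Theorem \ref{thm:2.4}(4)) and Lemma \ref{rem:2.5}. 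The key point is that the dependence of $\beta^{J'}_{\overline{n}(\{j_0\})}$ on the identity of $J'$ is itself of lower order than the leading bound $\min(x_{j_0},x_{j_0}^{-1})$, so the correction fits within the product bound and the induction goes through.
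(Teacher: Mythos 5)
Your derivation of the key identity
\[
\sum_{U\subseteq V}\prod_{j\in V\setminus U}(-\epsilon_j\widetilde{p_{n_j}}(j\xi))\,\beta_{\overline{n}(U)}(\xi)
=\se{\vec I\in\mathcal I_V}\ \prod_{j\in V}\Bigl(\prod_{i\in I_j}\cos(2j\pi\xi_i)-\epsilon_j\widetilde{p_{n_j}}(j\xi)\Bigr)
\]
and the vanishing claim coincide exactly with the paper's starting point (its equation (2.7)). From there, however, the paper does \emph{not} induct on $|V|$: it fixes, for each realization of the minima, a set $E\subseteq V$, bounds the factors with $j\in E$ by the telescoping inequality $|\prod a_i-\prod b_i|\le\sum|a_i-b_i|$ (giving $2\sum_{i\in I_j}\sin^2(\pi(j\xi_i+\mathds{1}_A(j)/2))+\tfrac{n_j}{d}\|j\xi+\mathds{1}_A(j)/2\|^2$), bounds the factors with $j\in V\setminus E$ via Lemma \ref{lem:2.8} on the random complement $[d]\setminus\bigcup_{j\in E}I_j$, and then evaluates the resulting \emph{joint} expectation over the mutually disjoint sets by explicit counting plus Corollary \ref{cor:2.7}. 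That explicit decoupling computation is the substance of the proof, and it is precisely the step your proposal does not supply.

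The gap is in your treatment of the error term $\se{\vec I_{V'}}\bigl[\prod_{j\in V'}(\cdots)\cdot(L_{J'}-L_{[d]})\bigr]$. Your justification is that "the dependence of $\beta^{J'}_{\overline{n}(\{j_0\})}$ on the identity of $J'$ is itself of lower order than $\min(x_{j_0},x_{j_0}^{-1})$". This is false pointwise in $J'$: if the coordinates $i$ carrying essentially all of the mass $\sum_i\sin^2(\pi(j_0\xi_i+\mathds{1}_A(j_0)/2))$ happen to lie in $[d]\setminus J'$, then $\beta^{J'}_{n_{j_0}}(\xi)$ can differ from $\beta^{[d]}_{n_{j_0}}(\xi)$ by an amount comparable to the leading bound itself, and Lemma \ref{lem:2.8} applied on $J'$ only controls $\beta^{J'}$ in terms of the $J'$-restricted sums, not the full ones. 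What saves the argument is that such realizations of $J'$ are rare, i.e.\ one must genuinely average over $\vec I_{V'}$ — and since $\se{J'}[L_{J'}]=L_{[d]}$, your error term is a covariance between $L_{J'}$ and the $V'$-factors, which are correlated through the disjointness constraint. Bounding that covariance requires exactly the joint combinatorial computation (the analogue of the paper's (2.10)--(2.12) together with Corollary \ref{cor:2.7}) that your induction was designed to avoid. As written, the proposal defers the hard part of the proof to an unproven, and in its stated pointwise form incorrect, claim; the base case, the identity, and the vanishing statement are fine, but the inductive step is not established.
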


\begin{proof}
The implied constant in our proof might depend on the set $V$, however we can just take maximum of such constants over all $V \subseteq [K]$. Fix $V \subseteq [K]$. We essentially have to prove $2^{|V|}$ different inequalities, one for each realization of the minimum among $\frac{n_j}{d} \|j\xi +\mathds{1}_A(j)/2\|^2 $ and its inverse. 

Take any set $E \subseteq V$, we will prove that
\begin{align*}
&\Big|\sum_{U \subseteq V} \prod_{j \in V \setminus U} (-\epsilon_j \widetilde{p_{n_j}}(j \xi) ) \cdot \beta_{\overline{n}(U)}(\xi) \Big| 
\\&\lesssim_K \prod_{j \in E}  \frac{n_j}{d} \|j\xi +\frac{\mathds{1}_A(j)}{2} \|^2 \prod_{j \in V \setminus E} \big( \frac{n_j}{d} \|j\xi +\frac{\mathds{1}_A(j)}{2} \|^2\big)^{-1},
\end{align*}
from this inequality the lemma will follow. Notice that for any $U \subseteq V$ we have
\begin{align*}
&\prod_{j \in V \setminus U} (-\epsilon_j \widetilde{p_{n_j}}(j \xi) ) \cdot \beta_{\overline{n}(U)}(\xi)\\
&= \se{\substack{I_1,I_2,...,I_K \subseteq [d], \\
    (\forall j \in [K]) |I_j|=n_j, \\
    (\forall i,j \in [K], i \neq j) I_i \cap I_j=\emptyset
    }} \prod_{k \in V \setminus U} \big(-\epsilon_k \widetilde{p_{n_k}}(k \xi) \big) \prod_{k \in U} \Big( \prod_{i \in I_k} \cos(2k \pi \xi_i) \Big),
\end{align*}
hence
\begin{equation*} 
\begin{split}
    &\sum_{U \subseteq V} \prod_{j \in V \setminus U} (-\epsilon_j \widetilde{p_{n_j}}(j \xi) ) \cdot \beta_{\overline{n}(U)}(\xi)\\
    &=\se{\substack{I_1,I_2,...,I_K \subseteq [d], \\
    (\forall j \in [K]) |I_j|=n_j, \\
    (\forall i,j \in [K], i \neq j) I_i \cap I_j=\emptyset
    }} \prod_{k \in V} \Big( \prod_{i \in I_k} \cos(2k \pi \xi_i)  -\epsilon_k \widetilde{p_{n_k}}(k\xi)  \Big)
\\
&=\se{\substack{I_j \subseteq [d]; j \in V, \\
    (\forall j \in V) |I_j|=n_j, \\
    (\forall i,j \in V, i \neq j) I_i \cap I_j=\emptyset
    }} \prod_{k \in V} \Big( \prod_{i \in I_k} \cos(2k \pi \xi_i)  -\epsilon_k \widetilde{p_{n_k}}(k\xi)  \Big)
    \end{split}
\end{equation*}
so that
\begin{equation}
\label{eq:2.7}
\begin{split}
&\sum_{U \subseteq V} \prod_{j \in V \setminus U} (-\epsilon_j \widetilde{p_{n_j}}(j \xi) ) \cdot \beta_{\overline{n}(U)}(\xi)\\
&=\se{\substack{I_j \subseteq [d]; j \in E, \\
    (\forall j \in E) |I_j|=n_j, \\
    (\forall i,j \in E, i \neq j) I_i \cap I_j=\emptyset
    }} \prod_{j \in E} \Big( \prod_{i \in I_j} \cos(2j \pi \xi_i)  -\epsilon_j\widetilde{p_{n_j}}(j\xi)  \Big) 
    \\
&\hspace{2.8cm}\cdot\se{\substack{I_k \subseteq [d] \setminus \cup_{j \in E} I_j ; k \in V \setminus E, \\
    (\forall k \in V \setminus E) |I_k|=n_k, \\
    (\forall j,k \in V \setminus E, j \neq k) I_j \cap I_k=\emptyset
    }} \prod_{k \in V \setminus E} \Big( \prod_{i \in I_k} \cos(2k \pi \xi_i)  -\epsilon_k \widetilde{p_{n_k}}(k\xi)  \Big).
    \end{split}
\end{equation}
If $n_k=0$ for some $k \in V$ then $I_k=\emptyset,$ $\epsilon_k=1$ and $\widetilde{p_{n_k}}(k\xi)=1,$ thus we would have
\[
\prod_{i \in I_k} \cos(2k \pi \xi_i)  -\epsilon_k \widetilde{p_{n_k}}(k\xi)=1-1=0,
\]
so the entire sum $\sum_{U \subseteq V} \prod_{j \in V \setminus U} (-\epsilon_j \widetilde{p_{n_j}}(j \xi) ) \cdot \beta_{\overline{n}(U)}(\xi)$ would vanish. From now on we assume that $n_j \geq 1$ for all $j \in V$. 
\par
Fix disjoint sets $(I_j: j \in E)$, let $J= \cup_{j \in E} I_j$, $m=\sum_{j \in E} n_j$. For now we will look only at the expression in the last line of \eqref{eq:2.7}, that is
\[
(*):=\se{\substack{I_k \subseteq [d] \setminus J ; k \in V \setminus E, \\
    (\forall k \in V\setminus E) |I_k|=n_k, \\
    (\forall j,k \in V \setminus E, j \neq k) I_j \cap I_k=\emptyset
    }} \prod_{k \in V \setminus E} \Big( \prod_{i \in I_k} \cos(2k \pi \xi_i)  -\epsilon_k \widetilde{p_{n_k}}(k\xi)  \Big).
\]
Notice that 
\begin{align*}
(*)&=\sum_{U \subseteq V \setminus E } \prod_{j \in V \setminus E \setminus U}  \big( -\epsilon_j \widetilde{p_{n_j}}(j\xi) \big) \cdot \hspace{-0.2cm}\se{\substack{I_k \subseteq [d] \setminus J ; k \in V \setminus E, \\
    (\forall k \in V \setminus E) |I_k|=n_k, \\
    (\forall j,k \in V \setminus E, j \neq k) I_j \cap I_k=\emptyset
    }}  \prod_{k \in U} \prod_{i \in I_k} \cos(2k \pi \xi_i),  
    \\
    &=\sum_{U \subseteq V \setminus E } \prod_{j \in V \setminus E \setminus U}  \big( -\epsilon_j \widetilde{p_{n_j}}(j\xi) \big) \se{\substack{I_k \subseteq [d] \setminus J ; k \in U, \\
    (\forall k \in U) |I_k|=n_k, \\
    (\forall j,k \in U, j \neq k) I_j \cap I_k=\emptyset
    }}  \prod_{k \in U} \prod_{i \in I_k} \cos(2k \pi \xi_i).
\end{align*}
By Lemma \ref{lem:2.8} with $[d]\setminus J$ in place of $J$ we obtain
\begin{align*}
&\Big| \hspace{-0.3cm}\se{\substack{I_k \subseteq [d] \setminus J ; k \in U, \\
    (\forall k \in U) |I_k|=n_k, \\
    (\forall j,k \in U, j \neq k) I_j \cap I_k=\emptyset
    }} \hspace{-0.3cm}  \prod_{k \in U} \prod_{i \in I_k} \cos(2k \pi \xi_i) \Big|
    = \Big| \hspace{-0.3cm}\se{\substack{I_k \subseteq [d] \setminus J ; k \in [K], \\
    (\forall k \in U) |I_k|=n_k, \\
    (\forall k \in [K]\setminus U) |I_k|=0, \\
    (\forall j,k \in [K], j \neq k) I_j \cap I_k=\emptyset
    }} \hspace{-0.3cm} \prod_{k \in [K]} \prod_{i \in I_k} \cos(2k \pi \xi_i) \Big|
\\
&\leq 6 \prod_{k \in U} \exp\Big(- \frac{cn_k}{80K(d-m)} \min\Big( \sum_{i \in [d] \setminus J } \sin^2(k \pi \xi_i), \sum_{i \in [d] \setminus J} \cos^2(k \pi \xi_i) \Big) \Big)
\\
&\leq 6 \prod_{k \in U} \exp\Big(- \frac{cn_k}{80Kd} \min\Big( \sum_{i \in [d] \setminus J } \sin^2(k \pi \xi_i), \sum_{i \in [d] \setminus J} \cos^2(k \pi \xi_i) \Big) \Big).
\end{align*}
Using the above with inequality $\widetilde{p_{n_j}}(j\xi) \leq  \big( \frac{n_j}{d} \|j\xi +\frac{\mathds{1}_A(j)}{2} \|^2\big)^{-1}$ we get
\begin{align*}
|(*)| &\lesssim \sum_{U \subseteq V \setminus E} \prod_{j \in V \setminus E \setminus U} \big( \frac{n_j}{d} \|j\xi +\frac{\mathds{1}_A(j)}{2} \|^2\big)^{-1}\\
&\hspace{0.8cm}\cdot \prod_{k \in U} \exp\Big(- \frac{cn_k}{80Kd} \min\Big( \sum_{i \in [d] \setminus J } \sin^2(k \pi \xi_i), \sum_{i \in [d] \setminus J} \cos^2(k \pi \xi_i) \Big) \Big).
\end{align*}
Using inequality above and equation \eqref{eq:2.7} we obtain
\begin{equation} \label{eq:2.8}
\begin{split}
&\Big|\sum_{U \subseteq V} \prod_{j \in V \setminus U} (-\epsilon_j \widetilde{p_{n_j}}(j \xi) ) \cdot \beta_{\overline{n}(U)}(\xi)\Big| \lesssim
\se{\substack{I_j \subseteq [d]; j \in E, \\
    (\forall j \in E) |I_j|=n_j, \\
    (\forall i,j \in E, i \neq j) I_i \cap I_j=\emptyset
    }} \\
& \prod_{j \in E}\Big| \prod_{i \in I_j} \cos(2j \pi \xi_i)  -\epsilon_j\widetilde{p_{n_j}}(j\xi)  \Big|\cdot \sum_{U \subseteq V \setminus E} \prod_{j \in V \setminus E \setminus U} \big( \frac{n_j}{d} \|j\xi +\frac{\mathds{1}_A(j)}{2} \|^2\big)^{-1} 
    \\
   &\cdot \prod_{k \in U} \exp\Big(- \frac{cn_k}{80Kd} \min\Big( \sum_{i \in [d] \setminus \cup_{j \in E} I_j } \sin^2(k \pi \xi_i), \sum_{i \in [d] \setminus \cup_{j \in E} I_j} \cos^2(k \pi \xi_i) \Big) \Big).
   \end{split}
\end{equation}
Recall that for any sequences of complex numbers $(a_j: j \in [d])$, $(b_j: j \in [d])$, which are bounded by $1$ we have
\[
\Big| \prod_{j=1}^d a_j - \prod_{j=1}^db_j \Big| \leq \sum_{j=1}^d |a_j-b_j|.
\]
Thus for any $j \in E \setminus A$ and any $I_j$ we have
\begin{align*}
&\Big| \prod_{i \in I_j} \cos(2j \pi \xi_i)  -\epsilon_j\widetilde{p_{n_j}}(j\xi)  \Big| \leq \Big|\prod_{ i \in I_j} \cos(2j \pi \xi_i)-1\Big|+|1-e^{- \frac{n_j}{d} \| j \xi \|^2} |
\\
&\leq  \sum_{i \in I_j} |\cos(2j \pi \xi_j)-1|+|1-e^{- \frac{n_j}{d}  \| j \xi \|^2} | \leq 2 \sum_{ i \in I_j} \sin^2(j \pi \xi_i)+ \frac{n_j}{d} \| j \xi \|^2.
\end{align*}
On the other hand if $j \in A \cap E$, then
\begin{align*}
&\Big| \prod_{i \in I_j} \cos(2j \pi \xi_i)  -\epsilon_j\widetilde{p_{n_j}}(j\xi)  \Big|\\
&=\Big| (-1)^{n_j}\prod_{i \in I_j} \cos\big(2 \pi (j\xi_i+1/2) \big)  -(-1)^{n_j}e^{- \frac{n_j}{d} \| j \xi +1/2 \|}  \Big|\\
&\leq  \sum_{i \in I_j} |\cos\big(2 \pi(j \xi_i +\frac{1}{2})\big)-1|+|1-e^{- \frac{n_j}{d}  \| j \xi+1/2 \|^2} |\\
&\leq 2 \sum_{ i \in I_j} \sin^2\big(\pi (j\xi_i+\frac{1}{2}) \big)+ \frac{n_j}{d} \| j \xi+\frac{1}{2} \|^2.
\end{align*}
In both cases we get
\[
\Big| \prod_{i \in I_j} \cos(2j \pi \xi_i)  -\epsilon_j\widetilde{p_{n_j}}(j\xi)  \Big|
\leq 2 \sum_{ i \in I_j} \sin^2\big(\pi (j\xi_i+\frac{\mathds{1}_A(j)}{2}) \big)+ \frac{n_j}{d} \| j \xi+\frac{\mathds{1}_A(j)}{2} \|^2
\]
and using this in \eqref{eq:2.8} we obtain
\begin{multline} 
\label{eq:2.9}
\Big|\sum_{U \subseteq V} \prod_{j \in V \setminus U} (-\epsilon_j \widetilde{p_{n_j}}(j \xi) ) \cdot \beta_{\overline{n}(U)}(\xi)\Big| \\
 \lesssim_K
\se{\substack{I_j \subseteq [d]; j \in E, \\
    (\forall j \in E) |I_j|=n_j, \\
    (\forall i,j \in E, i \neq j) I_i \cap I_j=\emptyset
    }} \prod_{j \in E} \Big( \sum_{ i \in I_j} \sin^2\big(\pi (j\xi_i+\frac{\mathds{1}_A(j)}{2}) \big)+ \frac{n_j}{d} \| j \xi+\frac{\mathds{1}_A(j)}{2} \|^2  \Big) \sum_{U \subseteq V \setminus E} 
    \\
  \prod_{j \in V \setminus E \setminus U} \big( \frac{n_j}{d} \|j\xi +\frac{\mathds{1}_A(j)}{2} \|^2\big)^{-1}  \prod_{k \in U} \exp\Big(- \frac{cn_k}{80Kd} \min\Big( \sum_{i \in [d] \setminus \cup_{j \in E} I_j } \sin^2(k \pi \xi_i), \sum_{i \in [d] \setminus \cup_{j \in E} I_j} \cos^2(k \pi \xi_i) \Big) \Big) \\
   = \sum_{U \subseteq V \setminus E} \sum_{F \subseteq E}  \prod_{j \in V \setminus E \setminus U} \big( \frac{n_j}{d} \|j\xi +\frac{\mathds{1}_A(j)}{2} \|^2\big)^{-1} \prod_{k \in E \setminus F} \big( \frac{n_k}{d} \| k \xi+\frac{\mathds{1}_A(k)}{2} \|^2 \big) 
   \\
 \cdot \se{\substack{I_j \subseteq [d]; j \in E, \\
    (\forall j \in E) |I_j|=n_j, \\
    (\forall i,j \in E, i \neq j) I_i \cap I_j=\emptyset
    }} \prod_{j \in F} \Big( \sum_{ i \in I_j} \sin^2\big(\pi (j\xi_i+\frac{\mathds{1}_A(j)}{2}) \big)  \Big) 
    \\
    \cdot 
    \prod_{k \in U} \exp\Big(- \frac{cn_k}{80Kd} \min\Big( \sum_{i \in [d] \setminus \cup_{j \in E} I_j } \sin^2(k \pi\xi_i), \sum_{i \in [d] \setminus \cup_{j \in E} I_j} \cos^2(k \pi \xi_i) \Big) \Big).
    \end{multline}

    Fix $U \subseteq V \setminus E$ and $F \subseteq E$, we will investigate the expression in the last two lines, call it $(**)$. This expression can be rewritten as
   \begin{multline} \label{eq:2.10}
   (**)= \frac{\prod_{j \in E}n_j!(d-\sum_{j \in E} n_j)!}{d!} \sum_{\substack{x_i \in [d]; i \in F, \\
    (\forall i,j \in F, i \neq j) x_i \neq x_j}} \prod_{j \in F} \sin^2\big(\pi(j \xi_{x_j}+\frac{\mathds{1}_A(j)}{2}) \big) 
    \\
    \cdot \sum_{\substack{J \subseteq [d], \\  \{x_j: j \in F\} \subseteq J, \\ |J|=\sum_{i \in E}n_i} } \prod_{k \in U} \exp\Big(- \frac{cn_k}{80Kd} \min\Big( \sum_{i \in [d] \setminus J } \sin^2(k \pi \xi_i), \sum_{i \in [d] \setminus J} \cos^2(k \pi \xi_i) \Big) \Big) 
    \\
    \cdot \#\{ (I_j: j \in E) : |I_j|=n_j, \cup_{j \in E} I_j=J, (\forall i,j \in E, i \neq j) I_i \cap I_j= \emptyset, (\forall j \in F)x_j \in I_j\}.
   \end{multline}  
   Notice that for each $J \subseteq [d]$, satisfying $|J|= \sum_{i \in E}n_i$ the quantity in the last line of \eqref{eq:2.10} equals
  \[
   \frac{\big(\sum_{i \in E}n_i-|F|\big)!}{\prod_{i \in E \setminus F}n_i! \cdot \prod_{i \in F}(n_i-1)!}.
  \]
   Plugging this into \eqref{eq:2.10} and replacing $J$ with $[d] \setminus J$ we obtain
   \begin{equation} \label{eq:2.11}
   \begin{split}
   (**)=&\frac{\prod_{j \in F}n_j \cdot (d-\sum_{j \in E} n_j)!(\sum_{i \in E} n_i-|F|)!}{d!} \\
   &\cdot \sum_{\substack{x_i \in [d]; i \in F, \\
    (\forall i,j \in F, i \neq j) x_i \neq x_j}} \prod_{j \in F} \sin^2\big(\pi(j \xi_{x_j}+\frac{\mathds{1}_A(j)}{2}) \big) 
    \\
   &\cdot \sum_{\substack{J \subseteq [d] \setminus  \{x_j: j \in F\}, \\ |J|=d-\sum_{i \in E}n_i} } \prod_{k \in U} \exp\Big(- \frac{cn_k}{80Kd} \min\Big( \sum_{i \in J } \sin^2(k \pi \xi_i), \sum_{i \in J} \cos^2(k \pi \xi_i) \Big) \Big).
   \end{split}
    \end{equation}
    Looking at the  expression in the last line above we notice that
   \begin{align*}
    &\sum_{\substack{J \subseteq [d] \setminus  \{x_j: j \in F\}, \\ |J|=d-\sum_{i \in E}n_i} } \prod_{k \in U} \exp\Big(- \frac{cn_k}{80Kd} \min\Big( \sum_{i \in J } \sin^2(k \pi \xi_i), \sum_{i \in J} \cos^2(k \pi \xi_i) \Big) \Big)
    \\
    &\leq \sum_{\substack{J \subseteq [d] \setminus  \{x_j: j \in F\}, \\ |J|=d-\sum_{i \in E}n_i} } \prod_{k \in U}\Big( \exp\Big(- \frac{cn_k}{80Kd} \sum_{i \in J } \sin^2(k \pi \xi_i) \Big)+ \exp\Big(  - \frac{cn_k}{80Kd}\sum_{i \in J} \cos^2(k \pi \xi_i) \Big) \Big) 
   \\
    &= \binom{d-|F|}{d-\sum_{i \in E} n_i}\sum_{C \subseteq U}\se{\substack{J \subseteq [d] \setminus  \{x_j: j \in F\}, \\ |J|=d-\sum_{i \in E}n_i} } \exp\Big( - \frac{cn_k}{80Kd} \sum_{i \in J}\Big( \sum_{k \in C} \sin^2(k \pi \xi_i) +\sum_{k \in U \setminus C} \cos^2(k \pi \xi_i) \Big) \Big).
    \end{align*}
    Using Corollary \ref{cor:2.7} to each inner expectation we see that the above sum in $C$ does not exceed $3$ times
   \begin{align*}
& \sum_{C \subseteq U}  \exp\Big( - \frac{c'n_k(d-\sum_{i \in E}n_i)}{Kd(d-|F|)} \sum_{i \in [d] \setminus \{x_j : j \in F\}} \Big( \sum_{k \in C} \sin^2(k \pi \xi_i) +\sum_{k \in U \setminus C} \cos^2(k \pi \xi_i) \Big) \Big)
   \\
    &\leq \sum_{C \subseteq U}  \exp\Big( - \frac{c''n_k}{Kd} \sum_{i \in [d] \setminus \{x_j : j \in F\}} \Big( \sum_{k \in C} \sin^2(k \pi \xi_i) +\sum_{k \in U \setminus C} \cos^2(k \pi \xi_i) \Big) \Big)
    \\
    &= \prod_{k \in U} \Big(\exp\Big( -\frac{c''n_k}{Kd} \sum_{i \in [d] \setminus \{x_j : j \in F\}}  \sin^2(k \pi \xi_i) \Big)+ \exp\Big( - \frac{c''n_k}{Kd} \sum_{i \in [d] \setminus \{x_j : j \in F\}}  \cos^2(k \pi \xi_i) \Big) \Big)
    \\
    &\lesssim_K  \prod_{k \in U} \exp\Big( - \frac{c''n_k}{Kd} \min\big( \| k \xi \|^2, \| k \xi+\frac{1}{2} \|^2 \big)\Big) \lesssim_K  \prod_{k \in U} \Big( \frac{n_k}{d}  \| k \xi + \frac{\mathds{1}_A(k)}{2} \|^2\Big)^{-1}.
    \end{align*}
Coming back to \eqref{eq:2.11} we get
    \begin{multline} \label{eq:2.12}
   \se{\substack{I_j \subseteq [d]; j \in E, \\
    (\forall j \in E) |I_j|=n_j, \\
    (\forall i,j \in E, i \neq j) I_i \cap I_j=\emptyset
    }} \prod_{j \in F} \Big( \sum_{ i \in I_j} \sin^2\big(\pi (j\xi_i+\frac{\mathds{1}_A(j)}{2}) \big)  \Big) 
    \\
    \cdot \prod_{k \in U} \exp\Big(- \frac{cn_k}{80Kd} \min\Big( \sum_{i \in [d] \setminus \cup_{j \in E} I_j } \sin^2(k \pi \xi_i), \sum_{i \in [d] \setminus \cup_{j \in E} I_j} \cos^2(k \pi \xi_i) \Big) \Big)=(**)
    \\
    \lesssim_K
    \frac{\prod_{j \in F}n_j \cdot (d-\sum_{j \in E} n_j)!(\sum_{i \in E} n_i-|F|)!}{d!} \cdot  \binom{d-|F|}{d-\sum_{i \in E}n_i}
    \\
    \sum_{\substack{x_i \in [d]; i \in F, \\
    (\forall i,j \in F, i \neq j) x_i \neq x_j}} \prod_{j \in F} \sin^2\big(\pi(j \xi_{x_j}+\frac{\mathds{1}_A(j)}{2}) \big) \prod_{k \in U} \Big( \frac{n_k}{d}  \| k \xi + \frac{\mathds{1}_A(k)}{2} \|^2\Big)^{-1}
    \\
    \lesssim_K   
    \frac{\prod_{j \in F}n_j \cdot (d-|F|)!}{d!} 
    \sum_{\substack{x_i \in [d]; i \in F}} \prod_{j \in F} \sin^2\big(\pi(j \xi_{x_j}+\frac{\mathds{1}_A(j)}{2}) \big) \prod_{k \in U} \Big( \frac{n_k}{d}  \| k \xi + \frac{\mathds{1}_A(k)}{2} \|^2\Big)^{-1} 
    \\
    \lesssim_K \prod_{j \in F} \big( \frac{n_j}{d} \| j \xi+ \frac{\mathds{1}_A(j)}{2} \|^2 \big) \prod_{k \in U} \Big( \frac{n_k}{d}  \| k \xi + \frac{\mathds{1}_A(k)}{2} \|^2\Big)^{-1}.
    \end{multline}
    
    Finally combining \eqref{eq:2.12} and \eqref{eq:2.9},  we reach
  \begin{align*}
    &\Big|\sum_{U \subseteq V} \prod_{j \in V \setminus U} (-\epsilon_j \widetilde{p_{n_j}}(j \xi) ) \cdot \beta_{\overline{n}(U)}(\xi)\Big| \lesssim_K \sum_{U \subseteq V \setminus E} \sum_{F \subseteq E} \prod_{j \in V \setminus E \setminus U} \big( \frac{n_j}{d} \|j\xi +\frac{\mathds{1}_A(j)}{2} \|^2\big)^{-1} 
   \\
    & \prod_{k \in E \setminus F} \big( \frac{n_j}{d} \| j \xi+\frac{\mathds{1}_A(j)}{2} \|^2 \big) \cdot \prod_{j \in F} \big( \frac{n_j}{d} \| j \xi+ \frac{\mathds{1}_A(j)}{2} \|^2 \big) \prod_{k \in U} \Big( \frac{n_k}{d}  \| k \xi + \frac{\mathds{1}_A(k)}{2} \|^2\Big)^{-1}, \end{align*}
  which leads to
    \[
  \Big|\sum_{U \subseteq B} \prod_{j \in V \setminus U} (-\epsilon_j \widetilde{p_{n_j}}(j \xi) ) \cdot \beta_{\overline{n}(U)}(\xi)\Big|
    \lesssim_K \prod_{j \in E} \big( \frac{n_j}{d} \| j \xi+ \frac{\mathds{1}_A(j)}{2} \|^2 \big) \prod_{k \in V \setminus E} \Big( \frac{n_k}{d}  \| k \xi + \frac{\mathds{1}_A(k)}{2} \|^2\Big)^{-1}.
    \]
    Since $E \subseteq V$ was chosen arbitrarily, this concludes the proof of Lemma \ref{lem:2.12}.
\end{proof}
We will also need the following simple lemma, which proof we skip.
\begin{Lem} \label{lem:2.13}
    For every $\varepsilon \in \{0,1\}$ and any $x \geq 0$ we have
    \[
    \sum_{n + \varepsilon \in \D} \min \big(n x, (n x)^{-1} \big)^2 \leq 10.
    \]
\end{Lem}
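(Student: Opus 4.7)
The plan is to treat the cases $\varepsilon = 0$ and $\varepsilon = 1$ separately and, in each, rewrite the sum as $\sum_{k=1}^{\infty} \min(a_k, 1/a_k)^2$ where $a_k = 2^k x$ (for $\varepsilon = 0$) or $a_k = (2^k - 1)x$ (for $\varepsilon = 1$). The case $x = 0$ is trivial, so assume $x > 0$. The key observation in both cases is that the sequence $\{a_k\}$ is increasing with ratio $a_{k+1}/a_k \geq 2$; for $\varepsilon = 0$ this is an equality, and for $\varepsilon = 1$ it follows from the elementary identity $2^{k+1} - 1 = 2(2^k-1) + 1 \geq 2(2^k - 1)$. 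Consequently both $\{a_k^2\}$ and $\{1/a_k^2\}$ are majorised by geometric progressions of ratio at most $\tfrac14$, so the whole sum is dominated by the two tails meeting at the ``balance index'' where $a_k \approx 1$.

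Concretely, I would let $k^{*}$ be the largest integer $k \geq 1$ with $a_k \leq 1$, setting $k^{*} = 0$ if no such $k$ exists. Using the pointwise bound $\min(a_k, 1/a_k)^2 \leq a_k^2$ for $k \leq k^{*}$ and $\min(a_k, 1/a_k)^2 \leq 1/a_k^2$ for $k > k^{*}$, the sum splits as
\[
\sum_{k=1}^{k^{*}} a_k^2 + \sum_{k > k^{*}} \frac{1}{a_k^2}.
\]
In each of these, successive terms differ by a factor of at least $4$, so the first sum is dominated by $a_{k^{*}}^{2} \cdot \tfrac{4}{3}$ and the second by $(1/a_{k^{*}+1})^{2} \cdot \tfrac{4}{3}$. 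By definition $a_{k^{*}} \leq 1$, giving $a_{k^{*}}^{2} \leq 1$, and $a_{k^{*}+1} > 1$ (when $k^{*} \geq 0$; if all $a_k > 1$, only the second tail exists and starts from $k = 1$ with $a_1 > 1$). Thus both tails are bounded by $\tfrac{4}{3}$, and the total is at most $\tfrac{8}{3} < 10$.

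There is no genuine obstacle; the only bookkeeping concerns the edge cases where $k^{*} = 0$ (when $a_1 > 1$, so the first sum is empty and only the geometric tail at infinity contributes) or when $k^{*}$ is large (when the second tail begins near $k^{*}+1$). In both cases the bound $\tfrac{8}{3}$ persists with room to spare, comfortably establishing the stated bound of $10$; the slack explains why the authors chose to omit the calculation.
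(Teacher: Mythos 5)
Your proof is correct. The paper explicitly omits the proof of this lemma ("which proof we skip"), and your argument — splitting at the balance index and summing two geometric tails of ratio $\le 1/4$, using $a_{k+1}/a_k \ge 2$ in both parity cases — is exactly the standard computation the authors had in mind, with the resulting bound $8/3$ comfortably below $10$.
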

Finally we are able to prove Proposition \ref{prop:2.9}.
\begin{proof}
Fix $K \in \N$ and assume that $d \geq 2K$ (the other case is trivial). We will prove inductively over $|V|$ the following.\\
{\bf Statement:}\textit{ For any $ V \subseteq [K]$ there exists $C_{V,K}$ such that for all $\eta \in \{0,1\}^V$,  we have
\[
 \Big\| \sup_{\substack{
 n_j+\eta(j) \in \D; j \in V, \\
 (\forall j \in V) n_j \leq \frac{d}{2K}}} |\mathcal{D}_{\overline{n}(V)}f| \Big\|_{2} \leq C_{V,K} \big\| f \big\|_{2},\qquad f \in \ell^2(\Z^d).
\]
}\\
Note that we can take $C_K=\sup_{V \subseteq [K]} C_{V,K}$ and have a constant independent of $V$ in the statement above. Note that for any $U \subseteq V \subseteq [K]$ we have $\overline{n}(V)(U)=\overline{n}(U)$, where the notation $\overline{n}(V)(U)$ was introduced in item 24.\ on p.\ \pageref{p: nota}. \\
\textit{Inductive proof of the statement above.} \\
\textbf{1)} If $|V|=0$, then $\overline{n}(V)= (0,0,...,0)$, so $\mathcal{D}_{\overline{n}(V)}$ is the identity operator and the claim is obvious.
\\
\textbf{2) Inductive step.} Take $V \subseteq [K]$, $|V|=m$ and assume that the statement is true for all sets of size smaller than $m$. Take any $f \in \ell^2(\Z^d)$, we can decompose $f$ in the following way
\[
f= \sum_{A \subseteq [K]} f_A,
\]
where every $f_A$ satisfies
\[
\supp(\widehat{f_A}) \hspace{-0.1cm} \subseteq \hspace{-0.1cm}\Big\{\xi \in \T^d \hspace{-0.1cm}:  \hspace{-0.1cm}(\forall j \in A) \| j \xi+ \frac{1}{2} \| \hspace{-0.1cm}\leq \hspace{-0.1cm}\|j \xi \|,   (\forall j \in [K] \setminus A) \|j \xi \| \hspace{-0.1cm}< \hspace{-0.1cm}\| j \xi+ \frac{1}{2} \| \Big\}.
\]
By triangle inequality we have
\begin{equation*} 
 \Big\| \sup_{\substack{
 n_j+\eta(j) \in \D; j \in V, \\
 (\forall j \in V) n_j \leq \frac{d}{2K}}} |\mathcal{D}_{\overline{n}(V)}f| \Big\|_{2} \leq \sum_{A \subseteq [K] } \Big\| \sup_{\substack{
 n_j+\eta(j) \in \D; j \in V, \\
 (\forall j \in V) n_j \leq \frac{d}{2K}}} |\mathcal{D}_{\overline{n}(V)}f_A| \Big\|_{2}.
\end{equation*}
We will bound each individual summand for $A \subseteq [K].$ Using the decomposition
\[
\beta_{\overline{n}(V)}=\sum_{U\in V}  \prod_{j \in V \setminus U} (-\epsilon_j \widetilde{p_{n_j}}(j \xi))\cdot \beta_{\overline{n}(U)}-\sum_{\substack{U \subseteq V, \\ U \neq V}}\prod_{j \in V \setminus U} (-\epsilon_j \widetilde{p_{n_j}}(j \xi))\cdot \beta_{\overline{n}(U)}
\]
we obtain
\begin{align*} 
    &\Big\| \sup_{\substack{
 n_j+\eta(j) \in \D; j \in V, \\
 (\forall j \in V) n_j \leq \frac{d}{2K}}} |\mathcal{D}_{\overline{n}(V)}f_A| \Big\|_{2}=\Big\| \sup_{\substack{
 n_j+\eta(j) \in \D; j \in V, \\
 (\forall j \in V) n_j \leq \frac{d}{2K}}} \big|\mathcal{F}^{-1}\Big( \beta_{\overline{n}(V)}\widehat{f_A} \Big)\big| \Big\|_{2}
 \\
 &\leq 
 \Big\| \sup_{\substack{
 n_j+\eta(j) \in \D; j \in V, \\
 (\forall j \in V) n_j \leq \frac{d}{2K}}} \big|\mathcal{F}^{-1}\Big(\sum_{U \subseteq V} \prod_{j \in V \setminus U} (-\epsilon_j \widetilde{p_{n_j}}(j \xi) ) \cdot \beta_{\overline{n}(U)}(\xi)\widehat{f_A} \Big)\big|  \Big\|_{2}
 \\
 &+\sum_{\substack{U \subseteq V, \\ U \neq V}} \Big\| \sup_{\substack{
 n_j+\eta(j) \in \D; j \in V, \\
 (\forall j \in V) n_j \leq \frac{d}{2K}}} \big|\mathcal{F}^{-1}\Big( \prod_{j \in V \setminus U} (-\epsilon_j \widetilde{p_{n_j}}(j \xi) ) \cdot \beta_{\overline{n}(U)}(\xi)\widehat{f_A} \Big)\big|  \Big\|_{2} \hspace{-0.2cm}=: \hspace{-0.1cm}W_1 \hspace{-0.1cm}+ \hspace{-0.1cm}W_2,
\end{align*}
where $\epsilon\in \{-1,1\}^K$ is defined as in Lemma \ref{lem:2.12}, i.e.\ $\epsilon_j=(-1)^{n_j}$ if $j \in A$ and $\epsilon_j=1$ if $j \not\in A$.

Estimating $W_1$ we use a standard square function argument and Parseval's theorem to get
\begin{align*} 
W_1 &\leq \Big\| \Big( \sum_{\substack{n_j+\eta(j) \in \D;j \in V, \\ (\forall j \in V) n_j \leq \frac{d}{2K}}} \Big| \mathcal{F}^{-1}\Big(\sum_{U \subseteq V} \prod_{j \in V \setminus U} (-\epsilon_j \widetilde{p_{n_j}}(j \xi) ) \cdot \beta_{\overline{n}(U)}(\xi)\widehat{f_A} \Big) \Big|^2  \Big)^{1/2} \Big\|_{2}
\\
&= 
 \Big( \sum_{\substack{n_j+\eta(j) \in \D;j \in V, \\ (\forall j \in V) n_j \leq \frac{d}{2K}}} \Big\| \mathcal{F}^{-1}\Big( \sum_{U \subseteq V}\prod_{j \in V \setminus U} (-\epsilon_j \widetilde{p_{n_j}}(j \xi) ) \cdot \beta_{\overline{n}(U)}(\xi)\widehat{f_A} \Big) \Big\|_{2}^2  \Big)^{1/2} 
 \\
 &=
 \Big( \sum_{\substack{n_j+\eta(j) \in \D;j \in V, \\ (\forall j \in V) n_j \leq \frac{d}{2K}}} \int_{\T^d}\Big| \sum_{U \subseteq V}\prod_{j \in V \setminus U} (-\epsilon_j \widetilde{p_{n_j}}(j \xi) ) \cdot \beta_{\overline{n}(U)}(\xi)\widehat{f_A}(\xi) \Big|^2  d \xi \Big)^{1/2}.
\end{align*}
Thus, using Lemma \ref{lem:2.12}, the assumption on $\supp(\widehat{f_A}),$ and Lemma \ref{lem:2.13} we obtain
 \begin{equation*}
 \begin{split}
 W_1&\lesssim_K 
 \Big(  \int_{\T^d} \sum_{\substack{n_j+\eta(j) \in \D;j \in V, \\ (\forall j \in V) n_j \leq \frac{d}{2K}}} \prod_{j \in V} \min \Big( \frac{n_j}{d} \|j\xi +\frac{\mathds{1}_A(j)}{2} \|^2, \big( \frac{n_j}{d} \|j\xi +\frac{\mathds{1}_A(j)}{2} \|^2\big)^{-1} \Big)^2|\widehat{f_A}(\xi)|^2  d \xi \Big)^{1/2}
 \\
 &\lesssim_K 
 \Big(  \int_{\T^d}|\widehat{f_A}(\xi)|^2  d \xi \Big)^{1/2} = \|\widehat{f_A} \|_{L^2(\T^d)}= \| f_A \|_{2}.
 \end{split}
\end{equation*}
\par
Now we will bound $W_2.$ For any $U \subseteq V$, $U \neq V$ we have
\begin{equation} \label{eq:2.16}
\begin{split}
&\Big\| \sup_{\substack{
 n_j+\eta(j) \in \D; j \in V, \\
 (\forall j \in V) n_j \leq \frac{d}{2K}}} \big|\mathcal{F}^{-1}\Big( \prod_{j \in V \setminus U} (-\epsilon_j \widetilde{p_{n_j}}(j \xi) ) \cdot \beta_{\overline{n}(U)}(\xi)\widehat{f_A} \Big)\big|  \Big\|_{2}
 \\
 &=
 \Big\| \sup_{\substack{
 n_j+\eta(j) \in \D; j \in V, \\
 (\forall j \in V) n_j \leq \frac{d}{2K}}} \big|\mathcal{F}^{-1}\Big( \prod_{j \in V \setminus U} p_{n_j}(j \xi+ \frac{\mathds{1}_A(j)}{2})  \cdot \beta_{\overline{n}(U)}(\xi)\widehat{f_A} \Big)\big|  \Big\|_{2}
 \\
 &=\Big\| \sup_{\substack{
 n_j+\eta(j) \in \D; j \in V, \\
 (\forall j \in V) n_j \leq \frac{d}{2K}}} \big|\Big( \prod_{j \in V \setminus U} P_{j,n_j,\mathds{1}_A(j)} \Big)   \Big(\mathcal{D}_{\overline{n}(U)}f_A \Big)\big|  \Big\|_{2}.
 \end{split}
\end{equation}
Using \eqref{eq:Pkn1<Pkn0} and the positivity preserving property of $P_{j,n_j,0}$ from Theorem \ref{thm:2.11} we see that the last term in \eqref{eq:2.16} is bounded by
\[
\Big\| \sup_{\substack{
 n_j+\eta(j) \in \D; j \in V \setminus U, \\
 (\forall j \in V \setminus U) n_j \leq \frac{d}{2K}}} \Big( \prod_{j \in V \setminus U} P_{j,n_j,0} \Big)   \Big( \sup_{\substack{
 n_j+\eta(j) \in \D; j \in U, \\
 (\forall j \in U) n_j \leq \frac{d}{2K}}} \Big|\mathcal{D}_{\overline{n}(U)}f_A \Big|\Big)  \Big\|_{2}.
\]
By Theorem \ref{thm:2.11} used $|V \setminus U|$ times we have that the above is bounded by
\[
C^{|V \setminus U|} \Big\| \sup_{\substack{
 n_j+\eta(j) \in \D; j \in U, \\
 (\forall j \in U) n_j \leq \frac{d}{2K}}} \Big|\mathcal{D}_{\overline{n}(U)}f_A \Big|  \Big\|_{2}, 
\]
which by our induction hypothesis (note that $|U|<|V|$) is controlled by $C_{U,K} \| f_A \|_{\ell^2{(\Z)}}$. Combining everything, for every $A \subseteq [K]$ we obtain
\[
\Big\| \sup_{\substack{
 n_j+\eta(j) \in \D; j \in V, \\
 (\forall j \in V) n_j \leq \frac{d}{2K}}} |\mathcal{D}_{\overline{n}(V)}f_A| \Big\|_{2} \lesssim_{V,K} \| f_A \|_{2},
\]
hence
\[
\Big\| \sup_{\substack{
 n_j+\eta(j) \in \D; j \in V, \\
 (\forall j \in V) n_j \leq \frac{d}{2K}}} |\mathcal{D}_{\overline{n}(V)}f| \Big\|_{2} \lesssim_{V,K} \sum_{A \subseteq [K]} \| f_A \|_{2} \lesssim_{V,K} \|f \|_{2}.
\]
This concludes proof of inductive step, hence concludes the proof of Proposition \ref{prop:2.9}.
\end{proof}
\subsection{Difference estimates for multipliers}
\label{sec:23}
In this subsection we will prove crucial difference estimates for multipliers $\beta_{\overline{n}}(\xi)$, which will allow us to deduce Theorem \ref{thm:1.5} from Proposition \ref{prop:2.9}. Before that, we have to introduce some definitions and a multi-parameter Rademacher-Menshov type inequality. 
\begin{Defn} \label{def:2.14}
  For $K \in \N$, $k \in [K]$ and a sequence $(a_{\overline{n}}: \overline{n} \in \N_0^K)$ we define the $2$-difference operator in parameter $k$ 
 \[
  (\Delta^k(a))_{n_1,\ldots,n_K}=a_{n_1,...,n_K}-a_{n_1,...,n_{k-1},n_k-2,n_{k+1},...,n_K}.
  \]
  Slightly abusing the notation we will write
  \[
  \Delta^k_{n_1,...,n_K}(a):=(\Delta^k(a))_{n_1,\ldots,n_K}.
  \]
Moreover for $U \subseteq [K]$ we define
  \[
  \Delta^U_{\overline{n}}= \prod_{k \in U} \Delta^k_{\overline{n}},
  \]
  so that for example
  \[
  \Delta_{\overline{n}}^{\{1,2\}}(a)=a_{n_1,n_2,\ldots,n_K}-a_{n_1-2,n_2,\ldots,n_K}-a_{n_1,n_2-2,\ldots,n_K}+a_{n_1-2,n_2-2,\ldots,n_K}.
  \]
\end{Defn}
A simple calculation shows that for any $i,j \in [K]$ we have
\[
\Delta^i_{\overline{n}} \circ \Delta^j_{\overline{n}}=\Delta^j_{\overline{n}} \circ \Delta^i_{\overline{n}},
\]
hence in the definition of $\Delta^U_{\overline{n}}$ it does not matter in which order we compose $\Delta^k_{\overline{n}}$. Note that for a fixed $\overline{k} \in \N^K$ and a sequence $(a_{\overline{n}}: \overline{n} \in \N_0^K)$, if
\[
b_{\overline{n}}= a_{\overline{n}+\overline{k}},
\]
then for any $U \subseteq [K]$ we have
\begin{equation}
\label{eq:DSnk}
\Delta_{\overline{n}}^U(b)=\Delta_{\overline{n}+\overline{k}}^U(a).
\end{equation}
We also introduce the following notation for dyadic intervals
\[
I_{j}^i=\big((j-1)2^{i},j2^{i}\big].
\]
Notice that for $i \geq 1$ we have
\begin{equation}
\label{eq:sumDelkdif}
\sum_{n_k \in I_{j}^i \cap (2 \N_0)} \Delta^k_{n_1,...,n_K}(a) \hspace{-0.05cm}= \hspace{-0.05cm}a_{n_1,...,n_{k-1},j2^i,n_{k+1},...,n_K}-a_{n_1,...,n_{k-1},(j-1)2^i,n_{k+1},...,n_K}.
\end{equation}
We are ready to state a multi-parameter Rademacher-Menshov type inequality. In the two parameter case such an inequality was established in \cite[Lemma 3.1]{KMT1} (see also \cite[Section 3.4]{KLMP} and \cite[Lemma 8.1]{KrMiTa}). Compared to \cite[Lemma 3.1]{KMT1} in the two parameter case we only consider $n_1^0=n_2^0=0$, which suffices for our purposes, while slightly simplifying the proof.
\begin{Lem} \label{lem:2.15}
For any $K \in \N$ and every sequence of complex numbers $a=(a_{\overline{n}}:\overline{n} \in \N_0^K)$ and all $s_1,s_2,...,s_K \in \N_0$ and $\overline{m} \in (\N_0)^K \cap \prod_{j=1}^K [0,2^{s_j}]$ we have
\begin{align*}
&\sup_{\substack{0 \leq n_j \leq m_j; j \in [K], \\ (\forall j \in [K]) \,2\,|\,n_j}} |a_{n_1,...,n_K}|
\\
&\le \sum_{\substack{U \subseteq [K], \\ U \neq  \emptyset}} \sum_{1 \leq i_u \leq s_u; u \in U} \Big(\sum_{1 \leq j_u \leq 2^{s_u-i_u}; u \in U} 
\Big| \hspace{-0.3cm}\sum_{\substack{k_u \in I_{j_u}^{i_u}; u \in U, \\ (\forall u \in U) \,2\,|\,k_u,\\
(\forall u \in U) k_u \leq m_u}} \hspace{-0.3cm} \Delta^U_{\overline{k}(U)}(a) \Big|^2
\Big)^{1/2} \hspace{-0.3cm}+|a_{\overline{0}}|.
\end{align*}
\end{Lem}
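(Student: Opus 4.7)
The plan is to argue by induction on $K$. For the base case $K=1$, I would establish the inequality as a small variant of the classical Rademacher--Menshov inequality. Given any even $n \leq m \leq 2^s$, the binary expansion of $n/2$ decomposes $\{2,4,\ldots,n\}$ into at most $s$ pairwise disjoint dyadic intervals of the form $I_{j_r}^{r}$, one at each scale. By the telescoping identity \eqref{eq:sumDelkdif}, the partial sum $a_n - a_0$ equals the sum of the corresponding block sums $\sum_{k \in I_{j_r}^{r} \cap 2\N_0,\, k \leq m}\Delta^{1}_k(a)$. Taking the supremum over $n$, moving it inside the sum over scales by the triangle inequality, and using the trivial pointwise bound $\sup_{j}|f_j| \leq \bigl(\sum_j |f_j|^2\bigr)^{1/2}$ at each fixed scale, yields the claim, once $\sup_n|a_n| \leq |a_0| + \sup_n |a_n - a_0|$ is taken into account.

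For the inductive step from $K-1$ to $K$, I would split off the last coordinate via the one-step telescoping
\begin{equation*}
a_{n_1,\ldots,n_K} \;=\; a_{n_1,\ldots,n_{K-1},0} \;+\; \sum_{\substack{k_K \in 2\N_0 \\ 2 \leq k_K \leq n_K}} \Delta^{K}_{n_1,\ldots,n_{K-1},k_K}(a)
\end{equation*}
and then take the supremum over the admissible $\overline{n}$. The inductive hypothesis applied to the first term (as a function of the first $K-1$ variables) produces all contributions with $U \subseteq [K-1]$ nonempty, together with the base constant $|a_{\overline{0}}|$. To the second term I would first apply the one-dimensional Rademacher--Menshov inequality in the variable $n_K$ only, which bounds the supremum in $n_K$ by
\begin{equation*}
\sum_{i_K=1}^{s_K} \Bigl( \sum_{j_K=1}^{2^{s_K-i_K}} \bigl| c^{i_K,j_K}_{n_1,\ldots,n_{K-1}} \bigr|^2 \Bigr)^{1/2}, \qquad c^{i_K,j_K}_{\overline{n}'} := \sum_{\substack{k_K \in I_{j_K}^{i_K} \cap 2\N_0\\ k_K \leq m_K}} \Delta^{K}_{n_1,\ldots,n_{K-1},k_K}(a).
\end{equation*}

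Next I would pass the supremum over $(n_1,\ldots,n_{K-1})$ inside, first into the sum over $i_K$ by the triangle inequality and then into the $\ell^2$-sum over $j_K$ via the elementary estimate $\sup_x\bigl(\sum_j|f_j(x)|^2\bigr)^{1/2} \leq \bigl(\sum_j(\sup_x|f_j(x)|)^2\bigr)^{1/2}$, and then apply the inductive hypothesis to each $(c^{i_K,j_K}_{\overline{n}'})_{\overline{n}'}$ viewed as a function of $(n_1,\ldots,n_{K-1})$. Here I would use the key observation that the difference operators $\Delta^{U'}$ in the first $K-1$ coordinates commute with summation in $k_K$, so that $\Delta^{U'}$ applied to $c^{i_K,j_K}$ equals the analogous partial sum $\sum_{k_K \in I_{j_K}^{i_K} \cap 2\N_0,\, k_K \leq m_K}\Delta^{U' \cup \{K\}}$ applied to $a$. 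In this way the nonempty $U'$'s contribute the terms indexed by $U = U' \cup \{K\}$ with $|U|\geq 2$, while the base constant $|c^{i_K,j_K}_{\overline{0}}|$ furnished by the inductive hypothesis contributes the $U = \{K\}$ term. A final application of Minkowski's inequality in $\ell^2$, used to swap the outer sum over $U'$ with $\bigl(\sum_{j_K}(\cdot)^2\bigr)^{1/2}$, assembles everything into the stated form.

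The main obstacle is the combinatorial bookkeeping: one must keep track of two families of dyadic scales $(i_K,j_K)$ and $(i_u,j_u)_{u \in U'}$ through two successive applications of Rademacher--Menshov, and carefully verify that after the $\ell^2$ swap the inductive output reconstitutes the double sum $\bigl(\sum_{\overline{j}_U}|\cdot|^2\bigr)^{1/2}$ at scales $(i_u)_{u \in U}$ correctly; the constant depending only on $K$ is then immediate since each induction step multiplies the constant by an absolute factor.
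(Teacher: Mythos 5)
Your proposal is correct and follows essentially the same route as the paper: induction on $K$, with the base case handled by a dyadic (binary-expansion) decomposition and the inductive step obtained by splitting off the last coordinate, applying the one-dimensional Rademacher--Menshov bound in $n_K$, invoking the inductive hypothesis on the block sums $c^{i_K,j_K}$ (using that $\Delta^{U'}$ commutes with summation in $k_K$), and reassembling via Minkowski. The only cosmetic differences are that your base case decomposes $(0,n]$ directly (one block per scale) where the paper decomposes a general interval $(n_0,k_0]$ (at most two blocks per scale), and that you swap the supremum over $(n_1,\dots,n_{K-1})$ with the $\ell^2$ sums pointwise rather than introducing the auxiliary sequence $b_n=\sup_{\overline{n}'}|\widetilde a_{\overline{n}',n}|$; both are equivalent.
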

\begin{proof}
    We will prove the statement by induction with respect to $K$.
    \par \textbf{1) Case K=1.}  We start with the following simple combinatorial \\ property: \\
    \textbf{Combinatorial property.} Any interval $(0,n ]$ such that $0 < n \leq 2^s $, $n \in 2 \N$ can be decomposed into disjoint union of intervals, where every interval belongs to 
    \[
    \mathcal{I}_i=\{ ((j-1)2^{i},j2^i] : 1 \leq j \leq 2^{s-i} \},
    \]
    for some $1 \leq i \leq s$, moreover each length appears at most once. 
   \par We prove the property above just by looking at the binary decomposition of $n$. Let 
    \[
    n=\sum_{i=1}^{\infty} a_i 2^{i} ,
    \]
     where $a_i\in \{ 0,1\} $ and all but finitely many of $a_i$ are equal to $0$ and, since $n\in 2\N,$ the sum starts from $i=1$ and not $0$.
    Let $U=\{ i \in \N: a_i \neq 0 \}$ and $i_1>i_2>...>i_t$ be its enumeration. Then we define $u_0=0$ and 
    \[
    u_j= \sum_{k \geq i_j} a_k 2^k
    \]
    for $j \in [t]$. We note that the decomposition
    \[
    (0,n]= \bigcup_{j=0}^{t-1} (u_{j},u_{j+1}]
    \]
     satisfies the desired properties.
     \par Let us go back to the proof of the original statement, take $s \in \N_0$ and  $m \in  \N_0, m \leq 2^s$ and  any sequence $(a_n: n \in \N_0)$, then we have
\[
\sup_{\substack{0 \leq n \leq m, \\ 2 \mid n}} |a_n| \leq  |a_{n_0}-a_{0}|+|a_0|,
\]
for some $n_0 \in 2\N$ such that $0<n_0\le m.$ Using the combinatorial property for the interval $(0,n_0]$ we get that
\[
(0,n_0]= \bigcup_{j=0}^{t-1} (u_{j},u_{j+1}],
\]
where each $(u_j,u_{j+1}]$ is dyadic of length $\geq 2$ and each length appears at most once. Then we have
\[
|a_{n_0}-a_{0}| \leq \sum_{j=0}^{t-1} |a_{u_{j+1}}- a_{u_j}|= \sum_{i=1}^s \sum_{\substack{j: (u_j,u_{j+1}] \in \mathcal{I}_i, \\ u_{j+1} \leq m}} |a_{u_{j+1}}- a_{u_j}|.
\]
Observe that the innermost sum above contains at most one term, hence by \eqref{eq:sumDelkdif} we get
\begin{align*}
\sup_{\substack{0 \leq n \leq m, \\ 2 \mid n}} |a_n| &\leq  \sum_{i=1}^s \Big( \sum_{\substack{j: (u_j,u_{j+1}] \in \mathcal{I}_i, \\ u_{j+1} \leq m}} |a_{u_{j+1}}- a_{u_j}|^2 \Big)^{1/2} +|a_0|\\
&\leq   \sum_{i=1}^s \Big( \sum_{\substack{1 \leq j \leq 2^{s-i}, \\ j2^i \leq m}}|a_{j2^i}- a_{(j-1)2^i}|^2 \Big)^{1/2} + |a_0|
\\
&\le \sum_{i=1}^s \Big( \sum_{1 \leq j \leq 2^{s-i}}\Big|\sum_{\substack{k \in I_{j}^i \cap (2\N_0) \cap [0,m]}} \Delta_k(a) \Big|^2 \Big)^{1/2} + |a_0|,
\end{align*}
which proves Lemma \ref{lem:2.15} in the case $K=1$.

\par \textbf{2) Inductive step.} Assume that Lemma \ref{lem:2.15} holds for some $K \geq 1$ and consider $K+1$ in place of $K$. Take any $s_1,...,s_{K+1}$ and $\overline{m}=(m_1,...,m_{K+1}) \in (\N_0)^{K+1} \cap \prod_{j=1}^{K+1} [0,2^{s_j}]$ and a sequence of complex numbers \\ $a=(a_{\overline{n}}:\overline{n} \in \N_0^{K+1})$. Our goal is to prove that 
\begin{align*}
&\sup_{\substack{0 \leq n_j \leq m_j; j \in [K+1], \\ (\forall j \in [K+1]) \,2\,|\,n_j}} |a_{n_1,...,n_{K+1}}| \\
&\le \sum_{\substack{U \subseteq [K+1], \\ U \neq  \emptyset}} \sum_{1 \leq i_u \leq s_u; u \in U} \Big(\sum_{1 \leq j_u \leq 2^{s_u-i_u}; u \in U} 
\Big| \sum_{\substack{k_u \in I_{j_u}^{i_u}; u \in U, \\ (\forall u \in U) \,2\,|\,k_u, \\ (\forall u \in U) k_u \leq m_u }} \hspace{-0.3cm}\Delta^U_{\overline{k}(U)}(a) \Big|^2
\Big)^{1/2}
\hspace{-0.3cm}+|a_{\overline{0}}|,
\end{align*}
 Notice that we have
\begin{multline} \label{eq:2.17}
    \sup_{\substack{0 \leq n_j \leq m_j; j \in [K+1], \\ (\forall j \in [K+1]) \,2\,|\,n_j}} |a_{n_1,...,n_{K+1}}| \leq \sup_{\substack{0 \leq n_j \leq m_j; j \in [K], \\ (\forall j \in [K]) \,2\,|\,n_j}} |a_{n_1,...,n_{K},0}| \\+ \sup_{\substack{0 \leq n_{K+1} \leq m_{K+1} , \\ \,2\,|\,n_{K+1}}} \sup_{\substack{0 \leq n_j \leq m_j; j \in [K], \\ (\forall j \in [K]) \,2\,|\,n_j}} 
 |a_{n_1,...,n_{K},n_{K+1}}- a_{n_1,...,n_{K},0}|
\end{multline}
\par
By induction hypothesis for the first term on the right hand side of \eqref{eq:2.17} we have
\begin{multline}
    \label{eq:2.18}
    \sup_{\substack{0 \leq n_j \leq m_j; j \in [K], \\ (\forall j \in [K]) \,2\,|\,n_j}} |a_{n_1,...,n_K,0}| \\ \le \sum_{\substack{U \subseteq [K], \\ U \neq  \emptyset}} \sum_{1 \leq i_u \leq s_u; u \in U} \Big(\sum_{1 \leq j_u \leq 2^{s_u-i_u}; u \in U} 
\Big| \sum_{\substack{k_u \in I_{j_u}^{i_u}; u \in U, \\ (\forall u \in U) \,2\,|\,k_u, \\ (\forall u \in U) k_u \leq m_u }} \hspace{-0.3cm}\Delta^U_{\overline{k}(U)}(a) \Big|^2
\Big)^{1/2}
\hspace{-0.3cm}+|a_{\overline{0}}|,
\end{multline}
where $\overline{k}(U)$ is the vector of length $K+1$, such that $\overline{k}(U)=k_u$ if $u \in U$ and $0$ otherwise.
\par
Now we will consider the second term of \eqref{eq:2.17}.
Let 
\[
\widetilde{a}_{n_1,...,n_{K},n}=a_{n_1,...,n_{K},n}- a_{n_1,...,n_{K},0}, \qquad 
b_n=\hspace{-0.4cm}\sup_{\substack{0 \leq n_j \leq m_j; j \in [K], \\ (\forall j \in [K]) \,2\,|\,n_j}} 
 \hspace{-0.4cm}|\widetilde{a}_{n_1,...,n_{K},n}|.
\]
By the case $K=1$ of Lemma \ref{lem:2.15} applied for $m=m_{K+1}$ we get 
\begin{equation}
 \label{eq:2.19}
\sup_{\substack{0 \leq n_{K+1} \leq m_{K+1} , \\ \,2\,|\,n_{K+1}}} |b_{n_{K+1}}| \leq \sum_{i=1}^{s_{K+1}} \Big( \sum_{\substack{1 \leq j \leq 2^{s_{K+1}-i}, \\ j2^i \leq m_{K+1}}}\Big| b_{j2^i}-b_{(j-1)2^i} \Big|^2 \Big)^{1/2}.
\end{equation}
Notice that for fixed $i,j$ by \eqref{eq:sumDelkdif} we have
\begin{equation}
\begin{split}
\label{eq:2.22}
    &|b_{j2^i}-b_{(j-1)2^i}|\\
    &= \Big|\sup_{\substack{0 \leq n_j \leq m_j; j \in [K], \\ (\forall j \in [K]) \,2\,|\,n_j}}
 |\widetilde{a}_{n_1,...,n_{K},j2^i}|-\sup_{\substack{0 \leq n_j \leq m_j; j \in [K], \\ (\forall j \in [K]) \,2\,|\,n_j}}
 |\widetilde{a}_{n_1,...,n_{K},(j-1)2^i}|\Big| 
 \\
 &\le \sup_{\substack{0 \leq n_j \leq m_j; j \in [K], \\ (\forall j \in [K]) \,2\,|\,n_j}} \Big|\widetilde{a}_{n_1,...,n_{K},j2^i}- \widetilde{a}_{n_1,...,n_{K},(j-1)2^i}\Big| \\
 &= \sup_{\substack{0 \leq n_j \leq m_j; j \in [K], \\ (\forall j \in [K]) \,2\,|\,n_j}} \Big|\sum_{\substack{k \in I_{j}^i \cap (2\N_0)}} \Delta^{K+1}_{n_1,...,n_K,k}(a)\Big|.
   \end{split}
\end{equation}
Let $j_{K+1}=j, i_{K+1}=i$ be such that $j2^i \leq m_{K+1}$. Applying induction hypothesis to \eqref{eq:2.22} we get
\begin{align*}
&|b_{j2^i}-b_{(j-1)2^i}| \le \Big|\sum_{\substack{k \in I_{j}^i \cap (2\N_0)}} \Delta^{K+1}_{0,...,0,k}(a)\Big|+
\\&
 \sum_{\substack{U \subseteq [K], \\ U \neq  \emptyset}} \sum_{1 \leq i_u \leq s_u; u \in U}\Big(\sum_{1 \leq j_u \leq 2^{s_u-i_u}; u \in U} \Big| \sum_{\substack{k_u \in I_{j_u}^{i_u}; u \in U, \\ (\forall u \in U) \,2\,|\,k_u, \\ (\forall u \in U)k_u \leq m_u }} \sum_{\substack{k_{K+1} \in I_{j}^i \cap (2\N_0)}}\Delta^{U\cup \{K+1\}}_{\overline{k}(U \cup \{K+1\})}(a)) \Big|^2 
\Big)^{1/2} 
\\
&=\Big|\sum_{\substack{k \in I_{j}^i \cap (2\N_0)}} \Delta^{K+1}_{0,...,0,k}(a)\Big|\\
&+\sum_{\substack{U \subseteq [K+1], \\ K+1 \in U, \\ U \neq  \{K+1\}}} \sum_{1 \leq i_u \leq s_u; u \in U \setminus \{K+1 \} } \Big(\sum_{1 \leq j_u \leq 2^{s_u-i_u}; u \in U \setminus \{K+1 \} } 
\Big| \sum_{\substack{k_u \in I_{j_u}^{i_u}; u \in U, \\ (\forall u \in U) \,2\,|\,k_u, \\ (\forall u \in U)k_u \leq m_u }} \Delta^{U}_{\overline{k}(U)}(a) \Big|^2 
\Big)^{1/2}.
\end{align*}
Plugging the above into \eqref{eq:2.19} and using Minkowski's inequality twice we get
\begin{align*}
&\sup_{\substack{0 \leq n_{K+1} \leq m_{K+1} , \\ n_{K+1} \equiv 0 \pmod{2}}} |b_{n_{K+1}}| 
\le \sum_{i_{K+1}=1}^{s_{K+1}} \Big( \sum_{\substack{1 \leq j_{K+1} \leq 2^{s_{K+1}-i_{K+1}}, \\ j_{K+1}2^{i_{K+1}} \leq m_{K+1} }}\Big| b_{j_{K+1}2^{i_{K+1}}}-b_{(j_{K+1}-1)2^{i_{K+1}}} \Big|^2 \Big)^{1/2} 
\\
&\le
\sum_{i_{K+1}=1}^{s_{K+1}} \Bigg( \sum_{\substack{1 \leq j_{K+1} \leq 2^{s_{K+1}-i_{K+1}}, \\ j_{K+1}2^{i_{K+1}} \leq m_{K+1} }}\Bigg[\Big|\sum_{\substack{k \in I_{j_{K+1}}^{i_{K+1}} \cap (2\N_0)}} \Delta^{K+1}_{0,...,0,k}(a)\Big| 
 \\
&+ \sum_{\substack{U \subseteq [K+1], \\ K+1 \in U, \\ U \neq  \{K+1\}}} \sum_{1 \leq i_u \leq s_u; u \in U\setminus \{K+1 \} } \Big(\sum_{1 \leq j_u \leq 2^{s_u-i_u}; u \in U\setminus \{K+1 \} } \Big| \sum_{\substack{k_u \in I_{j_u}^{i_u}; u \in U, \\ (\forall u \in U) \,2\,|\,k_u, \\ (\forall u \in U)k_u \leq m_u }} \Delta^{U}_{\overline{k}(U)}(a) \Big|^2\Big)^{1/2}\Bigg]^2\Bigg)^{1/2} 
\\
&\le
\sum_{i_{K+1}=1}^{s_{K+1}} \Big( \sum_{1 \leq j_{K+1} \leq 2^{s_{K+1}-i_{K+1}}}
\Big|\sum_{\substack{k \in I_{j_{K+1}}^{i_{K+1}} \cap (2\N_0) \cap [0,m_{K+1}]}} \Delta^{K+1}_{0,...,0,k}(a)\Big|
^2 \Big)^{1/2} +\sum_{i_{K+1}=1}^{s_{K+1}} \sum_{\substack{U \subseteq [K+1], \\ K+1 \in U, \\ U \neq  \{K+1\}}} 
\\
&\sum_{1 \leq i_u \leq s_u; u \in U\setminus \{K+1 \} } \Big( \sum_{1 \leq j_{K+1} \leq 2^{s_{K+1}-i_{K+1}}}\sum_{1 \leq j_u \leq 2^{s_u-i_u}; u \in U\setminus \{K+1 \} } \Big| \sum_{\substack{k_u \in I_{j_u}^{i_u}; u \in U, \\ (\forall u \in U) \,2\,|\,k_u, \\ (\forall u \in U) k_u \leq m_u }} \Delta^{U}_{\overline{k}(U)}(a) \Big|^2 
 \Big)^{1/2}.
\end{align*}
Rewriting the large sum in the two lines above we reach
\begin{equation} \label{eq:2.24}
\begin{split}
&\sup_{\substack{0 \leq n_{K+1} \leq m_{K+1} , \\ n_{K+1} \equiv 0 \pmod{2}}} |b_{n_{K+1}}|\\
&\le
 \sum_{\substack{U \subseteq [K+1], \\ K+1 \in U}} \sum_{1 \leq i_u \leq s_u; u \in U} \Big( \sum_{1 \leq j_u \leq 2^{s_u-i_u}; u \in U } 
\Big| \sum_{\substack{k_u \in I_{j_u}^{i_u}; u \in U, \\ (\forall u \in U) \,2\,|\,k_u, \\ (\forall u \in U) k_u \leq m_u }} \Delta^{U}_{\overline{k}(U)}(a) \Big|^2 
 \Big)^{1/2}.
 \end{split}
\end{equation}
Combining \eqref{eq:2.17}, \eqref{eq:2.18}, and \eqref{eq:2.24} we complete the proof of the inductive step and thus also the proof of Lemma \ref{lem:2.15}.
\end{proof}

Using Lemma \ref{lem:2.15} we shall now derive an estimate for the square function of full maximal functions corresponding to the averages $\mathcal D_{\overline{n}}$.  This estimate will be given in terms of the behavior of the differencing operator $\Delta^U_{(\cdot)}$ applied to the multipliers $\beta_{(\cdot)}(\xi)$ treated as the sequence $(\beta_{n}(\xi)\colon n\in (\N_0)^K).$ 
\begin{Cor} \label{cor:2.16}
For any $K \in \N$, $\epsilon \in \{0,1\}^K$, $V \subseteq [K]$, $d \in \N$ and $f \in \ell^2(\Z^d)$  we have
\begin{equation}
\begin{split}
\label{eq:cor:2.16}
&\Big\| \Big( \hspace{-0.5cm}\sum_{\substack{1 \leq s_j \leq \log_2(\frac{d}{2K}+\epsilon_j); j\in V}} \sup_{\substack{k_u \in [2^{s_u}-\epsilon_u,2^{s_u+1}-\epsilon_u]; u \in V, \\ (\forall u \in V)
k_u  \equiv \epsilon_u \pmod{2}, \\
(\forall u \in V)k_u \leq \frac{d}{2K}
}} \Big| \mathcal{D}_{\overline{k}(V)}f-\mathcal{D}_{\overline{2^{s}}(V)-\epsilon(V)} f \Big|^2 \Big)^{1/2} \Big\|_{2}
\\
&\le \sum_{\substack{U \subseteq V, \\ U \neq \emptyset}} \sum_{0 \leq i_u ; u \in U}   \Big( \sum_{\substack{i_j+1 \leq s_j\leq \log_2(\frac{d}{2K}+\epsilon_j); j\in V}}
\sum_{1 \leq j_u \leq 2^{i_u}; u \in U}
\\
&\hspace{2cm}\int_{\T^d} \Big| \hspace{-0.5cm}\sum_{\substack{k_u \in I_{j_u}^{s_u-i_u}; u \in U, \\ (\forall u \in U)
k_u  \equiv 0 \pmod{2}, \\
(\forall u \in U)k_u \leq \frac{d}{2K} -2^{s_u}+\epsilon_u}} \hspace{-0.5cm}\Delta^U_{\overline{k}(U)+\overline{2^s}(V)-\epsilon(V)}\big(\beta_{(\cdot)}(\xi)\big) \Big|^2  |\widehat{f}(\xi)|^2 d\xi \Big)^{1/2},
\end{split}
\end{equation}
where $\overline{2^s}(V)-\epsilon(V)$ is the vector of length $K$, whose $u$-th coordinate is $2^{s_u}-\epsilon_u$ if $u \in V$ and $0$ otherwise.
\end{Cor}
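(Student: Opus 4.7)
The idea is to apply the multi-parameter Rademacher--Menshov inequality of Lemma \ref{lem:2.15} pointwise in $x\in\Z^d$ and $\overline s=(s_j)_{j\in V}$, then pass to the multipliers $\beta$ via Plancherel's theorem. Fix $\overline s$ and $x$, and make the change of variables $\tilde k_u := k_u-(2^{s_u}-\epsilon_u)$ for $u\in V$. Since $s_u\ge 1$ makes $2^{s_u}$ even and $k_u\equiv\epsilon_u\pmod 2$, the values $\tilde k_u$ are non-negative even integers with $\tilde k_u\le 2^{s_u}$ and, because the original supremum is restricted by $k_u\le\tfrac{d}{2K}$, with $\tilde k_u\le \tfrac{d}{2K}-2^{s_u}+\epsilon_u$. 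Setting
\[
\tilde a_{\tilde k(V)}(x):=\bigl(\mathcal D_{\tilde k(V)+\overline{2^s}(V)-\epsilon(V)}-\mathcal D_{\overline{2^s}(V)-\epsilon(V)}\bigr)f(x),
\]
we have $\tilde a_{\overline 0}(x)=0$, so the boundary term $|a_{\overline 0}|$ in Lemma \ref{lem:2.15} vanishes.

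Applied to this $|V|$-parameter sequence, Lemma \ref{lem:2.15} bounds $\sup_{\tilde k}|\tilde a_{\tilde k(V)}(x)|$ by
\[
\lesssim_K\sum_{\emptyset\ne U\subseteq V}\sum_{1\le i_u\le s_u;\,u\in U}\Bigl(\sum_{1\le j_u\le 2^{s_u-i_u};\,u\in U}\bigl|S_{U,\overline s,\overline i_U,\overline j_U}(x)\bigr|^2\Bigr)^{1/2},
\]
where $S_{U,\overline s,\overline i_U,\overline j_U}(x)=\sum_{\overline k_U}\Delta^U_{\tilde k(U)}(\tilde a)(x)$ is summed over even $\tilde k_u\in I^{i_u}_{j_u}$ with $\tilde k_u\le \tfrac{d}{2K}-2^{s_u}+\epsilon_u$. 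The constant term $\mathcal D_{\overline{2^s}(V)-\epsilon(V)}f(x)$ in the definition of $\tilde a$ is annihilated by $\Delta^U$ whenever $U\ne\emptyset$, and so the shift identity \eqref{eq:DSnk} produces the key rewriting
\[
\Delta^U_{\tilde k(U)}(\tilde a)(x)=\Delta^U_{\tilde k(U)+\overline{2^s}(V)-\epsilon(V)}\bigl(\mathcal D_{(\cdot)}f\bigr)(x).
\]

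Squaring the Rademacher--Menshov bound, using $(\sum_U a_U)^2\lesssim_K\sum_U a_U^2$, summing in $\overline s$ and taking $\ell^2(\Z^d)$-norms in $x$, two successive applications of Minkowski's inequality---first pulling $\sum_{\emptyset\ne U\subseteq V}$ and then pulling $\sum_{\overline i_U}$ outside the resulting $\ell^2_{\overline s}$-norm---reduce matters to the bound
\[
\sum_{\emptyset\ne U\subseteq V}\sum_{\overline i_U}\Bigl(\sum_{\overline s}\sum_{\overline j_U}\|S_{U,\overline s,\overline i_U,\overline j_U}\|_{\ell^2(\Z^d)}^2\Bigr)^{1/2}.
\]
Plancherel's theorem, together with $\widehat{\mathcal D_{\overline n}f}=\beta_{\overline n}\widehat f$ and the linearity of $\Delta^U$, converts each $\|S\|_{\ell^2}^2$ into $\int_{\T^d}\bigl|\sum_{\overline k_U}\Delta^U_{\overline k(U)+\overline{2^s}(V)-\epsilon(V)}\beta_{(\cdot)}(\xi)\bigr|^2|\widehat f(\xi)|^2\,d\xi$, which is precisely the integrand appearing in the corollary. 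A final cosmetic reindexing $i'_u:=s_u-i_u$ transforms the Rademacher--Menshov ranges $1\le i_u\le s_u$, $1\le j_u\le 2^{s_u-i_u}$, $k_u\in I^{i_u}_{j_u}$ into the corollary's $0\le i'_u$, $1\le j_u\le 2^{i'_u}$, $k_u\in I^{s_u-i'_u}_{j_u}$, while the former constraint $i_u\ge 1$ becomes the lower bound $s_u\ge i'_u+1$ on the outer sum over $\overline s$; for $j\in V\setminus U$ one declares $i'_j=0$, so that the corollary's condition $i_j+1\le s_j$ reduces to the natural bound $s_j\ge 1$.

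The main difficulty is purely combinatorial bookkeeping---verifying that the constraint $k_u\le\tfrac{d}{2K}$ propagates through the shift to $\tilde k_u\le\tfrac{d}{2K}-2^{s_u}+\epsilon_u$, that the parity condition $k_u\equiv\epsilon_u\pmod 2$ is preserved, and that the two Minkowski steps are deployed in the correct order to yield the precise index structure of the corollary. No further analytic estimates beyond Lemma \ref{lem:2.15}, Minkowski's inequality, and Plancherel's theorem are needed.
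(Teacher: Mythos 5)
Your proposal is correct and follows essentially the same route as the paper's proof: apply Lemma \ref{lem:2.15} pointwise to the shifted sequence $\bigl(\mathcal D_{\tilde k(V)+\overline{2^s}(V)-\epsilon(V)}-\mathcal D_{\overline{2^s}(V)-\epsilon(V)}\bigr)f(x)$ (whose value at $\overline 0$ vanishes), use \eqref{eq:DSnk} to transfer the differencing to the unshifted averages, then two applications of Minkowski followed by Parseval, with the reindexing $i_u\mapsto s_u-i_u$. The bookkeeping of the parity, the range constraint $k_u\le\frac{d}{2K}-2^{s_u}+\epsilon_u$, and the restriction of the outer sum to $U\subseteq V$ all match the paper's argument.
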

\begin{proof}
We apply Lemma \ref{lem:2.15} to the supremum above. For fixed $f$ and $x$ we take the sequence 
\[
a_{\overline{k}}=  
\Big|\mathcal{D}_{\overline{k}(V)+\overline{2^{s}}(V)-\epsilon(V)}f(x)-\mathcal{D}_{\overline{2^{s}}(V)-\epsilon(V)}f(x) \Big|
\]
and $\overline{m} \in \N_0^K \cap \prod_{i=1}^K [0,2^{s_i}]$ defined by $m_u= \min(2^{s_u},\frac{d}{2K}-2^{s_u}+\epsilon_u)$. Then $a_{\overline{0}}=0$, so
one obtains
\begin{align*}
&\sup_{\substack{k_u \in [2^{s_u}-\epsilon_u,2^{s_u+1}-\epsilon_u]; u \in V, \\ (\forall u \in V)
k_u  \equiv \epsilon_u \pmod{2}, \\
(\forall u \in V)k_u \leq \frac{d}{2K}
}} \Big| \mathcal{D}_{\overline{k}(V)}f(x)-\mathcal{D}_{\overline{2^{s}}(V)-\epsilon(V)} f(x) \Big|
\le
\sum_{\substack{U \subseteq V, \\ U \neq \emptyset}} \sum_{1 \leq i_u \leq s_u; u \in U} \\
&\Big(\sum_{1 \leq j_u \leq 2^{s_u-i_u}; u \in U} 
\Big|\hspace{-0.5cm} \sum_{\substack{k_u \in I_{j_u}^{i_u}; u \in U, \\ (\forall u \in U)
k_u  \equiv 0 \pmod{2}, \\
(\forall u \in U)k_u \leq \frac{d}{2K} -2^{s_u}+\epsilon_u
}}\hspace{-0.5cm} \Delta^U_{\overline{k}(U)}\Big(\mathcal{D}_{(\cdot)+\overline{2^{s}}(V)-\epsilon(V)}f(x)\Big) \Big|^2
\Big)^{1/2}
\\
&= \hspace{-0.2cm}
\sum_{\substack{U \subseteq V, \\ U \neq \emptyset}} \sum_{0 \leq i_u \leq s_u-1; u \in U} \hspace{-0.2cm}\Big(\sum_{1 \leq j_u \leq 2^{i_u}; u \in U} 
\Big| \hspace{-0.5cm} \sum_{\substack{k_u \in I_{j_u}^{s_u-i_u}; u \in U, \\ (\forall u \in U)
k_u  \equiv 0 \pmod{2}, \\
(\forall u \in U)k_u \leq \frac{d}{2K} -2^{s_u}+\epsilon_u
}} \hspace{-0.5cm}\Delta^U_{\overline{k}(U)}\Big(\mathcal{D}_{(\cdot)+\overline{2^{s}}(V)-\epsilon(V)}f(x)\Big) \Big|^2
\Big)^{1/2}\hspace{-0.3cm}.
\end{align*}
In the above inequality the term $\mathcal{D}_{\overline{2^{s}}(V)-\epsilon(V)} f(x)$ vanished (it does not depend on $\overline{k}$). Moreover, since $a$ depends only on coordinates from $V$ the outermost sum is over $U \subseteq V$ (otherwise $\Delta^U(a)$ would be $0$). In the last line we've replaced $i_u$ by $s_u-i_u$. Using the above inequality and \eqref{eq:DSnk} we may estimate the left hand side of \eqref{eq:cor:2.16} by 
\begin{align*}
&\Big\| \Big( \sum_{\substack{1 \leq s_j \leq \log_2(\frac{d}{2K}+\epsilon_j); j\in V}} \Big( \sum_{\substack{U \subseteq V, \\ U \neq \emptyset}} \sum_{ 0\leq i_u \leq s_u-1; u \in U} 
\Big(\sum_{1 \leq j_u \leq 2^{i_u}; u \in U} 
\\
&\hspace{2.5cm}\Big| \hspace{-0.5cm} \sum_{\substack{k_u \in I_{j_u}^{s_u-i_u}; u \in U, \\ (\forall u \in U)
k_u  \equiv 0 \pmod{2}, \\
(\forall u \in U)k_u \leq \frac{d}{2K} -2^{s_u}+\epsilon_u
}} \hspace{-0.5cm}\Delta^U_{\overline{k}(U)+\overline{2^{s}}(V)-\epsilon(V)}\Big(\mathcal{D}_{(\cdot)}f\Big) \Big|^2
\Big)^{1/2}\Big)^2 \Big)^{1/2} \Big\|_{2}.
\end{align*}
Hence, applying Minkowski's inequality twice we obtain

\begin{align*}
&\Big\| \Big( \sum_{\substack{1 \leq s_j \leq \log_2(\frac{d}{2K}+\epsilon_j); j\in V}} \sup_{\substack{k_u \in [2^{s_u}-\epsilon_u,2^{s_u+1}-\epsilon_u]; u \in V, \\ (\forall u \in V)
k_u  \equiv \epsilon_u \pmod{2}, \\
(\forall u \in V)k_u \leq \frac{d}{2K}
}} \Big| \mathcal{D}_{\overline{k}(V)}f-\mathcal{D}_{\overline{2^{s}}(V)-\epsilon(V)} f \Big|^2 \Big)^{1/2} \Big\|_{2}\le
 \sum_{\substack{U \subseteq V, \\ U \neq \emptyset}} \sum_{0 \leq i_u ; u \in U}\\
& \Big\|  \Big( \sum_{\substack{i_j+1 \leq s_j \leq \log_2(\frac{d}{2K}+\epsilon_j); j\in V,}}
\sum_{1 \leq j_u \leq 2^{i_u}; u \in U}  \Big| \hspace{-0.5cm}\sum_{\substack{k_u \in I_{j_u}^{s_u-i_u}; u \in U, \\ (\forall u \in U)
k_u  \equiv 0 \pmod{2}, \\
(\forall u \in U)k_u \leq \frac{d}{2K} -2^{s_u}+\epsilon_u
}} \hspace{-0.5cm}\Delta^U_{\overline{k}(U)+\overline{2^{s}}(V)-\epsilon(V)}\Big(\mathcal{D}_{(\cdot)}f\Big) \Big|^2
\Big)^{1/2}\Big\|_{2} 
\end{align*}
Finally, Parseval's formula shows that the quantity on the right hand side of the inequality above equals the right hand side of \eqref{eq:cor:2.16}. This completes the proof of Corollary \ref{cor:2.16}.
\end{proof} 
Our goal now will be to bound $|\Delta^U_{\overline{k}(U)+\overline{2^s}(V)-\epsilon(V)}(\beta_{(\cdot)}(\xi))|$. For this purpose we introduce a crucial formula.
\begin{Lem} \label{lem:2.17}
For any $K \in \N$, finite set $J \subseteq \N$ with $|J| \geq 2$, $j \in [K]$ and \\$\overline{n}=(n_1,...,n_K) \in [K]$ with $n_j \geq 2$ and $n_1+...+n_K \le |J|$ we have
\[
\Delta^j_{\overline{n}}(\beta^J(\xi))\hspace{-0.1cm}= \hspace{-0.1cm}\frac{-4}{|J|(|J|-1)} \hspace{-0.1cm}\sum_{\substack{x,y \in J , \\ x \neq y}} \sin^2(j \pi \xi_x) \hspace{-0.05cm}\cos^2(j \pi \xi_y) \beta_{n_1,...,n_{j-1},n_j-2,n_{j+1},...,n_K}^{J \setminus \{x,y\}}(\xi).
\]
For any $U \subseteq [K]$ and finite $J \subseteq \N$ with $|J| \geq 2|U|$, if  $\overline{n}=(n_1,...,n_K) \in [K]$ satisfy $n_k \geq 2$ for all $k \in U$ and $n_1+...+n_K \le |J|$, then we have
\begin{align*}
&\Delta_{\overline{n}}^U(\beta^J(\xi))= \frac{(-4)^{|U|}}{|J|(|J|-1)...(|J|-2|U|+1)} \\
&\cdot \sum_{\substack{x_i,y_i \in J; i \in U, \\ \text{all distinct} 
}} \prod_{k \in U}\big( \sin^2(k \pi \xi_{x_k}) \cos^2(k \pi \xi_{y_k}) \big) \cdot \beta^{J \setminus \{ x_k,y_k: k \in U\}}_{\overline{n} -\overline{2}(U)}(\xi),
\end{align*}
where $\overline{2}(U) \in \N_0^K$ is a vector, whose $i$-th coordinate is $2$ if $i \in U$ and $0$ otherwise.
\end{Lem}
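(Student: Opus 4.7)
I would prove the single-parameter formula first by a direct calculation based on Lemma \ref{rem:2.5} and Theorem \ref{thm:2.4}(4), and then obtain the multi-parameter formula by iteration using the commutativity of the $\Delta^i_{\overline{n}}$.

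The plan for the single-step case is as follows. I start by applying Lemma \ref{rem:2.5} to write $\beta^J_{\overline{n}}(\xi)$ as an expectation over disjoint $(I_k)_{k\neq j}$ in $J$ with $|I_k|=n_k$, of $\prod_{k\neq j}\prod_{i\in I_k}\cos(2k\pi\xi_i)$ times an inner sum containing the Krawtchouk polynomial $\kr^{(|J|-m_j)}_{n_j}(|U|)$, where $m_j=\sum_{k\neq j}n_k$ and $\widetilde{I}=\cup_{k\neq j}I_k$. Since lowering $n_j$ to $n_j-2$ changes no other factor, the difference $\Delta^j_{\overline{n}}(\beta^J(\xi))$ reduces to the same double sum with the Krawtchouk factor replaced by $\kr^{(|J|-m_j)}_{n_j}(|U|)-\kr^{(|J|-m_j)}_{n_j-2}(|U|)$, which by Theorem \ref{thm:2.4}(4) equals $\tfrac{-4|U|(|J|-m_j-|U|)}{(|J|-m_j)(|J|-m_j-1)}\kr^{(|J|-m_j-2)}_{n_j-2}(|U|-1)$. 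The hypotheses $n_j\ge 2$ and $n_1+\cdots+n_K\le|J|$ make this application legitimate.

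Next I would use the combinatorial identity $|U|(|J|-m_j-|U|)=\#\{(x,y):x\in U,\ y\in (J\setminus\widetilde{I})\setminus U\}$ to interchange the order of summation: parametrize by ordered pairs $x\neq y$ in $J\setminus\widetilde{I}$ together with $U'=U\setminus\{x\}\subseteq(J\setminus\widetilde{I})\setminus\{x,y\}$. The factor $\prod_{i\in U}\sin^2(j\pi\xi_i)\prod_{i\in (J\setminus\widetilde{I})\setminus U}\cos^2(j\pi\xi_i)$ then splits as $\sin^2(j\pi\xi_x)\cos^2(j\pi\xi_y)$ times the analogous product over $U'$. The residual sum over $U'$ matches the right-hand side of equation \eqref{eq:Lem2.5calc} (the key step in the proof of Lemma \ref{rem:2.5}) applied with $J\setminus\{x,y\}$ in place of $J$, so inverting that equation it equals $\se{I_j\subseteq(J\setminus\{x,y\})\setminus\widetilde{I},\ |I_j|=n_j-2}\prod_{i\in I_j}\cos(2j\pi\xi_i)$. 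Finally I swap the outer expectation over $(I_k)_{k\neq j}$ with the sum over $(x,y)$: the constraint $x,y\notin\widetilde{I}$ becomes $(I_k)_{k\neq j}$ disjoint inside $J\setminus\{x,y\}$, and changing the normalization of this expectation from collections in $J$ to collections in $J\setminus\{x,y\}$ produces the ratio $\tfrac{(|J|-m_j)(|J|-m_j-1)}{|J|(|J|-1)}$, which exactly cancels the matching factor in the denominator coming from the Krawtchouk identity. What remains is, by Lemma \ref{rem:2.2}, precisely $\beta^{J\setminus\{x,y\}}_{\overline{n}-\overline{2}(\{j\})}(\xi)$, yielding the claimed formula with prefactor $\tfrac{-4}{|J|(|J|-1)}$.

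For the multi-parameter formula I would induct on $|U|$, using the commutativity $\Delta^i_{\overline{n}}\circ\Delta^j_{\overline{n}}=\Delta^j_{\overline{n}}\circ\Delta^i_{\overline{n}}$ observed just after Definition \ref{def:2.14}. Enumerating $U=\{u_1,\ldots,u_{|U|}\}$ in any order, I apply the single-step formula in the parameter $u_1$ to obtain a sum over one pair $(x_{u_1},y_{u_1})\in J$; then I apply $\Delta^{u_2}_{\overline{n}}$ to $\beta^{J\setminus\{x_{u_1},y_{u_1}\}}_{\overline{n}-\overline{2}(\{u_1\})}(\xi)$ via the same formula but with ambient set of size $|J|-2$; and so on. The hypothesis $|J|\ge 2|U|$ ensures that at each of the $|U|$ steps the current ambient set has at least two elements (in fact enough to pick distinct new pairs), and $n_k\ge 2$ for $k\in U$ ensures the single-step formula is always applicable; the successive pairs are automatically pairwise disjoint because each new pair is drawn from the complement of the previously removed points. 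The denominators telescope into $|J|(|J|-1)(|J|-2)\cdots(|J|-2|U|+1)$, and the accumulated sines and cosines match the stated product. The main obstacle in this scheme is the normalization bookkeeping in the single-step case, which is resolved precisely by the ratio cancellation described above; once that is in hand, the induction is essentially automatic.
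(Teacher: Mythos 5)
Your proposal is correct and follows essentially the same route as the paper's proof: the Krawtchouk representation from Lemma \ref{rem:2.5}, the difference identity of Theorem \ref{thm:2.4}(4), the reindexing by ordered pairs $(x,y)$ with the normalization ratio cancelling the Krawtchouk denominator, and iteration of the one-step formula for general $U$. No gaps to report.
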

\begin{proof}
    We will prove only the first part, since the second one follows from inductively applying the first part.
    
    Fix $j\in [K]$ and denote $m=\sum_{k\ne j}n_k.$ From Lemma \ref{rem:2.5} we have 
    \begin{multline} \label{eq:2.25}
    \Delta^j_{\overline{n}}(\beta^J_{\overline{n}}(\xi))=
    \se{\substack{I_1,...,I_{j-1},I_{j+1},...,I_K \subseteq J, \\
    (\forall k \in [K] \setminus \{j \}) |I_k|=n_k, \\
    (\forall i,k \in [K] \setminus \{j\}, i \neq k) I_i \cap I_k=\emptyset
    }} \prod_{k=1, k \neq j}^K \prod_{i \in I_k} \cos(2k \pi \xi_i) \cdot
    \\
    \sum_{U \subseteq J \setminus  ( \cup_{k \neq j} I_k )} \hspace{-0.3cm}\Big(\kr_{n_j}^{(|J|- m)}(|U|) -\kr_{n_j-2}^{(|J|- m )}(|U|) \Big)\cdot \hspace{-0.5cm}\prod_{i \in J \setminus ( \cup_{k \neq j}I_k) \setminus U} \hspace{-0.5cm} \cos^2(j \pi \xi_i) \cdot \prod_{i \in U} \sin^2(j \pi \xi_i).
    \end{multline}
    Fix $I_1,...,I_{j-1},I_{j+1},...,I_K \subseteq J$ and let $\widetilde{I}= \cup_{k \neq j}I_k$. We will look only at the second line of \eqref{eq:2.25}. By the last point of Theorem \ref{thm:2.4} we have
    \[
    \kr_{n_j}^{(|J|- m)}(|U|) -\kr_{n_j-2}^{(|J|- m)}(|U|)= \frac{-4|U|(|J|-m-|U|)}{(|J|-m)(|J|-m-1)} \kr_{n_j-2}^{(|J|-m-2)}(|U|-1).
    \]
   Denote $C(m,|J|)=-4((|J|-m)(|J|-m-1))^{-1}$ and notice that
    \[
 |U|(|J|-m-|U|) \kr_{n_j-2}^{(|J|-m-2)}(|U|-1)=   \sum_{x \in U} \sum_{y \in J \setminus \widetilde{I} \setminus U} \kr_{n_j-2}^{(|J|-m-2)} (|U \setminus \{x \}|),
    \]
    if $U= \emptyset $ or $U=J \setminus \widetilde{I}$, then we encounter sum over empty set on the right-hand side, which we define to be equal to 0. Using the above we get
    \begin{equation*} 
    \begin{split}
      &\sum_{U \subseteq J \setminus \widetilde{I}} \Big(\kr_{n_j}^{(|J|- m)}(|U|) -\kr_{n_j-2}^{(|J|- m)}(|U|) \Big)\cdot \prod_{i \in J \setminus  \widetilde{I} \setminus U} \cos^2(j \pi \xi_i) \cdot \prod_{i \in U} \sin^2(j \pi \xi_i) 
      \\
      &= C(m,|J|)\sum_{U \subseteq J \setminus \widetilde{I}} \sum_{x \in U} \sum_{y \in J \setminus \widetilde{I} \setminus U} \kr_{n_j-2}^{(|J|-m-2)} (|U \setminus \{x \}|)
      \\
      &\prod_{i \in J \setminus  \widetilde{I} \setminus U} \cos^2(j \pi \xi_i) \cdot \prod_{i \in U} \sin^2(j \pi \xi_i)
      =
      C(m,|J|) \\ 
      &\cdot \sum_{\substack{x,y \in J \setminus \widetilde{I}, \\ x \neq y}} \sum_{\substack{U \subseteq J \setminus \widetilde{I} \setminus \{y\}, \\
      x \in U}}  \kr_{n_j-2}^{(|J|-m-2)} (|U \setminus \{x \}|)\prod_{i \in J \setminus  \widetilde{I} \setminus U} \cos^2(j \pi \xi_i) \cdot \prod_{i \in U} \sin^2(j \pi \xi_i)
      \\
      &=\frac{-4}{(|J|-m)(|J|-m-1)}\sum_{\substack{x,y \in J \setminus \widetilde{I}, \\ x \neq y}}\sin^2(j \pi \xi_x) \cos^2(j \pi \xi_y) 
      \\
      &\cdot 
      \sum_{\substack{U' \subseteq J  \setminus \{x,y \} \setminus \widetilde{I}}}  \kr_{n_j-2}^{(|J \setminus \{x,y\} \setminus \widetilde{I}|)} (|U'|)\prod_{i \in J \setminus \{x,y\} \setminus  \widetilde{I} \setminus U'} \cos^2(j \pi \xi_i) \cdot \prod_{i \in U'} \sin^2(j \pi \xi_i).
      \end{split}
   \end{equation*}
   Now, the reasoning from the proof of Lemma \ref{rem:2.5}, cf.\ \eqref{eq:Lem2.5calc}, \eqref{eq:Lem2.5calc'} shows that
    \[
    \sum_{\substack{U' \subseteq J  \setminus \{x,y \} \setminus \widetilde{I}}}  \kr_{n_j-2}^{(|J \setminus \{x,y\} \setminus \widetilde{I}|)} (|U'|)\prod_{i \in J \setminus \{x,y\} \setminus  \widetilde{I} \setminus U'} \cos^2(j \pi \xi_i) \cdot \prod_{i \in U'} \sin^2(j \pi \xi_i)\]
    equals
    \[\se{\substack{I_j \subseteq J \setminus \{ x,y\} \setminus \widetilde{I}, \\ |I_j|=n_j-2}} \prod_{k \in I_j} \cos(2j \pi \xi_k).
    \]
    
    Combining the above and coming back to \eqref{eq:2.25} we get
   \begin{align*}
    &\Delta^j_{\overline{n}}(\beta^J_{\overline{n}}(\xi))=
    \frac{\prod_{i \neq j} n_i! \cdot (|J|-m)!}{|J|!} \hspace{-0.8cm}\sum_{\substack{I_1,...,I_{j-1},I_{j+1},...,I_K \subseteq J, \\
    (\forall k \in [K] \setminus \{j \}) |I_k|=n_k, \\
    (\forall i,k \in [K] \setminus \{j\}, i \neq k) I_i \cap I_k=\emptyset
    }} \hspace{-0.1cm}\prod_{k=1, k \neq j}^K \prod_{i \in I_k} \cos(2k \pi \xi_i) 
    \\
    &\cdot
    \frac{-4}{(|J|-m)(|J|-m-1)} \frac{(n_j-2)!(|J|-\sum_{i \in [K]} n_i)!}{(|J|-2-m)!}
   \\
   &
    \cdot \sum_{\substack{x,y \in J \setminus (\cup_{k \neq j}I_k), \\ x \neq y}}\sin^2(j \pi \xi_x) \cos^2(j \pi \xi_y) \sum_{\substack{I_j \subseteq J \setminus \{ x,y\} \setminus (\cup_{k \neq j}I_k), \\ |I_j|=n_j-2}} \prod_{k \in I_j} \cos(2j \pi \xi_k).
    \end{align*}
    Rewriting the right hand side of this equality we reach
   \begin{align*}
    & \Delta^j_{\overline{n}}(\beta^J_{\overline{n}}(\xi))= \frac{-4}{|J|(|J|-1)}\sum_{\substack{x,y \in J, \\ x \neq y}}\sin^2(j \pi \xi_x) \cos^2(j \pi \xi_y) \frac{\prod_{i \neq j} n_i! (n_j-2)! \cdot (|J|-\sum_{i \in [K]} n_i)!}{(|J|-2)!} 
    \\&\hspace{1cm}\sum_{\substack{I_1,...,I_{j-1},I_{j+1},...,I_K \subseteq J \setminus \{x,y\}, \\
    (\forall k \in [K] \setminus \{j \}) |I_k|=n_k, \\
    (\forall i,k \in [K] \setminus \{j\}, i \neq k) I_i \cap I_k=\emptyset
    }} \hspace{+0.1cm} \prod_{k=1, k \neq j}^K \prod_{i \in I_k} \cos(2k \pi \xi_i) \cdot
    \sum_{\substack{I_j \subseteq J \setminus \{ x,y\} \setminus (\cup_{k \neq j}I_k), \\ |I_j|=n_j-2}} \prod_{k \in I_j} \cos(2j \pi \xi_k)
    \\
     &=\frac{-4}{|J|(|J|-1)} \sum_{\substack{x,y \in J, \\ x \neq y}} \sin^2(j \pi \xi_x) \cos^2(j \pi \xi_y) \beta_{n_1,...,n_{j-1},n_j-2,n_{j+1},...,n_K}^{J \setminus \{x,y\}}(\xi),
   \end{align*}
   which completes the proof.
    \end{proof}
 Lemma \ref{lem:2.17} together with Lemma \ref{lem:2.8}  allows us to prove crucial bounds for $|\Delta_{\overline{n}}^U(\beta_{(.)}(\xi))|.$
    \begin{Cor} \label{cor:2.18}
        Fix $K \in \N$, $d \in \N$ with $d \geq 4K$, $\xi \in \T^d$. Take any set $U \subseteq [K]$ and $n_1,...n_K \leq \frac{d}{2K}$ satisfying $n_k \geq 2$ for all $k \in U$.
        Let 
        \[
A= A(\xi)=\{ j \in [K]: \|j \xi+ 1/2 \| \leq \| j \xi \| \}.
\]
Then we have
        \[
        \Big|\Delta_{\overline{n}}^U(\beta(\xi)) \Big| \lesssim_K \prod_{k \in U} \frac{1}{n_k} \cdot \prod_{k \in U} \min\Big( \frac{n_k}{d} \|k\xi + \frac{\mathds{1}_A(j)}{2} \|^2, \big( \frac{n_k}{d} \|k\xi + \frac{\mathds{1}_A(j)}{2} \|^2\big)^{-1} \Big).
        \]
    \end{Cor}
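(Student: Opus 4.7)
The plan is to apply Lemma~\ref{lem:2.17} with $J=[d]$ to obtain
\[
\Delta_{\overline{n}}^U(\beta(\xi)) = \frac{(-4)^{|U|}}{d(d-1)\cdots(d-2|U|+1)} \sum_{\substack{x_k, y_k \in [d]; k \in U, \\ \text{all distinct}}} \prod_{k \in U} \sin^2(k\pi\xi_{x_k})\cos^2(k\pi\xi_{y_k})\,\beta^{[d]\setminus\{x_k,y_k:k\in U\}}_{\overline{n}-\overline{2}(U)}(\xi).
\]
Since $d\ge 4K\ge 4|U|$, each of the $2|U|$ factors in the denominator is at least $d/2$, so the absolute value of the prefactor is bounded by $16^K d^{-2|U|}$. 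The strategy is then to estimate the residual multiplier $\beta^{J'}_{\overline{n}-\overline{2}(U)}(\xi)$ with $J'=[d]\setminus\{x_k,y_k:k\in U\}$ uniformly in the distinct tuple $(x_k,y_k)_{k\in U}$ via Lemma~\ref{lem:2.8}, pull that bound out of the sum, estimate the remaining sum of $\sin^2\cos^2$ products, and collect factors.

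For the uniform bound on $\beta^{J'}_{\overline{n}-\overline{2}(U)}$, the conditions $n_k\le d/(2K)$, $d\ge 4K$ and $|J'|=d-2|U|$ readily imply $\sum_j(n_j-2\mathds{1}_U(j))+\max_j(n_j-2\mathds{1}_U(j))\le |J'|$, so Lemma~\ref{lem:2.8} applies. Removing at most $2|U|\le 2K$ indices decreases each of $\sum_{i\in J'}\sin^2(j\pi\xi_i)$ and $\sum_{i\in J'}\cos^2(j\pi\xi_i)$ by at most $2K$, hence the minimum in Lemma~\ref{lem:2.8} is at least $\|j\xi+\mathds{1}_A(j)/2\|^2-2K$. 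Absorbing the additive $-2K$ term into a multiplicative $K$-dependent constant and setting $x_j:=(n_j/d)\|j\xi+\mathds{1}_A(j)/2\|^2$, Lemma~\ref{lem:2.8} yields
\[
|\beta^{J'}_{\overline{n}-\overline{2}(U)}(\xi)| \lesssim_K \prod_{k\in U}\exp\Bigl(-c_K\,\frac{n_k-2}{n_k}\,x_k\Bigr).
\]
When $n_k\ge 3$ the ratio $(n_k-2)/n_k\ge 1/3$ yields genuine exponential decay and the $k$-th factor is $\lesssim_K\min(1,x_k^{-2})$; when $n_k=2$ the exponent is trivial, but the inequality $x_k\le 2$ forces $\min(1,x_k^{-2})\ge 1/4$, so $1\le 4\min(1,x_k^{-2})$ and the same replacement is valid. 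In either case $|\beta^{J'}_{\overline{n}-\overline{2}(U)}(\xi)|\lesssim_K \prod_{k\in U}\min(1,x_k^{-2})$ holds uniformly in the indices $x_k,y_k$.

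For the remaining combinatorial sum I would enlarge it to the unrestricted sum and factor coordinate by coordinate:
\[
\sum_{x_k,y_k\in[d]}\prod_{k\in U}\sin^2(k\pi\xi_{x_k})\cos^2(k\pi\xi_{y_k})=\prod_{k\in U}\|k\xi\|^2\|k\xi+\tfrac12\|^2.
\]
The identity $\|k\xi\|^2+\|k\xi+\tfrac12\|^2=d$ together with the definition of $A$ gives $\|k\xi\|^2\|k\xi+\tfrac12\|^2\le d\cdot\|k\xi+\mathds{1}_A(k)/2\|^2=d^2 x_k/n_k$ for each $k\in U$. Multiplying the prefactor $16^Kd^{-2|U|}$, the $\beta^{J'}$ bound, and this sum estimate produces $\prod_{k\in U}x_k\min(1,x_k^{-2})/n_k=\prod_{k\in U}\min(x_k,1/x_k)/n_k$, which is exactly the claimed inequality.

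The main obstacle is the treatment of indices $k\in U$ with $n_k=2$, for which Lemma~\ref{lem:2.8} delivers no decay in the $k$-th direction; the resolution exploits the fact that $x_k\le 2$ in this range, so the target quantity $\min(1,x_k^{-2})$ is bounded below by an absolute constant and the trivial bound $|\beta^{J'}|\le 1$ already suffices in that coordinate.
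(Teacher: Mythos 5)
Your proposal is correct and follows essentially the same route as the paper: apply Lemma \ref{lem:2.17} to express $\Delta^U_{\overline{n}}(\beta(\xi))$ as the combinatorial sum, bound the residual multiplier uniformly via Lemma \ref{lem:2.8}, enlarge the restricted sum to the factorized unrestricted one, and combine using $\|k\xi\|^2+\|k\xi+\tfrac12\|^2=d$. The only cosmetic difference is the packaging of the final elementary inequality (you use $e^{-s}\lesssim\min(1,s^{-2})$ together with the observation $x_k\le n_k/2$ for the $n_k=2$ case, while the paper uses $se^{-s}\lesssim\min(s,s^{-1})$ after absorbing the $n_j-2$ versus $n_j$ discrepancy into the constant), and both are valid.
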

    \begin{proof}
        Since $d \geq 4K$, by Lemma \ref{lem:2.17} and Lemma \ref{lem:2.8}  we get
        \begin{align*}
         &\Big|\Delta_{\overline{n}}^U(\beta(\xi)) \Big| \lesssim_K \frac{1}{d^{2|U|}} \sum_{\substack{x_i,y_i \in [d]; i \in U, \\ \text{all distinct} 
}} \prod_{k \in U} \sin^2(k \pi \xi_{x_k}) \cos^2(k \pi \xi_{y_k}) \cdot 
 \Big|\beta^{[d] \setminus \{ x_k,y_k: k \in U\}}_{\overline{n} -\overline{2}(U)}(\xi) \Big|
       \\
        &\lesssim_K \frac{1}{d^{2|U|}} \sum_{\substack{x_i,y_i \in [d]; i \in U, \\ \text{all distinct} 
}} \prod_{k \in U} \sin^2(k \pi \xi_{x_k}) \cos^2(k \pi \xi_{y_k}) 
\\
&\cdot \prod_{j \in U}\exp \Big(- \frac{c(n_j-2)}{80K(d-2|U|)}  \min \Big( \sum_{i \in [d] \setminus \{x_k,y_k: k \in U \}} \sin^2(j \pi \xi_i), \sum_{i \in [d] \setminus \{x_k,y_k: k \in U \}} \cos^2(j \pi \xi_i) \Big) \Big).
\end{align*}
Hence, observing that
\begin{align*}
     &\exp \Big(- \frac{c(n_j-2)}{80K(d-2|U|)}  \min \Big( \sum_{i \in [d] \setminus \{x_k,y_k: k \in U \}} \sin^2(j \pi \xi_i), \sum_{i \in [d] \setminus \{x_k,y_k: k \in U \}} \cos^2(j \pi \xi_i) \Big) \Big)\\
     &\lesssim_K \exp \Big(- \frac{cn_j}{80 Kd}  \min \Big( \sum_{i \in [d] } \sin^2(j \pi \xi_i), \sum_{i \in [d] } \cos^2(j \pi \xi_i) \Big) \Big) 
\end{align*}
we obtain
\begin{align*}
 &\Big|\Delta_{\overline{n}}^U(\beta(\xi)) \Big|\\\
 &\lesssim_K \prod_{k \in U} \exp \Big(- \frac{cn_k}{80 Kd}  \min \Big( \sum_{i \in [d] } \sin^2(k \pi \xi_i), \sum_{i \in [d] } \cos^2(k \pi \xi_i) \Big) \Big) \cdot \prod_{k \in U} \Big( \frac{\| k \xi \|^2}{d} \cdot \frac{\|k \xi + \frac{1}{2} \|^2}{d} \Big)
\\
&\lesssim_K \prod_{k \in U}  \frac{1}{n_k}\frac{n_k}{d}\| k \xi+ \frac{\mathds{1}_A(k)}{2}\|^2  \exp \Big( - \frac{cn_k}{80Kd} \| k \xi+ \frac{\mathds{1}_A(k)}{2}\|^2 \Big) 
\\
&\lesssim_K \prod_{k \in U} \frac{1}{n_k} \min\Big( \frac{n_k}{d} \|k\xi + \frac{\mathds{1}_A(k)}{2} \|^2, \big( \frac{n_k}{d} \|k\xi + \frac{\mathds{1}_A(k)}{2} \|^2\big)^{-1} \Big),
\end{align*}
where in the last inequality we used the simple bound $se^{-s}\lesssim \min(s,s^{-1}),$ $s>0.$ This completes the proof of Corollary \ref{cor:2.18}.
\end{proof}

\subsection{Proof of Theorem \ref{thm:1.5}.}
\label{sec:24}
We are finally ready to prove Theorem \ref{thm:1.5}. For $p= \infty$ the theorem is trivial; due to interpolation it suffices to consider only $p=2$. \par
Assume that $d \geq 4K$ (in the other case the theorem is trivial).
We will prove the following.\\
\textbf{Statement.}\textit{
For any $ V \subseteq [K]$ there exists $C_{V,K}$ such that for all $d \geq 4K$ and  $f \in \ell^2(\Z^d)$ we have
\[
 \Big\| \sup_{\substack{
 1 \leq n_j\leq \frac{d}{2K}; j \in V}}   |\mathcal{D}_{\overline{n}(V)}f| \Big\|_{2} \leq C_{V,K} \big\| f \big\|_{2},
\]
}\\
Notice that the statement implies Theorem \ref{thm:1.5} when $d \geq 4K$, because 
\[
 \Big\| \sup_{\substack{
 0 \leq n_j \leq \frac{d}{2K}; j \in [K]}}   |\mathcal{D}_{\overline{n}}f| \Big\|_{2} \leq \sum_{V \subseteq [K]}  \Big\| \sup_{\substack{
 1 \leq n_j \leq \frac{d}{2K}; j \in V}}   |\mathcal{D}_{\overline{n}(V)}f| \Big\|_{2} \leq C_K \|f \|_{2}, 
\]
where $C_K=\sum_{V \subseteq [K]} C_{V,K}$. The reason for introducing the above statement instead of justifying Theorem \ref{thm:1.5} directly is purely technical and is reflected in \eqref{eq:thm:1.5:1} below.

\par
\begin{proof}[Proof of the statement above]
Fix $V \subseteq [K].$ We decompose any $f \in \ell^2(\Z^d)$ it in the following way
\[
f= \sum_{A \subseteq [K]} f_A,
\]
where $f_A$ satisfies
\[
\supp(\widehat{f_A}) \hspace{-0.1cm} \subseteq \hspace{-0.1cm}\Big\{\xi \in \T^d \hspace{-0.1cm}:  \hspace{-0.1cm}(\forall j \in A) \| j \xi+ \frac{1}{2} \| \hspace{-0.1cm}\leq \hspace{-0.1cm}\|j \xi \|,   (\forall j \in [K] \setminus A) \|j \xi \| \hspace{-0.1cm}< \hspace{-0.1cm}\| j \xi+ \frac{1}{2} \| \Big\}.
\]
By triangle inequality we have
\[
  \Big\| \sup_{\substack{
 1 \leq n_j \leq \frac{d}{2K}; j \in V}}   |\mathcal{D}_{\overline{n}(V)}f| \Big\|_{2} \leq \sum_{A \subseteq [K]}   \Big\| \sup_{\substack{
 1 \leq n_j \leq \frac{d}{2K}; j \in V}}   |\mathcal{D}_{\overline{n}(V)}f_A| \Big\|_{2}.
\]
Fix $A \subseteq [K]$. Since
\begin{align*}
&\sup_{\substack{
 1 \leq n_j \leq \frac{d}{2K}; j \in V}}   |\mathcal{D}_{\overline{n}(V)}f|\le \sup_{\epsilon\in\{0,1\}^K} \hspace{-0.4cm}\sup_{\substack{
 n_j+\epsilon_j \in \D; j \in V, \\
(\forall j \in V) 2-\epsilon_j \leq n_j\leq \frac{d}{2K}}}\sup_{\substack{k_u \in [n_u,2n_{u}+\epsilon_u]; u \in V, \\ (\forall u \in V)
k_u  \equiv \epsilon_u \pmod{2}, \\
(\forall u \in V)k_u \leq \frac{d}{2K}
}} \hspace{-0.3cm} \Big| \mathcal{D}_{\overline{k}(V)}f\Big|\\
&\le \sup_{\epsilon\in\{0,1\}^K}\sup_{\substack{
 n_j+\epsilon_j \in \D; j \in V, \\
(\forall j \in V) n_j\leq \frac{d}{2K}}}\Big| \mathcal{D}_{\overline{n}(V)}f\Big|\\
&\hspace{0.5cm}+\sup_{\epsilon\in\{0,1\}^K}\sup_{\substack{
 n_j+\epsilon_j \in \D; j \in V, \\
(\forall j \in V) 2- \epsilon_j \leq n_j\leq \frac{d}{2K}}}\sup_{\substack{k_u \in [n_u,2n_{u}+\epsilon_u]; u \in V, \\ (\forall u \in V)
k_u  \equiv \epsilon_u \pmod{2}, \\
(\forall u \in V)k_u \leq \frac{d}{2K}
}} \Big| \mathcal{D}_{\overline{k}(V)}f-\mathcal{D}_{\overline{n}(V)}f\Big|
\end{align*}
we see that
\begin{equation}
\label{eq:thm:1.5:1}
\begin{split}
&\Big\| \sup_{\substack{
 1 \leq n_j \leq \frac{d}{2K}; j \in V}}   |\mathcal{D}_{\overline{n}(V)}f_A| \Big\|_{2} \leq  \sum_{ \epsilon \in \{0,1 \}^K} \Big\| \sup_{\substack{
 n_j+\epsilon_j \in \D; j \in V, \\
(\forall j \in V) n_j\leq \frac{d}{2K}}}   |\mathcal{D}_{\overline{n}(V)}f_A| \Big\|_{2} 
+ \sum_{ \epsilon \in \{0,1 \}^K} \\
&\Big\| \Big( \hspace{-0.4cm} \sum_{\substack{1 \leq s_j \leq \log_2(\frac{d}{2K}+\epsilon_j); j\in V}} \sup_{\substack{k_u \in [2^{s_u}-\epsilon_u,2^{s_u+1}-\epsilon_u]; u \in V, \\ (\forall u \in V)
k_u  \equiv \epsilon_u \pmod{2}, \\
(\forall u \in V)k_u \leq \frac{d}{2K}
}} \Big| \mathcal{D}_{\overline{k}(V)}f-\mathcal{D}_{\overline{2^{s}}(V)-\epsilon(V)} f \Big|^2 \Big)^{1/2} \Big\|_{2},
\end{split}
\end{equation}
where $\overline{2^s}(V)-\epsilon(V)$ is the vector of length $K$, whose $u$-th coordinate is $2^{s_u}-\epsilon_u$ if $u \in V$ and $0$ otherwise. The first term on the right hand side of \eqref{eq:thm:1.5:1} is bounded by $C_K \| f_A \|_{2}$ due to Proposition \ref{prop:2.9}. 
\par For the second term we use Corollary \ref{cor:2.16} and get
\begin{equation} \label{eq:2.27}
\begin{split}
&\sum_{ \epsilon \in \{0,1 \}^K} \Big\| \Big( \sum_{\substack{1 \leq s_j \leq \log_2(\frac{d}{2K}+\epsilon_j); j\in V}} \\
&\hspace{3cm}\sup_{\substack{k_u \in [2^{s_u}-\epsilon_u,2^{s_u+1}-\epsilon_u]; u \in V, \\ (\forall u \in V)
k_u  \equiv \epsilon_u \pmod{2}, \\
(\forall u \in V)k_u \leq \frac{d}{2K}
}} \Big| \mathcal{D}_{\overline{k}(V)}f-\mathcal{D}_{\overline{2^{s}}(V)-\epsilon(V)} f \Big|^2 \Big)^{1/2} \Big\|_{2} 
\\
&\le 
 \sum_{ \epsilon \in \{0,1 \}^K} \sum_{\substack{U \subseteq V, \\ U \neq \emptyset}} \sum_{0 \leq i_u; u \in U} \Big( \sum_{i_u+1 \leq s_u \leq \log_2(\frac{d}{2K}+\epsilon_u); u \in U} \sum_{1 \leq j_u \leq 2^{i_u}; u \in U} 
 \\
 &\hspace{1cm}\int_{\T^d} \Big|  \hspace{-0.5cm}\sum_{\substack{k_u \in I_{j_u}^{s_u-i_u}; u \in U, \\ (\forall u \in U)
k_u  \equiv 0 \pmod{2}, \\
(\forall u \in U) k_u \leq \frac{d}{2K}-2^{s_u}+\epsilon_u
}} \hspace{-0.5cm}\Delta^U_{\overline{k}(U)+\overline{2^s}(V)-\epsilon(V)}\big(\beta(\xi)\big) \Big|^2  |\widehat{f_A}(\xi)|^2 d\xi \Big)^{1/2}.
\end{split}
\end{equation} 
Notice that for any $u \in U$ and $k_u \in I_{j_u}^{s_u-i_u}$ we have that $u$-th coordinate of the vector $\overline{k}(U)+\overline{2^s}(V)-\epsilon(V)$ is  between $2^{s_u}$ and $2^{s_u+1}$. Moreover  every coordinate of this vector does not exceed $ \frac{d}{2K}$. Thus by Corollary \ref{cor:2.18} for any $\xi \in \supp(\widehat{f_A})$ we have
\begin{align*}
&\Big| \sum_{\substack{k_u \in I_{j_u}^{s_u-i_u}; u \in U, \\ (\forall u \in U)
k_u  \equiv \epsilon_u \pmod{2}, \\
(\forall u \in U) k_u \leq \frac{d}{2K}-2^{s_u}+\epsilon_u
}} \Delta^U_{\overline{k}(U)+\overline{2^s}(V)-\epsilon(V)}\big(\beta(\xi)\big) \Big|^2 
\\
&\lesssim_K \prod_{u \in U}2^{2s_u-2i_u} \cdot \prod_{k \in U} 2^{-2s_k}\cdot \prod_{k \in U} \min\Big( \frac{2^{s_k}}{d} \|k\xi + \frac{\mathds{1}_A(k)}{2} \|^2, \big( \frac{2^{s_k}}{d} \|k\xi + \frac{\mathds{1}_A(k)}{2} \|^2\big)^{-1} \Big)^2.
\end{align*}
Using the above inequality and Lemma 
\ref{lem:2.13} we obtain
\begin{align*}
 &\sum_{i_u+1 \leq s_u \leq \log_2(\frac{d}{2K}+\epsilon_u); u \in U} \sum_{1 \leq j_u \leq 2^{i_u}; u \in U} 
 \int_{\T^d} \Big| \sum_{\substack{k_u \in I_{j_u}^{s_u-i_u}; u \in U, \\ (\forall u \in U)
k_u  \equiv 0 \pmod{2}, \\
(\forall u \in U) k_u \leq \frac{d}{2K}-2^{s_u}+\epsilon_u
}} \Delta^U_{\overline{k}(U)+\overline{2^s}(V)-\epsilon(V)}\big(\beta(\xi)\big) \Big|^2  |\widehat{f_A}(\xi)|^2 d\xi 
\\
&\lesssim_K
 \sum_{i_u+1 \leq s_u \leq \log_2(\frac{d}{2K}+\epsilon_u); u \in U} \sum_{1 \leq j_u \leq 2^{i_u}; u \in U} \\
 &\hspace{1cm}\int_{\T^d}  \prod_{k \in U}\Big( 2^{-2i_k} \min\Big( \frac{2^{s_k}}{d} \|k\xi + \frac{\mathds{1}_A(k)}{2} \|^2, \big( \frac{2^{s_k}}{d} \|k\xi + \frac{\mathds{1}_A(k)}{2} \|^2\big)^{-1} \Big)^2 \Big)|\widehat{f_A}(\xi)|^2 d\xi  \lesssim_{K} \prod_{u \in U} 2^{-i_u}
\\
&\hspace{0.75cm}\cdot \int_{\T^d}  \sum_{1\leq s_u \leq \log_2(\frac{d}{2K}+\epsilon_u); u \in U}  \prod_{k \in U} \min\Big( \frac{2^{s_k}}{d} \|k\xi + \frac{\mathds{1}_A(k)}{2} \|^2, \big( \frac{2^{s_k}}{d} \|k\xi + \frac{\mathds{1}_A(k)}{2} \|^2\big)^{-1} \Big)^2|\widehat{f_A}(\xi)|^2 d\xi 
\\
&\lesssim_K \prod_{u \in U} 2^{-i_u} \| \widehat{f_A} \|_{L^2( \T^d)}^2= \prod_{u \in U} 2^{-i_u} \| f_A \|_{\ell^2( \Z^d)}^2.
\end{align*}
Plugging the above into \eqref{eq:2.27} we get
\begin{align*}
&\sum_{ \epsilon \in \{0,1 \}^K} \Big\| \Big( \sum_{\substack{1 \leq s_j \leq \log_2(\frac{d}{2K}+\epsilon_j); j\in V}} \sup_{\substack{k_u \in [2^{s_u}-\epsilon_u,2^{s_u+1}-\epsilon_u]; u \in V, \\ (\forall u \in V)
k_u  \equiv \epsilon_u \pmod{2}, \\
(\forall u \in V)k_u \leq \frac{d}{2K}
}} \Big| \mathcal{D}_{\overline{k}(V)}f-\mathcal{D}_{\overline{2^{s}}(V)-\epsilon(V)} f \Big|^2 \Big)^{1/2} \Big\|_{2} 
\\
&\lesssim_K 
\sum_{ \epsilon \in \{0,1 \}^K} \sum_{\substack{U \subseteq [K], \\ U \neq \emptyset}} \sum_{1 \leq i_u; u \in U} \prod_{u \in U} 2^{-i_u/2} \| f_A \|_{\ell^2( \Z^d)} \lesssim_K \| f_A \|_{2}.
\end{align*}

Coming back to \eqref{eq:thm:1.5:1} we conclude that
\[
\Big\| \sup_{\substack{
 1 \leq n_j \leq \frac{d}{2K}; j \in V}}   |\mathcal{D}_{\overline{n}(V)}f_A| \Big\|_{2} \lesssim_K \| f_A \|_{2}.
\]
and thus we finally obtain
\[
 \Big\| \sup_{\substack{
 1 \leq n_j\leq \frac{d}{2K}; j \in V}}   |\mathcal{D}_{\overline{n}(V)}f| \Big\|_{2} \leq C_{V,K} \big\| f \big\|_{2}.
\]
This completes the proof of the statement, hence concludes the proof of Theorem \ref{thm:1.5}.
\end{proof}

\section{Lattice point counting}
\label{sec:3}
In this section we will count lattice points in balls $\sqrt{n}B$ with small radii. With applications towards maximal operators in mind, our main goals are the following two results. 
\begin{Thm}
    \label{thm:3.1}
    For any $K \in \N, \varepsilon>0$ there exists $a \in \N$, and $C=C(K, \varepsilon)>0$ such that for any $n, d \in \N$, satisfying $n \leq d^{1-\frac{1+\varepsilon}{(K+1)^2}}$ we have
    \begin{equation} \label{eq:3.1}  
    |\{ x \in  \sqrt{n}B: \sum_{\substack{i=1, \\ |x_i| \leq K}}^d |x_i|^2 \leq n-a \}| \leq \frac{C}{d} |\sqrt{n}B |. 
    \end{equation}
\end{Thm}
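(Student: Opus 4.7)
The plan is to partition the set $U:=\{x\in\sqrt{n}B\colon\sum_{|x_i|\le K}x_i^{2}\le n-a\}$ by the number $\ell(x):=\#\{i\colon |x_i|>K\}$ of ``large'' coordinates and to estimate each piece using the uniform lattice-point count from Theorem~\ref{thm:lat:extract} (equivalently Corollary~\ref{cor:1.4}). Writing $U=U_0\sqcup\bigsqcup_{k\ge 1}U_k$ with $U_k:=\{x\in U\colon \ell(x)=k\}$, the case $k=0$ yields $|x|^2=\sum_{|x_i|\le K}x_i^2\le n-a$, hence $U_0\subseteq \sqrt{n-a}B$. For $k\ge 1$, summing over the $\binom{d}{k}$ positions of the large coordinates and the ordered tuples $(j_1,\dots,j_k)\in \Z^{k}$ with $|j_s|\ge K+1$ and $J:=\sum_s j_s^{2}\le n$, the remaining small-coordinate vector $y\in\{-K,\dots,K\}^{d-k}$ satisfies $|y|^{2}\le \min(n-J,n-a)$, so it contributes at most $\lvert\sqrt{\min(n-J,n-a)}B^{2}(d-k)\cap\Z^{d-k}\rvert$ many lattice points.

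The workhorse comparison, which follows from Corollary~\ref{cor:1.4} once one notes that the exponential correction factor $\exp(n\alpha^{3}/8+\cdots)$ essentially cancels between numerator and denominator when $k$ and $J$ are bounded, is
\[
\frac{\lvert\sqrt{n-M}B^{2}(d-k)\cap\Z^{d-k}\rvert}{\lvert\sqrt{n}B\rvert}\;\lesssim_{K}\;\Big(\frac{n}{2d}\Big)^{M}\qquad(0\le M\le n,\; k=O_{K,\varepsilon}(1)).
\]
With $\delta:=(1+\varepsilon)/(K+1)^2$, applying this with $M=a$ and $k=0$ gives $|U_0|/|\sqrt{n}B|\lesssim (n/(2d))^{a}\le C/d$ as soon as $a\delta\ge 1$. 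For $k\ge 1$ I would split the inner sum by whether $J\ge a$ (\emph{Case A}, where $\min(n-J,n-a)=n-J$ and the per-tuple ratio is $\lesssim (n/(2d))^{J}$) or $J<a$ (\emph{Case B}, where the ratio is $\lesssim (n/(2d))^{a}$).

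Case B is finite: $k(K+1)^{2}\le J<a$ forces $k\le a/(K+1)^{2}$ and $|j_s|\le\sqrt{a}$, so there are $O_{a,K}(1)$ admissible tuples per $k$, yielding
\[
\sum_{k\le a/(K+1)^{2}}\binom{d}{k}\,O_{a,K}(1)\,(n/(2d))^{a}\;\lesssim_{a,K}\;d^{\lfloor a/(K+1)^{2}\rfloor-\delta a},
\]
which is $\le C/d$ provided $a\,(\delta-1/(K+1)^{2})>1$, i.e.\ $a>(K+1)^{2}/\varepsilon$. In Case A I split further at $k_*:=\lceil a/(K+1)^{2}\rceil$: for $k\ge k_*$ every tuple satisfies $J\ge k(K+1)^{2}\ge a$ automatically, and the crude bound $\binom{d}{k}S_K^{k}$ with $S_K:=\sum_{|j|\ge K+1}(n/(2d))^{j^{2}}\approx 2(n/(2d))^{(K+1)^{2}}$ yields a convergent sum with dominant term $(dS_K)^{k_*}/k_*!\approx d^{-k_*\varepsilon}/k_*!$, which is $\le C/d$ once $k_*\varepsilon\ge 1$ (again secured by $a\ge(K+1)^{2}/\varepsilon$).

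The decisive technical point is Case A with $1\le k<k_*$: here $J\ge a>k(K+1)^{2}$ excludes the minimal configuration $(\pm(K+1),\dots,\pm(K+1))$ and forces at least one coordinate to strictly exceed $K+1$. Writing $\sum_{(j_s)\colon J\ge a}(n/(2d))^{J}=\sum_{J\ge a}R_k(J)(n/(2d))^{J}$ with $R_k(J)$ of at most polynomial growth in $J$, the exponential decay of the weight dominates and an incomplete-gamma-type tail estimate delivers a bound of order $(n/(2d))^{a}$ times a constant depending only on $a,K,k$. Combined with $\binom{d}{k}\lesssim d^{k}$, $k<k_*\le a/(K+1)^{2}$, and $\delta a>a/(K+1)^{2}+1$, each such $k$ contributes at most $C/d$; summing the finitely many admissible $k$'s completes the argument. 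Taking $a=\lceil(K+1)^{2}/\varepsilon\rceil+1$ balances all the above requirements and produces a constant $C=C(K,\varepsilon)$. The main obstacle is exactly this Case A estimate for small $k$: the naive $\binom{d}{k}S_K^{k}$ bound yields only $d^{-k\varepsilon}$, which exceeds $1/d$ precisely when $k<1/\varepsilon$, so one must exploit the gap between $J$ and the minimal value $k(K+1)^{2}$ to gain the additional decay needed.
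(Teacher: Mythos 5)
Your strategy is essentially the paper's: both arguments hinge on the uniform count of Theorem \ref{thm:3.4}, on the ratio bound $|\sqrt{n-M}B|/|\sqrt{n}B|\lesssim (L\alpha)^M$ (the paper's Lemma \ref{lem:3.8}), and on the accounting that each coordinate exceeding $K$ in absolute value costs at least $(K+1)^2$ units of squared norm, so that the positional entropy $\binom{d}{k}\approx d^{k}$ is beaten by $\alpha^{M}\le d^{-(1+\varepsilon)M/(K+1)^2}$ with $M\ge k(K+1)^2$. The bookkeeping differs: the paper first discards points with a coordinate $\ge 4K$ (Corollary \ref{cor:3.9}) and then stratifies by the defect $m=n-\sum_{|x_i|\le K}x_i^2$, whereas you enumerate the large coordinates and their values directly and split according to $J=\sum j_s^2$ versus $a$ and $k$ versus $k_*$. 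Your version avoids the $4K$ truncation at the price of a slightly heavier case analysis; both close with $a\approx (K+1)^2/\varepsilon$.

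The one point that genuinely needs repair is your ``workhorse comparison.'' As stated -- with the explicit constant $(n/(2d))^M$, valid for all $0\le M\le n$, and derived from Corollary \ref{cor:1.4} on the grounds that ``the exponential correction factor essentially cancels'' -- it is not correct. For $K\ge 2$ the error term $O(n\alpha^4)$ in Corollary \ref{cor:1.4} is unbounded in the regime $n\le d^{1-(1+\varepsilon)/(K+1)^2}$ (it can be as large as $d^{4/9}$), so you must invoke Theorem \ref{thm:lat:extract} with roughly $(K+1)^2$ terms in the exponent, exactly as in the proof of Lemma \ref{lem:3.8}; and even then the substitution $\tilde\alpha=(n-M)/(d-k)$ produces a drift of size $e^{O_K(M)}$ in the exponential together with a factor $\sqrt{n/(n-M)}$, so the honest bound is $(L\alpha)^M$ with $L=L(K,\varepsilon)>1$, valid only for $M\le n/(1+\delta)$ (the regime $M$ close to $n$ needs the separate treatment the paper gives in Case 2 of its proof). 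None of this is fatal: your arithmetic only uses the qualitative decay $(C\alpha)^M$ followed by $\alpha\le d^{-(1+\varepsilon)/(K+1)^2}$, and the extra $L^M$ and $\sqrt{n}$ factors are absorbed after enlarging $a$ by an $O_{K,\varepsilon}(1)$ amount. But you should state and use the lemma in its correct form rather than the over-strong one. A minor further remark: in your final paragraph the ``gap between $J$ and $k(K+1)^2$'' plays no real role -- the constraint $J\ge a$ alone, together with the geometric decay of $(C\alpha)^J$ against the polynomial growth of the representation count $R_k(J)$, already yields $\sum_{J\ge a}R_k(J)(C\alpha)^J\lesssim_{a,K}(C\alpha)^a$, which is all you need for $k<k_*$.
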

\begin{Cor} \label{cor:3.2}
For any $K \in \N, \varepsilon>0$ there exists $a \in \N$, and $C=C(K,\varepsilon)>0$ such that for any $n, d \in \N$, satisfying $n \leq d^{1-\frac{1+\varepsilon}{(K+1)^2}}$ we have
    \begin{equation} \label{eq:3.2}
    |\{ x \in  \sqrt{n}S : \sum_{\substack{i=1, \\ |x_i| \leq K}}^d |x_i|^2 \leq n-a \}| \leq \frac{C}{d} |\sqrt{n}S|. 
    \end{equation}
\end{Cor}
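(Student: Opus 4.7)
The plan is to deduce Corollary \ref{cor:3.2} almost immediately from Theorem \ref{thm:3.1} combined with the comparison $|\sqrt{n}B|\approx |\sqrt{n}S|$ furnished by Theorem \ref{thm:lat:extract} (equivalently Corollary \ref{cor:1.4}). The key observation is just the trivial inclusion $\sqrt{n}S\subseteq \sqrt{n}B$: the set appearing in \eqref{eq:3.2} is contained in the set appearing in \eqref{eq:3.1}, and hence I can take the same constant $a\in\N$ produced by Theorem \ref{thm:3.1} and chain
\[
|\{x\in\sqrt{n}S\colon \textstyle\sum_{i\colon |x_i|\le K}|x_i|^2 \le n-a\}|\le |\{x\in\sqrt{n}B\colon \textstyle\sum_{i\colon |x_i|\le K}|x_i|^2 \le n-a\}|\le \tfrac{C_1}{d}|\sqrt{n}B|.
\]

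To finish I would replace $|\sqrt{n}B|$ by $|\sqrt{n}S|$ on the right. Corollary \ref{cor:1.4} provides a universal $c\in(0,1)$ with $|\sqrt{n}B|\lesssim |\sqrt{n}S|$ for $1\le n\le cd$. Under the hypothesis $n\le d^{1-(1+\varepsilon)/(K+1)^2}$ we have $n/d\le d^{-(1+\varepsilon)/(K+1)^2}$, so there exists $d_0=d_0(K,\varepsilon)$ such that $n\le cd$ whenever $d\ge d_0$. For those dimensions Corollary \ref{cor:1.4} applies and yields the desired bound $|S_n|\le \tfrac{C}{d}|\sqrt{n}S|$ for an appropriate $C=C(K,\varepsilon)$. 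The finitely many small dimensions $d<d_0$ are handled by the trivial estimate $|S_n|\le |\sqrt{n}S|$, absorbed into the constant by enlarging $C$ so that $C/d\ge 1$ whenever $d<d_0$; this is harmless since $d_0$ depends only on $K$ and $\varepsilon$.

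There is essentially no obstacle here: all the substance of the argument is concentrated in Theorem \ref{thm:3.1} (where the annulus-concentration / coordinate-boundedness phenomenon is proved via the saddle-point estimates of Theorem \ref{thm:3.4}) and in the ball-to-sphere comparison of Corollary \ref{cor:1.4}. The only mild point to verify is that the implicit constants in Corollary \ref{cor:1.4} are genuinely uniform in $n$ and $d$ in the range $n\le cd$, which is precisely the content of Theorem \ref{thm:lat:extract}.
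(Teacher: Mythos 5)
Your proposal is correct and follows essentially the same route as the paper: the inclusion $\sqrt{n}S\subseteq\sqrt{n}B$, an application of Theorem \ref{thm:3.1}, the ball-to-sphere comparison from Theorem \ref{thm:3.4} valid for $n\le cd$ (which holds once $d$ is large enough given the hypothesis on $n$), and a trivial absorption of the finitely many small dimensions into the constant. No gaps.
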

Note that if $n <a $ or $d \leq C$, then the statements of Theorem \ref{thm:3.1} and Corollary \ref{cor:3.2} trivially hold.
The point of Theorem \ref{thm:3.1} is to capture two different phenomena simultaneously. The first one is a concentration of lattice points in a small annulus near the boundary of the ball. The second one is the fact that huge proportion of points in the ball has "almost all" coordinates not bigger than $K$ (if the radius is sufficiently small in terms of $K$ and $d$). Exact powers of $d$ on the right-hand sides of \ref{eq:3.1} and \ref{eq:3.2} are not that important, they can be arbitrarily increased by taking bigger $C$ and $a$.
We expect the statements of Theorem \ref{thm:3.1} and Corollary \ref{cor:3.2} to be optimal, in the sense that conclusions should fail for $\varepsilon=0$. 
\par

Our main tool to prove Theorem \ref{thm:3.1} and Corollary \ref{cor:3.2} will be a formula for the number of lattice points in balls and spheres with small radii from Theorem \ref{thm:3.4} below. This formula will be expressed in terms of an auxiliary function $h$, which is a case of the Jacobi theta function.
\begin{Defn}
    \label{def:3.3}
We define $h : \{ z \in \mathbb{C}: |z|<1 \} \to \mathbb{C}$ by the formula
\[
h(z)= \sum_{k \in \Z} z^{k^2}=1+2 \sum_{k=1}^{\infty} z^{k^2}.
\]
\end{Defn}
\noindent Notice that for any $d \in \N$ we have
\[
h(z)^d= \sum_{n=0}^{\infty} |\sqrt{n}S | z^n
\]
and 
\[
\frac{h(z)^d}{1-z}= \sum_{n=0}^\infty \Big( \sum_{k=0}^n \big|\sqrt{k}S \big| \Big)z^n= \sum_{n=0}^{\infty} \big|\sqrt{n}B\big| z^n.
\]
Thus, due to Cauchy's formula for any $r\in (0,1)$ we have 
\begin{equation}
\label{eq:Cauchyball}
|\sqrt{n}S|= \frac{1}{2 \pi i} \oint_{|z|=r} \frac{h(z)^d}{z^{n+1}} dz,\qquad|\sqrt{n}B|= \frac{1}{2 \pi i} \oint_{|z|=r} \frac{h(z)^d}{z^{n+1}}\frac{1}{1-z} dz.
\end{equation}

Recall that  
\[ \alpha=\frac{n}{d}\] and note that since we are in the regime $n\lesssim d$ we have $\alpha\lesssim 1.$ We are now ready to present our most important tool in this section. Theorem \ref{thm:3.4} gives a quantitative and uniform (dimension-free) asypmtotic formula for the number of lattice points in balls and spheres within the regime $1 \leq n \leq c d.$ The authors are clueless, whether a result of this form was already established in the literature, however the case $n=cd$ for fixed $c>0$ as $d \to \infty$ was studied in \cite{HRou} and \cite{MO}.
\begin{Thm}
\label{thm:3.4} There exists $c \in (0,1)$  such that for  $n,d \in \N$ satisfying  $1 \leq n \leq c d$ we have
\begin{equation}
\label{eq:{thm:3.4}:hform}
|\sqrt{n}B| \approx \frac{h(r)^d}{r^n} \frac{1}{\sqrt{n}} \approx |\sqrt{n}S|,
\end{equation}
where $r \in (0,1)$ is the unique number satisfying $r \frac{h'(r)}{h(r)}=\alpha$ and the implicit constants do not depend on $n$ and $d$. Moreover there are coefficients $b_1,b_2,...$ such that for any $K \in \N$, $n,d \in \N$ satisfying $1 \leq n \leq cd$ we have
\begin{equation}
\label{eq:{thm:3.4}:expform}
|\sqrt{n}B| \approx 2^n e^n \alpha^{-n}  \frac{1}{\sqrt{n}
}\exp \Big( \sum_{k=1}^{K} b_k n\alpha^k + O_K(n \alpha^{K+1}) \Big) \approx |\sqrt{n}S|.
\end{equation}
Numbers $b_1,b_2,...$ are coefficients of a certain power series, we have in particular \[b_1=-\frac{1}{2},\quad b_2=-\frac{1}{6},\quad b_3=\frac{1}{24}.\]
\end{Thm}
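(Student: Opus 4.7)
The plan is to apply the saddle-point method directly to the contour integrals \eqref{eq:Cauchyball}. After choosing the radius $r=r(\alpha)\in(0,1)$ defined by the saddle equation $rh'(r)/h(r)=\alpha$ and parametrizing by $z=re^{i\theta}$, the integrals take the form
\[
|\sqrt{n}S|=\frac{h(r)^d}{r^n}\cdot\frac{1}{2\pi}\int_{-\pi}^{\pi}\left(\frac{h(re^{i\theta})}{h(r)}\right)^{\!d}e^{-in\theta}\,d\theta,
\]
with an analogous expression for $|\sqrt{n}B|$ carrying the extra factor $(1-re^{i\theta})^{-1}$. The saddle condition is precisely the stationary-phase condition for the exponent $d\log h(z)-n\log z$, so all the standard machinery applies; the point is to make everything quantitative and uniform in the small parameter $\alpha=n/d$.

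First I would justify existence and uniqueness of $r(\alpha)\in(0,1/2)$ for $\alpha\in(0,c)$, with $c$ small enough. Since $h(z)=1+2z+2z^{4}+\cdots$ near $0$, we have $zh'(z)/h(z)=2z+O(z^{4})$ as $z\to 0^{+}$, the map $r\mapsto rh'(r)/h(r)$ is strictly increasing on $(0,1)$, and inverting gives $r=\alpha/2+\sum_{k\ge 2}c_{k}\alpha^{k}$ as an analytic function of $\alpha$ on a neighborhood of $0$. In particular $r\asymp \alpha$ and $r$ stays bounded away from $1$, which is critical since we will divide by $1-re^{i\theta}$ in the ball integral.

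Next, and this is the main technical obstacle, I would establish a concentration estimate of the form
\[
\left|\frac{h(re^{i\theta})}{h(r)}\right|\le \exp\bigl(-c'\alpha\theta^{2}\bigr),\qquad |\theta|\le \pi,
\]
with a universal $c'>0$. Expanding $|h(re^{i\theta})|^{2}=\sum_{j,k\in\Z}r^{j^{2}+k^{2}}\cos((j^{2}-k^{2})\theta)$ and comparing with $h(r)^{2}$ term by term reduces the problem to lower bounds for $\sum_{j,k}r^{j^{2}+k^{2}}(1-\cos((j^{2}-k^{2})\theta))$; the $(j,k)=(1,0)$ contribution alone yields $\gtrsim r\theta^{2}\asymp \alpha\theta^{2}$ for $|\theta|\le \pi/2$, and for larger $\theta$ one uses that $r$ is bounded away from $1$, so the finitely many dominant terms of $h$ rule out accidental coherence. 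The correct scaling $\alpha\theta^{2}$ (rather than merely $\theta^{2}$) is what will produce the sharp factor $1/\sqrt{n}$ instead of $1/\sqrt{d}$ at the end.

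With Step~2 in hand, a standard saddle-point split handles the remainder. On $|\theta|\le\delta$ I expand $\log h(re^{i\theta})-\log h(r)=-\tfrac{A(r)}{2}\theta^{2}+O(\theta^{3})$ with $A(r)\asymp \alpha$, evaluate the Gaussian integral (the linear phase $-in\theta$ is killed by the saddle condition via a contour shift), and obtain the factor $(dA(r))^{-1/2}\asymp n^{-1/2}$; on $\delta\le |\theta|\le\pi$ the estimate from Step~2 makes the integrand exponentially small. For the ball the factor $(1-re^{i\theta})^{-1}$ is analytic on the arc and equals $(1-r)^{-1}+O(\theta)\asymp 1$ since $r$ is bounded away from $1$, so it contributes only a bounded multiplicative constant, which simultaneously explains $|\sqrt{n}B|\approx|\sqrt{n}S|$. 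This proves \eqref{eq:{thm:3.4}:hform}. Finally, to get \eqref{eq:{thm:3.4}:expform}, I substitute the analytic expansion $r=\alpha/2+c_{2}\alpha^{2}+\cdots$ into $d\log h(r)-n\log r$ and collect terms; writing
\[
d\log h(r)-n\log r=n\bigl(\log(2/\alpha)+1\bigr)+n\sum_{k\ge 1}b_{k}\alpha^{k},
\]
truncating at order $K$, and absorbing the tail into $O_{K}(n\alpha^{K+1})$ produces the claimed formula, with $b_{1},b_{2},b_{3}$ obtained from the first few Taylor coefficients of $\log h(r(\alpha))-\alpha\log r(\alpha)$ around $\alpha=0$.
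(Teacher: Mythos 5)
Your approach is the same as the paper's: the saddle-point method applied to the Cauchy integrals over the circle $|z|=r$ with $r$ the saddle radius, a major/minor arc split, and substitution of the Lagrange--B\"urmann series for $r(\alpha)$ to obtain the expansion in powers of $\alpha$. Your Step 2 is in fact a cleaner packaging of the minor-arc estimate than the paper's: the identity $h(r)^2-|h(re^{i\theta})|^2=\sum_{j,k}r^{j^2+k^2}\bigl(1-\cos((j^2-k^2)\theta)\bigr)\ge 4r(1-\cos\theta)\gtrsim \alpha\theta^2$ yields the global bound $|h(re^{i\theta})/h(r)|^d\le e^{-c n\theta^2}$ on all of $[-\pi,\pi]$, from which the upper bounds for both $|\sqrt{n}S|$ and $|\sqrt{n}B|$ follow in one stroke (the paper instead estimates $|1+2z|$ on the minor arc, which amounts to the same scaling $r\theta^2$).

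There is, however, one genuine gap: the two-sided bound when $n$ stays bounded. Your lower bound is obtained by subtracting the minor-arc contribution from the major-arc Gaussian term; Step 2 controls the former only by $e^{-c'n\delta^2}$, which is negligible compared with the major-arc term $\asymp n^{-1/2}$ only for $n$ large. Similarly, keeping the cosine of the residual cubic phase $O(n|\theta|^3)$ bounded away from $-1$ on a window $|\theta|\le Cn^{-1/2}$ that carries a fixed fraction of the Gaussian mass forces $n\ge n_0$ for a universal $n_0$. For $1\le n\le n_0$ the saddle-point lower bound therefore degenerates and a separate argument is needed; the paper supplies it via the elementary count $2^n\binom{d}{n}\le|\sqrt{n}S|\le|\sqrt{n}B|\le|nB^1(d)|\lesssim_{n_0} d^n$ combined with $h(r)^dr^{-n}n^{-1/2}\approx_{n_0} d^n$ in that range. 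With that case added (and the routine verification that the phase error on the major arc is $O(n|\theta|^3)$, which you leave implicit), your proof is complete.
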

\begin{Rem} The error term in \eqref{eq:{thm:3.4}:expform} is in the exponent, which might be somewhat disappointing. However, if we fix $K$ and restrict our attention to $n,d$ such that $n \alpha^{K+1} \leq 1$ (that is $n \leq d^{\frac{K+1}{K+2}})$, then we obtain a reasonable approximation for $|\sqrt{n}B|$ and $|\sqrt{n}S|$. 
\end{Rem}

We move towards the proof of Theorem \ref{thm:3.4}. It is not elaborate and uses saddle point method, which was highlighted for $n\approx d$ at the end of \cite{MO}. We start with a lemma that is essentially contained in \cite{MO}.

\begin{Lem}
    \label{lem:3.5}
    For any $0<\alpha<1$ there exists unique $r \in (0,1)$ such that 
\[
r \frac{h'(r)}{h(r)}=\alpha.
\]
\end{Lem}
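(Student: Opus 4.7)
The plan is to show that the function $f(r) := r h'(r)/h(r)$ is a continuous strictly increasing bijection of $(0,1)$ onto $(0,\infty)$; the lemma then follows instantly from the intermediate value theorem since $(0,1) \subset (0,\infty)$.

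The cleanest route is to reinterpret $f$ probabilistically. For $t \in (-\infty, 0)$ set $r = e^t$, and let $\mu_t$ be the probability measure on $\Z$ with mass $\mu_t(\{k\}) = e^{tk^2}/h(e^t)$. Then, writing $H(t) = \log h(e^t)$, a direct computation gives
\[
H'(t) = \frac{\sum_{k \in \Z} k^2 e^{tk^2}}{\sum_{k \in \Z} e^{tk^2}} = f(e^t), \qquad H''(t) = \mathrm{Var}_{\mu_t}(k \mapsto k^2).
\]
Since $k \mapsto k^2$ is non-constant on $\supp \mu_t = \Z$, the variance is strictly positive, so $t \mapsto f(e^t)$ is strictly increasing; equivalently, $r \mapsto f(r)$ is strictly increasing on $(0,1)$.

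For the boundary values, the small-$r$ expansions $r h'(r) = 2r + O(r^4)$ and $h(r) = 1 + O(r)$ give $f(r) \to 0$ as $r \to 0^+$. For the other end, I observe that $h(e^t) \to \infty$ as $t \to 0^-$, so for any fixed $M \in \N$ the mass $\mu_t(\{|k| \leq M\}) \leq (2M+1)/h(e^t)$ tends to $0$, and therefore
\[
f(e^t) = \sum_{k \in \Z} k^2 \mu_t(\{k\}) \geq M^2 \mu_t(\{|k| > M\}) \longrightarrow M^2,
\]
which, on letting $M \to \infty$, yields $f(r) \to \infty$ as $r \to 1^-$. Combining continuity, strict monotonicity, and the boundary asymptotics completes the proof. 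No substantial obstacle is expected: the only non-mechanical ingredient is the variance identity for the exponential family $\{\mu_t\}$, and its positivity is immediate from the fact that $\mu_t$ is fully supported on $\Z$.
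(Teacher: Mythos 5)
Your proof is correct and follows essentially the same route as the paper: the strict monotonicity of $r\mapsto rh'(r)/h(r)$ via positivity of $\mathrm{Var}_{\mu_t}(k\mapsto k^2)$ is exactly the paper's Cauchy--Schwarz inequality $\big(\sum_k z^{k^2}\big)\big(\sum_k k^4 z^{k^2}\big)>\big(\sum_k k^2 z^{k^2}\big)^2$ in probabilistic dress, followed by the same intermediate-value argument. The only difference is that you spell out the boundary limits (in particular the tightness argument for $r\to 1^-$), which the paper merely asserts.
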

\begin{proof}
We claim that for any $z \in (0,1)$ we have
    \begin{equation*}
    \frac{d}{dz}\Big(z\frac{h'(z)}{h(z)}\Big)>0.
    \end{equation*}
This inequality was justified in \cite[Lemma 2]{MO}, however, we include the proof for completeness. 
    Note that
    \[
       \frac{d}{dz}\Big(z\frac{h'(z)}{h(z)}\Big)= \frac{h'(z)}{h(z)}+ \frac{zh''(z)}{h(z)}- \frac{zh'(z)^2}{h(z)^2},
    \]
    thus is sufficient to prove that
    \[
    h(z)\big(zh'(z)+z^2h''(z)\big)> z^2h'(z)^2,\qquad z\in (0,1).
    \]
    The above inequality is equivalent to
    \[
    \Big( \sum_{k \in \Z} z^{k^2} \Big) \Big( \sum_{ k \in \Z} k^4 z^{k^2} \Big) > \Big( \sum_{k \in \Z} k^2z^{k^2} \Big)^2,
    \]
    and follows from Cauchy-Schwarz inequality. Equality holds if and only if there is $C$ such that $Cz^{k^2}=k^4z^{k^2}$ for all $k \in \Z$, so it never holds.

    Using the claim and noticing that $z \frac{h'(z)}{h(z)} \Big|_{z=0}=0$ and $ \lim_{z \to 1^-} z \frac{h'(z)}{h(z)}= \infty$ we complete the proof.
\end{proof}

To obtain some  understanding of such implicitly defined $r$ we use classical Lagrange-Bürmann inversion theorem (see \cite{Lagrange}, \cite{Burmann} for original articles, or see \cite[Theorem 5.4.2]{EnComb} for various different proofs). We apply this theorem to the equation $H(r)=\alpha,$ with $H(r)=rh'(r)/h(r)$ and this is possible since $H'(0)=2\neq 0.$ Note that by the implicit function theorem if $\alpha$ is sufficiently small, then $r$ is given by some convergent power series, hence to compute its coefficients it is sufficient to work on formal power series as in the above cited works.
\begin{Prop}
 \label{thm:3.6}
There is $c_1>0$ such that for all $|\alpha|\le c_1$ the equation
\[
z \frac{h'(z)}{h(z)}= \alpha
\]
has a unique solution $r$ satisfying $r\in (0,1).$ The solution is given by the power series
\[
r= \sum_{k=1}^{\infty} a_k \alpha^k,
\]
with radius of convergence at least $2c_1$, where
\[
a_k= \frac{1}{k!} \Big\{  \frac{d^{k-1}}{dz^{k-1}} \Big(  \frac{h(z)}{h'(z)}\Big)^k \Big\}_{z=0},
\]
in particular \[a_1=a_2=a_3=\frac{1}{2},\quad  a_4= \frac{1}{4}.\]
\end{Prop}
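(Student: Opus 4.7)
The plan is to apply the classical Lagrange--Bürmann inversion theorem to $H(z) := zh'(z)/h(z)$. First I would observe that since $h(z) = 1 + 2z + 2z^4 + 2z^9 + \cdots$ is holomorphic on the open unit disk and satisfies $h(0) = 1 \neq 0$, the function $H$ is holomorphic in some neighborhood of the origin. A direct expansion gives $H(z) = 2z + O(z^2)$, so $H'(0) = 2 \neq 0$, and the holomorphic inverse function theorem produces a $c_1 > 0$ such that $H$ admits an analytic inverse $r = r(\alpha)$ on $\{|\alpha| < 2c_1\}$, automatically represented by a convergent power series on that disk.

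The next step is to identify the coefficients of this inverse. Writing $H(z) = z/g(z)$ with $g(z) := h(z)/h'(z)$ (which is holomorphic at $0$ with $g(0) = 1/2 \neq 0$), the standard form of Lagrange--Bürmann inversion yields
\[
a_k = \frac{1}{k!}\left.\frac{d^{k-1}}{dz^{k-1}}\, g(z)^k\,\right|_{z=0} = \frac{1}{k!}\left.\frac{d^{k-1}}{dz^{k-1}}\left(\frac{h(z)}{h'(z)}\right)^k\right|_{z=0},
\]
which is exactly the claimed formula. The uniqueness of $r \in (0,1)$ for real $\alpha$ then comes from combining two ingredients: Lemma \ref{lem:3.5} already provides existence and uniqueness of a real solution in $(0,1)$ for each $\alpha$ in that range, while the analytic inverse gives a unique solution in a complex neighborhood of $0$; both solutions must agree by the uniqueness clause of the inverse function theorem, possibly after shrinking $c_1$ so that the real solution stays inside the neighborhood on which $H$ is biholomorphic.

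To pin down the first few coefficients I would expand $h(z) = 1 + 2z + 2z^4 + O(z^9)$ and $h'(z) = 2 + 8z^3 + O(z^7)$, then divide formally to obtain
\[
g(z) = \tfrac12 + z + 0\cdot z^2 - 2z^3 + O(z^4),
\]
and finally compute $a_k = \tfrac{1}{k}[z^{k-1}]\, g(z)^k$ for $k = 1,2,3,4$ by elementary power series multiplication: one finds $[z^0]g = \tfrac12$, $[z^1]g^2 = 1$, $[z^2]g^3 = \tfrac32$, and $[z^3]g^4 = 1$, yielding $a_1 = a_2 = a_3 = \tfrac12$ and $a_4 = \tfrac14$.

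None of these steps presents a substantive difficulty --- the entire proof is essentially a packaging of the holomorphic inverse function theorem together with Lagrange--Bürmann inversion --- and the only point requiring some care is the simultaneous calibration of $c_1$ so that (i) the power series converges on $\{|\alpha| < 2c_1\}$, (ii) the real solution from Lemma \ref{lem:3.5} lies in the complex domain of biholomorphism, and (iii) $H$ is injective on the corresponding $z$-neighborhood. All three conditions are easily satisfied by choosing $c_1$ small enough.
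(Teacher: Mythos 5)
Your proposal is correct and follows essentially the same route as the paper, which likewise invokes the Lagrange--Bürmann inversion theorem for $H(z)=zh'(z)/h(z)$ using $H'(0)=2\neq 0$ and the implicit/inverse function theorem, and obtains uniqueness in $(0,1)$ from Lemma \ref{lem:3.5}. Your expansion $g(z)=\tfrac12+z-2z^3+O(z^4)$ and the resulting values $a_1=a_2=a_3=\tfrac12$, $a_4=\tfrac14$ check out.
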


Before proceeding to the proof of Theorem \ref{thm:3.4} we need to derive some consequences of Proposition \ref{thm:3.6}. Let $\alpha=n/d \leq c_1$, where $c_1$ is from Proposition \ref{thm:3.6}. Then there exists some $r \in (0,1)$ such that 
    \[
    r \frac{h'(r)}{h(r)}= \alpha
    \]
and by Proposition \ref{thm:3.6} for each $K\in 
N$ we have
   \begin{equation*}
    r=\sum_{k=1}^{K} a_k \alpha^k + O_K(\alpha^{K+1}).
    \end{equation*}
    Since $a_1= \frac{1}{2} \neq 0$ for $\alpha$ small enough we have  \[\alpha/4<r<\alpha<1/4.\]  In what follows we consider the analytic function
   \begin{equation}
   \label{eq:fdefi}
    f(z):=\log(h(z))- \alpha \log(z)
    \end{equation}
    on a sufficiently small neighborhood of  $\{z \in \mathbb{C}: |z|=r, |\arg(z)| \leq \delta\},$ where $\delta$ is a small quantity to be fixed later. Notice that
    \[\exp(df(r))=\frac{h(r)^d}{r^n}\]
    and
    \[
    f'(r)=\frac{h'(r)}{h(r)}- \frac{\alpha}{r}=0.
    \]
    Denote $f^{(2)}(r)=\beta$, then
    \[
    \beta= \frac{\alpha}{r^2}+ \frac{d}{dr} \Big( \frac{h'(r)}{h(r)} \Big)= \frac{\alpha}{r^2}+O(1),
    \]
   and since $r \in ( \alpha/4, \alpha)$, for  small enough $\alpha$ we have $\beta \in ( \frac{1}{2\alpha}, \frac{5}{\alpha})$. By complex Taylor's theorem for any $z \in \mathbb{C}$  such that $|z|=r$, $|\arg(z)| \leq \delta$ we have 
    \[
    f(z)=f(r)+ \frac{\beta}{2}(z-r)^2 + \frac{1}{2} \int_{\wideparen{r,z}} (w-z)^2 f^{(3)}(w) dw,
    \]
    where $\wideparen{r,z}$ denotes the arc from $r$ to $z$ along the circle of radius $r$. Notice that 
    \[
    f^{(3)}(w)= -\frac{2\alpha}{w^3} + \frac{d^2}{dw^2} \Big( \frac{h'(w)}{h(w)} \Big).
    \]
    Thus for $|w|=r \approx \alpha$ we have
    \[
    |f^{(3)}(w)|\lesssim \frac{\alpha}{r^3} +1 \lesssim \frac{1}{\alpha^2}
    \]
    and hence
   \begin{equation}
   \label{eq:ftay}
     f(z)=f(r)+ \frac{\beta}{2}(z-r)^2 + O\big( \frac{1}{\alpha^2}|z-r|^3\big).
    \end{equation}

\par We are now ready to prove Theorem \ref{thm:3.4}.
\begin{proof}[Proof of Theorem \ref{thm:3.4}]
Observe first that, if $C>0$ is a constant and $n<C$, then \\ $r\approx \alpha \approx_C 1/d$ and thus 
    \[
\frac{h(r)^d}{r^n} \frac{1}{\sqrt{n}}\approx_C d^n.
    \]
    Furthermore, a simple combinatorial argument shows that 
    \[|\sqrt{n}B| \approx_C d^n\approx_C |\sqrt{n}S|.\]
    Indeed, on the one hand we have
    \[
    |\sqrt{n}B|\ge |\sqrt{n}S|\ge |\{-1,0,1\}^d\cap\sqrt{n}S|=2^n\binom{d}{n}\approx_C d^n.
    \]
    On the other hand using the inclusion $\sqrt{n}B\subseteq n B^1(d),$ where $B^1(d)$ denotes the $\ell^1$ ball \eqref{eq: Bqballs}, we obtain
    \[
    |\sqrt{n}B|\le |n B^1(d)|=\sum_{j=0}^n 2^j\binom{d}{j}\binom{n}{j}\lesssim_C d^n.
    \]
The formula for the number of lattice points in $n B^1(d)$ used above is easy to establish, see e.g.\ \cite[Lemma 2.2]{Ni1} for a proof. In summary, in the case $n<C$ we have
    \[
    |\sqrt{n}B|\approx  |\sqrt{n}S|\approx \frac{h(r)^d}{r^n} \frac{1}{\sqrt{n}}.
    \]
  which establishes both \eqref{eq:{thm:3.4}:hform} and  \eqref{eq:{thm:3.4}:expform}.
  
It remains to consider $n>C.$ We start with justifying \eqref{eq:{thm:3.4}:hform}. For this purpose we will estimate $|\sqrt{n}B|$ from above and $|\sqrt{n}S|$ from below. In the proof several steps will include taking $\alpha, r,\delta$ 'small enough' or $n$ 'large enough'. Each such steps means that we are taking $\alpha, r,\delta$ smaller than a universal constant or $n$ larger than a universal constant. The number of these steps will be finite and thus there will be universal constants $c>0$ and $C>0$ such that all the statements in the proof below hold for $\alpha<c,$ $r<c,$ $\delta<c$ and $n>C.$ Throughout the proof $c_2,c_3,c_4,c_5$ will denote non-negative universal constants.

\noindent {\bf 1) Estimate from above for $|\sqrt{n}B|$}. 
  Recalling \eqref{eq:Cauchyball} we have
    \[
    |\sqrt{n}B|= \frac{1}{2 \pi i } \oint_{|z|=r} \frac{h(z)^d}{z^{n+1}} \frac{1}{1-z} dz.
    \]
    Now take $\delta \in (0, \pi)$ (how small we will determine later, it will not depend on $n,d$). We divide the integral 
    \begin{equation} \label{eq:3.3}
   \frac{1}{2 \pi i } \int_{\substack{|z|=r, \\ |\arg(z)| \leq \delta }} \frac{h(z)^d}{z^{n+1}} \frac{1}{1-z} dz + \frac{1}{2 \pi i } \int_{\substack{|z|=r, \\ |arg(z)|> \delta}} \frac{h(z)^d}{z^{n+1}} \frac{1}{1-z} dz=:W_1+W_2.
    \end{equation}
    Above for any $z \in \mathbb{C} \setminus \{0 \}$ we take $\arg(z) \in (-\pi, \pi]$. We will treat two integrals in \eqref{eq:3.3} separately. 
    \par  We start with the first integral $W_1$.
   Using \eqref{eq:ftay} we obtain
  \begin{align*}
W_1&= \frac{1}{2 \pi i } \int_{\substack{|z|=r, \\ |\arg(z)| \leq \delta }} e^{df(z)} \frac{1}{z(1-z)} dz\\
    &= \frac{1}{2 \pi i } \int_{\substack{|z|=r, \\ |\arg(z)| \leq \delta }} \exp\Big( d f(r) + \frac{d\beta}{2}(z-r)^2+O\big( \frac{d}{\alpha^2} |z-r|^3\big)\Big) \frac{1}{z(1-z)} dz
   \\ 
    &= 
    \frac{h(r)^d}{r^{n}} \frac{1}{2 \pi} \int_{-\delta}^{\delta} \exp\Big( \frac{d\beta r^2}{2}(1- e^{i\theta})^2+O\big( \frac{d r^3}{\alpha^2} |1- e^{i\theta}|^3 \big) \Big) \frac{1}{(1-re^{i \theta})} d \theta
    \\
    &=\frac{h(r)^d}{r^{n}} \frac{1}{2 \pi} \int_{-\delta}^{\delta} \exp\Big( \frac{d\beta r^2}{2}(-\theta^2+O(|\theta|^3))+O\big(d \alpha |\theta|^3 \big) \Big) \frac{1}{(1-re^{i \theta})} d \theta
  \\
    &=\frac{h(r)^d}{r^{n}}\frac{1}{2 \pi} \int_{-\delta}^{\delta} \exp\Big( -\theta^2 \frac{d\beta r^2}{2}\big(1+ O(\delta)\big) \Big) \big(1+ O(r) \big) d \theta.
   \end{align*}
    We fix $\delta \in (0,1)$ and take $\alpha$ small enough so that both expressions $O(\delta)$ and $O(r)$ above have absolute values smaller than $1/2$. Then we have
\[
    |W_1| \leq  \frac{h(r)^d}{r^{n}}\frac{1}{2 \pi}  \cdot \frac{3}{2}\int_{-\delta}^{\delta} \exp\Big( -\theta^2 \frac{d\beta r^2}{4} \Big)  d \theta \lesssim \frac{h(r)^d}{r^{n}} \cdot \frac{1}{\sqrt{d \beta r^2}} \int_{- \infty}^{\infty} e^{- x^2/2} dx \approx \frac{h(r)^d}{r^{n}} \frac{1}{\sqrt{n}},
    \]
    last inequality holds because $d \beta r^2 \in ( \frac{d\alpha}{32}, 5 d \alpha)= (\frac{n}{32}, 5n)$.

\par Now we will consider $W_2$, the second integral of $\eqref{eq:3.3}$.
    Take any $z \in \mathbb{C}$ such that $|z|=r$ and $|\arg(z)|=\theta \in (\delta, \pi]$. Then, using the inequality $\cos(x) \leq 1- x^2/20$, $x \in [0, \pi],$ we obtain
\begin{align*}
    |1+2z|^2&=(1+2r \cos(\theta))^2+ (2r\sin \theta)^2= 1+4r^2+4r \cos \theta  \leq  (1+2r)^2- \frac{r\theta^2}{5} \le(1+2r)^2- \frac{r\delta^2}{5}\\
    &=1+2r(2-\delta^2/10)+4r^2\le (1+2(1-\delta^2/40)r)^2,
\end{align*}
provided that $\alpha$ and hence $r$ is small enough. Denoting $c_2=\delta^2/40$ we obtain
   \[
\frac{|h(z)|}{h(r)}\le \frac{(1+2(1-c_2)r+O(r^4))}{1+2r}\le 1-\frac{c_2 r+ O(r^4)}{1+2r}\le \exp(-c_2r/2)
\]
for small enough $r.$ This implies
    \begin{equation}
\label{eq:W2bound}
   |W_2| \lesssim \frac{h(r)^d}{r^{n}}   \exp (-dr c_2/2)\lesssim \frac{1}{\sqrt{n}}\frac{h(r)^d}{r^{n}},
\end{equation}
where the last inequality holds for large enough $n$ since $dr\approx d\alpha=n.$ Recalling \eqref{eq:3.3} we conclude that
\begin{equation}
\label{eq:snBabo}
|\sqrt{n}B|\lesssim \frac{1}{\sqrt{n}}\frac{h(r)^d}{r^{n}}.
\end{equation}

\noindent {\bf 2) Estimate from below for $|\sqrt{n}S|.$} This time we have
 \[
    |\sqrt{n}S|= \frac{1}{2 \pi i } \oint_{|z|=r} \frac{h(z)^d}{z^{n+1}}dz
    \]
and split
\begin{equation*} 
   \frac{1}{2 \pi i } \int_{\substack{|z|=r, \\ |\arg(z)| \leq \delta }} \frac{h(z)^d}{z^{n+1}} dz + \frac{1}{2 \pi i } \int_{\substack{|z|=r, \\ |arg(z)|> \delta}} \frac{h(z)^d}{z^{n+1}}  dz=:V_1+V_2.
    \end{equation*}
For further reference note that repeating the argument used in estimating $W_2,$ cf.\ \eqref{eq:W2bound}, it is easy to see that
\begin{equation}
\label{eq:V2ab}
|V_2|\lesssim  \exp (-c_3 n)\frac{h(r)^d}{r^{n}},   
\end{equation}
where $c_3$ is a constant that depends only on $\delta$. 

Considering $V_1$ we let
\[
\varphi(\theta)=\Ima (f(re^{i\theta})-f(r))=\Ima f(re^{i\theta}),\qquad \psi(\theta)=\Rea (f(re^{i\theta})-f(r)),
\]
with $f$ being defined in \eqref{eq:fdefi}.
Then, since $\exp(df(r))=h(r)^d r^{-n}$ we may write
\begin{equation}
\label{eq:ReaV1}
\Rea (V_1)\hspace{-0.1cm}= \hspace{-0.1cm}\Rea\left( \frac{1}{2 \pi i } \int_{\substack{|z|=r, \\ |\arg(z)| \leq \delta }} \hspace{-0.3cm}e^{df(z)} \frac{dz}{z}\right)\hspace{-0.1cm}=\hspace{-0.1cm}\frac{1}{2\pi} \frac{h(r)^d}{r^{n}}\hspace{-0.2cm}\int_{-\delta}^{\delta} \hspace{-0.2cm}\cos(d\varphi(\theta))\exp(d\psi(\theta))\,d\theta.
\end{equation}
Recalling \eqref{eq:ftay} and using the observation that $\Ima \big((r-re^{i\theta})^2 \big)=O(r^2\theta^3)$ we see that
\[
|d\varphi(\theta)|\lesssim d(\beta r^2 +r^3/\alpha^2)\theta^3 \lesssim n \theta^3,\qquad |\theta|\le \delta.
\]
Taking large enough $n>C>0$ we may absorb the implicit constant above and get
\[
|d\varphi(\theta)|\le n^{-1/4}(\sqrt{n}\theta)^{3},\qquad |\theta|\le \delta.
\]
Furthermore, \eqref{eq:ftay} also implies that for small enough $\delta>0$ we have
\[
-c_4n\theta^2\le d\psi(\theta) \le -c_5 n \theta^2,\qquad |\theta|\le \delta,
\]
where $c_4$ and $c_5$ are universal constants. Changing variables $s=\sqrt{n}\theta$ in \eqref{eq:ReaV1} we get
\[
\Rea (V_1)=\frac{1}{2\pi\sqrt{n}} \frac{h(r)^d}{r^{n}}\int_{-\delta\sqrt{n}}^{\delta\sqrt{n}} \cos\Big(d\varphi\Big(\frac{s}{\sqrt{n}}\Big)\Big)\exp\Big(d\psi\Big(\frac{s}{\sqrt{n}}\Big)\Big)\,ds.
\]
Since for $|s|\le  \delta\sqrt{n}$ it holds
\[
\Big|d\varphi\Big(\frac{s}{\sqrt{n}}\Big)\Big|\le n^{-1/4} s^3,\qquad -c_4 s^2\le d\psi(s/\sqrt{n})\le -c_5 s^2,
\]
we see that for each $C>0$ and each $n$ such that $\delta\sqrt{n}>C$ and $n^{-1/4}C^3 \le \pi /3$ we have
\begin{align*}
&\int_{-\delta\sqrt{n}}^{\delta\sqrt{n}} \cos(d\varphi(s/\sqrt{n}))\exp(d\psi(s/\sqrt{n}))\\
&\ge \int_{-C}^C   \cos(d\varphi(s/\sqrt{n}))\exp(d\psi(s/\sqrt{n}))\,ds-\int_{\delta\sqrt{n}>|s|>C} \exp(-c_5 s^2)\,ds\\
& \ge \frac{1}{2}\int_{-C}^C \exp(-c_4 s^2)\,ds - \int_{|s|>C} \exp(-c_5 s^2)\,ds\gtrsim 1,
\end{align*}
where the last inequality holds, provided we take large enough $C>0.$ In summary we have justified that
\[
\Rea (V_1)\gtrsim \frac{1}{\sqrt{n}}\frac{h(r)^d}{r^n} 
\]
and together with \eqref{eq:V2ab} we obtain for large enough $n$
\begin{equation}
\label{eq:snSbel}
|\sqrt{n}S|\gtrsim \frac{1}{\sqrt{n}}\frac{h(r)^d}{r^n} .
\end{equation}

Collecting \eqref{eq:snSbel} and \eqref{eq:snBabo} we obtain \eqref{eq:{thm:3.4}:hform}.

\noindent {\bf 3) Verification of \eqref{eq:{thm:3.4}:expform}.} It remains to establish \eqref{eq:{thm:3.4}:expform}. We write
    \begin{equation}
    \label{eq:thm:3.4expformproof}
    \begin{split}
    \frac{h(r)^d}{r^n}& \hspace{-0.1cm}= \hspace{-0.1cm}\exp \Big( d\log(h(r))\hspace{-0.1cm}-\hspace{-0.1cm} n \log(r) \Big)\hspace{-0.1cm}=\hspace{-0.1cm} \exp\Big(d \log\big(1\hspace{-0.1cm}+\hspace{-0.1cm}2\sum_{k=1}^{\infty} r^{k^2}\big)\hspace{-0.1cm} - \hspace{-0.1cm}n\log(r) \Big)
  \\
    &= 2^n \alpha^{-n} \exp\Big(n\Big(\alpha^{-1} \log\big(1+2\sum_{k=1}^{\infty} r^{k^2}\big) - \log(2r/\alpha)\Big) \Big)
    \end{split}
\end{equation}
and use Proposition \ref{thm:3.6} to express the quantity under the last exponential as a function of $\alpha,$ namely
\begin{equation*}
   \alpha^{-1} \log\big(1+2\sum_{k=1}^{\infty} r^{k^2}\big) - \log(2r/\alpha) = \alpha^{-1} \log\Big(1+2\sum_{k=1}^{\infty} \Big( \sum_{j=1}^{\infty} a_j \alpha^j \Big)^{k^2}\Big) - \log\Big(1+2\sum_{j=2}^{\infty} a_j \alpha^{j-1}\Big).
  \end{equation*}
Note that
\begin{align*}
\sum_{k=1}^{\infty} \Big( \sum_{j=1}^{\infty} a_j \alpha^j \Big)^{k^2}&=\sum_{k=1}^{K+1} \Big( \sum_{j=1}^{\infty} a_j \alpha^j \Big)^{k^2}+O_K(\alpha^{(K+1)^2})=\sum_{k=1}^{K}\tilde{a}_j\alpha^{j}+O_K(\alpha^{K+1}),\\
\sum_{j=2}^{\infty} a_j \alpha^{j-1}&=\sum_{j=2}^{K+1} a_j \alpha^{j-1}+O_K(\alpha^{K+1}),
\end{align*}
for some coefficients $\tilde{a}_j.$ Hence, using the power series expansion of the logarithm we reach  
  \[
 \alpha^{-1} \log\big(1+2\sum_{k=1}^{\infty} r^{k^2}\big) - \log(2r/\alpha) = \sum_{k=0}^{K} b_k \alpha^k + O_K( \alpha^{K+1}) 
  \]
    for some coefficients $b_k$.

    An elementary though tedious calculation shows that $b_0=1$, $b_1=-\frac{1}{2}$, $b_2=-\frac{1}{6}$, $b_3=\frac{1}{24}.$ Finally, coming back to \eqref{eq:thm:3.4expformproof} 
   we see that \eqref{eq:{thm:3.4}:expform} holds.

    Thus we completed the proof of Theorem \ref{thm:3.4}.

\end{proof}
Before proving Theorem \ref{thm:3.1} and Corollary \ref{cor:3.2} we will  need  two easy consequences of Theorem \ref{thm:3.4}. The constant $c$ in Lemma \ref{lem:3.8} and Corollary \ref{cor:3.9} below is the one from Theorem \ref{thm:3.4}. 
\begin{Lem}
    \label{lem:3.8} Fix $K \in \N$, $\delta>0$. Then there is a constant $L(K,\delta):=L>0$, such that for any numbers $n,d,m \in \N$ satisfying $(1+\delta)m \leq n \leq d^{1-\frac{1}{(K+1)^2}}$, $d \geq c^{-(K+1)^2}$ we have
    \[
    |\sqrt{n-m}B| \lesssim_{K,\delta} L^m\alpha^m |\sqrt{n}B|.
    \]
\end{Lem}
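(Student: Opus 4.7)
The plan is to apply the asymptotic formula \eqref{eq:{thm:3.4}:expform} to both $|\sqrt{n}B|$ and $|\sqrt{n-m}B|$ and form the ratio. First I would verify that Theorem \ref{thm:3.4} is applicable in both cases: the hypothesis $d \geq c^{-(K+1)^2}$ yields $d^{1-1/(K+1)^2} \leq c d$, so $n$ and $n-m$ both lie in $[1, cd]$ (note that $(1+\delta)m \leq n$ with $m,n \in \N$ forces $n-m \geq 1$). The crucial point is the choice of expansion order $K'$ in \eqref{eq:{thm:3.4}:expform}. Taking $K' := (K+1)^2 - 2$, a direct calculation gives
\[
n \alpha^{K'+1} = \frac{n^{K'+2}}{d^{K'+1}} \leq d^{(K'+2)(1 - 1/(K+1)^2) - (K'+1)} = d^{1 - (K'+2)/(K+1)^2} = 1,
\]
and analogously $(n-m)(\alpha')^{K'+1} \leq 1$, where $\alpha' := (n-m)/d$. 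Hence both $O_{K'}$ error terms produced by \eqref{eq:{thm:3.4}:expform} are bounded by some constant $C_K$.

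Forming the ratio and collecting the elementary factors produces
\[
\frac{|\sqrt{n-m}B|}{|\sqrt{n}B|} \approx 2^{-m} e^{-m} \cdot \frac{\alpha^n}{(\alpha')^{n-m}} \cdot \sqrt{\frac{n}{n-m}} \cdot \exp\Big( \sum_{k=1}^{K'} b_k\big[(n-m)(\alpha')^k - n \alpha^k\big] + O_K(1) \Big).
\]
The identity $\alpha^n/(\alpha')^{n-m} = \alpha^m (n/(n-m))^{n-m}$ combined with $\log(1+x) \leq x$ gives $\alpha^n/(\alpha')^{n-m} \leq \alpha^m e^m$, which cancels the $e^{-m}$ prefactor, and the hypothesis $n \geq (1+\delta)m$ yields $\sqrt{n/(n-m)} \leq \sqrt{1+\delta^{-1}}$, a constant depending only on $\delta$.

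For the polynomial-in-$\alpha$ part of the exponent, I would apply the mean value theorem to the function $g(x) = x^{k+1}/d^k$ on the interval $[n-m, n]$, obtaining
\[
\big|(n-m)(\alpha')^k - n\alpha^k\big| = |g(n-m) - g(n)| \leq m(k+1)\alpha^k \leq m(k+1),
\]
so that $\big|\sum_{k=1}^{K'} b_k \big[(n-m)(\alpha')^k - n\alpha^k\big]\big| \leq C_K m$. Assembling everything gives
\[
\frac{|\sqrt{n-m}B|}{|\sqrt{n}B|} \lesssim_{K,\delta} 2^{-m} \alpha^m \exp(C_K m) = L^m \alpha^m
\]
with $L = L(K,\delta) := e^{C_K}/2$, as required. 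The only real subtlety is the exponent bookkeeping in the first paragraph: the precise cutoff $n \leq d^{1 - 1/(K+1)^2}$ is calibrated exactly so that the choice $K' = (K+1)^2 - 2$ keeps $n\alpha^{K'+1}$ bounded, and beyond that the estimate is forced by standard inequalities.
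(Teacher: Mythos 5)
Your proof is correct and follows essentially the same route as the paper: both apply the asymptotic formula \eqref{eq:{thm:3.4}:expform} at an expansion order near $(K+1)^2$, calibrated so that the error term $n\alpha^{K'+1}$ stays $O(1)$ under the hypothesis $n\le d^{1-1/(K+1)^2}$, and then compare the two expressions factor by factor, using $(1+m/(n-m))^{n-m}\le e^m$ and $n\ge(1+\delta)m$ exactly as the paper does. The only cosmetic differences are that the paper takes order $(K+1)^2-1$ rather than your $(K+1)^2-2$ and controls $(n-m)\tilde\alpha^k-n\alpha^k$ via the expansion $\tilde\alpha^k=\alpha^k+O_K(m/n)$ instead of your mean value theorem step; both work.
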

\begin{proof}
    By Theorem \ref{thm:3.4} we have 
    \[
    |\sqrt{n}B| \approx  2^n e^n \alpha^{-n}  \frac{1}{\sqrt{n}
}\exp \Big( \sum_{k=1}^{(K+1)^2-1} b_k n\alpha^k + O_K(n \alpha^{(K+1)^2}) \Big)
    \]
    and our assumptions imply that $n \alpha^{(K+1)^2} \leq 1$. Let 
    $\tilde{\alpha}= \frac{n-m}{d}=\alpha-\frac{m}{d}=\alpha(1-\frac{m}{n})$, then we get
    \[
    \tilde{\alpha}^{-n+m}= \alpha^{-n+m}\Big(1-\frac{m}{n}\Big)^{-n+m}= \alpha^{-n+m}\Big(1+ \frac{m}{n-m}\Big)^{n-m} \leq \alpha^{-n+m} e^m.
    \]
    Moreover for any $k \leq (K+1)^2-1$ we have
    \[
    \tilde{\alpha}^k=\alpha^k\Big( 1- \frac{m}{n} \Big)^k= \alpha^k +O_K\Big(\frac{m}{n}\Big).
    \]
    Notice  also that $n \geq (1+\delta)m$ implies 
    \[
    \frac{1}{\sqrt{n-m}} \lesssim_{\delta} \frac{1}{\sqrt{n}}.
    \]
    Thus by Theorem \ref{thm:3.4} we obtain
    \begin{align*}
    &|\sqrt{n-m}B| \approx 2^{n-m} e^{n-m} \tilde{\alpha}^{-n+m}  \frac{1}{\sqrt{n-m}
} \\
&\cdot\exp \Big( \sum_{k=1}^{(K+1)^2-1} b_k (n-m)\tilde{\alpha}^k + O_K\Big((n-m) \tilde{\alpha}^{(K+1)^2}\Big) \Big)
 \\
    &\lesssim_{\delta}  2^{n-m} e^{n} \alpha^{-n+m}  \frac{1}{\sqrt{n}
}\exp \Big( \sum_{k=1}^{(K+1)^2-1} b_k n\alpha^k + O_K(m) \Big)
    \lesssim  (L\alpha)^m |\sqrt{n}B|,
    \end{align*}
    for some constant $L=L(K,\delta)$ and the proof is completed.
\end{proof}
\begin{Cor}
\label{cor:3.9}   
Fix $K \in \N$. Then for any $n,d \in \N$ satisfying $32 K^2 \leq n \leq d^{1-\frac{1}{(K+1)^2}}$, $d \geq c^{-(K+1)^2}$ we have
\[
\big| \{x \in \sqrt{n}B: (\exists i \in [d])( |x_i| \geq 4K) \} \big| \lesssim_{K} \frac{1}{d} |\sqrt{n}B|.
\]
\end{Cor}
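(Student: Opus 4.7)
My plan is to reduce the statement to a sum of lattice counts in strictly smaller balls, each of which can be handled by Lemma~\ref{lem:3.8}. First, by the permutation symmetry of $\sqrt{n}B$ and a union bound over the coordinate~$i$, one has
\[
|\{x \in \sqrt{n}B : \exists i\in[d],\ |x_i| \geq 4K\}| \leq d \cdot |\{x \in \sqrt{n}B : |x_1| \geq 4K\}|.
\]
Fixing $x_1=\pm j$ with $j\geq 4K$, the remaining coordinates form a point of $\Z^{d-1}$ inside the ball of radius $\sqrt{n-j^2}$; via the trivial embedding sending $(y_2,\ldots,y_d)$ to $(0,y_2,\ldots,y_d)$ this count is at most $|\sqrt{n-j^2}B|$, so
\[
|\{x \in \sqrt{n}B : |x_1| \geq 4K\}| \leq 2\sum_{j=4K}^{\lfloor\sqrt{n}\rfloor} |\sqrt{n-j^2}B|.
\]
The task thus reduces to showing that this sum is $\lesssim_K |\sqrt{n}B|/d^2$.

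I will split the sum according to whether $j^2\leq n/2$ or $j^2>n/2$. In the first range, which is nonempty thanks to the hypothesis $n\geq 32K^2$ (giving $4K\leq\sqrt{n/2}$), Lemma~\ref{lem:3.8} applies with $\delta=1$ and $m=j^2$, yielding the exponential decay
\[
|\sqrt{n-j^2}B| \lesssim_K (L\alpha)^{j^2}\,|\sqrt{n}B|,
\]
where $L=L(K,1)$. In the second range $j^2>n/2$, Lemma~\ref{lem:3.8} cannot be used directly with $m=j^2$ because the hypothesis $(1+\delta)m\leq n$ fails. Instead, I use the monotonicity $|\sqrt{n-j^2}B| \leq |\sqrt{\lfloor n/2\rfloor}B|$ together with Lemma~\ref{lem:3.8} applied with $\delta=1/2$ and $m=\lceil n/2\rceil$ to bound each such term by $\lesssim_K (L\alpha)^{\lceil n/2\rceil}|\sqrt{n}B|$.

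Summing the two contributions, the geometric series in the first range is dominated by its smallest exponent, namely $(L\alpha)^{16K^2}|\sqrt{n}B|$; in the second range there are at most $\sqrt{n}$ terms, and since $\lceil n/2\rceil\geq 16K^2$ and $L\alpha$ is small (because $\alpha\leq d^{-1/(K+1)^2}$ and $d$ is large relative to $K$), the polynomial factor $\sqrt{n}$ is absorbed by the exponential, again giving a contribution $\lesssim_K (L\alpha)^{16K^2}|\sqrt{n}B|$. Multiplying by the factor $d$ from the union bound yields
\[
|\{x \in \sqrt{n}B : \exists i,\ |x_i| \geq 4K\}| \lesssim_K d\,(L\alpha)^{16K^2}\,|\sqrt{n}B|.
\]
Since $16K^2/(K+1)^2\geq 4$ for $K\geq 1$ and $\alpha\leq d^{-1/(K+1)^2}$, this quantity is $\lesssim_K L^{16K^2}\,d^{\,1-16K^2/(K+1)^2}\,|\sqrt{n}B|\leq L^{16K^2}d^{-3}|\sqrt{n}B|\leq d^{-1}|\sqrt{n}B|$ provided $d$ exceeds a $K$-dependent threshold; for smaller $d$ the conclusion is trivial by absorbing everything into the implicit $K$-dependent constant.

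The main technical obstacle is the treatment of large $j$ with $j^2>n/2$, which lies outside the direct scope of Lemma~\ref{lem:3.8}. The monotonicity argument described above sidesteps this at the cost of a polynomial factor $\sqrt{n}$, and this factor is harmless because the exponential decay rate $(L\alpha)^{n/2}$ dominates by a huge margin in the regime $n\leq d^{1-1/(K+1)^2}$ of interest.
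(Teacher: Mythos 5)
Your proof is correct and follows essentially the same route as the paper's: union bound over the coordinate index, slicing on the value of $x_1$, Lemma \ref{lem:3.8}, and the power count $16K^2/(K+1)^2\ge 4$. The paper's version is slightly shorter: it observes $|x_1|\le\sqrt n\le\sqrt d$, so there are at most $2\sqrt d$ slices, each bounded by the largest one $|\sqrt{n-16K^2}B|$, which requires only a single application of Lemma \ref{lem:3.8} (with $m=16K^2$, $\delta=1$) and avoids your split at $j^2=n/2$, at the harmless cost of a factor $d^{3/2}$ in place of your $d$.
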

\begin{proof}
Denote $n'=n-16K^2$ and notice that
\begin{align*}
&\big| \{x \in \sqrt{n}B: (\exists i \in [d])(  |x_i| \geq 4K) \} \big| \le d \big| \{x \in \sqrt{n}B: (\sqrt{d}\ge |x_1| \geq 4K) \} \big|\\
&\le 2d^{3/2} |\sqrt{n'}B^{d-1}|\leq 2d^{3/2} |\sqrt{n'} B|,
\end{align*}
above $|\sqrt{n'}B^{d-1}|$ denotes the number of lattice points in $\Z^{d-1}$ inside the ball $\sqrt{n'} B^{d-1}.$
Hence, by Lemma \ref{lem:3.8} with $\delta=1$, $m=16K^2$ we obtain
\begin{align*}
&\big| \{x \in \sqrt{n}B \hspace{-0.05cm}: \hspace{-0.05cm}(\exists i \in [d])( |x_i| \geq 4K) \} \big| \hspace{-0.05cm}\le \hspace{-0.05cm} 2d^{3/2} |\sqrt{n'} B| \hspace{-0.05cm} \lesssim \hspace{-0.05cm} L^{16K^2} d^{3/2} \alpha^{16K^2} |\sqrt{n}B|  \\
&\lesssim_{K} d^{3/2- \frac{16K^2}{(K+1)^2}} |\sqrt{n}B|\lesssim_{K} \frac{1}{d} |\sqrt{n}B|.
\end{align*}
In the third inequality above we used the fact that
$ \alpha \leq d^{\frac{-1}{(K+1)^2}}$ for
$n \leq d^{1-\frac{1}{(K+1)^2}}.$
\end{proof}

We are now ready to prove Theorem \ref{thm:3.1}. In the proof we shall use the following subset of $\sqrt{n}B$.
\begin{Defn} \label{def:3.7}
For any $K,n,d \in \N$ we let 
\[
A_K(n)= \{ x \in \{-K,...,K\}^d : \sum_{i=1}^d x_i^2 \leq n \}.
\]
\end{Defn}
\begin{proof}[Proof of Theorem \ref{thm:3.1}]
Fix $K \in \N$, $\varepsilon>0$. Consider any $n,d \in \N$ such that 
$32K^2 \leq n \leq  d^{1-\frac{1+\varepsilon}{(K+1)^2}}$, $d \geq c^{-(K+1)^2}$. By Corollary \ref{cor:3.9} we have
\[
|\sqrt{n}B \setminus A_{4K}(n)| \lesssim_K \frac{1}{d} |\sqrt{n}B|.
\]
Thus it remains to show that 
\begin{equation} \label{eq:3.4}
    \big|\{ x \in A_{4K}(n): \sum_{\substack{i=1, \\ |x_i| \leq K}}^d x_i^2 \leq n-a \}\big| \lesssim_{K,\varepsilon} \frac{1}{d} |\sqrt{n}B|,
\end{equation}
for properly chosen large $a$ (which will be independent of $n$ and $d$). Let 
\[
E_m= \{ x \in A_{4K}(n): \sum_{\substack{i=1, \\ |x_i| \leq K}}^d x_i^2=n-m \},
\]
where $n\ge m \geq a.$ Note that \eqref{eq:3.4} will be established once we show that 
\begin{equation}
\label{eq:3.4'}
|E_m| \lesssim_{K, \varepsilon} \frac{1}{d^2}|\sqrt{n}B|.
\end{equation}

We claim that 
\begin{equation}\label{eq:3.5}
|E_m|  \leq 2\cdot (6K)^{\frac{m}{(K+1)^2}} d^{\frac{m}{(K+1)^2}}|\sqrt{n-m}B|.
\end{equation}
Indeed, denoting $m'=\lfloor \frac{m}{(K+1)^2} \rfloor$ we see that if $x\in E_m$ then
\[
 \sum_{\substack{i=1, \\ K<|x_i| \le 4K}}^d |x_i|^2 \leq m,
\]
and hence
\[
\# \{ i \in [d]: K<|x_i|\le 4K\} \leq m'.
\]
Therefore we obtain
\begin{align*}
 &|E_m|\leq \sum_{j=0}^{m'} |\{x\in E_m\colon \# \{ i \in [d]: K<|x_i|\le 4K\}=j\} |\le    \sum_{j=0}^{m'}{d\choose j}(6K)^j|A_K(n-m)|\\
 &\le   (6K)^{m'}\bigg(\sum_{j=0}^{m'}d^j\bigg)|A_K(n-m)|\le (6K)^{m'} d^{m'}\big(1+\frac{m'}{d}\big)|A_K(n-m)|\leq 2\cdot  (6K)^{m'} d^{m'}|A_K(n-m)|,
\end{align*}
where in the last inequality we used the fact that $m'\le d,$ and \eqref{eq:3.5} follows.

We now come back to the proof of \eqref{eq:3.4'}. We fix $\delta= \varepsilon/2$ and consider two cases separately.
\par
\textbf{Case 1) $a \leq m \leq \frac{n}{1+\delta}$}.
Using \eqref{eq:3.5} and Lemma \ref{lem:3.8} there is a constant $L=L(K,\varepsilon) \geq 1$ depending only on $K$ and $\varepsilon$ such that
\begin{align*}
|E_m| \hspace{-0.05cm}&\leq \hspace{-0.05cm}2\cdot  (6K)^{\frac{m}{(K+1)^2}} d^{\frac{m}{(K+1)^2}}|\sqrt{n-m}B| \hspace{-0.05cm}\lesssim \hspace{-0.05cm}L^m (6K)^{\frac{m}{(K+1)^2}} d^{\frac{m}{(K+1)^2}} \alpha^m | \sqrt{n}B|
\\
& \lesssim  L^m (6K)^{\frac{m}{(K+1)^2}} d^{\frac{m}{(K+1)^2}} d^{- \frac{m(1+\varepsilon)}{(K+1)^2}} | \sqrt{n}B| =  \Big(\frac{6KL^{(K+1)^2}}{d^{\varepsilon} } \Big)^{ \frac{m}{(K+1)^2}}| \sqrt{n}B|.
\end{align*}
If $d \geq (6KL^{(K+1)^2})^{2/\varepsilon} $, then we obtain
\[
|E_m|\lesssim d^{-\frac{\varepsilon m}{2(K+1)^2}} |\sqrt{n}B| \leq d^{- \frac{\varepsilon a}{2(K+1)^2}} |\sqrt{n}B|\leq \frac{1}{d^2} |\sqrt{n}B|,
\] 
where the last inequality holds if we choose $a \geq  \frac{4(K+1)^2}{\varepsilon}.$
\par \textbf{Case 2) $\frac{n}{1+\delta} \leq m \leq n$}.
Using  \eqref{eq:3.5} and Lemma \ref{lem:3.8} with $ m'=\lfloor \frac{n}{(1+\delta)} \rfloor$ in place of $m$ we get
\begin{align*}
&|E_m| \leq 2 \hspace{-0.05cm}\cdot \hspace{-0.05cm}  (6K)^{\frac{m}{(K+1)^2}} d^{\frac{m}{(K+1)^2}}|\sqrt{n-m}B|\le 2 \hspace{-0.05cm} \cdot \hspace{-0.05cm} (6K)^{\frac{m}{(K+1)^2}} d^{\frac{m}{(K+1)^2}} |\sqrt{n-m'}B|
\\ 
&\lesssim L^{m'}   (6K)^{\frac{m}{(K+1)^2}} d^{\frac{m}{(K+1)^2}} \alpha^{m' }|\sqrt{n}B| \leq  L^{\frac{n}{1+\delta}}(6K)^{\frac{n}{(K+1)^2}} d^{\frac{n}{(K+1)^2}}  \alpha^{\frac{n}{(1+\delta) } } d |\sqrt{n}B|
\\
&\lesssim \Big(6K L ^{(K+1)^2} \Big)^{\frac{n}{(K+1)^2}} d^{\frac{n}{(K+1)^2}} d^{- \frac{n}{(1+\delta)(K+1)^2} - \frac{\varepsilon n}{(1+\delta)(K+1)^2}}  d|\sqrt{n}B|
\\
&=  \Big(6K L ^{(K+1)^2} \Big)^{\frac{n}{(K+1)^2}} d^{ \frac{n(\delta- \varepsilon)}{(1+\delta)(K+1)^2}} d|\sqrt{n}B|.
\end{align*}
Since $\delta= \varepsilon/2$ we have
\[
d^{ \frac{n(\delta- \varepsilon)}{(1+\delta)(K+1)^2}}=d^{- \frac{ \varepsilon n}{(2+\varepsilon)(K+1)^2}},
\]
and thus for 
\[
d \geq  \Big(6K L ^{(K+1)^2} \Big)^{2+ \frac{4}{\varepsilon}}
\]
we get
\[
\Big(6K L ^{(K+1)^2} \Big)^{\frac{n}{(K+1)^2}} d^{ \frac{n(\delta- \varepsilon)}{(1+\delta)(K+1)^2}} d|\sqrt{n}B| \leq d^{- \frac{ \varepsilon n}{(4+2\varepsilon)(K+1)^2}} d|\sqrt{n}B|. 
\]
Hence, for $n \geq (\frac{12}{\varepsilon}+6)(K+1)^2$
also in case 2) we obtain $|E_m|\lesssim \frac{1}{d^2} |\sqrt{n}B|.$

Summarizing cases 1) and 2), we have proved that \eqref{eq:3.4'} and thus also \eqref{eq:3.4} hold provided we take $a \geq  \frac{4(K+1)^2}{\varepsilon} $, $d\ge C_1(K,\varepsilon)$ and $n\ge C_2(K,\varepsilon).$ Let $a= \lceil \max\big(\frac{4(K+1)^2}{\varepsilon}, C_2(K,\varepsilon)\big) \rceil+1$, then we have established Theorem \ref{thm:3.1} for  $d \ge C_1(K,\varepsilon)$ and any $n \leq d^{1- \frac{1+\varepsilon}{(K+1)^2}}$. In the remaining case, when $d<C_1(K,\varepsilon)$ the statement of Theorem \ref{thm:3.1} trivially holds.
\end{proof}
We can now easily deduce Corollary \ref{cor:3.2}.
\begin{proof}[Proof of Corollary \ref{cor:3.2}] 
Fix $K \in \N$, $\varepsilon>0$ and let $a,$ $C$ be the constants from Theorem $\ref{thm:3.1}$. Then by Theorem \ref{thm:3.1} we have
\[
|\{ x \in  \sqrt{n}S : \sum_{\substack{i=1, \\ |x_i| \leq K}}^d |x_i|^2 \leq n-a \}|\]
\[
\leq |\{ x \in  \sqrt{n}B : 
\sum_{\substack{i=1, \\ |x_i| \leq K}}^d |x_i|^2 \leq n-a \}| \leq   \frac{C}{d} |\sqrt{n}B|.
\]
Recall that $n/d \leq d^{-\frac{1+\varepsilon}{(K+1)^2}}.$ Thus, in the case $d\ge c^{-\frac{(K+1)^2}{1+\varepsilon}}$ we have $n/d\le c$ and an application of Theorem \ref{thm:3.4} gives
\[
|\{ x \in  \sqrt{n}S : \sum_{\substack{i=1, \\ |x_i| \leq K}}^d |x_i|^2 \leq n-a \}| \le  \frac{C}{d} |\sqrt{n}B|\approx \frac{C}{d} |\sqrt{n}S|.
\]
In the remaining case $d<c^{-\frac{(K+1)^2}{1+\varepsilon}}$ Corollary \ref{cor:3.2} trivially holds.
\end{proof}
We finish this section with the proof of Corollary \ref{cor:1.4}, which was mentioned in the introduction. We also introduce another consequence of Theorem \ref{thm:3.4} which is given in Corollary \ref{rem:3.10}. These results are not necessary for the proofs of Theorems \ref{thm:ball} or \ref{thm:sphere}, however they are insightful on their own.
\begin{proof}[Proof of Corollary \ref{cor:1.4}]
By \eqref{eq:{thm:3.4}:expform} from Theorem \ref{thm:3.4} for $n,d \in \N$ satisfying $1 \leq n \leq cd$ we have
\[
|\sqrt{n}B|\approx |\sqrt{n}S| \approx 2^n e^n \alpha^{-n}  \frac{1}{\sqrt{n}
}\exp \Big( -\frac{n \alpha}{2}-\frac{n \alpha^2}{6}+\frac{n \alpha^3}{24}+ O(n \alpha^{4}) \Big).
\]
By Stirling's formula $n!\approx \sqrt{n} (n/e)^n$ we get
\[
\binom{d}{n}= \frac{d(d-1)...(d-n+1)}{n!} \approx \frac{d^n}{n^n} e^n \frac{1}{\sqrt{n}} \prod_{k=1}^{n-1}(1- \frac{k}{d}).
\]
Notice also that 
\begin{align*}
&\prod_{k=1}^{n-1}(1- \frac{k}{d}) = \exp\Big( \sum_{k=1}^{n-1} \log(1- \frac{k}{d}) \Big)= \exp\Big( \sum_{k=1}^{{n-1} }(- \frac{k}{d}- \frac{k^2}{2d^2}- \frac{k^3}{3d^3}) + O(\frac{n^5}{d^4}) \Big)
\\
&= \exp \Big( -\frac{n^2}{2d}- \frac{n^3}{6d^2} - \frac{n^4}{12d^3}+O(\frac{n^5}{d^4})+O(1) \Big)= \exp \Big( -\frac{n \alpha}{2} -\frac{n \alpha^2}{6}- \frac{n \alpha^3}{12}+ O(n \alpha^4)+O(1) \Big).
\end{align*}
Combining the above observations we get the desired conclusion.
\end{proof}

Finally we introduce a slightly weaker generalization of Corollary \ref{cor:1.4}. Note that in the case $K=1$ we have $A_1(n)=\sqrt{n} S\cap\{-1,0,1\}^d$ and Corollary \ref{rem:3.10} boils down to Corollary \ref{cor:1.4} except for the term inside the exponential. Corollary \ref{rem:3.10} is not a direct consequence of Theorem \ref{thm:3.4}, however its proof is similar and thus we only sketch it.
\begin{Cor}
    \label{rem:3.10}
    For any $K\in \N$ there is constant $c_K>0$ such that for any $n,d \in \N$ satisfying $n \leq c_Kd$ we have
    \[
    |\sqrt{n}B| \approx |\sqrt{n}S| \approx_K |A_K(n)| \exp\Big( O(n \alpha^{(K+1)^2-1}) \Big).
    \]
    In particular if $n \leq d^{1- \frac{1}{(K+1)^2}}$, then
    \[
    |\sqrt{n}B| \approx |\sqrt{n}S| \approx_K |A_K(n)|.
    \]
\end{Cor}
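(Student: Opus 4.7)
The plan parallels the proof of Theorem \ref{thm:3.4}: derive a saddle-point formula for $|A_K(n)|$ and then compare the resulting expression to that obtained for $|\sqrt{n}S|$ in Theorem \ref{thm:3.4}. Set $h_K(z) = 1 + 2\sum_{k=1}^K z^{k^2}$, the truncation of $h$. Since a point $x \in \{-K,\ldots,K\}^d$ with $\sum_i x_i^2 = m$ contributes $z^m$ to $h_K(z)^d$, we have $\sum_{n \ge 0} |A_K(n)| z^n = h_K(z)^d/(1-z)$, and Cauchy's formula yields
\[
|A_K(n)| = \frac{1}{2\pi i} \oint_{|z| = r_K} \frac{h_K(z)^d}{z^{n+1}(1-z)}\, dz, \qquad r_K \in (0,1).
\]

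Paralleling Lemma \ref{lem:3.5} (the Cauchy--Schwarz argument still applies to $h_K$) and Proposition \ref{thm:3.6}, I would first verify that for $\alpha = n/d$ sufficiently small in terms of $K$, there is a unique $r_K \in (0,1)$ with $r_K h_K'(r_K)/h_K(r_K) = \alpha$, and via Lagrange inversion (using $z h_K'(z)/h_K(z) = 2z + O(z^4)$ when $K \ge 2$, and the analogous expansion for $K=1$) that $r_K \asymp \alpha$. Then the saddle-point analysis from the proof of Theorem \ref{thm:3.4} transfers verbatim with $h$ replaced by $h_K$: the complex Taylor expansion on the arc $|\arg z|\le\delta$, the Gaussian main term, and the off-axis decay $|h_K(r_K e^{i\theta})|\le h_K(r_K)\exp(-c_K r_K)$ for $|\theta|\ge\delta$ (from $|1 + 2r_K e^{i\theta}|^2 \le (1+2r_K)^2 - c r_K$, exactly as in the main proof) all go through. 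This yields $|A_K(n)| \approx_K h_K(r_K)^d r_K^{-n}/\sqrt{n}$ for $n \le c_K d$. I expect this replay to be the bulkiest part of the argument, though conceptually routine.

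The heart of the proof is the comparison of saddle values. Setting $f(z)=\log h(z)-\alpha\log z$ and $f_K(z)=\log h_K(z)-\alpha\log z$, decompose
\[
f(r)-f_K(r_K) = [f(r_K) - f_K(r_K)] + [f(r) - f(r_K)].
\]
Since $h-h_K = 2\sum_{j \ge K+1} z^{j^2} = O(z^{(K+1)^2})$ near $0$, the first bracket equals $\log\bigl(1 + (h(r_K)-h_K(r_K))/h_K(r_K)\bigr) = O(r_K^{(K+1)^2}) = O(\alpha^{(K+1)^2})$, contributing $O(n \alpha^{(K+1)^2 - 1})$ after multiplication by $d$. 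For the second bracket, $f'(r)=0$ and $f''(r)=O(1/\alpha)$ (from the proof of Theorem \ref{thm:3.4}), so $f(r)-f(r_K)=O(\alpha^{-1}(r-r_K)^2)$. To bound $r-r_K$: writing $h' h_K - h h_K' = \epsilon' h_K - \epsilon h_K'$ with $\epsilon = h - h_K$, a short calculation gives $H(z) - H_K(z) = 2(K+1)^2 z^{(K+1)^2}(1+O(z))$ near $0$, where $H(z) = zh'(z)/h(z)$ and analogously for $H_K$. Combined with $H'\gtrsim 1$ near $0$, this forces $|r-r_K| = O(\alpha^{(K+1)^2})$, making the second bracket $O(\alpha^{2(K+1)^2 - 1})$, which is dominated by the first. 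Exponentiating yields $|A_K(n)| \approx_K |\sqrt{n}S|\exp\bigl(O(n\alpha^{(K+1)^2-1})\bigr)$, and for the concluding claim the hypothesis $n \le d^{1 - 1/(K+1)^2}$ forces $n\alpha^{(K+1)^2-1}\le 1$, so the exponential factor is bounded. The main obstacle is bookkeeping in the saddle-point transfer; the comparison step itself is clean once one notices that $h-h_K$ vanishes to order $(K+1)^2$ at the origin.
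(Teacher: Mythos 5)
Your proposal is correct and follows essentially the same route as the paper: the same generating-function identity $\sum_n |A_K(n)|z^n = h_K(z)^d/(1-z)$, the same saddle-point replay for $h_K$, and the same key observation that $h-h_K$ vanishes to order $(K+1)^2$, yielding $|r-r_K|=O(\alpha^{(K+1)^2})$ (the paper gets this by matching Lagrange-inversion coefficients $a_k=a_{k,K}$ for $k\le (K+1)^2-1$, you by monotonicity of $H$; your subsequent use of $f'(r)=0$ to make the $r$-shift second order is a clean variant of the paper's direct substitution into $h(r)^d/r^n$, and both land on the factor $\exp(O(n\alpha^{(K+1)^2-1}))$). The only blemish is the parenthetical claim $zh_K'(z)/h_K(z)=2z+O(z^4)$ for $K\ge 2$, which should read $2z+O(z^2)$, but all that is needed there is $H_K'(0)=2\neq 0$, so nothing is affected.
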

\begin{proof}[Proof (sketch)]
    Let 
    \[
    h_K(z)=1+2 \sum_{k=1}^K z^{k^2}.
    \]
    Repeating the argument from the proof of Lemma \ref{lem:3.5} one may prove that for each $\alpha\le 2/3$ the equation
    \[
    z \frac{h'_K(z)}{h_K(z)}=\alpha
    \]
    has exactly one solution $r_K\in (0,1).$ By applying Lagrange-Bürmann inversion theorem we have 
    \[
    r_K= \sum_{k=1}^{\infty} a_{k,K} \alpha^k,
    \]
    provided that $\alpha$ is small enough, where 
    \[
    a_{k,K}= \frac{1}{k!} \Big\{  \frac{d^{k-1}}{dz^{k-1}} \Big(  \frac{h_K(z)}{h_K'(z)}\Big)^k \Big\}_{z=0},
    \]
    in particular $a_{1,K}= \frac{1}{2}$. Computation on formal power series shows that 
    \begin{align*}
    \frac{h(z)}{h'(z)}&= \frac{h_K(z)+O(z^{(K+1)^2})}{h_K'(z)+O(z^{(K+1)^2-1})}= \frac{h_K(z)}{h'_K(z)} \cdot \frac{1+O\big( \frac{z^{(K+1)^2}}{h_K(z)} \big)}{1+O\big( \frac{z^{(K+1)^2-1}}{h_K'(z)} \big)}
    \\
    &=\frac{h_K(z)}{h'_K(z)} \cdot \frac{1+O(z^{(K+1)^2})}{1+O(z^{(K+1)^2-1})}= \frac{h_K(z)}{h'_K(z)}(1+ O(z^{(K+1)^2-1}))=\frac{h_K(z)}{h'_K(z)}+O(z^{(K+1)^2-1}).
    \end{align*}
    From the above we see that for any $k \leq (K+1)^2-1$ the coefficients $a_k$ from Proposition \ref{thm:3.6} satisfy
   \begin{align*}
    a_k&=  \frac{1}{k!} \Big\{  \frac{d^{k-1}}{dz^{k-1}} \Big(  \frac{h(z)}{h'(z)}\Big)^k \Big\}_{z=0}= \frac{1}{k!} \Big\{  \frac{d^{k-1}}{dz^{k-1}} \Big(  \frac{h_K(z)}{h_K'(z)} + O(z^{(K+1)^2-1})\Big)^k \Big\}_{z=0}
  \\
    &=\frac{1}{k!} \Big\{  \frac{d^{k-1}}{dz^{k-1}} \Big(  \Big(\frac{h_K(z)}{h_K'(z)} \Big)^k + O(z^{(K+1)^2-1})\Big) \Big\}_{z=0}= \frac{1}{k!} \Big\{  \frac{d^{k-1}}{dz^{k-1}} \Big(\frac{h_K(z)}{h_K'(z)} \Big)^k ) \Big\}_{z=0}=a_{k,K}.
   \end{align*}
    In particular we have 
    \[r=r_K+O(\alpha^{(K+1)^2}).\]
    
    Using the same method as in the proof of Theorem \ref{thm:3.4} based on writing
   \[|A_K(n)|= \frac{1}{2 \pi i} \oint_{|z|=r_K} \frac{h_K(z)^d}{z^{n+1}}\frac{1}{1-z} dz\]
    one can show that there is a constant $c_K$ such that for all $n,d \in \N$ satisfying $1 \leq n \leq c_K d$ we have
    \[
    |A_K(n)| \approx_K  \frac{h_K(r_K)^d}{r_K^n} \frac{1}{\sqrt{n}}.
    \]
Moreover, by Theorem \ref{thm:3.4} for any $n,d \in \N$ satisfying $n \leq cd$ we have
    \[
    |\sqrt{n}S| \approx |\sqrt{n}B| \approx  \frac{h(r)^d}{r^n} \frac{1}{\sqrt{n}}
    \]
    with $r\approx \alpha.$ 
    
    Previous calculations show that for $\alpha \leq \min(c_K,c)/2$ we have 
 \begin{align*}
    &|\sqrt{n}B| \approx  \frac{h(r)^d}{r^n} \frac{1}{\sqrt{n}} \hspace{-0.1cm}=\hspace{-0.1cm}\frac{\big(h_K(r)\hspace{-0.1cm}+ \hspace{-0.1cm}O(r^{(K+1)^2}) \big)^d}{r^n} \frac{1}{\sqrt{n}}\hspace{-0.1cm}=\hspace{-0.1cm}\frac{\big(h_K(r)\hspace{-0.1cm}+\hspace{-0.1cm} O(\alpha^{(K+1)^2}) \big)^d}{r^n} \frac{1}{\sqrt{n}}
   \\
    &= \frac{\big(h_K\big( r_K\hspace{-0.1cm}+\hspace{-0.1cm}O(\alpha^{(K+1)^2}) \big)\hspace{-0.1cm}+\hspace{-0.1cm} O(\alpha^{(K+1)^2}) \big)^d}{\big( r_K\hspace{-0.1cm}+\hspace{-0.1cm}O(\alpha^{(K+1)^2})\big)^n} \frac{1}{\sqrt{n}}\hspace{-0.1cm}= \hspace{-0.1cm}\frac{\big(h_K(r_K)\hspace{-0.1cm}+ \hspace{-0.1cm}O(\alpha^{(K+1)^2}) \big)^d}{\big( r_K\hspace{-0.1cm}+\hspace{-0.1cm}O(\alpha^{(K+1)^2})\big)^n} \frac{1}{\sqrt{n}}
   \\
    &= \hspace{-0.1cm}\frac{h_K(r_K)^d}{r_K^n} \cdot \frac{(1\hspace{-0.1cm}+\hspace{-0.1cm}O(\alpha^{(K+1)^2}))^d}{(1\hspace{-0.1cm}+\hspace{-0.1cm}O(\alpha^{(K+1)^2-1}))^n} \frac{1}{\sqrt{n}}\hspace{-0.1cm}= \hspace{-0.1cm}\frac{h_K(r_K)^d}{r_K^n} \frac{1}{\sqrt{n}} \exp\Big( O(n \alpha^{(K+1)^2-1}) \Big).
    \end{align*}
Hence for any $n,d \in \N$ satisfying $1 \leq n \leq c_K'd$ for some $c_K'>0$ we have 
    \[
    |\sqrt{n}S| \approx |\sqrt{n}B| \approx_K |A_K(n)| \exp \Big( O(n \alpha^{(K+1)^2-1}) \Big).
    \]
    This proves the first part of the lemma. 
    
    The second part follows from the formula above once we note that for  $n,d \in \N$ satisfying $1 \leq n \leq d^{1- \frac{1}{(K+1)^2}}$ and  $1 \leq n \leq c_K'd$ we have  \\ $O(n \alpha^{(K+1)^2-1})=O(1).$ Note that the opposite case $1 \leq n \leq d^{1- \frac{1}{(K+1)^2}}$ and $n>  c_K'd$ can only happen for a finite number of dimensions $d$ in which case Corollary \ref{rem:3.10} is trivial.
\end{proof}

\section{Consequences for the maximal function corresponding to discrete spheres} 
\label{sec:4}
Using the results from previous sections we now proceed towards the proof of  Theorem \ref{thm:sphere}. We start with a lemma from \cite{balls} and its immediate corollary.
\begin{Lem}[{\cite[Lemma 3.2]{balls}}]
 \label{lem:4.1}
 For every $d,n \in \N$, if $n \leq \frac{d}{25}$ and $n\geq k \geq2^9 \max(1,\frac{n^4}{d^3})$, then
 \[
|\{x \in \sqrt{n}B: |\{i \in [d]: x_i= \pm 1\}| \leq n-k\}| \leq 2^{-k+1}|\sqrt{n}B|.
 \]
\end{Lem}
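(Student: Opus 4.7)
The plan is to dyadically decompose the set in question and bound, for each integer $k' \in [k,n]$, the level count
\[
N_{k'} := \big|\{x \in \sqrt{n}B : |\{i : x_i = \pm 1\}| = n-k'\}\big|
\]
by $|\sqrt{n}B| \cdot 2^{-k'}$; summing the geometric series $\sum_{k' \geq k} 2^{-k'} = 2^{-k+1}$ then delivers the statement. For the lower bound on $|\sqrt{n}B|$, the plan is to simply use
\[
|\sqrt{n}B| \geq \big|\sqrt{n}S \cap \{-1,0,1\}^d\big| = 2^n \binom{d}{n}.
\]

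To upper bound $N_{k'}$ I would build a point $x$ with exactly $j := n-k'$ coordinates equal to $\pm 1$ as follows: first choose the positions of the $\pm 1$'s and their signs ($2^j \binom{d}{j}$ options), then handle the remaining $d-j$ coordinates which avoid $\{-1,+1\}$ and have squared sum at most $k'$. Since each non-zero remaining coordinate satisfies $|x_i| \geq 2$ and thus contributes at least $4$ to the sum of squares, at most $\lfloor k'/4 \rfloor$ of them are non-zero. For a fixed number $m$ of such non-zero entries, their positions contribute $\binom{d-j}{m}$. For their values $z_i$, the inequality $|z_i|^2 \geq 2|z_i|$ (valid for $|z_i| \geq 2$) forces $\sum |z_i| \leq k'/2$; substituting $w_i := |z_i| - 2 \geq 0$ and applying stars-and-bars then bounds the number of such $z$ by $2^m \binom{\lfloor k'/2 \rfloor - m}{m}$.

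Combining and dividing by the lower bound yields
\[
\frac{N_{k'}}{|\sqrt{n}B|} \leq \frac{\binom{d}{n-k'}}{\binom{d}{n}} \cdot 2^{-k'} \sum_{m=0}^{\lfloor k'/4 \rfloor} \binom{d}{m}\, 2^m \binom{\lfloor k'/2 \rfloor - m}{m}.
\]
The hypothesis $n \leq d/25$ gives $\binom{d}{n-k'}/\binom{d}{n} \leq (25n/(24d))^{k'}$ by expanding the ratio as a product of $k'$ factors bounded by $n/(d-n)$. The inner sum in $m$ should be dominated by its endpoint $m = \lfloor k'/4 \rfloor$ (up to a linear prefactor), yielding an overall bound of order $(Cd/k')^{k'/4}$ for an absolute constant $C$ obtained from $\binom{d}{k'/4} \leq (4ed/k')^{k'/4}$. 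The required inequality $N_{k'}/|\sqrt{n}B| \leq 2^{-k'}$ then reduces, after taking $k'$-th roots, to a condition of the form $k' \gtrsim n^4/d^3$; the factor $2^9$ in the hypothesis is generous enough for this, with the $\max(1,\cdot)$ absorbing the regime $n \leq d^{3/4}$ where $n^4/d^3$ drops below $1$.

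The main obstacle I expect is to rigorously verify that the sum over $m$ is indeed dominated by its endpoint. Computing the ratio of consecutive terms shows it behaves like $dk'/m^2$ up to constants, so monotonicity on $[0, k'/4]$ requires $dk'/m^2 \gtrsim 1$ throughout, which follows from $k' \leq n \leq d/25 \ll d$. A careful accounting of the numerical constants then confirms that the threshold $k \geq 2^9 \max(1, n^4/d^3)$ is precisely what is needed to close the geometric series with base $1/2$ and land on the stated $2^{-k+1}$.
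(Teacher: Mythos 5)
This lemma is not proved in the paper at all: it is imported verbatim from \cite[Lemma 3.2]{balls}, so there is no in-paper argument to compare against. Judged on its own terms, your counting strategy is sound and the numerology does close. The decomposition into level sets $N_{k'}$, the lower bound $|\sqrt{n}B|\ge 2^n\binom{d}{n}$, the enumeration of the non-$\{0,\pm1\}$ coordinates via $m\le \lfloor k'/4\rfloor$ and stars-and-bars (giving $2^m\binom{\lfloor k'/2\rfloor-m}{m}$ for the values), and the ratio bound $\binom{d}{n-k'}/\binom{d}{n}\le (n/(d-n))^{k'}\le(25n/(24d))^{k'}$ are all correct, and the exponent count $(n/d)^{k'}(Cd/k')^{k'/4}\le 1\iff k'\gtrsim n^4/d^3$ is right; with reasonable constants one needs roughly $k'\ge 350\,n^4/d^3$, which $2^9=512$ covers, and the $\max(1,\cdot)$ handles $n\le d^{3/4}$ as you say.

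The one genuinely incorrect claim is that the sum over $m$ is dominated by its endpoint term. It is not: at $m=\lfloor k'/4\rfloor$ the factor $\binom{\lfloor k'/2\rfloor-m}{m}$ collapses to $O(k')$, whereas slightly interior terms carry $\binom{\lfloor k'/2\rfloor-m}{m}$ of size polynomial or larger in $k'$; the exact ratio of consecutive terms is $\frac{2(d-m)(M-2m)(M-2m-1)}{(m+1)^2(M-m)}$ with $M=\lfloor k'/2\rfloor$, which vanishes as $m\to M/2$, so the true maximum sits in the interior (e.g.\ for $k'\approx n\approx d/25$ the term at $m=\lfloor k'/4\rfloor-1$ exceeds the endpoint term by a factor of order $d$). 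Your heuristic ``ratio $\approx dk'/m^2$'' describes a majorant of the terms, not the terms themselves. The repair is routine: bound each term by $\bigl(2e^2 dM/m^2\bigr)^m\le\bigl(e^2dk'/m^2\bigr)^m$, which \emph{is} increasing on $[0,k'/4]$ since its critical point $\sqrt{dk'}\ge 5k'$ exceeds $k'/4$, and then bound the sum by $(k'/4+1)\bigl(16e^2d/k'\bigr)^{k'/4}$; alternatively use $2^m\binom{\lfloor k'/2\rfloor-m}{m}\le 2^{\lfloor k'/2\rfloor}$ and $\sum_{m\le k'/4}\binom{d}{m}\le(k'/4+1)\binom{d}{\lfloor k'/4\rfloor}$. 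Either way the linear prefactor costs only $(k'/4+1)^{1/k'}\le 1.25$ after taking $k'$-th roots, and the stated threshold still suffices. With that correction made explicit, the proof is complete.
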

\begin{Cor} \label{cor:4.2}
 For every $d,n \in \N$, if $n \leq d \min(c, \frac{1}{25})$, then
 \[
|\{x \in \sqrt{n}S: |\{i \in [d]: x_i= \pm 1\}| \leq n/2\}| \lesssim 2^{-n/2}|\sqrt{n}S|.
 \]   
\end{Cor}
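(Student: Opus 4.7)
The plan is to derive this from Lemma \ref{lem:4.1} in two steps: first pass from the sphere to the ball via the obvious inclusion $\sqrt{n}S \subseteq \sqrt{n}B$, then recover the sphere count using the equivalence $|\sqrt{n}B|\approx |\sqrt{n}S|$ from Theorem \ref{thm:3.4}. Concretely, since
\[
\{x\in\sqrt{n}S\colon |\{i\in[d]\colon x_i=\pm 1\}|\le n/2\}\subseteq \{x\in\sqrt{n}B\colon |\{i\in[d]\colon x_i=\pm 1\}|\le n/2\},
\]
it suffices to show the bound with $\sqrt{n}B$ on both sides and then invoke Theorem \ref{thm:3.4}, which applies because the hypothesis $n\le cd$ is built into the assumption $n\le d\min(c,1/25)$.

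The key step is to verify that Lemma \ref{lem:4.1} is applicable with $k=\lfloor n/2\rfloor$. The lemma requires $n\le d/25$ (which we have) together with
\[
n/2\ge 2^{9}\max\bigl(1,\, n^{4}/d^{3}\bigr).
\]
For $n$ bounded by a universal constant (say $n<2^{10}$), the conclusion of the corollary is trivial: the right-hand side $2^{-n/2}|\sqrt{n}S|$ is at least a fixed positive multiple of $|\sqrt{n}S|$, and the left-hand side is trivially at most $|\sqrt{n}S|$. For $n\ge 2^{10}$ the first requirement $n/2\ge 2^{9}$ holds. For the second, using $d\ge 25n$ we have $d^{3}\ge 25^{3}n^{3}>2^{10}n^{3}$, which rearranges to $n/2\ge 2^{9}n^{4}/d^{3}$. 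Thus the hypotheses of Lemma \ref{lem:4.1} are satisfied with $k=\lfloor n/2\rfloor$, giving
\[
|\{x\in\sqrt{n}B\colon |\{i\in[d]\colon x_i=\pm 1\}|\le n/2\}|\le 2^{-\lfloor n/2\rfloor+1}|\sqrt{n}B|\lesssim 2^{-n/2}|\sqrt{n}B|.
\]

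Finally, since $n\le cd$, Theorem \ref{thm:3.4} yields $|\sqrt{n}B|\approx |\sqrt{n}S|$ with universal implicit constants, and combining with the previous inequality completes the proof. There is essentially no obstacle here: the corollary is genuinely a two-line consequence of Lemma \ref{lem:4.1} and Theorem \ref{thm:3.4}, and the only mildly delicate point is checking that the threshold $n\le d/25$ is strong enough to make $k=n/2$ admissible in Lemma \ref{lem:4.1} (which is exactly why the factor $1/25$ appears in the hypothesis), together with noting that $2^{-n/2}$ is bounded below for any bounded range of $n$ so that the regime of small $n$ is automatic.
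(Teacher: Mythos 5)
Your proposal is correct and follows essentially the same route as the paper: handle $n<2^{10}$ trivially, apply Lemma \ref{lem:4.1} with $k=\lfloor n/2\rfloor$ (checking $d\ge 25n$ gives $d^3>2^{10}n^3$ so the hypothesis $k\ge 2^9\max(1,n^4/d^3)$ holds), and pass between $|\sqrt{n}B|$ and $|\sqrt{n}S|$ via Theorem \ref{thm:3.4}. No gaps.
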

\begin{proof}

 For $n < 2^{10}$ the statement is trivial, we assume that $n \geq 2^{10}$.
 Using Lemma \ref{lem:4.1} for $k= \lfloor \frac{n}{2} \rfloor$ and Theorem \ref{thm:3.4} we obtain
\begin{align*}
&|\{x \in \sqrt{n}S: |\{i \in [d]: x_i= \pm 1\}| \leq n/2\}| \leq |\{x \in \sqrt{n}B: |\{i \in [d]: x_i= \pm 1\}| \leq n/2\}|
\\
&\leq 2^{-\lfloor n/2 \rfloor +1} |\sqrt{n}B| \approx 2^{-n/2} |\sqrt{n}S|.
\end{align*}
\end{proof}
Now we state crucial proposition, which gives the connection between Theorem \ref{thm:1.5} and Theorem \ref{thm:sphere}.
\begin{Prop}
  \label{lem:4.3}
   For any $K \in \N, \varepsilon>0$ there exists $d_0\in \N$ such that for any $d \geq d_0$ and $f \in \ell^2(\Z^d)$ we have
   \[
   \Big\| \sup_{n \leq d^{1-\frac{1+\varepsilon}{(K+1)^2}}} |\mathcal{S}_{\sqrt{n}}f| \Big\|_{2} \lesssim_{K,\varepsilon} \sum_{i=0}^1 \Big\| \sup_{n_1,n_2,...,n_K \leq \frac{d}{2K}} |\mathcal{D}_{n_1,...,n_K}f_i| \Big\|_{2} + \|f\|_{2},
   \]
   where $f=f_0+f_1$ and
\[
\supp(\widehat{f_0}) \subseteq \{\xi \in \T^d: \|\xi \| \leq \| \xi + 1/2 \| \},
\]
\[
\supp(\widehat{f_1}) \subseteq \{\xi \in \T^d: \|\xi \| > \| \xi + 1/2 \| \}.
\]
\end{Prop}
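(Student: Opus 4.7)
The plan is to decompose $\mathcal{S}_{\sqrt{n}}f$ into three pieces handled by different tools. First I would apply Corollary \ref{cor:3.2} with the given $K$ and $\varepsilon$ to obtain constants $a=a(K,\varepsilon)$ and $C=C(K,\varepsilon)$ such that, setting $B_n:=\{x\in\sqrt{n}S:\sum_{|x_i|\le K}|x_i|^2\le n-a\}$, we have $|B_n|/|\sqrt{n}S|\le C/d$ throughout the admissible range of $n$. Points in $\sqrt{n}S\setminus B_n$ have total squared contribution from coordinates with $|x_j|>K$ strictly less than $a$, hence at most $M:=\lfloor(a-1)/(K+1)^2\rfloor$ such ``big'' coordinates, each of absolute value at most $L:=\lceil\sqrt{a-1}\rceil$. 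Accordingly I would write $\sqrt{n}S=A_{K,n}\sqcup\bigsqcup_{m=1}^M G_m(n)\sqcup B_n$, where $A_{K,n}:=\sqrt{n}S\cap[-K,K]^d$ and $G_m(n)$ consists of points with exactly $m$ big coordinates. Setting $T_E f(x):=|\sqrt{n}S|^{-1}\sum_{y\in E}f(x-y)$ for $E\subseteq\sqrt{n}S$, we have $\mathcal{S}_{\sqrt{n}}=T_{A_{K,n}}+\sum_{m=1}^M T_{G_m(n)}+T_{B_n}$, and the three types of pieces will be treated separately.

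For the clean piece, every $y\in A_{K,n}$ belongs to a unique $D_{\overline{n}}$ with $\overline{n}=(n_1,\ldots,n_K)$ satisfying $\sum_j j^2 n_j=n$, giving the identity $T_{A_{K,n}}f=\sum_{\overline{n}:\sum j^2 n_j=n}(|D_{\overline{n}}|/|\sqrt{n}S|)\,\mathcal{D}_{\overline{n}}f$ as a convex combination of total mass at most $1$. Since $n_j\le n\le d/(2K)$ for all admissible $\overline{n}$ once $d$ is large enough, this yields the pointwise bound $|T_{A_{K,n}}f|\le\sup_{n_1,\ldots,n_K\le d/(2K)}|\mathcal{D}_{\overline{n}}f|$; applying the triangle inequality through the decomposition $f=f_0+f_1$ produces the two terms of the desired right-hand side.

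The hardest step is the medium piece. I would parametrize $G_m(n)=\bigsqcup_{T,y}\{y\oplus z:z\in A_{K,n-\|y\|^2}^{[d]\setminus T}\}$ over $m$-subsets $T\subseteq[d]$ and vectors $y\in\mathbb{Z}^T$ with $K<|y_i|\le L$ and $\|y\|^2<a$, so that each sub-piece factors (up to an $\ell^2$-isometric shift by $y$) as the averaging $\mathcal{A}^{[d]\setminus T}_{K,n-\|y\|^2}$ weighted by $|A_{K,n-\|y\|^2}^{[d]\setminus T}|/|\sqrt{n}S|$. The key obstacle is to show that this weight is $\lesssim_{K,\varepsilon,m}d^{-m(1+\varepsilon)}$ uniformly in $n$; for this I would invoke Theorem \ref{thm:3.4} in both dimension $d$ and dimension $d-m$ (together with the trivial bound $|A_{K,n'}^{[d]\setminus T}|\le|\sqrt{n'}S^{d-m-1}|$) and exploit that the two saddle-point parameters satisfy $|r'-r|=O(1/d)$ and $r\approx\alpha/2\lesssim d^{-(1+\varepsilon)/(K+1)^2}$; after cancellation the ratio is dominated by $r^{\|y\|^2}\le r^{m(K+1)^2}\lesssim d^{-m(1+\varepsilon)}$, while the remaining factors $(h(r')/h(r))^d$, $(r/r')^n$, $h(r')^{-m}$ all contribute only $O_{K,\varepsilon,m}(1)$ given the constraints $m,\|y\|^2=O_{K,\varepsilon}(1)$ and $n\le d^{1-\delta}$. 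The residual averaging $\sup_{n'}|\mathcal{A}^{[d]\setminus T}_{K,n'} g|$ is dominated by $\sup_{\overline{n}}|\mathcal{D}^{[d]\setminus T}_{\overline{n}} g|$, which by tensoring Theorem \ref{thm:1.5} in dimension $d-m$ with the identity on $\ell^2(\mathbb{Z}^T)$ has $\ell^2\to\ell^2$ norm at most $C_K$. Summing over the $\binom{d}{m}(2L)^m$ choices of $(T,y)$ yields $\|\sup_n|T_{G_m(n)}f|\|_2\lesssim_{K,\varepsilon,m}d^{-m\varepsilon}\|f\|_2$, and summing over $1\le m\le M$ gives a total contribution of only $O_{K,\varepsilon}(\|f\|_2)$.

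Finally, for the bad piece, Young's inequality gives $\|T_{B_n}f\|_2\le(C/d)\|f\|_2$; the square-function bound $\|\sup_n|T_{B_n}f|\|_2^2\le\sum_n\|T_{B_n}f\|_2^2\lesssim d^{1-\delta}\cdot d^{-2}\|f\|_2^2$ is then $O(d^{-1-\delta}\|f\|_2^2)$ and absorbed into the $\|f\|_{\ell^2}$ term. Combining the three contributions completes the proof.
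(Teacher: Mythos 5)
Your argument is correct, but it proceeds along a genuinely different route from the paper. The paper works on the Fourier side: it splits the multiplier $s_{\sqrt{n}}=\tilde{s}_{\sqrt{n}}+r_{\sqrt{n}}$, bounds $r_{\sqrt{n}}$ uniformly by $O(1/d)+O(2^{-n/2})$ using Corollary \ref{cor:3.2} \emph{and} Corollary \ref{cor:4.2}, and then compares $\tilde{s}_{\sqrt{n}}$ with symmetrized multipliers $\phi_{\sqrt{n},0},\phi_{\sqrt{n},1}$ obtained by averaging out the phase of the big coordinates; the error $O(1/n)$ in that comparison is where Lemma \ref{lem:2.8}, the ``many $\pm1$ coordinates'' condition, and the frequency split $f=f_0+f_1$ are all genuinely needed. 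You instead decompose the sphere in physical space into the clean part $\sqrt{n}S\cap[-K,K]^d$ (an exact disjoint union of the $D_{\overline{n}}$, hence a sub-convex combination of the $\mathcal{D}_{\overline{n}}$, giving the pointwise domination directly), the boundedly many configurations of $1\le m\le M$ big coordinates, and the bad set of Corollary \ref{cor:3.2}. The cost of your route is the union bound over the $\approx\binom{d}{m}(2L)^m$ big-coordinate configurations, which you recoup from the weight $\lesssim d^{-m(1+\varepsilon)}$ per configuration — this is where the $\varepsilon$-room in the radius restriction enters, exactly as it does in the paper's Theorem \ref{thm:3.1}. What your approach buys is the complete elimination of the multiplier comparison: you never need Lemma \ref{lem:4.1}/Corollary \ref{cor:4.2}, Lemma \ref{lem:2.8}, or the split $f=f_0+f_1$ (your bound is formally stronger, with $\sup_{\overline{n}}|\mathcal{D}_{\overline{n}}f|$ on the right, which implies the stated inequality by the triangle inequality). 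Two small remarks: the weight estimate $|A^{[d]\setminus T}_{K,n-\|y\|^2}|/|\sqrt{n}S|\lesssim_{K,\varepsilon}d^{-m(1+\varepsilon)}$ follows more directly from the embedding $\Z^{d-m}\hookrightarrow\Z^d$ together with Lemma \ref{lem:3.8} and $|\sqrt{n}B|\approx|\sqrt{n}S|$, rather than re-running the saddle-point comparison in dimension $d-m$ (and Lemma \ref{lem:3.8} needs $n\gtrsim a$, so the finitely many small $n$ should be dispatched by the elementary count $|\sqrt{n}S|\approx_n d^n$); and you should state explicitly that $G_m(n)$ is taken inside $\sqrt{n}S\setminus B_n$ so that the three pieces really partition the sphere.
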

\begin{proof}
Throughout the proof we write
\[
s_{\sqrt{n}}(\xi):=m_{\sqrt{n}}^{S}(\xi)=\frac{1}{|\sqrt{n}S|}\sum_{x\in \sqrt{n}S}e(x\cdot \xi),\qquad \xi \in \T^d
\] 
for the multiplier symbol of the operator $\mathcal S_{\sqrt{n}}.$ Fix $K \in \N,$  $\varepsilon>0$, and take $a$ from Corollary \ref{cor:3.2}. We decompose every $x \in \Z^d$ into $x=y(x)+z(x)$, where
\[
z(x)_j= \begin{cases}
    x_j, \ \ \text{if $|x_j| \geq K+1$} \\
    0, \ \ \text{otherwise}
\end{cases}
\]
and denote $Z(\sqrt{n}S)=\{z(x) \in \Z^d: x \in \sqrt{n}S\}.$ Then we split
\[
s_{\sqrt{n}}(\xi)=\tilde{s}_{\sqrt{n}}(\xi)+r_{\sqrt{n}}(\xi),
\]
where
\begin{align*}
\tilde{s}_{\sqrt{n}}(\xi)&=\frac{1}{|\sqrt{n}S|} \sum_{\substack{x \in \sqrt{n}S, \\ |y(x) |^2 \geq n-a, \\ |\{i: |x_i|=1\} | \geq n/2}} e(x \cdot \xi),
\\
r_{\sqrt{n}}(\xi)&=\frac{1}{|\sqrt{n}S|} \sum_{\substack{x \in \sqrt{n}S, \\ |y(x)|^2 < n-a, \\ |\{i: |x_i|=1\} | \ge n/2}} e(x \cdot \xi) + \frac{1}{|\sqrt{n}S|} \sum_{\substack{x \in \sqrt{n}S \\ |\{i: |x_i|=1\} | < n/2}} e(x \cdot \xi).
\end{align*}
Above and throughout the proof of Proposition \ref{lem:4.3} we denote by $|y(x)|$ and $|z|$ the Euclidean lengths of $y(x)$ and $z,$ respectively.

By Corollary \ref{cor:3.2} and Corollary \ref{cor:4.2}, if $ n \leq d^{1-\frac{1+\varepsilon}{(K+1)^2}}$ and \\ $d \geq \max(c^{-(K+1)^2}, (25K)^{(K+1)^2})$, then 
\begin{equation}
\label{eq:rsnbound}
|r_{\sqrt{n}}(\xi)| \lesssim_{K,\varepsilon} \frac{1}{d} + 2^{-n/2},
\end{equation}
uniformly in $\xi \in \T^d$. 

To treat the term $\tilde{s}_{\sqrt{n}}(\xi)$ we rewrite
\begin{align*}
&\tilde{s}_{\sqrt{n}}(\xi)= \frac{1}{|\sqrt{n}S|} \sum_{\substack{z \in Z(\sqrt{n}S), \\ |z|^2 \leq a}} e(z \cdot \xi) \hspace{-0.5cm}\sum_{\substack{y \in \{-K,...,K\}^d, \\
n-a \leq |y|^2 \leq n- |z|^2, 
\\ \supp(y) \cap \supp(z)= \emptyset, 
\\ |\{i: |y_i|=1 \}| \geq n/2
}} \hspace{-0.5cm}e(y \cdot \xi)
\\
&=\frac{1}{|\sqrt{n}S|} \sum_{l=0}^a \sum_{\substack{z \in Z(\sqrt{n}S),
\\ |\supp(z)|=l, \\ |z|^2 \le a
}} e(z \cdot \xi) \hspace{-0.5cm} \sum_{\substack{i_1,...,i_K \in [d], 
\\ n-a \leq \sum_{j=1}^K j^2i_j \leq n-|z|^2,
\\
i_1 \geq n/2
}} 
\hspace{-0.2cm} \sum_{\substack{y \in \{-K,...,K\}^d, 
\\ \supp(y) \cap \supp(z)= \emptyset, 
\\
(\forall j \in [K]) |\{i \in [d]: |y_i|=j\}|=i_j
}} \hspace{-0.5cm} e(y \cdot \xi).
\end{align*}
Now we define two new multipliers
\begin{align*}
\phi_{\sqrt{n},0}(\xi)&=\frac{1}{|\sqrt{n}S|} \sum_{l=0}^a \sum_{\substack{z \in Z(\sqrt{n}S),
\\ |\supp(z)|=l, \\ |z|^2 \le a
}} \hspace{+0.1cm} \sum_{\substack{i_1,...,i_K \in [d], 
\\ n-a \leq \sum_{j=1}^K j^2i_j \leq n-|z|^2,
\\
i_1 \geq n/2
}} 
\hspace{-0.2cm}\sum_{\substack{y \in \{-K,...,K\}^d, 
\\ \supp(y) \cap \supp(z)= \emptyset, 
\\
(\forall j \in [K]) |\{i \in [d]: |y_i|=j\}|=i_j
}} \hspace{-1cm}e(y \cdot \xi),
\\
\phi_{\sqrt{n},1}(\xi)&=\frac{1}{|\sqrt{n}S|} \sum_{l=0}^a \sum_{\substack{z \in Z(\sqrt{n}S),
\\ |\supp(z)|=l, \\ |z|^2 \le a
}} (-1)^{\sum_{i=1}^d z_i} \hspace{-1.0cm}\sum_{\substack{i_1,...,i_K \in [d], 
\\ n-a \leq \sum_{j=1}^K j^2i_j \leq n-|z|^2,
\\
i_1 \geq n/2
}} 
\hspace{-0.2cm}\sum_{\substack{y \in \{-K,...,K\}^d, 
\\ \supp(y) \cap \supp(z)= \emptyset, 
\\
(\forall j \in [K]) |\{i \in [d]: |y_i|=j\}|=i_j
}} \hspace{-1cm} e(y \cdot \xi).
\end{align*}
We claim that two crucial inequalities hold:
\begin{enumerate} \item If $\| \xi \| \leq \| \xi+ 1/2 \|$ then
\begin{equation} \label{eq:4.1}
|\tilde{s}_{\sqrt{n}}(\xi)-\phi_{\sqrt{n},0}(\xi)| \lesssim_{K,\varepsilon} \frac{1}{n}.
\end{equation}
\item  If $\| \xi \| \geq \| \xi+ 1/2 \|$ then
\begin{equation} \label{eq:4.2}
|\tilde{s}_{\sqrt{n}}(\xi)-\phi_{\sqrt{n},1}(\xi)| \lesssim_{K,\varepsilon} \frac{1}{n}.
\end{equation}
\end{enumerate}

We start with the proof of  \eqref{eq:4.1} and assume that $\|\xi\| \leq \| \xi+1/2 \|$. First, notice that for any $z \in Z(\sqrt{n}S)$ and $i_1,...,i_K \in [d]$ we have
\begin{equation}
\label{eq:syebd}
\begin{split}
&\sum_{\substack{y \in \{-K,...,K\}^d,  
\\ \supp(y) \cap \supp(z)= \emptyset, 
\\
(\forall j \in [K]) |\{i \in [d]: |y_i|=j\}|=i_j
}}\hspace{-1cm} e(y \cdot \xi)= 2^{\sum_{j=1}^K i_j} \hspace{-1cm}\sum_{\substack{I_1,...,I_K \subseteq [d] \setminus \supp(z), \\
 (\forall j \in [K]) |I_j|=i_j, \\
    (\forall i,j \in [K], i \neq j) I_i \cap I_j=\emptyset}} \prod_{k=1}^K \prod_{i \in I_k} \cos(2k \pi \xi_i)   
\\
&=  2^{\sum_{j=1}^K i_j} \frac{(d-|\supp(z)|)!}{\prod_{j \in [K]}i_j! \cdot (d-|\supp(z)|-\sum_{j \in [K]}i_j)!} \beta_{\overline{i}}^{[d] \setminus \supp(z)} (\xi);
\end{split}
\end{equation}
recall Definition \ref{def:2.1} for the meaning of $\beta_{\overline{i}}^{[d] \setminus \supp(z)}.$ 
Let 
\[
F(l,\overline{i})=  2^{\sum_{j=1}^K i_j} \frac{(d-l)!}{\prod_{j \in [K]}i_j! (d-l-\sum_{j \in [K]}i_j)!}.
\]
Exploiting invariance of $Z(\sqrt{n}S)$ under sign changes of coordinates, that is using similar arguments as in the proof of Lemma \ref{rem:2.2}, we get
\begin{align*}
&\tilde{s}_{\sqrt{n}}(\xi)-\phi_{\sqrt{n},0}(\xi)\\
&= \frac{1}{|\sqrt{n}S|} \sum_{l=0}^a \sum_{\substack{z \in Z(\sqrt{n}S),
\\ |\supp(z)|=l, \\ |z|^2 \le a
}} \Big( \prod_{j=1}^d \cos(2\pi z_j \xi_j)-1 \Big)  \hspace{-1cm}\sum_{\substack{i_1,...,i_K \in [d], 
\\ n-a \leq \sum_{t=1}^K t^2i_t \leq n-|z|^2,
\\
i_1 \geq n/2
}} \hspace{-0.8cm} F(l, \overline{i}) \beta_{\overline{i}}^{[d] \setminus  \supp(z)}(\xi).
\end{align*}
From the inequality $|\prod_{j \in [d]} a_j - 1| \leq \sum_{j \in [d]} |a_j-1|$, valid if $\sup_j |a_j| \leq 1$, and Lemma \ref{lem:2.8} we obtain 
\begin{align*} 
&|\tilde{s}_{\sqrt{n}}(\xi)-\phi_{\sqrt{n},0}(\xi)| \lesssim 
\frac{1}{|\sqrt{n}S|} \sum_{l=0}^a \sum_{\substack{z \in Z(\sqrt{n}S),
\\ |\supp(z)|=l, \\ |z|^2 \le a
}}  \sum_{j=1}^d \sin^2( \pi z_j \xi_j)  \hspace{-0.8cm}\sum_{\substack{i_1,...,i_K \in [d], 
\\ n-a \leq \sum_{t=1}^K t^2i_t \leq n-|z|^2,
\\
i_1 \geq n/2
}} \hspace{-0.8cm} F(l, \overline{i}) 
\\
 &\cdot\exp\Bigg( -\frac{ci_1}{80K(d-|\supp(z)|)} \min\bigg( \hspace{-0.4cm}\sum_{k \in [d] \setminus \supp(z)} \hspace{-0.4cm} \sin^2( \pi  \xi_k),  \hspace{-0.4cm}\sum_{k \in [d] \setminus \supp(z)} \hspace{-0.4cm}\cos^2( \pi  \xi_k)  \bigg) \Bigg)
 \\
 &\lesssim_{K,\varepsilon}
 \frac{1}{|\sqrt{n}S|} \sum_{l=0}^a \sum_{\substack{z \in Z(\sqrt{n}S),
\\ |\supp(z)|=l, \\ |z|^2 \le a
}}  \sum_{j=1}^d \sin^2( \pi z_j \xi_j)  \hspace{-0.8cm}\sum_{\substack{i_1,...,i_K \in [d], 
\\ n-a \leq \sum_{t=1}^K t^2i_t \leq n-|z|^2,
\\
i_1 \geq n/2
}} \hspace{-0.8cm} F(l, \overline{i}) \exp\Big( -\frac{cn}{160Kd} \|\xi\|^2 \Big),
\end{align*}
where in the last estimate above we used our assumption $\|\xi\|\le \|\xi+1/2\|.$ Using the inequality $\sin^2(\pi xt) \leq |x|^2 \sin^2(\pi t),$ which holds for all $x \in \Z, t \in \mathbb{R},$ we get
\begin{align*}
&|\tilde{s}_{\sqrt{n}}(\xi)-\phi_{\sqrt{n},0}(\xi)| 
\\
& \lesssim \exp\Big( -\frac{cn}{160Kd} \| \xi \|^2 \Big)  \sum_{j=1}^d \sin^2( \pi \xi_j) \\
&\hspace{5cm}\frac{1}{|\sqrt{n}S|} \sum_{l=0}^a \hspace{-0.3cm}\sum_{\substack{i_1,...,i_K \in [d], 
\\ n-a \leq \sum_{t=1}^K t^2i_t \leq n,
\\
i_1 \geq n/2
}} \hspace{-0.6cm} F(l, \overline{i}) \hspace{-0.5cm}\sum_{\substack{z \in Z(\sqrt{n}S),
\\ |\supp(z)|=l, \\ |z|^2 \le n-\sum_{t=1}^K t^2i_t
}}  \hspace{-0.5cm} z_j^2.
\end{align*}
By symmetry invariance of $Z(\sqrt{n}S)$ the innermost sum above is
\[
\sum_{\substack{z \in Z(\sqrt{n}S),
\\ |\supp(z)|=l, \\ |z |^2\le n-\sum_{t=1}^K t^2i_t
}}  \hspace{-0.5cm} z_j^2\hspace{+0.1cm}=  \hspace{+0.1cm}\frac{1}{d} \hspace{-0.5cm}\sum_{\substack{z \in Z(\sqrt{n}S),
\\ |\supp(z)|=l, \\ |z |^2\le n-\sum_{t=1}^K t^2i_t
}} \hspace{-0.5cm}|z |^2 \hspace{+0.1cm}\le \hspace{+0.1cm}\frac{a}{d} \hspace{-0.5cm}\sum_{\substack{z \in Z(\sqrt{n}S),
\\ |\supp(z)|=l, \\ |z |^2\le n-\sum_{t=1}^K t^2i_t
}} \hspace{-0.5cm} 1
\]
and thus we obtain 
\begin{align*}
&|\tilde{s}_{\sqrt{n}}(\xi)-\phi_{\sqrt{n},0}(\xi)| \\
&\lesssim_{K,\varepsilon} 
\exp\Big( -\frac{cn}{160Kd} \| \xi \|^2 \Big)  \frac{ \| \xi \|^2}{d} \frac{1}{|\sqrt{n}S|} \sum_{l=0}^a \hspace{-0.3cm} \sum_{\substack{i_1,...,i_K \in [d], 
\\ n-a \leq \sum_{t=1}^K t^2i_t \leq n,
\\
i_1 \geq n/2
}} \hspace{-0.1cm} \sum_{\substack{z \in Z(\sqrt{n}S),
\\ |\supp(z)|=l, \\ |z|^2\le n-\sum_{t=1}^K t^2i_t
}} \hspace{-0.5cm} F(l, \overline{i}).
\end{align*}
However, putting $\xi=0$ in \eqref{eq:syebd} we notice that 

\begin{align*}
&\frac{1}{|\sqrt{n}S|} \sum_{l=0}^a \hspace{-0.3cm} \sum_{\substack{i_1,...,i_K \in [d], 
\\ n-a \leq \sum_{t=1}^K t^2i_t \leq n,
\\
i_1 \geq n/2
}} \hspace{-0.1cm} \sum_{\substack{z \in Z(\sqrt{n}S),
\\ |\supp(z)|=l, \\ |z|^2\le n-\sum_{t=1}^K t^2i_t
}} \hspace{-0.5cm} F(l, \overline{i})
\\
&=\frac{1}{|\sqrt{n}S|} \sum_{l=0}^a \sum_{\substack{z \in Z(\sqrt{n}S),
\\ |\supp(z)|=l, \\ |z|^2 \le a
}}\hspace{+0.1cm} \sum_{\substack{i_1,...,i_K \in [d], 
\\ n-a \leq \sum_{j=1}^K j^2i_j \leq n-|z|^2,
\\
i_1 \geq n/2
}}  \hspace{-0.2cm}
\sum_{\substack{y \in \{-K,...,K\}^d, 
\\ \supp(y) \cap \supp(z)= \emptyset, 
\\
(\forall j \in [K]) |\{i \in [d]: |y_i|=j\}|=i_j
}} \hspace{-0.5cm} 1 \\
&= \tilde{s}_{\sqrt{n}}(0) \leq s_{\sqrt{n}}(0)=1.
\end{align*}
Thus we have
\[
|\tilde{s}_{\sqrt{n}}(\xi)-\phi_{\sqrt{n},0}(\xi)| \lesssim_{K,\varepsilon} \exp\Big( -\frac{cn}{160Kd} \| \xi \|^2 \Big)  \frac{ \| \xi \|^2}{d} \lesssim_{K,\varepsilon} \frac{1}{n}
\]
and this finishes the proof of \eqref{eq:4.1}.

\par The proof of  \eqref{eq:4.2} is similar and thus we only sketch it. If $\| \xi+1/2 \| \leq \| \xi \|$ then one has
\[
\tilde{s}_{\sqrt{n}}(\xi)=\frac{1}{|\sqrt{n}S|} \sum_{l=0}^a \sum_{\substack{z \in Z(\sqrt{n}S),
\\ |\supp(z)|=l, \\ |z|^2 \le a
}}(-1)^{\sum_{j=1}^d z_j} e\big(z \cdot (\xi+1/2)\big) 
\] \[\sum_{\substack{i_1,...,i_K \in [d], 
\\ n-a \leq \sum_{j=1}^K j^2i_j \leq n-|z|^2,
\\
i_1 \geq n/2
}} 
\hspace{-0.8cm}(-1)^{\sum_{j \in [K]} ji_j} \hspace{-0.8cm}
\sum_{\substack{y \in \{-K,...,K\}^d, \\
n-a \leq |y|^2 \leq n- u, 
\\ \supp(y) \cap \supp(z)= \emptyset, 
\\
(\forall j \in [K]) |\{i \in [d]: |y_i|=j\}|=i_j
}}\hspace{-0.8cm} e\big(y \cdot (\xi+1/2) \big),
\]
\[\phi_{\sqrt{n},1}(\xi)=\frac{1}{|\sqrt{n}S|} \sum_{l=0}^a \sum_{\substack{z \in Z(\sqrt{n}S),
\\ |\supp(z)|=l, \\ |z|^2 \le a
}} (-1)^{\sum_{i=1}^d z_i} \]
\[\sum_{\substack{i_1,...,i_K \in [d], 
\\ n-a \leq \sum_{j=1}^K j^2i_j \leq n-|z|^2,
\\
i_1 \geq n/2
}} \hspace{-0.8cm} (-1)^{\sum_{j \in [K]} ji_j} \hspace{-0.8cm}
\sum_{\substack{y \in \{-K,...,K\}^d, 
\\ \supp(y) \cap \supp(z)= \emptyset, 
\\
(\forall j \in [K]) |\{i \in [d]: |y_i|=j\}|=i_j
}} \hspace{-0.8cm} e\big(y \cdot (\xi+1/2) \big).
\]
Hence using similar arguments as before and denoting $\eta=\xi+1/2$ we get
\begin{align*}
&|\tilde{s}_{\sqrt{n}}(\xi)-\phi_{\sqrt{n},1}(\xi)| \\
&\lesssim \frac{1}{|\sqrt{n}S|} \sum_{l=0}^a \sum_{\substack{z \in Z(\sqrt{n}S),
\\ |\supp(z)|=l, \\ |z|^2 \le a
}} \Big| \prod_{j=1}^d \cos\big(2\pi z_j \eta_j\big)-1 \Big|
\hspace{-0.8cm}\sum_{\substack{i_1,...,i_K \in [d], 
\\ n-a \leq \sum_{j=1}^K j^2i_j \leq n-|z|^2,
\\
i_1 \geq n/2
}} \hspace{-0.8cm}F(l, \overline{i}) |\beta_{\overline{i}}^{[d] \setminus  \supp(z)}(\eta)|.
\end{align*}
Since $\|\eta\|\le \|\eta+1/2\|$ we see that the quantity above was estimated by $O_K(1/n)$ during the proof of \eqref{eq:4.1}. This justifies \eqref{eq:4.2}.

Having proven \eqref{eq:4.1} and \eqref{eq:4.2} we take a closer look at $\phi_{\sqrt{n},0}(\xi)$ and $\phi_{\sqrt{n},1}(\xi).$ Notice that 
\begin{align*}
&\phi_{\sqrt{n},0}(\xi)= \frac{1}{|\sqrt{n}S|} \sum_{l=0}^a \sum_{\substack{I \subseteq [d], \\ |I|=l}}\sum_{\substack{z \in Z(\sqrt{n}S),
\\ \supp(z)=I, \\ |z|^2 \le a
}}  \hspace{+0.1cm}\sum_{\substack{i_1,...,i_K \in [d], 
\\ n-a \leq \sum_{j=1}^K j^2i_j \leq n-|z|^2,
\\
i_1 \geq n/2
}} 
\hspace{-0.5cm} \sum_{\substack{y \in \{-K,...,K\}^d,  
\\ \supp(y) \cap I= \emptyset, 
\\
(\forall j \in [K]) |\{i \in [d]: |y_i|=j\}|=i_j
}} \hspace{-0.8cm} e(y \cdot \xi) 
\\
&=\frac{1}{|\sqrt{n}S|} \sum_{l=0}^a \sum_{\substack{I \subseteq [d], \\ |I|=l}} \sum_{\substack{i_1,...,i_K \in [d], 
\\ n-a \leq \sum_{j=1}^K j^2i_j \leq n,
\\
i_1 \geq n/2
}} \hspace{-0.2cm} \sum_{\substack{z \in Z(\sqrt{n}S),
\\ \supp(z)=I, \\ |z|^2 \le n-\sum_{j=1}^K j^2i_j
}} 1 \cdot
\hspace{-0.4cm}
\sum_{\substack{y \in \{-K,...,K\}^d, 
\\ \supp(y) \cap I= \emptyset, 
\\
(\forall j \in [K]) |\{i \in [d]: |y_i|=j\}|=i_j
}} \hspace{-0.8cm}e(y \cdot \xi).
\end{align*}
 Due to symmetry invariance of $Z(\sqrt{n}S)$, for any $i_1,\ldots, i_K \in [d]$ and $\ I \subseteq [d]$ with $|I|=l$ and $\sum_{j=1}^K j^2i_j\le n$, we have
\[
|\{z \in Z(\sqrt{n}S):\supp(z)=I, | z|^2 \le n-\sum_{j=1}^K j^2i_j \}|= \frac{1}{\binom{d}{l}} \sum_{\substack{z \in Z(\sqrt{n}S),
\\ |\supp(z)|=l, \\ |z|^2 \le n-\sum_{j=1}^K j^2i_j
}}  1,
\]
hence
\[
\phi_{\sqrt{n},0}(\xi)
=\frac{1}{|\sqrt{n}S|} \sum_{l=0}^a \sum_{\substack{z \in Z(\sqrt{n}S),
\\ |\supp(z)|=l, \\ |z|^2 \le a
}}  
\hspace{+0.1cm}\sum_{\substack{i_1,...,i_K \in [d], 
\\ n-a \leq \sum_{j=1}^K j^2i_j \leq n-|z|^2,
\\
i_1 \geq n/2
}} 
\se{\substack{I \subseteq [d], \\ |I|=l}}
\hspace{-0.5cm}\sum_{\substack{y \in \{-K,...,K\}^d,  
\\ \supp(y) \cap I= \emptyset, 
\\
(\forall j \in [K]) |\{i \in [d]: |y_i|=j\}|=i_j
}} \hspace{-0.8cm}e(y \cdot \xi).
\]
Notice that 
\begin{align*}
&\se{\substack{I \subseteq [d], \\ |I|=l}}
\hspace{-0.5cm}\sum_{\substack{y \in \{-K,...,K\}^d,  
\\ \supp(y) \cap I= \emptyset, 
\\
(\forall j \in [K]) |\{i \in [d]: |y_i|=j\}|=i_j
}}\hspace{-0.8cm} e(y \cdot \xi)\\
&= \frac{1}{\binom{d}{l}}\hspace{-0.6cm}\sum_{\substack{y \in \{-K,...,K\}^d,  
\\
(\forall j \in [K]) |\{i \in [d]: |y_i|=j\}|=i_j
}} \hspace{-0.8cm}e(y \cdot \xi) | \{ I \subseteq [d]: |I|=l, I \cap \supp(y)= \emptyset \}|
\\
&=\frac{\binom{d-\sum_{j \in[K]}i_j}{l}}{\binom{d}{l}} \hspace{-0.6cm}\sum_{\substack{y \in \{-K,...,K\}^d,  
\\
(\forall j \in [K]) |\{i \in [d]: |y_i|=j\}|=i_j
}} \hspace{-0.8cm}e(y \cdot \xi)= \frac{\binom{d-\sum_{j \in[K]}i_j}{l}}{\binom{d}{l}} |D_{\overline{i}}| \beta_{\overline{i}}(\xi), 
\end{align*}
where $\overline{i}=(i_1,\ldots,i_K),$
and this implies 
\[
\phi_{\sqrt{n},0}(\xi)
=\frac{1}{|\sqrt{n}S|} \sum_{l=0}^a \sum_{\substack{z \in Z(\sqrt{n}S),
\\ |\supp(z)|=l, \\ |z|^2 \le a
}}  
\hspace{+0.1cm}\sum_{\substack{i_1,...,i_K \in [d], 
\\ n-a \leq \sum_{j=1}^K j^2i_j \leq n-|z|^2,
\\
i_1 \geq n/2
}} 
\frac{\binom{d-\sum_{j \in[K]}i_j}{l}}{\binom{d}{l}} |D_{\overline{i}}| \beta_{\overline{i}}(\xi).
\]
Using exactly the same argument one can obtain
\[\phi_{\sqrt{n},1}(\xi)
=\frac{1}{|\sqrt{n}S|} \sum_{l=0}^a \sum_{\substack{z \in Z(\sqrt{n}S),
\\ |\supp(z)|=l, \\ |z|^2 \le a
}}  (-1)^{\sum_{j=1}^dz_j}
\hspace{-1cm}\sum_{\substack{i_1,...,i_K \in [d], 
\\ n-a \leq \sum_{j=1}^K j^2i_j \leq n-|z|^2,
\\
i_1 \geq n/2
}} 
\hspace{-0.8cm}\frac{\binom{d-\sum_{j \in[K]}i_j}{l}}{\binom{d}{l}} |D_{\overline{i}}| \beta_{\overline{i}}(\xi).
\]
Since $\phi_{\sqrt{n},0}(0)=\tilde{s}_{\sqrt{n}}(0)=s_{\sqrt{n}}(0)-r_{\sqrt{n}}(0) \leq 1$, we see that
\begin{equation*} 
\frac{1}{|\sqrt{n}S|} \sum_{l=0}^a \sum_{\substack{z \in Z(\sqrt{n}S),
\\ |\supp(z)|=l, \\ |z|^2 \le a
}}  
\hspace{+0.1cm}\sum_{\substack{i_1,...,i_K \in [d], 
\\ n-a \leq \sum_{j=1}^K j^2i_j \leq n-|z|^2,
\\
i_1 \geq n/2
}} 
\frac{\binom{d-\sum_{j \in[K]}i_j}{l}}{\binom{d}{l}} |D_{\overline{i}}| \leq 1.
\end{equation*}
This implies that for $\epsilon \in \{0,1\}$ and $f\in \ell^2$ we have the pointwise bound, valid for $x\in \Z^d,$
\begin{equation}
\label{eq:4.4}
\begin{split}
\Big|\mathcal{F}^{-1} \Big( \phi_{\sqrt{n},\epsilon}\widehat{f} \Big)(x)\Big| &\leq \frac{1}{|\sqrt{n}S|} \sum_{l=0}^a \sum_{\substack{z \in Z(\sqrt{n}S),
\\ |\supp(z)|=l, \\ |z|^2 \le a
}}  
\hspace{+0.1cm}\sum_{\substack{i_1,...,i_K \in [d], 
\\ n-a \leq \sum_{j=1}^K j^2i_j \leq n-|z|^2,
\\
i_1 \geq n/2
}} 
\frac{\binom{d-\sum_{j \in[K]}i_j}{l}}{\binom{d}{l}} |D_{\overline{i}}| \Big|\mathcal{F}^{-1} \Big( \beta_{\overline{i}}\widehat{f} \Big)(x)\Big|
\\
&\leq \sup_{n_1,...,n_K \leq \frac{d}{2K}} |\mathcal{D}_{\overline{n}}f|(x).
\end{split}
\end{equation}
Above we used the fact that $n \leq \frac{d}{2K}$, which follows from assumptions $d \geq (25K)^{(K+1)^2}$ and $n \leq d^{1-\frac{1+\varepsilon}{(K+1)^2}}$.

Having established \eqref{eq:rsnbound}, \eqref{eq:4.1}, \eqref{eq:4.2} and \eqref{eq:4.4} we can now finish the proof of Proposition \ref{lem:4.3}. Take any $f \in \ell^2(\Z^d)$ and decompose $f=f_0+f_1$, where
\begin{align*}
\supp(\widehat{f_0}) &\subseteq \{\xi \in \T^d: \|\xi \| \leq \| \xi + 1/2 \| \},\\
\supp(\widehat{f_1}) &\subseteq \{\xi \in \T^d: \|\xi \| > \| \xi + 1/2 \| \}.
\end{align*}
Then we have
\begin{equation}
\label{eq:sf0f1}
\Big\| \sup_{ n \leq d^{1-\frac{1+\varepsilon}{(K+1)^2}}} |\mathcal{S}_{\sqrt{n}}f| \Big\|_{2} \lesssim \Big\| \sup_{ n \leq d^{1-\frac{1+\varepsilon}{(K+1)^2}}} |\mathcal{S}_{\sqrt{n}}f_0| \Big\|_{2} +\Big\| \sup_{ n \leq d^{1-\frac{1+\varepsilon}{(K+1)^2}}} |\mathcal{S}_{\sqrt{n}}f_1| \Big\|_{2}. 
\end{equation}

The  term on the right hand side of \eqref{eq:sf0f1}  containing $f_0$ is estimated with the aid of \eqref{eq:4.4} 
\begin{align*}
&\Big\| \sup_{ n \leq d^{1-\frac{1+\varepsilon}{(K+1)^2}}} |\mathcal{S}_{\sqrt{n}}f_0| \Big\|_{2} \\
&\leq \Big\| \hspace{-0.3cm} \sup_{ n \leq d^{1-\frac{1+\varepsilon}{(K+1)^2}}} \Big|\mathcal{F}^{-1} \Big( \phi_{\sqrt{n},0}\widehat{f_0} \Big)\Big| \Big\|_{2} +\Big\| \hspace{-0.3cm} \sup_{ n \leq d^{1-\frac{1+\varepsilon}{(K+1)^2}}} \Big|\mathcal{F}^{-1} \Big( (s_{\sqrt{n}}-\phi_{\sqrt{n},0})\widehat{f_0} \Big)\Big| \Big\|_{2}\\
&\le \Big\|\sup_{n_1,...,n_K \leq \frac{d}{2K}} |\mathcal{D}_{\overline{n}}f_0|\Big\|_2+\Big\| \sup_{ n \leq d^{1-\frac{1+\varepsilon}{(K+1)^2}}} \Big|\mathcal{F}^{-1} \Big( (s_{\sqrt{n}}-\phi_{\sqrt{n},0})\widehat{f_0} \Big)\Big| \Big\|_{2}.
\end{align*} 
For the second term of the last inequality we use Parseval's formula, \eqref{eq:rsnbound}, and \eqref{eq:4.1} to get
\begin{align*}
&\Big\| \sup_{ n \leq d^{1-\frac{1+\varepsilon}{(K+1)^2}}} \Big|\mathcal{F}^{-1} \Big( (s_{\sqrt{n}}-\phi_{\sqrt{n},0})\widehat{f_0} \Big)\Big| \Big\|_{2}^2 \leq \Big\| \Big(\sum_{ n \leq d^{1-\frac{1+\varepsilon}{(K+1)^2}}} \Big|\mathcal{F}^{-1} \Big( (s_{\sqrt{n}}-\phi_{\sqrt{n},0})\widehat{f_0} \Big)\Big|^2 \Big)^{1/2} \Big\|_{2}^2\\
&= \sum_{ n \leq d^{1-\frac{1+\varepsilon}{(K+1)^2}}} \Big\|\mathcal{F}^{-1} \Big( (s_{\sqrt{n}}-\phi_{\sqrt{n},0})\widehat{f_0} \Big)\Big\|_{2}^2  =  \int_{\T^d} | \widehat{f_0}(\xi)|^2 \sum_{n \leq d^{1-\frac{1+\varepsilon}{(K+1)^2}}} |s_{\sqrt{n}}(\xi)-\phi_{\sqrt{n},0}(\xi)|^2 \,d\xi 
\\
&\lesssim_{K, \varepsilon}  \int_{\T^d} | \widehat{f_0}(\xi)|^2\,d\xi \sum_{n \leq d}\big( \frac{1}{d^2}+ 2^{-n}+ \frac{1}{n^2}\big)  \lesssim_{K, \varepsilon} \| f_0 \|_2^2.
\end{align*}
In summary, we justified that
\[
\Big\| \sup_{ n \leq d^{1-\frac{1+\varepsilon}{(K+1)^2}}} |\mathcal{S}_{\sqrt{n}}f_0| \Big\|_{2} \lesssim_{K,\varepsilon}  \Big\| \sup_{n_1,...,n_K \leq \frac{d}{2K}} |\mathcal{D}_{\overline{n}}f_0| \Big\|_{2}+ \|f_0 \|_{2}.
\]

Using similar arguments, but with $\phi_{\sqrt{n},0}$ replaced by $\phi_{\sqrt{n},1}$ and \eqref{eq:4.2} used in place of \eqref{eq:4.1}, we can also estimate the term on the right hand side of \eqref{eq:sf0f1} containing $f_1,$ namely
\[
\Big\| \sup_{ n \leq d^{1-\frac{1+\varepsilon}{(K+1)^2}}} |\mathcal{S}_{\sqrt{n}}f_1| \Big\|_{2} \lesssim_{K,\varepsilon}  \Big\| \sup_{n_1,...,n_K \leq \frac{d}{2K}} |\mathcal{D}_{\overline{n}}f_1| \Big\|_{2}+ \|f_1 \|_{2}.
\]
Thus if $d$ is sufficiently large we obtain
\[
\Big\| \sup_{ n \leq d^{1-\frac{1+\varepsilon}{(K+1)^2}}} |\mathcal{S}_{\sqrt{n}}f| \Big\|_{2} \lesssim_{K,\varepsilon} \sum_{i=0}^1  \Big\| \sup_{n_1,...,n_K \leq \frac{d}{2K}} |\mathcal{D}_{\overline{n}}f_i| \Big\|_{2}+ \|f \|_{2}
\]
and this completes the proof of Proposition \ref{lem:4.3}.
\end{proof}
We are finally ready to prove Theorem \ref{thm:sphere}.
\begin{proof}[Proof of Theorem \ref{thm:sphere}] For $p= \infty$ the statement is trivial, due to interpolation it suffices to consider only $p=2$. \par
Take any $\varepsilon>0$, then there exists $K \in \N$ such that
\[
1-  \varepsilon \leq 1-\frac{1+\varepsilon}{(K+1)^2},
\]
for this $K$ take $d_0$ from Proposition \ref{lem:4.3} and consider $d \geq d_0.$ By Proposition \ref{lem:4.3} and Theorem \ref{thm:1.5} for $f \in \ell^2(\Z^d)$ we obtain
\begin{align*}
   &\Big\| \sup_{n \leq d^{1- \varepsilon}}  |\mathcal{S}_{\sqrt{n}}f| \Big\|_{2} \leq \Big\| \sup_{n \leq d^{1-\frac{1+\varepsilon}{(K+1)^2}}} |\mathcal{S}_{\sqrt{n}}f| \Big\|_{2} \\
   &\lesssim_{K,\varepsilon} \sum_{i=0}^1 \Big\| \sup_{n_1,n_2,...,n_K \leq \frac{d}{2K}} |\mathcal{D}_{n_1,...,n_K}f_i| \Big\|_{2} + \|f\|_{2} \\
&\lesssim_{K,\varepsilon} \sum_{i=0}^1  \|f_i\|_{2} + \|f\|_{2} \lesssim_{K,\varepsilon}  \|f\|_{2}. 
\end{align*}
On the other hand if $4\le d \leq d_0$, then we have
\[
\Big\| \sup_{n \leq d^{1-\varepsilon}}  |\mathcal{S}_{\sqrt{n}}f| \Big\|_{2} \leq \sum_{n \leq d^{1-\varepsilon}} \Big\|  |\mathcal{S}_{\sqrt{n}}f| \Big\|_{2} \le d_0 \|f \|_{2},
\]
since $\|\mathcal{S}_{\sqrt{n}}f \|_{2} \leq \| f \|_{2}$. Therefore we proved that in any dimension $d\ge 4$ it holds
\[
\Big\| \sup_{n \leq d^{1-\varepsilon}}  |\mathcal{S}_{\sqrt{n}}f| \Big\|_{2}  \lesssim_{K,\varepsilon} \|f \|_{2}
\]
and this concludes the proof of Theorem \ref{thm:sphere}.
\end{proof}

\section*{Acknowledgments}
The authors are grateful to the referee and Dariusz Kosz for helpful comments that led to the improvement of the presentation.

\bibliographystyle{plainnat} 
\bibliography{references}

\end{document}